\newtheorem{theorem}{Theorem}
\newtheorem{lemma}[theorem]{Lemma}
\newtheorem{corollary}[theorem]{Corollary}
\newtheorem{definition}[theorem]{Definition}
\newtheorem{proposition}[theorem]{Proposition}
\newtheorem{remark}[theorem]{Remark}
\newtheorem{observation}[theorem]{Observation}
\newtheorem{claim}{Claim}
\newcommand{\Z}{\mathbb Z}
\begin{document}

\title{On the removal lemma for linear configurations in finite abelian groups}
\author{Llu\'{i}s Vena\thanks{Department of Mathematics, University of Toronto, Canada. \newline Supported by a University of Toronto Graduate Student Fellowship.
\newline Charles University, Czech Republic. \newline Supported by ERC-CZ project LL1201 CORES. \newline E-mail: {\tt lluis.vena@gmail.com}}}
\date{\today}

\maketitle

\begin{abstract}
We present a general framework to represent discrete configuration systems using hypergraphs. This representation allows one to transfer combinatorial removal lemmas to their analogues for configuration systems. These removal lemmas claim that a system without many configurations can be made configuration-free by removing a few of its constituent elements. As applications of this approach we give; an alternative proof of the removal lemma for permutations by Cooper~\cite{cooper06}, a general version of a removal lemma for linear systems in finite abelian groups, an interpretation of the mentioned removal lemma in terms of subgroups, and an alternative proof of the counting version of the multidimensional Szemer\'edi theorem in abelian groups with generalizations.
\end{abstract}




\section{Introduction} \label{s.intro}

In 1976 Ruzsa and Szemer\'edi introduced the Triangle Removal Lemma \cite{ruzsze78}, which roughly states that in a graph with not many triangles (copies of the complete graph on $3$ vertices), we can remove a small proportion of the edges to leave the graph with no copies of the triangle at all.\footnote{Recall that a (hyper)graph is composed by an ordered pair of sets; the vertex set and the set of edges, which are subsets of vertices. A hypergraph is said to be $k$-uniform if all the edges are subsets of $k$ vertices. A graph is a $2$-uniform hypergraph. A copy of a triangle is an injective map from $\{1,2,3\}$ to the vertex set of the graph that sends an edge of the triangle to an edge in the graph. To be precise, Ruzsa and Szemer\'edi showed the $(6,3)$--Theorem, which states the following: in any $3$-uniform hypergraph with $\delta n^2$ edges, there are $6$ vertices spanning (or inducing) $3$ edges if $n>n_0(\delta)$.} They gave a short proof of Roth's theorem \cite{rot53} as an application: any subset of the integers with positive (upper) density contains non-trivial $3$-term arithmetic progressions.\footnote{The $k$-term arithmetic progressions are the configurations $\{a,a+d,\ldots,a+(k-1)d\}$ with $a\in\Z$ and $d\in \Z_{\geq 0}$} 
The case for $k$-term arithmetic progressions, conjectured by Erd\H{o}s and Tur\'an \cite{tur41},  was established by Szemer\'edi in 1975 and is now called Szemer\'edi's Theorem.

	Both the Triangle Removal Lemma and Szemer\'edi's Theorem use Szemer\'edi's Regularity Lemma \cite{sze78} in their original proofs.\footnote{Szemer\'edi's Regularity Lemma states that the vertex set of any graph can be partitioned into finitely many parts, such that between most of the pairs, we have a quasi-random bipartite graph.} However, Szemer\'edi's Theorem does not seem to easily follow from the Triangle Removal Lemma (or the general Removal Lemma for Graphs \cite{erdfranrod86,fur95}.) Indeed, while the Triangle Removal Lemma follows almost immediately  from the Regularity Lemma, the proof of Szemer\'edi's Theorem is more involved and the Regularity Lemma is used in only one, yet crucial, step.

	In \cite{frarod02} Frankl and R\"odl showed that a $k$-uniform hypergraph version of the Removal Lemma suffices to establish the existence of non-trivial $(k+1)$-term arithmetic progressions in subsets of the integers with positive density.\footnote{The Removal Lemma for Hypergraphs states that if a given $k$-uniform hypergraph $K$ has few copies of a fixed $k$-uniform hypergraph $H$, then it can be made free of copies of $H$ by removing few edges. See Theorem~\ref{t.rem_lem_edge_color_hyper}.} The argument is similar to the one used by Ruzsa and Szemer\'edi \cite{ruzsze78} to show the $3$-term case using the Triangle Removal Lemma.\footnote{In this paper, the argument of Ruzsa and Szemer\'edi can be found in the proof of Theorem~\ref{t.multi_szem_ab_gr} but the construction of the hypergraph and the use of the Removal Lemma for Hypergraphs
has been substituted by the removal lemma for homomorphisms 
Theorem~\ref{t.rem_lem_ab_gr}.} This simple proof of Szemer\'edi's Theorem, 
along with the many applications that the Regularity Lemma for Graphs has (see
the surveys 
\cite{komsim96,komshosimsze02}), were the main motivations to extend the 
Regularity and the Removal lemmas from graphs to hypergraphs. This extension was 
done by several authors following different approaches: a combinatorial approach 
from Nagle, R\"odl, Schacht, and Skokan \cite{nagrodsch06,rodsko04}, an approach
using quasirandomness by Gowers \cite{gow07}, a probabilistic approach from Tao \cite{tao06}, and a non-standard measure-theoretic approach by Elek and Szegedy \cite{elesze12}.

\subsection{Arithmetic removal lemmas}

In \cite{gre05}, Green used Fourier analysis techniques to establish a regularity lemma and a removal lemma for linear equations in finite abelian groups. These statements are analogous to their combinatorial counterparts. The Group Removal Lemma \cite[Theorem~1.5]{gre05} ensures that for every $\varepsilon>0$ and every positive integer $m$, there exists a $\delta=\delta(\epsilon,m)>0$ such that, for any finite abelian group $G$, if
\begin{equation} \label{eq.1}
	x_1+\cdots+x_m=0
\end{equation} 
has less than $\delta |G|^{m-1}$ solutions with $x_i\in X_i\subset G$, then we can, by removing less than $\varepsilon |G|$ elements in each $X_i$, create sets $X_i''$ for which there is no solution to (\ref{eq.1}) with $x_i\in X_i''$ for all $i\in[1,m]$. 

Kr\'al', Serra and the author \cite{kraserven09} gave an alternative proof of the above removal lemma, showed by Green as \cite[Theorem~1.5]{gre05}, using the removal lemma for directed graphs \cite{alosha04}. With this alternative approach, the result was extended to any finite group, eliminating the need of commutativity.

While a removal lemma for linear systems for some $0/1$-matrices was shown to hold in \cite{kraserven09} using graphs, the work of Frankl and R\"odl \cite{frarod02} suggested that the hypergraph setting might provide the right tools to extend the removal lemma for one equation to a linear system.
Indeed,  Shapira~\cite{sha10}, and independently Kr\'al', Serra, and the author \cite{kraserven12}, used the Removal Lemma for Hypergraphs to obtain a removal lemma for linear systems over finite fields and proved a conjecture by Green \cite[Conjecture~9.4]{gre05} regarding a removal lemma for linear systems in the integers. 
A partial result for finite fields was obtained by Kr\'al', Serra, and the author \cite{kraserven08}, and also independently by Candela \cite{can_tesis_09}.


In addition to showing the removal lemma for finite fields,
Shapira~\cite{sha10} raised the issue of whether an analogous result holds for linear systems over finite abelian groups. In \cite{ksv13}, 
 Kr\'al', Serra, and the author answered the question affirmatively provided that the determinantal\footnote{The determinantal of order $i$ of an integer matrix $A$ is the greatest common divisor of all the $i\times i$ submatrices of $A$. In this paper, the term \emph{determinantal} is used to refer to the determinantal of maximal order. See 
\cite[Chapter~II, Section~13]{new72} or 
Section~\ref{s.union_of_systems} for additional details and more of its properties.} of the integer matrix that defines the system is coprime to the cardinality of the finite abelian group. See \cite[Theorem~1]{ksv13} or Theorem~\ref{t.rem_lem_ksv13} for the result.

In a different direction, Candela and Sisask \cite{cansis13} proved that a removal lemma for integer linear systems holds over certain compact abelian groups. 
The main result in \cite{cansis13} has been recently extended by Candela, Szegedy and the author \cite{canszeven14+} to any compact abelian group provided that the integer matrix has determinantal $1$.

\paragraph{Previous combinatorial arguments.} 
The proof schemes of the previous arithmetic removal lemmas in \cite{kraserven09,kraserven12,ksv13,sha10} are inspired by the approaches of \cite{ruzsze78} and \cite{frarod02}, and can also be found in \cite{gow07,sol04,sze10,tao12}. Concisely, the main argument involves 
constructing a
 pair of graphs (or hypergraphs) $(K,H)$ so as to make it possible 
to transfer the removal lemma from the graph/hypergraph combinatorial setting to an arithmetic context. The pair $(K,H)$ is said to be a representation of the system and usually satisfies 
 the property that each copy of $H$ in $K$ is associated with a solution of our system. Moreover, these copies of $H$
should be evenly distributed throughout the edges of $K$.
The notion of representability of a  system by a hypergraph has been formalized by Shapira in \cite[Definition~2.4]{sha10}.

\subsection{Main notions and results} \label{s.main_notion_and_results}

This paper is built around two main pieces. The first one is the generalization of the combinatorial representability notion introduced by Shapira in \cite{sha10}. This generalization is stated as Definition~\ref{d.rep_sys}. The second piece is a removal lemma for homomorphisms of finite abelian groups; given a finite abelian group $G$ and a homomorphism $A$ from $G^m$ to $G^k$, if a set $X=X_1\times\cdots\times X_m$ does not contain many $\mathbf{x}\in X$ with $A\mathbf{x}=0$, then $X$ can be made solution-free by removing a small proportion of each $X_i$. The detailed statement can be found below as Theorem~\ref{t.rem_lem_ab_gr}. Additionally, we provide an interpretation of Theorem~\ref{t.rem_lem_ab_gr} in terms of subgroups, which is stated as Theorem~\ref{t.rem_lem_subgroups}, and can be seen as a removal lemma for finite abelian subgroups.

\subsubsection{Systems of configurations and representability}

\paragraph{Systems of configurations.}

Let us introduce the notion of a (finite) system of configurations.
Let $m$ be a positive integer and let $G$ be a set. A system of degree $m$ over $G$ consists of a pair $(A,G)$, where $A$ is a property on the configurations of $G^m$, $A: G^m\to \{0,1\}$. If
$x\in G^m$ is such that $A(x)=1$, then $x$ is said to be a \emph{solution} to $(A,G)$. In this paper all the sets $G$ considered are finite.

\paragraph{Representable systems.}

We introduce as Definition~\ref{d.rep_sys} a more general notion of representability for systems than the one given by Shapira in \cite[Definition~2.4]{sha10}. Although rather technical, Definition~\ref{d.rep_sys} asks for the existence of a pair of hypergraphs $(K,H)$, associated to a finite system  of configurations $(A,G)$,
with the following summary of properties (in parenthesis appear references to the properties described in Definition~\ref{d.rep_sys}):
\begin{itemize}
	\item Each copy of $H$ in $K$ is associated to a solution of $(A,G)$ (domain and range of $r$ in RP\ref{prop_rep2}.) The edges are associated with elements of $G$ (RP\ref{prop_rep1}, third point.) 
	\item Given a solution of $(A,G)$, there are many copies of $H$ in $K$ associated to such solution (cardinality of $r^{-1}(\mathbf{x},q)$ in RP\ref{prop_rep2}.) Those copies are well (evenly) distributed through the edges associated to the elements that configure the solution (RP\ref{prop_rep3} and RP\ref{prop_rep4}.)
	\item The number of vertices in $H$ is bounded (first point in RP\ref{prop_rep1}.)
\end{itemize}
All these points are sufficient to prove a removal lemma for representable systems, which has Theorem~\ref{t.rep_sys_rem_lem} as the precise statement.

Let us mention that the representations used in \cite{kraserven09,kraserven12,ksv13,sha10} can be seen to give a representation according to Definition~\ref{d.rep_sys}. In \cite{ksv13}, the authors used Shapira's definition, with an extra post-processing, to show a removal lemma for integer linear systems with determinantal $1$ and where the sets to be removed are small with respect to the total group, \cite[Theorem~1]{ksv13}.
The additional features of Definition~\ref{d.rep_sys} with respect to the representation notion given by Shapira in \cite{sha10}
 allow us to extend the result \cite[Theorem~1]{ksv13} (Theorem~\ref{t.rem_lem_ksv13} in this paper) to Theorem~\ref{t.rem_lem_ab_gr} in the following way. 
\begin{itemize}
	\item The set $Q$ is used to remove the determinantal condition and to extend the result to any homomorphism $A$ with domain $G^m$ and image in $G^k$.
	\item The vector $\gamma$ or, more precisely, the vector of proportions $(1/\gamma_1, \ldots, 1/\gamma_m)$, allows us to claim that the $i$-th removed set is an $\epsilon$-proportion of the projection of the whole solution set onto the $i$-th coordinate. The projection of the solution set is, in general, smaller than the whole abelian group $G$.\footnote{\label{f.footnote_7}For instance, the projection onto the first coordinate of the solution set of the equation $x_1+2(x_2+x_3)=0$, with $x_i\in \Z_6$, is isomorphic to $\Z_3$.} See Theorem~\ref{t.rep_sys_rem_lem} for further details.
\end{itemize}

\subsubsection{Removal lemma for representable systems.}

Let $I\subset \mathbb{N}$ be a set of indices. Consider $(\mathcal{A},\mathcal{G},m)=\{(A_i,G_i)\}_{i\in I}$ a family of systems of degree $m$. 
Let $S(A,G)$ denote the set of solutions for $(A,G)\in (\mathcal{A},\mathcal{G},m)$ and let $S(A,G,X)$ denote the subset of $\mathbf{x}\in S(A,G)$ with $\mathbf{x}\in X\subset G^m$.

Let $\Gamma=\{\gamma(i)\}_{i\in I}=\{(\gamma_1(i),\ldots,\gamma_m(i))\}_{i\in I}$ be a family of $m$-tuples of positive real numbers indexed by $I$. 
The family $(\mathcal{A},\mathcal{G},m)$ of systems is said to be $\Gamma$-representable, and the system $(A_i,G_i)$ is said to be associated with $\gamma_i$, if Definition~\ref{d.rep_sys} in Section~\ref{s.representables_systems} holds. The representability property suffices to show a removal lemma for configuration systems.

\begin{theorem}[Removal lemma for representable systems] \label{t.rep_sys_rem_lem}
	Let $(\mathcal{A},\mathcal{G},m)$ be a $\Gamma$-representable family of systems. Let $(A,G)$ be an element in the family associated to $\gamma=(\gamma_1,\ldots,\gamma_m)$. Let $X_1,\ldots,X_m$ be subsets of $G$ and let $X=X_1\times \cdots \times X_m$.
	
	For every $\varepsilon>0$ there exists a $\delta=\delta(m,\varepsilon)>0$, universal for all the members of the family, such that if 
	$$|S(A,G,X)|<\delta |S(A,G)|,$$ 
	then there are sets $X_i'\subset X_i$ with $|X_i'|<\varepsilon |G|/ \gamma_i$ for which $$ S\left(A,G,X\setminus X'\right)=\emptyset,$$ where
$X\setminus X'=(X_1\setminus X'_1)\times \cdots \times (X_m\setminus X'_m)$.
\end{theorem}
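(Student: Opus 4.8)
The plan is to deduce the statement from the removal lemma for edge-colored hypergraphs, Theorem~\ref{t.rem_lem_edge_color_hyper}, by passing through the representing pair of hypergraphs $(K,H)$ provided by Definition~\ref{d.rep_sys}.

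First I would fix the system $(A,G)$ in the family, associated to $\gamma=(\gamma_1,\dots,\gamma_m)$, together with the sets $X_1,\dots,X_m$, and invoke Definition~\ref{d.rep_sys} to obtain the pair $(K,H)$ with its auxiliary data: the colouring/labelling of the edges of $K$ by elements of $G$ (with the edge set split into $m$ coordinate classes), the set $Q$, and the map $r$ sending each copy of $H$ in $K$ onto $S(A,G)\times Q$. Let $K_X$ be the subhypergraph of $K$ obtained by deleting every edge whose label, in its coordinate class $i$, is not in $X_i$. By RP2 and RP3 the copies of $H$ contained in $K_X$ are precisely the copies whose $r$-image lies in $S(A,G,X)\times Q$; since the fibres $r^{-1}(\mathbf x,q)$ have cardinality controlled by a constant depending only on $m$ (RP2), the number of copies of $H$ in $K_X$ is, up to such a constant, equal to $|S(A,G,X)|\cdot|Q|$, and likewise the number of copies of $H$ in $K$ equals, up to a constant depending only on $m$, the quantity $|S(A,G)|\cdot|Q|$.

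Next I would carry out the two translations quantitatively. Since $v(H)\le C(m)$ (RP1), the number of copies of $H$ in $K$ is at most $|V(K)|^{v(H)}$, so $|S(A,G)|\cdot|Q|\le C'(m)\,|V(K)|^{v(H)}$; combined with the previous paragraph, the hypothesis $|S(A,G,X)|<\delta\,|S(A,G)|$ yields that $K_X$ has fewer than $C''(m)\,\delta\,|V(K)|^{v(H)}$ copies of $H$. Given $\varepsilon>0$, I pick $\varepsilon'=\varepsilon'(m,\varepsilon)$ as described below, let $\delta'=\delta'(k,H,\varepsilon')$ be the threshold returned by Theorem~\ref{t.rem_lem_edge_color_hyper}, and choose $\delta=\delta(m,\varepsilon)$ so small that $C''(m)\,\delta\le\delta'$; all constants here depend only on $m$ and $\varepsilon$ because $v(H)$ and the structural constants of the representation are bounded in terms of $m$, which is what makes $\delta$ universal over the family. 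Now Theorem~\ref{t.rem_lem_edge_color_hyper} provides an edge set $F\subseteq E(K_X)$ with $|F|<\varepsilon'\,|V(K)|^{k}$ such that $K_X-F$ is free of copies of $H$. For each $i$ let $X_i'$ be the set of labels carried by the edges of $F$ in the $i$-th coordinate class; then $X_i'\subseteq X_i$, and the subhypergraph of $K$ that keeps only edges with label in $X_i\setminus X_i'$ for all $i$ is contained in $K_X-F$, hence has no copy of $H$, and so $S(A,G,X\setminus X')=\emptyset$ by RP2. Finally, RP3 and RP4 guarantee that all labels actually occurring on the $i$-th coordinate class occur on the same number $t_i$ of edges, and that there are exactly $|G|/\gamma_i$ of them, so $t_i$ is a fixed positive fraction (depending only on $m$) of $|V(K)|^{k}$; hence $|X_i'|\le|F|/t_i$, and taking $\varepsilon'$ equal to $\varepsilon$ times the reciprocal of that fraction gives $|X_i'|<\varepsilon\,|G|/\gamma_i$, as claimed.

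The main obstacle is not conceptual but bookkeeping: one must verify that every multiplicative constant entering the two dictionaries — copies of $H$ versus solutions, and deleted edges versus deleted group elements — is controlled in terms of $m$ alone, so that the resulting $\delta$ is the same for every member of the family, and one must check that rounding the edge set $F$ up to unions of full label classes does not inflate its size beyond the allowed $\varepsilon\,|G|/\gamma_i$; this last point is exactly the purpose of the even-distribution clauses RP3 and RP4 of Definition~\ref{d.rep_sys}, and the bound $v(H)\le C(m)$ of RP1 is what keeps the hypergraph-removal threshold $\delta'$ uniform. Beyond assembling these ingredients and invoking Theorem~\ref{t.rem_lem_edge_color_hyper} as a black box there is nothing further to do.
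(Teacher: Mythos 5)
The first half of your argument (bounding copies of $H$ in $K$ and in $K_X$ via RP\ref{prop_rep1}--RP\ref{prop_rep2}, invoking Theorem~\ref{t.rem_lem_edge_color_hyper} with constants depending only on $m$) is the same as the paper's and is fine. The gap is in how you extract $X_i'$ from the deleted edge set $F$. You define $X_i'$ as the set of \emph{all} labels carried by edges of $F$ in colour class $i$ and then claim $|X_i'|\le |F|/t_i$, where $t_i$ is the number of edges carrying each label. That inequality is false for this definition: $F$ need not contain all (or even many) edges with a given label, so a single deleted edge already puts its label into $X_i'$. In the extreme case $F$ could consist of one edge per label, giving $|X_i'|$ as large as the whole label set while $|F|$ is still tiny compared with $|K|^s$; the bound $|X_i'|<\varepsilon|G|/\gamma_i$ then fails completely. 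Moreover, your justification leans on RP\ref{prop_rep4} and on the assertion that every label in class $i$ sits on exactly $t_i$ edges with exactly $|G|/\gamma_i$ labels in total; the theorem only assumes $\Gamma$-representability (RP\ref{prop_rep1}--RP\ref{prop_rep3}), not strong representability, and even the strong version does not give you that global regularity of label classes --- RP\ref{prop_rep2}--RP\ref{prop_rep3} only control, for each fixed $(\mathbf{x},q)$, the size of the fibre $r^{-1}(\mathbf{x},q)$ and the number of its copies through a single edge.

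The missing idea is a thresholding/averaging step. Put $x$ into $X_i'$ only when $F$ contains at least $\lambda\gamma_i/m$ edges coloured $i$ and labelled $x$ (with $\lambda=c|K|^s/|G|$); then double counting gives $|X_i'|\le |F|/(\lambda\gamma_i/m)\le \varepsilon|G|/\gamma_i$ once $\varepsilon'=c\varepsilon/m$. To see that this smaller deletion still kills every solution, take $\mathbf{x}\in S(A,G,X)$ and any $q\in Q$: all $p\lambda\prod_j\gamma_j$ copies in $r^{-1}(\mathbf{x},q)$ lie in $K_X$ and each must meet $F$ in an edge coloured $i$ and labelled $x_i$ for some $i$; by RP\ref{prop_rep3} each such edge lies in at most $p\prod_{j\ne i}\gamma_j$ copies of $r^{-1}(\mathbf{x},q)$, so
\begin{displaymath}
\sum_{i=1}^m |F_{i,x_i}|\, p\prod_{j\ne i}\gamma_j \;\ge\; p\,\lambda\prod_{j=1}^m \gamma_j ,
\end{displaymath}
and the pigeonhole principle forces $|F_{i,x_i}|\ge \lambda\gamma_i/m$ for some $i$, i.e.\ $x_i\in X_i'$. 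This is exactly the quantitative use of RP\ref{prop_rep3} that your sketch replaces by an appeal to ``even distribution''; without it neither the size bound nor the emptiness of $S(A,G,X\setminus X')$ can both be guaranteed.
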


 Let us notice that, if 
a family of systems is $\Gamma$-representable, then the conclusions of Theorem~\ref{t.rep_sys_rem_lem} holds with smaller $\gamma_j(i)$'s as the restrictions on the theorem decrease with the $\gamma$'s. In the representability notion of Shapira~\cite{sha10}, as well as in other works like \cite{kraserven09,kraserven12,ksv13}, the $\gamma_j(i)$ are all $1$.
Thus, the notion of representability Definition~\ref{d.rep_sys}
is an extension of \cite[Definition~2.4]{sha10}.

\subsubsection{Removal lemma for finite abelian groups.}

Let $G$ be a finite abelian group. Given $\mathbf{b}\in G^k$, a homomorphism $A$ from $G^m$ to $G^k$ 
induces a property $(A,\mathbf{b})$ in $G^m$ given by: $\mathbf{x}\in S((A,\mathbf{b}),G)$ if and only if $A(\mathbf{x})=b$. Let $(\mathcal{A},\mathcal{G},m)$ be the family of systems given by the homomorphisms $(A,\mathbf{b})$ with fixed $m$. These are the configuration systems
that we consider for most of the paper, especially in Section~\ref{s.equiv_and_rep} and onwards.

The set of homomorphisms $A:G^m\to G^k$ are in bijection with $k\times m$ homomorphism matrices $(\vartheta_{i,j})$ for some homomorphisms $\vartheta_{i,j}:G\to G$ depending on $A$.\footnote{See \cite[Section~13.10]{vandW91-2}} In particular, given $\mathbf{b}=(b_1,\ldots,b_k)^{\top}\in G^k$, $(x_1,\ldots,x_m)\in S\left(\left(A,\mathbf{b}\right),G\right)$ if and only if
\begin{displaymath}
\left(
	\begin{array}{ccc}
		\vartheta_{1,1} & \cdots & \vartheta_{1,m}  \\
		\vdots  &\ddots & \vdots \\
		\vartheta_{k,1} &  \cdots & \vartheta_{k,m} \\
		\end{array}\right)
		\left(\begin{array}{c}
		x_1 \\
		\vdots \\
			x_m \\
			\end{array}\right)	
	=	\left(\begin{array}{c}
			b_1 \\
			\vdots \\
				b_k \\
				\end{array}\right)
				\iff
\sum_{i=1}^m \vartheta_{j,i}(x_i)=b_j, \; \forall j\in [1,k].
\end{displaymath}
 Thus, we may use the term \emph{$k\times m$ homomorphism system on $G$} to refer to the system induced by a homomorphism from $G^m$ to $G^k$.

Let $S_i((A,\mathbf{b}),G)$, $i\in [1,m]$ denote the projection of the solution set $S((A,\mathbf{b}),G)$ to the $i$-th coordinate of $G^m$, $S_i((A,\mathbf{b}),G)=\pi_i(S((A,\mathbf{b}),G))$. The solution set $S((A,\mathbf{b}),G)$ is denoted by $S(A,G)$ when $\mathbf{b}=0$ or understood by the context.

In the sections \ref{s.proof_rl-lsg-1} and \ref{s.representability_product_cyclics}
of this paper, we show that the family of homomorphisms of finite abelian groups
is $\Gamma$-representable with $\gamma_i=|G|/|S_i((A,\mathbf{b}),G)|$ when $m\geq k+2$. Hence Theorem~\ref{t.rep_sys_rem_lem}, together with the additional comments to the construction presented in Section~\ref{s.finish_rem_lem_dkA1}, implies Theorem~\ref{t.rem_lem_ab_gr}.

\begin{theorem}[Removal lemma for linear systems over abelian groups] \label{t.rem_lem_ab_gr}
	 Let $G$ be a finite abelian group and let $m, k$ be two positive integers. Let $A$ be a group homomorphism from $G^m$ to $G^k$.
	Let $\mathbf{b}\in G^k$. Let $X_i\subset G$ for $i=[1,m]$, and $X=X_1\times \cdots \times X_m$. 
	
	For every $\varepsilon>0$ there exists a $\delta=\delta(m,\varepsilon)>0$ such that, if
 $$\left|S((A,\mathbf{b}),G,X)\right|<\delta \left|S((A,\mathbf{b}),G)\right|,$$ then there are sets $X_i'\subset X_i\cap S_i((A,\mathbf{b}),G)$ with $|X_i'|<\varepsilon |S_i((A,\mathbf{b}),G)|$ and $$S\left((A,\mathbf{b}),G, X\setminus X'\right)=\emptyset, \text{ where } X\setminus X'=(X_1\setminus X'_1)\times \cdots \times (X_m\setminus X'_m).$$

Also, let $I\subset [1,m]$ be such that $X_i\supset S_i((A,\mathbf{b}),G)$ for $i\in I$. The previous statement holds with the extra condition that $X_i'=\emptyset$ for $i\in I$.
\end{theorem}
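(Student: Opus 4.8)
The plan is to reduce Theorem~\ref{t.rem_lem_ab_gr} to Theorem~\ref{t.rep_sys_rem_lem} via the $\Gamma$-representability of homomorphism systems established in Sections~\ref{s.proof_rl-lsg-1} and~\ref{s.representability_product_cyclics}, and then to dispatch the points this reduction does not immediately cover: the range restriction $X_i'\subseteq X_i\cap S_i$, the cases $m<k+2$, and the assertion about the index set $I$. Write $S=S((A,\mathbf{b}),G)$ and $S_i=S_i((A,\mathbf{b}),G)$. If $S=\emptyset$, the hypothesis $|S((A,\mathbf{b}),G,X)|<\delta|S|=0$ fails for every $\delta\le 1$, so the statement is vacuous; hence assume $S\ne\emptyset$ and fix $\mathbf{s}_0\in S$, so that $S=\mathbf{s}_0+\ker A$. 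We use the following normalization repeatedly: \emph{since $S\ne\emptyset$, the system $(A,\mathbf{b})$ may be replaced by a homomorphism system over $G$ in the same $m$ variables, with at most $m$ equations, having exactly the same solution set} (hence the same coordinate projections). Indeed $\mathrm{im}(A)\le G^k$ is generated by the $m$ columns $A(e_1),\dots,A(e_m)$, so it has at most $m$ invariant factors, each of order dividing $\exp(G^k)=\exp(G)$; as $G$ contains an element of order $\exp(G)$, there is an embedding $\mathrm{im}(A)\hookrightarrow G^{k^*}$ with $k^*\le m$, and composing it with $A$ gives $A^*\colon G^m\to G^{k^*}$ with $\ker A^*=\ker A$; then $\mathbf{b}^*:=A^*(\mathbf{s}_0)$ gives $S((A^*,\mathbf{b}^*),G)=\mathbf{s}_0+\ker A=S$.

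\emph{The first assertion.} After the normalization assume $k\le m$, and add two dummy variables: let $A^\dagger\colon G^{m+2}\to G^{k}$ act as $A$ on the first $m$ coordinates and ignore the last two, keep $\mathbf{b}$, set $X_{m+1}=X_{m+2}=G$ and $X^\dagger=X_1\times\cdots\times X_{m+2}$. Then $m+2\ge k+2$, $S((A^\dagger,\mathbf{b}),G)=S\times G^2$, $S_i((A^\dagger,\mathbf{b}),G)=S_i$ for $i\le m$, and the hypothesis is multiplied by $|G|^2$ on both sides, hence preserved. Since the family of homomorphism systems in $m+2$ variables is $\Gamma$-representable with $\gamma_i=|G|/|S_i((A^\dagger,\mathbf{b}),G)|$, Theorem~\ref{t.rep_sys_rem_lem} yields $X_i'\subseteq X_i^\dagger$ with $|X_i'|<\varepsilon|G|/\gamma_i=\varepsilon|S_i((A^\dagger,\mathbf{b}),G)|$ and $S((A^\dagger,\mathbf{b}),G,X^\dagger\setminus X')=\emptyset$. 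Replace each $X_i'$ by $X_i'\cap S_i((A^\dagger,\mathbf{b}),G)$: this enlarges the complements, but any solution $\mathbf{x}$ of $(A^\dagger,\mathbf{b})$ in the enlarged product has $x_i\in S_i((A^\dagger,\mathbf{b}),G)$ for all $i$, so $x_i\notin X_i'\cap S_i((A^\dagger,\mathbf{b}),G)$ forces $x_i\notin X_i'$, contradicting the previous emptiness; thus emptiness persists and now $X_i'\subseteq X_i^\dagger\cap S_i((A^\dagger,\mathbf{b}),G)$. Discarding $X_{m+1}'$ and $X_{m+2}'$, which (as $\varepsilon<1$) leave $G\setminus X_{m+1}'$ and $G\setminus X_{m+2}'$ nonempty, we obtain $X_1',\dots,X_m'$ with $X_i'\subseteq X_i\cap S_i$, $|X_i'|<\varepsilon|S_i|$ and $S((A,\mathbf{b}),G,X\setminus X')=\emptyset$. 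Taking $\delta(m,\varepsilon)$ to be the constant of Theorem~\ref{t.rep_sys_rem_lem} for degree $m+2$ (which we may assume non-increasing in the degree) finishes the first assertion.

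\emph{The assertion about $I$.} Set $J=[1,m]\setminus I$; since $S\ne\emptyset$, $J\ne\emptyset$. Because $X_i\supseteq S_i$ for $i\in I$, the conditions $x_i\in X_i$ ($i\in I$) hold automatically on $S$, so $S((A,\mathbf{b}),G,X)=\{\mathbf{x}\in S : x_j\in X_j\text{ for all }j\in J\}$, and every fibre of $\pi_J\colon S\to\pi_J(S)$ has the same cardinality $f=|\ker A\cap(\{0\}^J\times G^I)|\ge 1$. The coset $\pi_J(S)$ of $H:=\pi_J(\ker A)\le G^J$ is, by the same normalization as above (the quotient $G^J/H$ is a quotient of $\mathrm{im}(A)$, hence has at most $m$ invariant factors), the solution set of a homomorphism system $(B,\mathbf{c})$ over $G$ in the variables of $J$, with $S_j((B,\mathbf{c}),G)=S_j$ for $j\in J$. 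Dividing the hypothesis by $f$ gives $|S((B,\mathbf{c}),G,\prod_{j\in J}X_j)|<\delta|S((B,\mathbf{c}),G)|$, so the first assertion, applied to $(B,\mathbf{c})$, returns sets $X_j'\subseteq X_j\cap S_j$ with $|X_j'|<\varepsilon|S_j|$ and no solution of $(B,\mathbf{c})$ in $\prod_{j\in J}(X_j\setminus X_j')$. As every element of $\pi_J(S)$ lifts to $S$, setting $X_i'=\emptyset$ for $i\in I$ yields $S((A,\mathbf{b}),G,X\setminus X')=\emptyset$, which is the claim.

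\emph{Expected main obstacle.} The genuinely hard work lies outside this deduction: constructing the representing hypergraphs --- in particular the role of the set $Q$ and of the proportions $1/\gamma_i$ --- that make the family of homomorphism systems $\Gamma$-representable so that Theorem~\ref{t.rep_sys_rem_lem} applies. Within the deduction itself the only non-formal point is the handling of $I$: Theorem~\ref{t.rep_sys_rem_lem} does not by itself allow avoiding the cleaning of a coordinate whose set already contains the relevant projection, and the remedy is the observation --- used twice above --- that a coordinate-projection of the solution set of a homomorphism system over $G$ is again the solution set of a homomorphism system over $G$, which is exactly where the bound on the number of invariant factors and the embedding into a power of $G$ enter.
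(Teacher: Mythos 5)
Your deduction is essentially correct and, for the first assertion, takes the same route as the paper: bound the number of equations, pad with two free variables so that the number of variables exceeds the number of equations by at least two, invoke the representability of homomorphism systems (Proposition~\ref{p.repr_hom}) together with Theorem~\ref{t.rep_sys_rem_lem}, and post-process (intersecting with $S_i$, discarding the removal sets of the dummy coordinates, assuming $\varepsilon<1$). Like the paper, you treat the construction of the representation in Sections~\ref{s.proof_rl-lsg-1}--\ref{s.representability_product_cyclics} as a black box; your text supplies only the reduction, which is the content of Section~\ref{s.finish_rem_lem_dkA1}.

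Where you genuinely depart from the paper is the assertion about $I$. The paper proves it inside the hypergraph representation (Observation~\ref{lem:red2_ext_3}): it deletes the colour classes of the full variables, checks that the truncated pair $(K'',H'')$ still represents the projected system with comparable constants (using the $n$-circular structure and the connectivity of the truncated $H$), and reruns the removal argument there. You argue entirely at the group level: project onto the coordinates in $J$, note that all fibres of $\pi_J$ restricted to $S$ have the same size, that the projected set is again the solution set of a homomorphism system over $G$ with the same projections $S_j$, and quote the already-proved first assertion. This is valid (modulo the repair below, the routine monotonicity of $\delta$ in the degree, and the observation that $J=\emptyset$ is excluded by the hypothesis rather than by $S\neq\emptyset$ alone), and it is simpler than Observation~\ref{lem:red2_ext_3}; what the paper's route buys is that it stays inside the representation machinery, so the same argument applies to settings, such as \cite[Theorem~1]{ksv13}, where the first assertion is not available in the strengthened form you use.

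The one incorrect step is your justification of the normalization: $\mathrm{im}(A)$ is \emph{not} generated by $A(e_1),\dots,A(e_m)$ unless $G$ is cyclic ($G^m$ is not generated by $m$ elements), so the bound of at most $m$ invariant factors fails --- take $A=\mathrm{id}\colon G^m\to G^m$ with $G=\Z_2\times\Z_2$. The conclusion you need is nevertheless true and standard: $\mathrm{im}(A)\cong G^m/\ker A$ is a quotient of $G^m$, and a quotient of a finite abelian group is isomorphic to a subgroup of it, so $\mathrm{im}(A)$ embeds into $G^{m}$, giving $k^*\le m$; likewise $G^J/\pi_J(\ker A)$, being a quotient of $G^J$, embeds into $G^{|J|}$ (this is precisely the fact the paper invokes when passing between Theorem~\ref{t.rem_lem_ab_gr} and Theorem~\ref{t.rem_lem_subgroups}). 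With that repair your argument goes through.
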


Let us state the known arithmetic removal lemma for finite abelian groups \cite[Theorem~1]{ksv13}.
\begin{theorem}[Removal lemma for finite abelian groups, Theorem~1 in \cite{ksv13}]\label{t.rem_lem_ksv13}
		 Let $G$ be a finite abelian group and let $m, k$ be two positive integers. Let $A$ be a $k\times m$ integer matrix with determinantal coprime with the order of the group $|G|$.\footnote{The determinantal can be assumed to be $1$. This restriction implies that $\left|S((A,\mathbf{b}),G)\right|= |G^{m-k}|$, which is not true in the general case, as Observation~\ref{o.number_of_solutions}, or footnote \ref{f.footnote_7} shows.}
		Let $\mathbf{b}\in G^k$. Let $X_i\subset G$ for $i=[1,m]$, and $X=X_1\times \cdots \times X_m$. For every $\varepsilon>0$ there exists a $\delta=\delta(m,\varepsilon)>0$ such that, if
	 $$\left|S((A,\mathbf{b}),G,X)\right|<\delta \left|S((A,\mathbf{b}),G)\right|=\delta |G^{m-k}|,$$ then there are sets $X_i'\subset X_i\cap S_i((A,\mathbf{b}),G)$ with $|X_i'|<\varepsilon |G|$ and $$S\left((A,\mathbf{b}),G, X\setminus X'\right)=\emptyset,
	\text{ where } X\setminus X'=(X_1\setminus X'_1)\times \cdots \times (X_m\setminus X'_m).$$
\end{theorem}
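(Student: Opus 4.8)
\emph{Sketch of the argument.} The plan is to derive Theorem~\ref{t.rem_lem_ksv13} directly from Theorem~\ref{t.rem_lem_ab_gr}. An integer $k\times m$ matrix $A$ acts coordinatewise on $G^m$ (an integer $a$ acting on $G$ by $g\mapsto ag$), so the system $(A,\mathbf{b})$ is one of the homomorphism systems to which Theorem~\ref{t.rem_lem_ab_gr} applies. The two statements then differ only in bookkeeping: Theorem~\ref{t.rem_lem_ab_gr} removes sets of size less than $\varepsilon|S_i((A,\mathbf{b}),G)|$ whereas Theorem~\ref{t.rem_lem_ksv13} allows size less than $\varepsilon|G|$, and Theorem~\ref{t.rem_lem_ksv13} additionally asserts $|S((A,\mathbf{b}),G)|=|G|^{m-k}$. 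Since $S_i((A,\mathbf{b}),G)\subseteq G$ gives $\varepsilon|S_i((A,\mathbf{b}),G)|\le\varepsilon|G|$ for free, and the containment $X_i'\subseteq X_i\cap S_i((A,\mathbf{b}),G)$ is already part of the conclusion of Theorem~\ref{t.rem_lem_ab_gr}, everything reduces to establishing $|S((A,\mathbf{b}),G)|=|G|^{m-k}$, i.e.\ (all nonempty fibres of a group homomorphism having equal size) to showing that $A\colon G^m\to G^k$ is \emph{onto}.

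To prove surjectivity I would pass to the Smith normal form over $\Z$: write $A=UDV$ with $U$ a unimodular $k\times k$ and $V$ a unimodular $m\times m$ integer matrix, and $D$ in invariant-factor form with diagonal entries $d_1\mid\cdots\mid d_r$, where $r$ is the rank of $A$ over $\Q$. The determinantal of maximal order of $A$ equals $d_1\cdots d_r$ when $r=\min(k,m)$ and is $0$ otherwise, so its being coprime to $|G|$ forces $r=\min(k,m)$; together with $m\ge k$ (implicit in the very notation $|G|^{m-k}$) this means $A$ has full row rank $k$, and each $d_i$ ($1\le i\le k$), dividing the determinantal, is coprime to $|G|$, hence a unit modulo $\exp(G)$ (which has the same prime divisors as $|G|$). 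Therefore multiplication by $D$ maps $G^m$ onto $G^k$, and composing with the automorphisms of $G^k$ and $G^m$ induced by $U$ and $V$ shows that $A$ is onto; in particular $S((A,\mathbf{b}),G)=A^{-1}(\mathbf{b})$ is nonempty of size $|G^m|/|G^k|=|G|^{m-k}$. (One may, if desired, first normalise the determinantal to $1$ as the statement permits; this plays no role in the argument.)

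With this in place I would simply invoke Theorem~\ref{t.rem_lem_ab_gr} for the system $(A,\mathbf{b})$ and the given $\varepsilon$, obtaining $\delta=\delta(m,\varepsilon)$ and, whenever $|S((A,\mathbf{b}),G,X)|<\delta|S((A,\mathbf{b}),G)|=\delta|G|^{m-k}$, sets $X_i'\subseteq X_i\cap S_i((A,\mathbf{b}),G)$ with $|X_i'|<\varepsilon|S_i((A,\mathbf{b}),G)|\le\varepsilon|G|$ and $S((A,\mathbf{b}),G,X\setminus X')=\emptyset$ --- exactly the conclusion of Theorem~\ref{t.rem_lem_ksv13}, and in fact marginally sharper. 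The only real content, and hence the closest thing to an obstacle, lies in the middle paragraph: one must track the individual invariant factors $d_i$ rather than just their product, and observe that ``coprime to $|G|$'' and ``coprime to $\exp(G)$'' impose the same condition on them; once surjectivity of $A$ is secured, the whole statement follows formally.
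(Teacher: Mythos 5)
Your derivation is correct, but it is worth being clear that it is not the route the paper takes: Theorem~\ref{t.rem_lem_ksv13} is stated here only as a citation of Theorem~1 of \cite{ksv13}, where it is proved directly by a hypergraph representation tailored to the coprime-determinantal case (essentially the $1$-representable situation, with $Q=\{1\}$ and all $\gamma_i=1$, so that the coprimality is what makes the construction go through). You instead deduce it as a corollary of the paper's new Theorem~\ref{t.rem_lem_ab_gr}; since that theorem is proved in Sections~\ref{s.proof_rl-lsg-1}--\ref{s.finish_rem_lem_dkA1} without ever invoking Theorem~\ref{t.rem_lem_ksv13}, there is no circularity, and your reduction is exactly the implication the author alludes to when saying that Theorem~\ref{t.rem_lem_ab_gr} ``extends'' Theorem~\ref{t.rem_lem_ksv13}. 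The only genuine content in your argument is the step $|S((A,\mathbf{b}),G)|=|G|^{m-k}$, and your Smith-normal-form treatment of it is sound: coprimality of the maximal determinantal with $|G|$ (together with the implicit $m\ge k$) forces full row rank, each invariant factor $d_i$ divides the determinantal and hence acts as an automorphism of $G$, and the unimodular factors induce automorphisms of $G^k$ and $G^m$, so $A\colon G^m\to G^k$ is onto and every fibre is a coset of the kernel of size $|G|^{m-k}$; combined with $S_i((A,\mathbf{b}),G)\subseteq G$ and the containment $X_i'\subseteq X_i\cap S_i((A,\mathbf{b}),G)$ already present in Theorem~\ref{t.rem_lem_ab_gr}, the conclusion follows (indeed in the slightly sharper form $|X_i'|<\varepsilon|S_i((A,\mathbf{b}),G)|$). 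The trade-off is that your proof leans on the full strength of Theorem~\ref{t.rem_lem_ab_gr}, whose proof is far heavier than the original argument of \cite{ksv13}, whereas the original proof is self-contained precisely because the coprimality hypothesis removes the need for the set $Q$, the simulating variables, and the $\gamma$-weighted representability developed in this paper.
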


We can see that Theorem~\ref{t.rem_lem_ab_gr} extends Theorem~\ref{t.rem_lem_ksv13} in three ways: 
\begin{itemize}
	\item The coprimality condition between the determinantal of $A$ 
	and the order of the group is not needed.
	\item The systems induced by homomorphisms are more general than the systems induced by integer matrices. In particular, if $G=\prod_{i=1}^t\Z_{n_i}$, $n_{i+1}|n_i$, and we let $x=(x_1,\ldots,x_t)\in G$ with $x_i\in \Z_{n_i}$, then the homomorphism systems allow for linear equations between the components $x_i$ and $x_j$. This fact is used 
	to prove the multidimensional version of Szemer\'edi's Theorem. See Section~\ref{s.intro_applic}.
	\item The sizes of the deleted sets $X_i'$ are an $\varepsilon$-proportion of $|S_i((A,\mathbf{b}),G)|$ and not of $|G|$. 
	In particular, if $|S_i((A,\mathbf{b}),G)|=1$, $\varepsilon<1$, and $X_i$ contains that element, then $X_i'=\emptyset$. 
	This makes the result best possible in the following sense: if $S((A,\mathbf{b}),G,X)>\delta |S((A,\mathbf{b}),G)|$, then, in order to delete all the solutions, there should exist an $i$ with $|X_i'|> |S_i((A,\mathbf{b}),G)| \delta/m$. Therefore, we remove, at most, an $\varepsilon$-proportion of the right order of magnitude.
	\item The set of variables $x_i$ with $X_i=S_i(A,G)$ and for which no element should be removed is arbitrary. The argument leading to \cite[Theorem~1]{ksv13}, allows the existence of a set of indices $I$ of full sets from which no element is removed. However, the argument from \cite{ksv13} imposes an upper bound on the size of $I$. The argument in Section~\ref{s.adding_variables} remove those bounds on $I$.\footnote{The argument of Section~\ref{s.adding_variables} and of Observation~\ref{lem:red2_ext_3} could be applied to \cite[Theorem~1]{ksv13} to add this extra property.}
\end{itemize}

\subsubsection{Removal lemma for finite abelian subgroups}

As Theorem~\ref{t.rem_lem_ab_gr} can be applied to any finite abelian group $G$ and any homomorphism, we can rephrase the result in terms of subgroups.
\begin{theorem}[Removal lemma for subgroups] \label{t.rem_lem_subgroups}
	 For every $\epsilon>0$ and every positive integer $m$, there exists a $\delta=\delta(\epsilon,m)>0$ such that the following holds. Let $G_1,\ldots,G_m$ be finite abelian groups. Let $S$ be a subgroup of $G_1\times \cdots\times G_m$ and let $s\in \prod_{i\in[m]} G_i$.
	Let $X_i$ be a subset of $G_i$ for each $i\in[1,m]$. If
	$|\left[X_1\times\cdots\times X_m\right] \cap s+S|<\delta |S|$ then there exist
	$X_1',\ldots,X_m'$, with $|X_i'|<\epsilon \pi_i (S)$ for all $i\in[1,m]$, such that
	$\{\left[X_1\setminus X_1' \times\cdots\times X_m\setminus X_m'\right] \cap s+S\}=\emptyset$. If $\pi_i (s+S)\subset X_i$, for some $i\in[1,m]$ then we can assume
	$X_i'=\emptyset$.
\end{theorem}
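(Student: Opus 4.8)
The plan is to \emph{derive Theorem~\ref{t.rem_lem_subgroups} directly from Theorem~\ref{t.rem_lem_ab_gr}} by viewing a coset $s+S$ inside $\prod_i G_i$ as the solution set of a suitable homomorphism system over a single ambient finite abelian group. First I would set $G := G_1\times\cdots\times G_m$ (or, if one wants the variables to literally range over the individual $G_i$, embed each $G_i$ diagonally and work inside a product; but the cleanest route is to take all variables in the same group $G$). The subgroup $S\le G^{?}$ — here I must be slightly careful, since in the statement $S$ is a subgroup of $G_1\times\cdots\times G_m$, i.e. one point of $G$, not of $G^m$. So the correct reading is: we have one ambient group $H:=G_1\times\cdots\times G_m$, a subgroup $S\le H$, a coset $s+S$, and ``variables'' $x_i\in G_i$ assembled into $\mathbf{x}=(x_1,\ldots,x_m)\in H$. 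The condition $\mathbf{x}\in s+S$ is the condition that $A(\mathbf{x})=b$ where $A:H\to H/S$ is the quotient homomorphism and $b$ is the image of $s$. To fit the template of Theorem~\ref{t.rem_lem_ab_gr}, which takes a homomorphism $A:G^m\to G^k$ with all coordinates in \emph{one} group $G$, I would embed: let $G$ be a finite abelian group containing each $G_i$ as a subgroup and containing $H/S$ as well (e.g. $G = \bigl(\prod_i G_i\bigr)\times (H/S)$, or any large enough group; the precise choice is immaterial because the sets $X_i$ and the solution set all live in the relevant subgroups and the removal lemma is insensitive to enlarging $G$ as long as we track the projections correctly). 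Then define $\widetilde A:G^m\to G$ by $\widetilde A(x_1,\ldots,x_m) = \iota(A(x_1,\ldots,x_m))$ where $\iota$ is the chosen embedding of $H/S$ into $G$, and set $\mathbf{b}=\iota(b)$ with $b$ the class of $s$.

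Next I would check the dictionary between the two statements. The solution set $S((\widetilde A,\mathbf{b}),G)$ equals exactly $s+S$ viewed inside $G^m$, and its $i$-th projection $S_i((\widetilde A,\mathbf{b}),G)$ equals $\pi_i(s+S)$, which as a coset in $G_i$ has cardinality $|\pi_i(S)|$; I should verify $\pi_i(s+S)=\pi_i(s)+\pi_i(S)$ so that $|S_i| = |\pi_i(S)|$. The hypothesis $|[X_1\times\cdots\times X_m]\cap(s+S)| < \delta|S|$ is then literally $|S((\widetilde A,\mathbf{b}),G,X)| < \delta|S((\widetilde A,\mathbf{b}),G)|$, so Theorem~\ref{t.rem_lem_ab_gr} applies (with $k=1$, or with the genuine value $k$ if $H/S$ is presented as a product of cyclic groups and one prefers a $k\times m$ matrix). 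Its conclusion yields $X_i'\subset X_i\cap S_i((\widetilde A,\mathbf{b}),G) = X_i\cap(\pi_i(s)+\pi_i(S))$ with $|X_i'| < \varepsilon|S_i| = \varepsilon|\pi_i(S)|$ and $[X_1\setminus X_1'\times\cdots\times X_m\setminus X_m']\cap(s+S)=\emptyset$, which is exactly the desired conclusion. The last sentence (``if $\pi_i(s+S)\subset X_i$ then $X_i'=\emptyset$'') is precisely the ``extra condition'' clause of Theorem~\ref{t.rem_lem_ab_gr}: the index set $I$ there is allowed to be arbitrary, so I take $I = \{i : \pi_i(s+S)\subset X_i\}$.

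The quantification is uniform: Theorem~\ref{t.rem_lem_ab_gr} gives $\delta=\delta(m,\varepsilon)$ depending only on $m$ and $\varepsilon$ (not on $G$, $A$, or $\mathbf{b}$), so the same $\delta$ works for all choices of $G_1,\ldots,G_m$, $S$, $s$, matching the form of Theorem~\ref{t.rem_lem_subgroups}. I would also remark that one need not literally build a common overgroup $G$: since $S \le G_1\times\cdots\times G_m$, one can already regard $(x_1,\dots,x_m)\mapsto (x_1,\dots,x_m)+S$ as a homomorphism from $(\prod_iG_i)$ — but because Theorem~\ref{t.rem_lem_ab_gr} is phrased with a single group raised to the $m$-th power, the embedding step is the honest way to match hypotheses, and it costs nothing.

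I do not expect a genuine obstacle here; this is a reformulation rather than a new theorem. The only points requiring a line of care are: (i) making sure the embedding $G_i\hookrightarrow G$ is chosen so that the sets $X_i$ and the cosets $\pi_i(s)+\pi_i(S)$ sit inside $G$ consistently and the projections $S_i((\widetilde A,\mathbf{b}),G)$ come out as claimed (this is where one must resist the temptation to conflate $G^m$ with $\prod_i G_i$ when the $G_i$ differ); (ii) confirming that $|\pi_i(s+S)|=|\pi_i(S)|$, which is immediate since $\pi_i$ is a group homomorphism and $s+S$ is a coset; and (iii) translating the index-set clause. All of these are routine once the setup is fixed, so the ``hard part'' is purely bookkeeping: choosing notation for the embedding that keeps the statement honest.
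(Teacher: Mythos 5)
Your overall route is exactly the paper's: realize the coset $s+S$ as the solution set of a homomorphism system over a single finite abelian group and quote Theorem~\ref{t.rem_lem_ab_gr}, including its arbitrary index-set clause; the dictionary you set up (restricted solution count, $|S_i|=|\pi_i(S)|$, uniformity of $\delta$ in $m,\varepsilon$) is the intended one. However, there is one concrete defect in your construction of the homomorphism. You define $\widetilde A:G^m\to G$ by $\widetilde A(x_1,\ldots,x_m)=\iota(A(x_1,\ldots,x_m))$, where $A:H\to H/S$ is the quotient map on $H=G_1\times\cdots\times G_m$. But $A$ is only defined on the subgroup $\prod_i G_i\subsetneq G^m$, so this formula does not define a homomorphism on $G^m$ at all. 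Nor can you patch it by extending $A$ arbitrarily to $G^m$ (after enlarging the codomain): any extension $\widetilde A$ satisfies $\ker\widetilde A\cap H=S$, but $\ker\widetilde A$ may strictly contain $S$ inside $G^m$. In that case $S((\widetilde A,\mathbf{b}),G)=s+\ker\widetilde A$ and $S_i((\widetilde A,\mathbf{b}),G)=\pi_i(s+\ker\widetilde A)$ can be strictly larger than $s+S$ and $\pi_i(s+S)$; Theorem~\ref{t.rem_lem_ab_gr} then only gives $|X_i'|<\varepsilon|S_i|$, which is weaker than the required $|X_i'|<\varepsilon|\pi_i(S)|$, and the clause ``$\pi_i(s+S)\subset X_i\Rightarrow X_i'=\emptyset$'' no longer transfers either, since $X_i\supset\pi_i(s+S)$ need not contain the inflated $S_i$.

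The fix is the paper's one-liner, and it is where your ``embedding step'' should act on the domain rather than only on the codomain: choose $G$ containing every $G_i$, observe that then $S\le\prod_i G_i\le G^m$, and take $A$ to be the quotient map $G^m\to G^m/S$ composed with an embedding $G^m/S\hookrightarrow G^k$ (for finite abelian groups a quotient embeds into the group, so $k=m$ works; note your parenthetical $k=1$ is generally impossible, since $|G^m/S|=|G|^m/|S|$ typically exceeds $|G|$). With this $A$ the kernel is exactly $S$, the solution set of $A\mathbf{x}=A(s)$ is exactly $s+S\subset\prod_i G_i$, and $S_i((A,A(s)),G)=\pi_i(s)+\pi_i(S)$ has size $|\pi_i(S)|$; from there the rest of your argument (hypothesis translation, choice of $I=\{i:\pi_i(s+S)\subset X_i\}$, uniformity of $\delta$) goes through verbatim and reproduces the paper's proof.
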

Indeed, any subgroup of a finite abelian group is the kernel of a homomorphism (namely the quotient map $\prod_{i\in[m]} G_i\to \left[\prod_{i\in[m]} G_i\right]/S$, $S$ being our subgroup of interest.) Notice also that, instead of $\prod_{i\in[m]} G_i$, we could consider our domain to be a supergroup $G^m>\prod_{i\in[m]} G_i$, for some suitable finite abelian group $G$, as $S$ is also a subgroup of $G^m$.
Moreover, we can assume $G^m/S<G^k$ for some $k$ (take, for instance $k=m$.) Therefore, Theorem~\ref{t.rem_lem_ab_gr} suffices to show Theorem~\ref{t.rem_lem_subgroups}. Since the kernel of a homomorphism generates
a subgroup of $G^m$, Theorem~\ref{t.rem_lem_subgroups} implies Theorem~\ref{t.rem_lem_ab_gr}. Hence the version of the result for subgroups
Theorem~\ref{t.rem_lem_ab_gr}, and the version for systems of homomorphisms Theorem~\ref{t.rem_lem_subgroups}, are equivalent.

\subsubsection{Removal lemma for permutations} \label{s.intro_permutations}

In \cite{cooper06}, Cooper introduced a regularity lemma and a removal lemma for permutations. Let $\mathcal{S}(i)$ denote the set of bijective maps from $[0,i-1]$ to $[0,i-1]$. Slightly modifying the notation in \cite{cooper06}, let $\Lambda^{\tau}(\sigma)$, for $\tau\in \mathcal{S}(m)$ and $\sigma\in \mathcal{S}(n)$, be the set of occurrences of the pattern $\tau$ in $\sigma$. That is to say, the set of index sets $\{x_0<\cdots < x_{m-1}\}\subset [0,n-1]$
such that $\sigma(x_i)<\sigma(x_j)$ if and only if $\tau(i)<\tau(j)$.

\begin{proposition}[Proposition~6 in \cite{cooper06}]\label{p.rem_lem_permutations}
	Suppose that $\sigma\in \mathcal{S}(n)$, $\tau\in \mathcal{S}(m)$. For every $\epsilon>0$ there exist a $\delta=\delta(\epsilon,m)>0$ such that, if $|\Lambda^{\tau}(\sigma)|<\delta n^m$, then we may delete at most $\epsilon n^2$
	index pairs to destroy all copies of $\tau$ in $\sigma$.
\end{proposition}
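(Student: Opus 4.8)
The plan is to present the pattern–containment problem as a $\Gamma$-representable family of configuration systems and then invoke the removal lemma for representable systems, Theorem~\ref{t.rep_sys_rem_lem}. Fix $m$ once and for all. For each $n\ge 1$ and each $\tau\in\mathcal{S}(m)$ let $G=G_n=[0,n-1]\times[0,n-1]$ and define the property $A=A_{\tau,n}\colon G^m\to\{0,1\}$ by setting, for cells $p_i=(a_i,b_i)\in G$, $A(p_0,\ldots,p_{m-1})=1$ exactly when $a_0<a_1<\cdots<a_{m-1}$ and $b_i<b_j\iff\tau(i)<\tau(j)$ for all $i,j$; note that $A$ does not involve any particular permutation $\sigma$. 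Thus a solution of $(A,G)$ is a ``$\tau$-shaped'' $m$-tuple of cells of the $n\times n$ grid, and counting the choices of the (strictly increasing) first coordinates and of the second coordinates separately gives $|S(A,G)|=\binom{n}{m}^2$. Given $\sigma\in\mathcal{S}(n)$, take $X_1=\cdots=X_m=P_\sigma:=\{(x,\sigma(x)):x\in[0,n-1]\}\subset G$, the graph of $\sigma$. Since the cells of $P_\sigma$ are precisely the pairs $(x,\sigma(x))$, a solution of $(A,G)$ lying in $X=X_1\times\cdots\times X_m$ is exactly an index set $\{x_0<\cdots<x_{m-1}\}$ with $\sigma(x_i)<\sigma(x_j)\iff\tau(i)<\tau(j)$, i.e.\ an occurrence of $\tau$ in $\sigma$; hence $|S(A,G,X)|=|\Lambda^\tau(\sigma)|$.

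The core of the argument is to show that this family is $\Gamma$-representable for a suitable $\Gamma$, i.e.\ to produce, for each $(A_{\tau,n},G_n)$, the data of Definition~\ref{d.rep_sys} (a pair of hypergraphs $(K,H)$ and the attached auxiliary objects) satisfying properties RP\ref{prop_rep1}--RP\ref{prop_rep4}. The membership rule of $A$ is the conjunction of $2(m-1)$ binary comparisons --- the chain $a_0<a_1<\cdots<a_{m-1}$ on the first coordinates and the chain $b_{\tau^{-1}(0)}<b_{\tau^{-1}(1)}<\cdots<b_{\tau^{-1}(m-1)}$ on the second coordinates --- so, in analogy with the hypergraph representations of linear systems used elsewhere in the paper, one would build $K$ on a vertex set obtained from $G$ together with auxiliary ``partial $\tau$-shape'' classes, realizing each of the $2(m-1)$ comparisons by edges that are indexed by the elements of $G$ (the $i$-th coordinate cells indexing the edges over which the $i$-th set is to be removed), and take $H$ to be the fixed small hypergraph recording one consistent choice across all the comparisons. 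Copies of $H$ in $K$ would then be in correspondence, up to a multiplicity uniform over the family, with the elements of $S(A,G)$, and spread evenly among the edges sitting over each coordinate. The exact values of the $\gamma_i$ produced by this construction will play no role below, since the sets we delete are in any case contained in the $n$-element set $P_\sigma$. Verifying RP\ref{prop_rep1}--RP\ref{prop_rep4}, and above all the even-distribution conditions RP\ref{prop_rep3}--RP\ref{prop_rep4} --- which in the abelian setting are driven by translation invariance and here would have to be read off from the combinatorics of the two interacting linear orders, one on positions and one on values --- is the real work, and the step I expect to be the main obstacle.

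Granting the representation, the remaining steps are routine. Let $\delta_0=\delta_0(m,\varepsilon)>0$ be the constant provided by Theorem~\ref{t.rep_sys_rem_lem} for this family, and, using $\binom{n}{m}^2/n^m\to\infty$, fix $n_0=n_0(m,\varepsilon)>1/\varepsilon$ with $\binom{n}{m}^2\ge n^m$ for all $n\ge n_0$; put $\delta=\delta(\varepsilon,m):=\min\{\delta_0,\ n_0^{-m}\}\le 1$. Assume $|\Lambda^\tau(\sigma)|<\delta n^m$. If $n<n_0$ then $|\Lambda^\tau(\sigma)|<n_0^{-m}n^m<1$, so there are no copies of $\tau$ and nothing to delete. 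If $n\ge n_0$ then $|S(A,G,X)|=|\Lambda^\tau(\sigma)|<\delta n^m\le\delta_0\binom{n}{m}^2=\delta_0\,|S(A,G)|$, so Theorem~\ref{t.rep_sys_rem_lem} applies and yields sets $X_i'\subseteq X_i=P_\sigma$ with $S(A,G,X\setminus X')=\emptyset$. Put $D=\bigcup_{i=1}^m X_i'\subseteq P_\sigma$; each element of $D$ is a cell $(x,\sigma(x))$, hence an index pair of $\sigma$, and $|D|\le|P_\sigma|=n<\varepsilon n^2$ because $n\ge n_0>1/\varepsilon$. Finally, any occurrence $\{x_0<\cdots<x_{m-1}\}$ of $\tau$ in $\sigma$ is a solution of $(A,G)$ lying in $X$, so by $S(A,G,X\setminus X')=\emptyset$ some $(x_i,\sigma(x_i))$ must belong to $X_i'\subseteq D$; thus deleting the at most $\varepsilon n^2$ index pairs in $D$ destroys every copy of $\tau$, which is the conclusion of Proposition~\ref{p.rem_lem_permutations}. (The argument in fact deletes at most $n$ index pairs, and only ones actually occupied by $\sigma$, which is stronger than stated.)
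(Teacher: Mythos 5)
There is a genuine gap, and it starts with the reading of the statement. In Cooper's Proposition~\ref{p.rem_lem_permutations} the objects to be deleted are \emph{pairs of indices}, i.e.\ elements of ${[0,n-1]\choose 2}$: an occurrence $\{x_0<\cdots<x_{m-1}\}$ of $\tau$ is destroyed only when one of its ${m\choose 2}$ index pairs $\{x_a,x_b\}$ has been deleted. This is why the bound $\epsilon n^2$ is meaningful (there are about $n^2/2$ pairs) and why the result is a genuine removal lemma. You instead delete cells $(x,\sigma(x))$ of the grid, i.e.\ points (positions) of the permutation, of which there are only $n$. Under that reading the proposition is vacuous: deleting \emph{all} $n$ cells of $P_\sigma$ destroys every copy and costs $n<\epsilon n^2$ once $n>1/\epsilon$, with no hypothesis on $|\Lambda^\tau(\sigma)|$ needed. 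Your own argument reflects this: since $|S(A,G,X)|\le{n\choose m}$ while $|S(A,G)|={n\choose m}^2$, the hypothesis of Theorem~\ref{t.rep_sys_rem_lem} is satisfied for \emph{every} $\sigma$ once $n$ is large, and your final bound is just $|D|\le|P_\sigma|=n$ (you even remark this is ``stronger than stated''). Moreover, the statement you prove does not imply Cooper's: simulating the deletion of a position by deleting all pairs through it costs a factor of $n$, so $n$ deleted positions translate into order $n^2$ deleted pairs, not $\epsilon n^2$. To get the actual proposition the ground set must consist of index pairs, with the $m={t\choose 2}$ variables of the configuration being the pairs inside a copy; this is exactly how the paper sets it up in Section~\ref{s.perm_rep_and_rem_lem}, taking $X_i={[0,n-1]\choose 2}$ and comparing against $n^t=|K|^{|V(H)|}$ rather than against $|S(A,G)|$ in your normalization.

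Separately, the core of your argument --- the $\Gamma$-representability of the configuration system, i.e.\ producing $(K,H)$ and verifying RP1--RP4 of Definition~\ref{d.rep_sys} --- is only sketched, and you yourself flag it as ``the main obstacle''. The paper does this concretely and cheaply: $H=G_\tau$ and $K=G_\sigma$ are loopless $2$-coloured directed graphs on $[0,t-1]$ and $[0,n-1]$, with an edge $i\to j$ whenever $\sigma(i)<\sigma(j)$ (resp.\ $\tau(i)<\tau(j)$), coloured by the relative order of the endpoints; Claims~\ref{cl.0}--\ref{cl.2} show that copies of $G_\tau$ in $G_\sigma$ are in bijection with $\Lambda^\tau(\sigma)$, the even-distribution conditions are immediate ($Q=\{1\}$, $p=\gamma_i=1$, each copy determined by its index set), and the proof of Theorem~\ref{t.rep_sys_rem_lem} is run with a removal lemma for directed coloured graphs in place of Theorem~\ref{t.rem_lem_edge_color_hyper}. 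The edges removed there are precisely index pairs, which yields the $\epsilon n^2$ bound in the intended sense. As it stands, your proposal both proves the wrong (trivial) statement and leaves its key representability step unestablished.
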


In Section~\ref{s.perm_rep_and_rem_lem} we can find a representation where the valid configurations are given by the set $\Lambda^{\tau}(\sigma)$, and where we shall delete pairs of indices to destroy them. Hence Proposition~\ref{p.rem_lem_permutations} follows from Theorem~\ref{t.rep_sys_rem_lem}.

\subsection{Applications} \label{s.intro_applic}

\paragraph{Multidimensional Szemer\'edi.}

One of the main applications of Theorem~\ref{t.rem_lem_ab_gr} is a new proof
of the counting version of the multidimensional Szemer\'edi's Theorem for finite abelian groups.

 The original proof of the multidimensional Szemer\'edi theorem for the integers was given by Furstenberg and Katznelson \cite{furskatz78} and uses ergodic theory. Solymosi \cite{sol04} observed that a removal lemma for hypergraphs would imply the multidimensional Szemer\'edi theorem (a detailed construction can be found in \cite{gow07}). Solymosi's geometric argument uses hypergraphs and follows the lines of the argument by Ruzsa and Szemer\'edi \cite{ruzsze78} to obtain Roth's Theorem \cite{rot53} from the Triangle Removal Lemma. With the development of the Regularity Method for Hypergraphs \cite{gow07,nagrodsch06,rodsko04,tao06} and the corresponding Removal Lemma for Hypergraphs \cite{gow07,nagrodsch06,tao06}, 
a combinatorial proof of the multidimensional version of Szemer\'edi's Theorem for the integers could be pushed forward \cite{gow07,sol04}.

In \cite{tao12}, Tao uses the same construction as Solymosi \cite{sol04} to show \cite[Theorem~B.2]{tao12} and a lifting trick to obtain a generalized version of the multidimensional Szemer\'edi theorem for finite abelian groups, \cite[Theorem~B.1]{tao12}.
 Theorem~\ref{t.rem_lem_ab_gr} can be used to prove both \cite[Theorem~B.1]{tao12} and \cite[Theorem~B.2]{tao12}. 

The argument to deduce \cite[Theorem~B.2]{tao12} (Theorem~\ref{t.multi_szem_ab_gr} in this paper) from Theorem~\ref{t.rem_lem_ab_gr} explicitly shows that the dependencies in \cite[Theorem~B.2]{tao12} and in \cite[Theorem~B.1]{tao12} are independent of the dimension of the space and depend only on the number of points required in the configuration. On the other hand, the relation between $\delta$ and $\epsilon$ obtained using Theorem~\ref{t.rem_lem_ab_gr} is worse than the direct construction of \cite[Theorem~B.2]{tao12} due to the larger uniformity of the hypergraph used.

\begin{theorem}[Multidimensional Szemer\'edi for finite abelian groups, Theorem~B.2 in \cite{tao12}] \label{t.multi_szem_ab_gr}
	Let $\varepsilon >0$. Let $G^m$ be a finite abelian group and let
	$S\subset G^m$ be such that $|S|/|G^m|\geq \varepsilon$. There exists $\delta=\delta(\varepsilon,m+1)>0$ such that the number of configurations of the type
	$\{(x_1,\ldots,x_m),(x_1+a,x_2,\ldots,x_m),\ldots,(x_1,x_2,\ldots,x_m+a)\}\subset S$, for some $a\in G$, is at least $\delta |G|^{m+1}$.
\end{theorem}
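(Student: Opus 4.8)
The plan is to deduce Theorem~\ref{t.multi_szem_ab_gr} from the arithmetic removal lemma Theorem~\ref{t.rem_lem_ab_gr} by mimicking the Ruzsa--Szemer\'edi argument adapted to this multidimensional setting, exactly as in Solymosi's geometric argument and Tao's lifting trick. First I would introduce $m+1$ ``variables'' living in the ambient group $G^m$ together with one auxiliary variable recording the ``step'' $a\in G$. Concretely, a configuration $\{(x_1,\dots,x_m),(x_1+a,x_2,\dots,x_m),\dots,(x_1,\dots,x_m+a)\}$ is determined by the base point $P=(x_1,\dots,x_m)\in G^m$ and $a\in G$, and each of the $m+1$ points of the configuration is an affine-linear (in fact homomorphic, up to a translate by $\mathbf{b}$) function of the $m+1$ data $(x_1,\dots,x_m,a)$. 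One should set up a homomorphism system on a suitable finite abelian group --- say on $G^m$ with the components of $G^m$ themselves allowed to interact, which is precisely the extra generality of Theorem~\ref{t.rem_lem_ab_gr} over Theorem~\ref{t.rem_lem_ksv13} --- whose solution set parametrizes such configurations, with $m+1$ blocks of variables $X_0,X_1,\dots,X_m$, each block equal to $S$ (thought of as a subset of $G^m$), so that a solution $\mathbf{x}$ with all coordinates in the $X_i$ corresponds to a configuration of the desired type fully contained in $S$.

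The second step is to count. Since each point of $G^m$ appears in the constraints, I want the property that a solution corresponds to a legitimate (possibly degenerate) configuration; degenerate configurations are those with $a$ equal to the identity (so all $m+1$ points coincide), and there are exactly $|S|$ of these lying inside $S$ (one for each point of $S$, with $a=0$). Now suppose, for contradiction, that the number of genuine configurations in $S$ is less than $\delta |G|^{m+1}$ for a $\delta$ to be chosen. Then the total number of solutions of the system with all blocks in $S$ is at most $|S| + (m+1)!\,\delta|G|^{m+1}$ or so --- it is on the order of the degenerate ones plus $\delta|G|^{m+1}$ --- while the total number of solutions of the system over the full group $G^m$ (i.e.\ $|S(A,G)|$ in the notation of the theorem) is on the order of $|G|^{m+1}$, since the configuration is parametrized by $m+1$ free group elements. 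Hence, choosing $\delta$ appropriately, $|S(A,G,X)| < \delta' |S(A,G)|$ for the $\delta'$ supplied by Theorem~\ref{t.rem_lem_ab_gr} applied with error parameter $\varepsilon/(2m)$ (or similar) and with $m+1$ in place of $m$. The removal lemma then yields deletion sets $X_i' \subset X_i$ with $|X_i'| < (\varepsilon/(2m)) |S_i(A,G)| \le (\varepsilon/(2m))|G^m|$ destroying all solutions in $X\setminus X'$.

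The final step is to derive the contradiction. Because the degenerate configurations (the $|S|$ diagonal points with $a=0$) are solutions of the system contained in $X=S\times\cdots\times S$, and because all of them survive unless their base point has been deleted from one of the blocks, the total number of deleted elements across all $m+1$ blocks must be at least $|S|$; but we just bounded it above by $(m+1)\cdot(\varepsilon/(2m))|G^m| < \varepsilon |G^m| \le |S|$, a contradiction, for $\varepsilon$ small enough relative to the density hypothesis $|S| \ge \varepsilon |G^m|$. (One has to be slightly careful with constants here so that the two bounds on the number of deletions are genuinely incompatible; choosing the removal-lemma error parameter to be, say, $\varepsilon/(2(m+1))$ makes the upper bound on deletions strictly below $\varepsilon|G^m|/2 < |S|$.) This forces the number of genuine configurations to be at least $\delta|G|^{m+1}$ with $\delta = \delta(\varepsilon,m+1)$, as claimed. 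I expect the main obstacle to be the bookkeeping in the first step: one must exhibit an honest homomorphism system on an appropriate finite abelian group whose solution set is in bijection (up to a bounded multiplicative constant depending only on $m$) with the configurations, handling the fact that the $m$ coordinates of $G^m$ must be allowed to mix and that the degenerate solutions are counted correctly; once that representation is in place, the counting and the contradiction are the routine Ruzsa--Szemer\'edi argument.
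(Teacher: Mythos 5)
Your construction and the overall Ruzsa--Szemer\'edi scheme are exactly the paper's: work over $P=G^m$, take $m+1$ blocks of variables each restricted to $S$, encode the configuration by a homomorphism system (using precisely the component-mixing generality of Theorem~\ref{t.rem_lem_ab_gr}), apply the removal lemma with parameter roughly $\varepsilon/(m+1)$, and use the diagonal solutions $(\mathbf{s},\ldots,\mathbf{s})$, $a=0$, as witnesses that the union of the deleted sets would have to cover $S$, which its size forbids. However, there is one concrete flaw in your counting step. You take as the contradiction hypothesis that the number of \emph{genuine} (non-degenerate, $a\neq 0$) configurations is below $\delta|G|^{m+1}$, and then bound the restricted solution count by $|S|+O(\delta|G|^{m+1})$ before comparing it with $\delta'|S(A,P)|=\delta'|G|^{m+1}$. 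Since $|S|$ can be as large as $|G|^m=|G|^{m+1}/|G|$, the inequality $|S|+O(\delta|G|^{m+1})<\delta'|G|^{m+1}$ fails whenever $|G|\lesssim 1/\delta'$, so in the bounded-group regime the removal lemma cannot be invoked and your argument stalls. Worse, in that regime the strengthened conclusion you are aiming at is simply false: for $G=\Z_2$, $m=1$, $\varepsilon=1/2$, $S=\{0\}$ there is no configuration with $a\neq 0$ inside $S$, yet $\delta|G|^{2}>0$; so no choice of $\delta(\varepsilon,m+1)$ can rescue the non-degenerate count.

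The repair is to read the statement as the paper does (and as it must be read for it to be true): count all configurations, including $a=0$. Solutions of the system restricted to $X=S^{m+1}$ are then in bijection with the configurations inside $S$, so the contradiction hypothesis gives directly $|S(A,P,X)|<\delta|S(A,P)|$ with no additive $|S|$ term and no case distinction on $|G|$; the rest of your argument (deletion sets of total size under $\varepsilon|G^m|\le|S|$ versus the need to delete every diagonal witness) then closes the contradiction exactly as in the paper. In short: same approach, but drop the degenerate/genuine split --- it is both unnecessary and the source of the gap.
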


\begin{proof}[Proof of Theorem~\ref{t.multi_szem_ab_gr}]
	Consider the abelian group $P=G^m$, $X_i=S\subset P$, for $i\in[1,m+1]$. Let $\mathbf{x}_i=(x_{i,1},\ldots,x_{i,m})$, $i\in[1,m+1]$, be the variables of the homomorphism system that can be derived from the following linear equations:
	\begin{equation}\label{eq.system_multidim}
		\left\{\begin{array}{cl}
		x_{1,1}-x_{2,1}= x_{1,j}-x_{j+1,j} & \text{ for } j\in[2,m] \\
		x_{1,j}=x_{i,j} & \text{ for all }(i,j)\in[1,m+1]\times[1,m], i\neq j+1.\\
	\end{array}\right. 
	\end{equation}
Indeed, $\mathbf{x}_1$ is thought of as the centre of the configuration. The first equations state that the difference between the $j$-th coordinate of $\mathbf{x}_{j+1}$ and the $j$-th coordinate of $\mathbf{x}_1$ is the same regardless of $j$; this is achieved by setting all the differences to be equal
to the difference between the first coordinate of $\mathbf{x}_2$ and the first coordinate of $\mathbf{x}_1$. The second set of equations treat the other coordinates, imposing that all the other coordinates of $\mathbf{x}_{j+1}$, except the $j$-th, should be equal to those of $\mathbf{x}_1$. Therefore, $(\mathbf{x_1},\ldots,\mathbf{x}_{m+1})$ is a solution to the system defined by 
(\ref{eq.system_multidim}) if and only if $\mathbf{x}_1=(y_1,\ldots,y_m)$, 
$\mathbf{x}_2=(y_1+a,\ldots,y_m)$, \ldots, $\mathbf{x}_{m+1}=(y_1,\ldots,y_m+a)$
for some $y_1,\ldots,y_m,a\in G$.

By adding some trivial equations, like $0=0$, the system induces a homomorphism $A:P^{m+1} \to P^{m}$, with $S(A,P)\cong G^{m+1}$. Observe that $S_i(A,P)\cong G^m$ as any point in $P=G^m$ can be the $i$-th element in the configuration. 

Consider the $\delta=\delta_{\text{Theorem~\ref{t.rem_lem_ab_gr}}}(m+1,\varepsilon/(m+1))$ coming from Theorem~\ref{t.rem_lem_ab_gr} applied with $\varepsilon/(m+1)$ and $m+1$.
Let us proceed by contradiction and assume that the number of solutions is less than $\delta S(A,P)=\delta |G|^{m+1}$. Now we apply Theorem~\ref{t.rem_lem_ab_gr} and find sets $X_1',\ldots,X_{m+1}'$ with $|X_i'|<|G^m|\varepsilon/(m+1)$ such that the sets $X_i=S\setminus X_i'$ bear none of the desired configurations.

Observe that any point $\mathbf{x}\in S\subset G^m$ generates a solution to the linear system as $(\mathbf{x},\ldots,\mathbf{x})\in P^{m+1}$ is a valid configuration with $a=0_{G}$. Consider $S'=S\setminus (\cup X_i')$. Since $|S|>\varepsilon|G^m|$ and $|X_i'|<|G^m|\varepsilon/(m+1)$, there exists an element $\mathbf{s}$ in $S'$, as $S'$ is non-empty. Therefore $\mathbf{s}\in X_i$ for every $i\in[1,m+1]$. Thus $(\mathbf{s},\ldots,\mathbf{s})\in P^{m+1}$ is a solution that still exists after removing the sets $X_i'$ of size at most $\varepsilon/(m+1)$ from $S$. This contradicts Theorem~\ref{t.rem_lem_ab_gr}. Therefore, we conclude that at least $\delta |G|^{m+1}$ solutions exist.
\end{proof}

\paragraph{Other linear configurations.}

More generally, we can show the following corollary of Theorem~\ref{t.rem_lem_ab_gr}.

\begin{corollary}\label{c.homotetic_solutions2}
	Let $G$ be a finite abelian group, let $A$ be a $k\times m$ homomorphism for $G$ and let $\mathbf{b}\in G^k$. Assume that $S(A,G)=S((A,\mathbf{b}),G)\subset G^m$ contains a set $R$ satisfying the following conditions.
	\begin{enumerate}[(i)]
	\item The projection of $R$ onto the $i$-th coordinate of $G^m$  is $S_i(A,G)$. This is, $\pi_i(R)=S_i(A,G)$.
	\item For each $i\in[1,m]$ and for each pair $g_1,g_2\in S_i(A,G)$,
	$|\pi_i^{-1}(g_1)\cap R|=|\pi_i^{-1}(g_2)\cap R|$.
\end{enumerate}
 Then, for every $\varepsilon>0$ there exists a $\delta=\delta(\varepsilon,m)>0$ such that, for any $S\subset G$ with $|S^m\cap R|>\varepsilon |R|$, we have $\left|S(A,G,S^m)\right|\geq\delta \left|S(A,G)\right|$.
\end{corollary}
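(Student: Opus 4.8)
The plan is to derive Corollary~\ref{c.homotetic_solutions2} from Theorem~\ref{t.rem_lem_ab_gr} by the same contrapositive ``popularity'' argument used in the proof of Theorem~\ref{t.multi_szem_ab_gr} above, with $R$ playing the role that the diagonal copy of $S$ played there. First I would fix $\varepsilon>0$, set $\delta:=\delta_{\text{Thm~\ref{t.rem_lem_ab_gr}}}(m,\varepsilon/m)$, and argue by contradiction: suppose $S\subset G$ satisfies $|S^m\cap R|>\varepsilon|R|$ but $|S(A,G,S^m)|<\delta\,|S(A,G)|$. Apply Theorem~\ref{t.rem_lem_ab_gr} with $X_i=S$ for all $i\in[1,m]$ and with parameter $\varepsilon/m$: this yields sets $X_i'\subset S\cap S_i(A,G)$ with $|X_i'|<(\varepsilon/m)\,|S_i(A,G)|$ such that $S(A,G,(S\setminus X_1')\times\cdots\times(S\setminus X_m'))=\emptyset$.

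The core of the argument is then to show that $R$ is not entirely destroyed by the removals, i.e.\ that $R\cap\big((S\setminus X_1')\times\cdots\times(S\setminus X_m')\big)\neq\emptyset$; since every element of $R$ is a solution of $(A,\mathbf{b})$, such a surviving element contradicts the emptiness conclusion. To see this I would count the elements of $R\cap S^m$ killed by the $X_i'$. A point $\mathbf{r}\in R\cap S^m$ is removed only if $\pi_i(\mathbf{r})\in X_i'$ for some $i$; by the union bound the number of such points is at most $\sum_{i=1}^m |R\cap S^m\cap \pi_i^{-1}(X_i')|\le \sum_{i=1}^m |R\cap\pi_i^{-1}(X_i')|$. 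Here conditions (i) and (ii) enter: because $\pi_i(R)=S_i(A,G)$ and all fibres $\pi_i^{-1}(g)\cap R$, $g\in S_i(A,G)$, have the common cardinality $|R|/|S_i(A,G)|$, we get $|R\cap\pi_i^{-1}(X_i')| = |X_i'|\cdot |R|/|S_i(A,G)| < (\varepsilon/m)|S_i(A,G)|\cdot |R|/|S_i(A,G)| = (\varepsilon/m)|R|$. Summing over $i$, at most $\varepsilon|R|$ points of $R\cap S^m$ are removed, so strictly fewer than $|R\cap S^m|$ of them die, leaving at least one survivor — the desired contradiction. Hence $|S(A,G,S^m)|\ge\delta\,|S(A,G)|$.

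The main obstacle — and really the only delicate point — is the fibre-counting step, which is exactly what hypotheses (i) and (ii) are designed to make work: without the equidistribution of $R$ over the fibres of each projection, a small (in the $|S_i(A,G)|$-scale) removal set $X_i'$ could in principle meet $R$ in a disproportionately large number of points and wipe out all of $R\cap S^m$. One should also note that invoking Theorem~\ref{t.rem_lem_ab_gr} with the \emph{refined} bound $|X_i'|<\varepsilon' |S_i(A,G)|$ (rather than $\varepsilon'|G|$) is essential here, since $|R|/|S_i(A,G)|$ need not relate nicely to $|G|$; this is precisely the improvement of Theorem~\ref{t.rem_lem_ab_gr} over Theorem~\ref{t.rem_lem_ksv13} that the statement exploits. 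Everything else is routine: the choice $\varepsilon'=\varepsilon/m$, the union bound, and the final contradiction are all immediate.
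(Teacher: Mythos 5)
Your proposal is correct and follows essentially the same route as the paper: argue by contradiction, apply Theorem~\ref{t.rem_lem_ab_gr} with $X_i=S$, and use hypotheses (i)--(ii) to bound the number of points of $R\cap S^m$ destroyed by the removal sets, so that a surviving point of $R$ (which is a solution) contradicts the emptiness conclusion. The only cosmetic difference is your parameter $\varepsilon/m$ versus the paper's $\varepsilon/(m+1)$, both of which suffice, and your fibre-counting step is in fact spelled out more carefully than in the paper's terse proof.
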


\begin{proof}
	We proceed by contradiction.
Choose $\delta=\delta_{\text{Theorem~\ref{t.rem_lem_ab_gr}}}(m,\varepsilon/(m+1)$ and assume that $\left|S(A,G,S^m)\right|<\delta \left|S(A,G)\right|$. Then there are sets $X_i'$, with $|X_i'|< \varepsilon/(m+1)$, such that $S(A,G,\prod_{i=1}^m S\setminus X_i')=\emptyset$. However, by $(i)$ and $(ii)$ we delete at most $\epsilon\frac{m}{m+1}|R|$ hence $R\cap \prod_{i=1}^m S\setminus X_i'\neq \emptyset$, thus $S(A,G,\prod_{i=1}^m S\setminus X_i')\neq \emptyset$ reaching a contradiction.
\end{proof}

In particular, if the linear system $(A,G)$ satisfies $S_i(A,G)=G$ for all $i\in[1,m]$ and $(x,\ldots,x)\in S(A,G)$ for each $x\in G$, then Corollary~\ref{c.homotetic_solutions} shows that any set $S\subset G$ with $|S|\geq \epsilon |G|$, satisfies that $|S(A,G,S^m)|>\delta |S(A,G)|$ for some $\delta>0$ depending on $\epsilon$ and $m$. That is, any set with positive density will contain a positive proportion of the solutions.
Corollary~\ref{c.homotetic_solutions2} can be particularized as
Corollary~\ref{c.homotetic_solutions} which presents a perhaps more directly applicable form.

\begin{corollary}\label{c.homotetic_solutions}
	Let $G$ be a finite abelian group, let $G_1,\ldots G_s$ be subgroups of $G$. Let $\Phi_1,\ldots,\Phi_t$ be group homomorphisms
\begin{align}
		\Phi_i: G_1\times \cdots\times G_s &\to G \nonumber \\
		(x_1,\ldots,x_s) &\mapsto \Phi_i(x_1,\ldots,x_s). \nonumber
\end{align}
For every $\epsilon>0$ there exists a $\delta=\delta(\epsilon,t)>0$
such that, for every $S\subset G$ with $S\geq \epsilon |G|$,
\begin{multline}
	\left|\left\{x\in G, \mathbf{x}\in \prod_{i=1}^s G_i \; |\; (x+\Phi_1(\mathbf{x}),
	\ldots,x+\Phi_t(\mathbf{x}))\in S^t\right\}\right|>\\ 
	\delta 
	\left|\left\{x\in G, \mathbf{x}\in \prod_{i=1}^s G_i  \;|\; (x+\Phi_1(\mathbf{x}),
	\ldots,x+\Phi_t(\mathbf{x}))\in G^t\right\}\right|. \nonumber
\end{multline}
\end{corollary}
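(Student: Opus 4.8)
The plan is to derive Corollary~\ref{c.homotetic_solutions} as a special case of Corollary~\ref{c.homotetic_solutions2}. First I would set up the homomorphism system over the single group $G$ (taking $m=t$): the variables are $x_1,\ldots,x_t\in G$, and I want the solution set to be exactly $\{(x+\Phi_1(\mathbf{x}),\ldots,x+\Phi_t(\mathbf{x})) : x\in G,\ \mathbf{x}\in G_1\times\cdots\times G_s\}$. To express this as a homomorphism system $A:G^t\to G^k$, I would introduce the fact that a subgroup $G_j\leq G$ is the image of an idempotent-type description, or more concretely use that each $\Phi_i$ is determined by homomorphisms $\varphi_{i,j}:G_j\to G$; the condition ``$y_i=x+\Phi_i(\mathbf{x})$ for a common $x$ and a common $\mathbf{x}$'' can be phrased as linear relations among the $y_i$ once one notes that $y_i-y_1=\Phi_i(\mathbf{x})-\Phi_1(\mathbf{x})$ must lie in the appropriate subgroup and be consistent. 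The cleanest route is: let $R\subset G^t$ be precisely the image set above; I would check $R$ is itself a subgroup (it is the image of the homomorphism $G\times G_1\times\cdots\times G_s\to G^t$, $(x,\mathbf{x})\mapsto (x+\Phi_i(\mathbf{x}))_i$), hence $R$ is the solution set of the quotient homomorphism $A:G^t\to G^t/R$, which after padding with trivial equations is a $k\times m$ homomorphism system on $G$ with $k=m=t$ (or $k$ slightly larger).

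Next I would verify the two hypotheses of Corollary~\ref{c.homotetic_solutions2} for this $R$. For (i), the projection $\pi_i(R)$ equals $S_i(A,G)$ by construction, since $R$ \emph{is} the full solution set $S(A,G)$, so $\pi_i(R)=\pi_i(S(A,G))=S_i(A,G)$ trivially. For (ii), I would use that $R$ is a subgroup of $G^t$: for a subgroup $R$, the fiber $\pi_i^{-1}(g)\cap R$ over any $g\in\pi_i(R)$ is a coset of $\ker(\pi_i|_R)$, hence all nonempty fibers have the same cardinality $|\ker(\pi_i|_R)|$. This is exactly condition (ii). So both hypotheses hold automatically because $R$ is a subgroup equal to the whole solution set.

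Then I would apply Corollary~\ref{c.homotetic_solutions2} with this $A$, this $R$, and the given $\varepsilon$: for $S\subset G$ with $|S|\geq\varepsilon|G|$, I need $|S^t\cap R|>\varepsilon'|R|$ for a suitable $\varepsilon'$ depending only on $\varepsilon$ and $t$, so that the corollary yields $|S(A,G,S^t)|\geq\delta|S(A,G)|$, which is precisely the claimed inequality (the left-hand count is $|S(A,G,S^t)|$ and the right-hand count is $|S(A,G)|=|R|$). The density transfer step is where I would spend the most care: I must show that if $|S|\geq\varepsilon|G|$ then a noticeable fraction of the ``homothetic'' solutions $(x+\Phi_1(\mathbf{x}),\ldots,x+\Phi_t(\mathbf{x}))$ lands in $S^t$. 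Here I would use the diagonal trick already visible in the proof of Theorem~\ref{t.multi_szem_ab_gr}: taking $\mathbf{x}=0$ gives $(x,x,\ldots,x)$ for every $x\in G$, so $R$ contains the full diagonal $\{(x,\ldots,x):x\in G\}$; more relevantly, averaging over $x\in G$ for fixed $\mathbf{x}$, the number of $x$ with $x+\Phi_i(\mathbf{x})\in S$ for all $i$ is, by inclusion–exclusion / a union bound, at least $|G|-t(|G|-|S|)$, which is positive and in fact $\geq(\varepsilon - (t-1)(1-\varepsilon))|G|$ only when $\varepsilon$ is close to $1$ — so a crude union bound is \emph{not} enough. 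Instead I would restrict to $\mathbf{x}=0$: the contribution from $\mathbf{x}=0$ alone already gives $|S|\geq\varepsilon|G|$ solutions lying in $S^t\cap R$ (the diagonal points $(s,\ldots,s)$ with $s\in S$), while $|R|\leq |G|\cdot\prod_j|G_j|\leq |G|^{s+1}$; but I want a lower bound on $|S^t\cap R|/|R|$, and the diagonal contributes $|S|/|R|$ which is too small if $R$ is large.

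The resolution — and the real heart of the argument — is to choose the parameter in Corollary~\ref{c.homotetic_solutions2} correctly: observe that condition (ii) plus (i) mean $R$ is ``balanced'' over each coordinate, and the quantity controlled is $|S^t\cap R|$ versus $|R|$, not $|S|$ versus $|G|$. I would argue as follows: partition $R$ by the value of $x$ (equivalently by the first coordinate after quotienting by the subgroup generated by the $\Phi_i$-differences), note that for each fixed $\mathbf{x}$ the ``slice'' $R_{\mathbf{x}}=\{(x+\Phi_i(\mathbf{x}))_i : x\in G\}$ is a translate of the diagonal, so $|S^t\cap R_{\mathbf{x}}|=|\{x: x+\Phi_i(\mathbf{x})\in S\ \forall i\}|$; summing over $\mathbf{x}$ and using that $\sum_{\mathbf{x}}\sum_{x}\mathbf{1}[x+\Phi_1(\mathbf{x})\in S]=|S|\cdot\prod_j|G_j|$ (for each value of the first coordinate there are $\prod_j|G_j|$ lifts), a second-moment or Cauchy–Schwarz estimate shows the ``all $t$ coordinates in $S$'' event has density at least $\varepsilon^{C(t)}$ over $R$ — actually the cleanest is to note the map $R\to G$, $(y_1,\ldots,y_t)\mapsto y_1$ is a surjective group homomorphism with all fibers of equal size, and the conditional distribution of $(y_2,\ldots,y_t)$ given $y_1$ lives in a coset; iterating, $|S^t\cap R|\geq$ (something like) $(|S|/|G|)^t|R|$ is false in general but $(|S|/|G|)$-to-the-codimension works. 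I expect the main obstacle to be exactly this: pinning down the precise combinatorial lemma that ``a subgroup $R$ meeting the diagonal fully has $|S^t\cap R|\geq f(\varepsilon,t)|R|$ whenever $|S|\geq\varepsilon|G|$,'' and in particular whether one needs the hypothesis (implicit in the statement via the $\Phi_i$ structure) that $R$ contains the diagonal — which it does, since $\mathbf{x}=0$ is allowed — to make the bound $|S^t\cap R|\geq \varepsilon^{t}|R|/|G|^{?}$ come out with the right normalization; I would resolve it by reducing to the case where $R$ is a graph over its first coordinate, so that $|R|=|G|$ and $|S^t\cap R|\geq$ (by a union bound on the $t-1$ non-diagonal coordinates, each of which ranges over a coset of a subgroup and hence is equidistributed) $\geq |S|-(t-1)(|G|-|S|)$, which together with passing to $\mathbf{x}=0$ only — giving $|R|=|G|$ after all, since fixing $\mathbf{x}=0$ already surjects onto $S_i(A,G)$ for each $i$ when the $\Phi_i$ have no obstruction — makes everything match, and $\varepsilon' = \varepsilon$ works, so $\delta=\delta_{\text{Cor.~\ref{c.homotetic_solutions2}}}(\varepsilon,t)$ suffices.
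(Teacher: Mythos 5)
Your overall strategy (reduce to Corollary~\ref{c.homotetic_solutions2} by realizing the configuration set as $S(A,G)$ for a quotient homomorphism $A$) starts out on the paper's track, but the choice of the auxiliary set $R$ is where the argument goes wrong. You take $R$ to be the \emph{entire} solution set. With that choice, conditions (i) and (ii) are indeed automatic, but the remaining hypothesis of Corollary~\ref{c.homotetic_solutions2}, namely $|S^t\cap R|>\varepsilon'|R|$, becomes essentially the statement you are trying to prove: there is no way to deduce it from $|S|\geq\varepsilon|G|$ short of re-proving the corollary. You notice this ("the diagonal contributes $|S|/|R|$ which is too small if $R$ is large"), but the attempted repairs do not close the gap: the union bound $|S^t\cap R|\geq|S|-(t-1)(|G|-|S|)$ is vacuous for $\varepsilon\leq (t-1)/t$ (as you yourself observed earlier in the same paragraph), and the Cauchy--Schwarz/second-moment route is only gestured at. The final "resolution" switches midstream to "passing to $\mathbf{x}=0$ only" and asserts $|R|=|G|$, which contradicts your own definition of $R$ and is never reconciled with the verification of (i) and (ii) you carried out for the full solution set.

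The point you are missing is that Corollary~\ref{c.homotetic_solutions2} does \emph{not} ask for $R=S(A,G)$; $R$ may be any subset of the solution set satisfying (i) and (ii), and the whole trick is to take it as small as possible. The paper simply sets $R=\{(x,\ldots,x):x\in G\}$, the diagonal, which lies in the configuration set because $\mathbf{x}=0$ is allowed. Then (i) holds since $\pi_i(R)=G=S_i(A,G)$ (every $g\in G$ arises as $x+\Phi_i(\mathbf{x})$ with $\mathbf{x}=0$), (ii) holds trivially since each fiber $\pi_i^{-1}(g)\cap R$ is a single point, and the density hypothesis is immediate: $|S^t\cap R|=|S|\geq\varepsilon|G|=\varepsilon|R|$, so $\delta=\delta_{\text{Cor.~\ref{c.homotetic_solutions2}}}(\varepsilon,t)$ works with no further estimate. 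Your closing sentences flirt with exactly this (restricting to $\mathbf{x}=0$), so the idea is within reach, but as written the proposal verifies the hypotheses for the wrong $R$ and leaves the density transfer for that $R$ unproven, which is a genuine gap.
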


\begin{proof}
	Consider $R=\{(x,\ldots,x)\}_{x\in G}$.
	Observe that the configuration set $\{x\in G, \mathbf{x}\in \prod_{i=1}^s G_i  \;|\; (x+\Phi_1(\mathbf{x}),
	\ldots,x+\Phi_t(\mathbf{x}))\in G^t\}$ is a subgroup of $G^t$, whence there exists a homomorphism $A$ such that $S(A,G)$ is the configuration set and, in this case, $S_i(A,G)=G=\pi_i(R)$ for all $i$. Thus the hypotheses of Corollary~\ref{c.homotetic_solutions2} are fulfilled and the result follows.
\end{proof}

Corollary~\ref{c.homotetic_solutions} encompasses the simplex-like configurations from the multidimensional version of Szemer\'edi's theorem with the evaluation: $G=\Z_p^k$, $G_1=\Z_p$, $\Phi_t=0$ and $\Phi_1,\ldots,\Phi_{k}$ being the coordinate homomorphisms 
\begin{align}
	\Phi_i: \Z_p&\to \Z_p^t \nonumber \\
	x&\mapsto (0,\ldots,0,\overbrace{x}^{i},0,\ldots,0). \nonumber
\end{align}

Additionally Corollary~\ref{c.homotetic_solutions} generalizes \cite[Theorem~B.1]{tao12} which asserts that given a finite abelian group $G$, for every $\epsilon>0$ and $t,m$ positive integers, there are, in any set $S\subset G^m$ with $|S|>\epsilon |G|^m$,
 $\delta(\epsilon,t,m) |G|^{m+1}$ configurations
\begin{displaymath}
	\{y\in G^m, x\in G \; | \; (y+\Phi_1(x),\ldots,y+\Phi_{(2t+1)^m}(x))\in S^{(2t+1)^m}\}
\end{displaymath}
with 
\begin{align}
	\Phi_i(x)=(\chi_1(i) x,\ldots,\chi_m(i) x) \nonumber
\end{align}
where $(\chi_1(i),\ldots,\chi_m(i))$ are the components of $i$ in base $2t+1$ shifted by $-t$ so their values lie in $[-t,t]$ instead of the usual $[0,2t]$.
An example of an extra configuration that Corollary~\ref{c.homotetic_solutions} covers are the ``rectangles'' $(x,x+x_1,x+x_2,x+x_1+x_2)\in S^4$, for $S\subset G=\Z_3^n$, with $x_1\in G_1$ and $x_2\in G_2$ two subgroups of $G$, isomorphic to $\Z_3^{n-\log(n)}$ and $\Z_3^{\sqrt{n}}$ respectively, and such that $G_1+G_2=G$.

The arguments to show Corollary~\ref{c.homotetic_solutions}, Corollary~\ref{c.homotetic_solutions2}, or Theorem~\ref{t.multi_szem_ab_gr} exemplify that Theorem~\ref{t.rem_lem_ab_gr} presents a comprehensive approach to the asymptotic counting of homothetic-to-a-point structures found in dense sets of products of finite abelian groups. More precisely, the constants involved in the lower bound of the number of configurations depend only on the number of points of the configuration  and on the density of the set, but not on the configuration itself nor on the structure of the finite abelian group.

If we ask for configurations in the integers, the constant does depend on the configuration as we are not interested in solutions that occur due to the cyclic nature of the components of the finite abelian group. Therefore, we should reduce the density of the sets to allow only the desired solutions. This affects the total number of configurations found in the finite abelian group.

\paragraph{Monochromatic solutions.}

Theorem~\ref{t.rem_lem_ab_gr} also allows us to extend the results in \cite{serven14} regarding a counting statement for the monochromatic solutions of bounded torsion groups. In particular, we ensure that there are $\Omega\left(|S(A,G)|\right)$  monochromatic solutions, thus improving the asymptotic behaviour $\Omega\left(|G|^{m-k}\right)$ stated in \cite{serven14}. Here $S(A,G)$ represents the solution set of $A\mathbf{x}=0$, $\mathbf{x}\in G^m$, when $A$ is a $k\times m$ full rank integer matrix and the asymptotic behaviour depends on the number of colours.

\paragraph{Hypergraph containers.}

Using the hypergraph containers tools from \cite{saxtho13+},
 Theorem~\ref{t.rem_lem_ab_gr} can be used to extend  \cite[Theorem~10.3]{saxtho13+} or \cite[Theorem~2.10]{saxtho13+2}, regarding the number of subsets free of solutions of a given system of equations, and show for instance Theorem~\ref{t.saxtom+}, where homomorphism systems are considered. Following the notation in \cite{saxtho13+}, a homomorphism system $A$ is said to be \emph{full rank} if there exists a solution to $Ax=\mathbf{b}$ for any $\mathbf{b}\in G^k$. A full rank $k\times m$ homomorphism system $A$ (or with coefficients over a finite field) is said to be \emph{abundant} if any $k\times m-2$ subsystem of $A$ formed using $m-2$ columns of homomorphisms also has full rank. Given a set $Z\subset G^m$ of discounted solutions and $\mathbf{b}\in G^k$, a set $X\subset G$ is said to be $Z$-solution-free if there is no $x\in X^m-Z$ with $Ax=\mathbf{b}$. Let $\text{ex}(A,\mathbf{b},G)$ denote the size of the maximum $Z$-solution-free set.

\begin{theorem}[Saxton, Thomason, Theorem~10.3/2.10 in \cite{saxtho13+}/\cite{saxtho13+2} with Theorem~\ref{t.rem_lem_ab_gr}] \label{t.saxtom+}
	Let $\{G_i\}_{i\in I}$ be a sequence of finite abelian groups. Let $A_i$ be a sequence of abundant $k\times m$ homomorphism systems and $\mathbf{b}_i\in G_i^k$ a sequence of independent vectors such that $|S((A_i,\mathbf{b}_i),G_i)|=|G_i|^{m-k}$. Let 
	$Z$ be such that $Z\subset S((A_i,\mathbf{b}_i),G_i)$ and $|Z|=o(|G_i|^{m-k})$. Then the number of $Z$-solution-free subsets of $G_i$ is $2^{\text{ex}(A_i,\mathbf{b}_i,G_i)+o(|G_i|)}$.
\end{theorem}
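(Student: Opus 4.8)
The plan is to carry out the hypergraph container argument of Saxton and Thomason \cite{saxtho13+,saxtho13+2} essentially verbatim, the only new ingredient being that Theorem~\ref{t.rem_lem_ab_gr} now supplies the supersaturation statement for homomorphism systems that plays, in their proof of Theorem~10.3/2.10, the role of the available removal/supersaturation lemmas. Write $N_i=|G_i|$ and $e_i=\text{ex}(A_i,\mathbf{b}_i,G_i)$. First I would attach to each index $i$ the hypergraph $\mathcal{H}_i$ on vertex set $G_i$ whose edges are the coordinate sets $\{x_1,\dots,x_m\}$ of the solutions $\mathbf{x}=(x_1,\dots,x_m)$ of $A_i\mathbf{x}=\mathbf{b}_i$ with $\mathbf{x}\notin Z$ (solutions with repeated coordinates contribute edges of size less than $m$, which I keep). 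By construction the independent sets of $\mathcal{H}_i$ are exactly the $Z$-solution-free subsets of $G_i$. Since $A_i$ is full rank and $\mathbf{b}_i$ independent we have $|S((A_i,\mathbf{b}_i),G_i)|=N_i^{m-k}$; the number of degenerate solutions is $O(N_i^{m-k-1})$ and $|Z|=o(N_i^{m-k})$, so $e(\mathcal{H}_i)=(1+o(1))N_i^{m-k}/m!$.

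Next I would feed $\mathcal{H}_i$ to the container theorem with parameter $\tau_i=\Theta(1/N_i)$. The abundance hypothesis is precisely the condition Saxton and Thomason isolate to control the codegrees: because $m\ge k+2$ for an abundant system, deleting any two columns leaves a $k\times(m-2)$ subsystem of full rank, hence with $N_i^{m-2-k}$ solutions once the two corresponding coordinates are fixed; summing over the $O(m^2)$ ways to choose and order two coordinate slots (and absorbing the negligible degenerate contribution) gives $\Delta_2(\mathcal{H}_i)=O(N_i^{m-k-2})$, which is $O(\tau_i)$ times the average degree $\Theta(N_i^{m-k-1})$. The remaining codegree bookkeeping (higher $\ell$, and coordinate sub-systems that may drop rank) is handled exactly as in \cite{saxtho13+}. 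Since $\tau_i\log(1/\tau_i)=O(N_i^{-1}\log N_i)\to0$, the container theorem yields a family $\mathcal{C}_i$ of at most $2^{o(N_i)}$ subsets of $G_i$ such that every independent set of $\mathcal{H}_i$ is contained in some $C\in\mathcal{C}_i$ and $e(\mathcal{H}_i[C])=o(N_i^{m-k})$ for all $C\in\mathcal{C}_i$.

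Then I would extract the supersaturation statement from Theorem~\ref{t.rem_lem_ab_gr}: for every $\gamma>0$ there is $c=c(\gamma,m)>0$ so that $e(\mathcal{H}_i[X])\ge cN_i^{m-k}$ whenever $X\subseteq G_i$ and $|X|>e_i+\gamma N_i$. Indeed, put $\delta=\delta_{\text{Theorem~\ref{t.rem_lem_ab_gr}}}(m,\gamma/m)$; if $X$ had fewer than $\delta\,|S((A_i,\mathbf{b}_i),G_i)|$ solutions, applying Theorem~\ref{t.rem_lem_ab_gr} with $X_1=\cdots=X_m=X$ would give sets $X_j'\subseteq X$ with $|X_j'|<(\gamma/m)|S_j((A_i,\mathbf{b}_i),G_i)|\le(\gamma/m)N_i$ such that $(X\setminus\bigcup_j X_j')^m$ contains no solution; but $X\setminus\bigcup_j X_j'$ would then be a $Z$-solution-free set of size more than $|X|-\gamma N_i>e_i$, contradicting the definition of $e_i$. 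Hence $X$ has at least $\delta N_i^{m-k}$ solutions, and since $|Z|=o(N_i^{m-k})$ at least $\tfrac12\delta N_i^{m-k}$ of them avoid $Z$, which bounds $e(\mathcal{H}_i[X])$ from below; this also shows $e_i$ agrees with the ordinary solution-free extremal number up to $o(N_i)$. Comparing with the container property $e(\mathcal{H}_i[C])=o(N_i^{m-k})$ forces $|C|\le e_i+o(N_i)$ for every $C\in\mathcal{C}_i$. Consequently the number of $Z$-solution-free subsets of $G_i$ is at most $\sum_{C\in\mathcal{C}_i}2^{|C|}\le|\mathcal{C}_i|\,2^{e_i+o(N_i)}\le2^{o(N_i)}\cdot2^{e_i+o(N_i)}=2^{e_i+o(N_i)}$, and for the matching lower bound I would note that all $2^{e_i}$ subsets of a single $Z$-solution-free set of size $e_i$ are $Z$-solution-free; together this gives the claimed $2^{\text{ex}(A_i,\mathbf{b}_i,G_i)+o(|G_i|)}$.

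The hard part will not be the container step, which I would use as a black box, but checking its hypotheses in the present generality: one must redo the codegree estimates of \cite{saxtho13+} for homomorphism systems over an \emph{arbitrary} finite abelian group, reading ``rank'' and ``abundance'' through the homomorphism-matrix description preceding Theorem~\ref{t.rem_lem_ab_gr} (equivalently, through a Smith-normal-form analysis of the $\vartheta_{j,i}$), in particular confirming $|S((A_i,\mathbf{b}_i),G_i)|=N_i^{m-k}$ and controlling the contribution of coordinate sub-systems that fail to be full rank, and one must verify that all the estimates hold uniformly along the sequence $(G_i,A_i,\mathbf{b}_i)$. Once this bookkeeping is settled, the supersaturation input is exactly Theorem~\ref{t.rem_lem_ab_gr} and the rest is the standard container count.
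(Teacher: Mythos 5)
Your proposal is correct and follows exactly the route the paper intends: the paper offers no detailed proof of Theorem~\ref{t.saxtom+}, merely asserting that the Saxton--Thomason container machinery combined with Theorem~\ref{t.rem_lem_ab_gr} (as the supersaturation input, derived just as you do, via the removal lemma applied to a set larger than $\text{ex}(A_i,\mathbf{b}_i,G_i)+\gamma|G_i|$) yields the count $2^{\text{ex}(A_i,\mathbf{b}_i,G_i)+o(|G_i|)}$. Your sketch, including the codegree bookkeeping from abundance and the full-rank condition over arbitrary finite abelian groups, is a faithful fleshing-out of that same argument.
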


\subsection{Outline of the paper}

The main results of the paper are Theorem~\ref{t.rep_sys_rem_lem} and Theorem~\ref{t.rem_lem_ab_gr}. To prove Theorem~\ref{t.rep_sys_rem_lem}, we observe that the notion of representation, Definition~\ref{d.rep_sys}, is sufficient to transfer the hypergraph removal lemma, Theorem~\ref{t.rem_lem_edge_color_hyper} in this paper, to the representable setting. The argument can be found in Section~\ref{s.representables_systems}.
Some examples of representable systems and their correspondent removal lemmas are presented.

In Section~\ref{s.equiv_and_rep} we introduce the notion of $\mu$-equivalent linear systems (see Section~\ref{s.equiv_systems}). In Section~\ref{s.oper_between_representable}, we show some relations between the representability of the systems $(A_1,G_1)$ and $(A_2,G_2)$ whenever $(A_2,G_2)$ is $\mu$-equivalent to $(A_1,G_1)$. These results are used in the proof of Theorem~\ref{t.rem_lem_ab_gr}. Indeed, the strategy of the proof can be summarized as finding a suitable sequence of $\mu$-equivalent systems, from the system of our interest, to a representable one. As Section~\ref{s.oper_between_representable} shows, we can then find a representation for our original system.

In Section~\ref{s.proof_rl-lsg-1} and Section~\ref{s.representability_product_cyclics} we prove Theorem~\ref{t.rem_lem_ab_gr} by arguing that the systems involved in the statement of the theorem are representable. 
Section~\ref{s.finish_rem_lem_dkA1} is devoted to show the cases where $m\leq k+1$ and to prove the second part of the result 
involving the sets $X_i$ for which $X_i\supset\pi_i(S(A,G))$.

The sketch of the construction for the representation is as follows.
Given $G=\prod_{i=1}^t \Z_{n_i}$ with $n=n_1$ and $n_i|n_j$ for $i\geq j$, we interpret the homomorphism $A:G^m\to G^t$ as a homomorphism $A'$ from  $(\Z_{n}^t)^m$ to $(\Z_{n}^t)^k$ 
in a natural way. Then any solution of $S(A,G)$ is related to $|S(A',\Z_{n}^t)|/|S(A,G)|$ solutions of $S(A',\Z_{n}^t)$. This reduction process is detailed in Section~\ref{s.proof_rl-lsg-1}.

As Section~\ref{s.hom_mat_to_integer_mat} shows,
the homomorphism matrix $A'$ can be thought of as an integer matrix from $\Z_{n}^{t m}$ to $\Z_{n}^{t k}$ with $t m$ variables and $t k$ equations in $\Z_{n}$.
This interpretation as an integer matrix allows for the construction of the representation by using the ideas in the proof of \cite[Lemma~4]{ksv13}.
The construction is detailed in Section~\ref{s.representability_product_cyclics} 
and involves several transformations to the pair $(A',G')$ to address the 
different issues like the determinantal being larger than $1$. The main 
characteristics of those transformations are described in the statements of 
Section~\ref{s.oper_between_representable}. The $\Gamma$-representability with $\gamma_j(i)> 1$ involves the generation of several systems. The construction of such systems is detailed in Section~\ref{s.gamma-effective} and they are combined in Section~\ref{s.final_composition} to create a single $1$-strongly-representable system.\footnote{See Definition~\ref{d.rep_sys} for the additional conditions of the strong-representability.} A summary of all the transformations can be found in a table in Section~\ref{s.unwrap_const}.


\section{Representable systems}\label{s.representables_systems}

In this work $[a,b]$ stands for the integers between $a$ and $b$, both included. If $\mathbf{x}\in G^m$, then $(\mathbf{x})_i$ denotes the $i$-th component of $\mathbf{x}$.
Let us recall some notions regarding hypergraphs. Given a hypergraph $K=(V,E)$, $V=V(K)$ denotes the vertex set, $E=E(K)$ denotes the edge set and $|K|=|V(K)|$ denotes the size of the vertex set. A hypergraph $K$ with vertex set $V=V(K)$ and edge set $E=E(K)$ is said to be $s$-uniform
if each edge in $E$ contains precisely $s$ vertices. Throughout this paper, we consider hypergraphs with edges coloured by integers. A hypergraph $K$ is said to be $m$-coloured if each edge in $K$ bears a colour in $[1,m]$.  If $K$ is an $m$-coloured hypergraph, $E_i(K)$ denotes the set of edges coloured $i\in[1,m]$ in $K$.
By a copy of $H$ in $K$ we understand an injective homomorphism of colored hypergraphs of $H$ into $K$ respecting the colors of the edges (the map is from vertices to vertices, injective, and maps edges colored $i$ to edges with color $i$). We use $C(H,K)$ to denote the set of colored copies of $H$ in $K$. If $H$ has $m$ edges $\{e_1,\ldots,e_m\}$ with $e_i$ colored $i$ then $H$ can be identified with $(e_1,\ldots,e_m)$.

\subsection{Representability} \label{s.rep_tech_def}

The definition of a representable system, Definition~\ref{d.rep_sys}, is a generalized notion of the one formalized in \cite{sha10} that suffices to obtain a removal lemma; in our case Theorem~\ref{t.rep_sys_rem_lem}. These representability notions have been used in several works like \cite{can_tesis_09,kraserven08,kraserven09,kraserven12,ksv13,sha10,sze10} 
to translate the conclusion of the removal lemma for graphs or hypergraphs to linear systems of equations.
 The representable system notion could potentially be used in more general contexts than the homomorphism systems described in this work.

Recall that a system $(A,G)$ is a pair given by a finite set $G$ and a property $A:G^m\to \{0,1\}$. $S(A,G)$ denotes the preimage of $1$ by $A$. $\gamma$ denotes a tuple of $m$ positive real numbers $(\gamma_1,\ldots,\gamma_m)$.
$(\mathcal{A},\mathcal{G},m)$ denotes a family of systems and $\Gamma$ a collection of $\gamma$'s, one for each system.

\begin{definition}[(strongly) representable system] \label{d.rep_sys}
	The family of finite systems $(\mathcal{A},\mathcal{G},m)$ 
	is said to be $\Gamma$-representable if there are positive real numbers $\chi_1, \chi_2$, depending on the family $(\mathcal{A},\mathcal{G},m)$, and for each $(A,G)\in (\mathcal{A},\mathcal{G},m)$ and the $\gamma=(\gamma_1,\ldots,\gamma_m)\in \Gamma$ associated with $(A,G)$, there exists a pair of coloured hypergraphs $(K,H)$ 
	with the following properties RP\ref{prop_rep1}, RP\ref{prop_rep2} and RP\ref{prop_rep3}.
\begin{enumerate}[RP1]
	\item \label{prop_rep1} 
	\begin{itemize}
		\item $K$ and $H$ are $s$-uniform $m$-colored hypergraphs. 
		\item $H$ has $m$ different edges $\{e_1,\ldots,e_m\}$ and the edge $e_i$ is coloured $i$. Moreover  $\chi_1\geq|V(H)|=h>s\geq2$. 
		\item Each edge in $K$ bears a label in $G$ given by $l:E(K)\to G$. 
	\end{itemize}

\item \label{prop_rep2} There exist a positive integer $p$, a set $Q$, and 
a 
surjective map $r$
	\begin{align}
		r:C(H,K) &\longrightarrow  S(A,G)\times Q\nonumber \\
		H=\{e_1,\ldots,e_m\}&\longmapsto (r_0(H),r_q(H)) \nonumber
	\end{align}
	such that $r_0(H)=(l(e_1),\ldots,l(e_m))$, and, for any given $\mathbf{x}\in S(A,G)$ and $q\in Q$, 
the set $r^{-1}(\mathbf{x},q)$ has size 
 		 \begin{displaymath}
 		 	|r^{-1}(\mathbf{x},q)|=p \lambda \prod_{i=1}^m \gamma_i \text{ with } \lambda= c\frac{|K|^s}{|G|} 
 		 \end{displaymath}
 		for some $c\geq \chi_2$.
\item \label{prop_rep3} If $e_i$ is an edge coloured $i$ in a copy $H \in r^{-1}(\mathbf{x},q)$, then $p\frac{\prod_{j=1}^m\gamma_j}{\gamma_i}$ copies of $H$ in $r^{-1}(\mathbf{x},q)$ contain $e_i$.

\end{enumerate}
If, additionally, 
\begin{enumerate}[RP1]
	  \setcounter{enumi}{3}
	\item \label{prop_rep4} For any edge $e_i$ coloured $i$ and $l(e_i)=\mathbf{x}_i$, there exists a copy of $H\in r^{-1}(\mathbf{x},q)$, with $(\mathbf{x})_i=\mathbf{x}_i$, containing $e_i$,
\end{enumerate}
then the family is said to be strongly $\Gamma$-representable.
\end{definition}

If $H\in r^{-1}(\mathbf{x},q)$ we say that $H$ is related to $\mathbf{x}$ through $q$. 
If a system $(A,G)$ belongs to a $\Gamma$-representable family of systems and has $\gamma$ as its associated parameters then $(A,G)$ is said to be $\gamma$-representable. If $\gamma_1=\cdots=\gamma_m=1$ we say that the system is $1$-representable. The vector $(K,H,\gamma,l,r,Q,p,c)$ defines the $\gamma$-representation and the key parameters are $\chi_1$ and $\chi_2$.

\paragraph{Comments on Definition~\ref{d.rep_sys}.} \label{s.rep_oper_def}
In the definition, the hypergraphs $H$ and $K$ could have also been asked to be directed.

By choosing $Q=\{1\}$, $p=1$ and $\gamma_1=\cdots=\gamma_m=1$ for all the systems $(A,G)$, Definition~\ref{d.rep_sys} covers the representation notions in \cite{kraserven09,ksv13,kraserven12,sha10}. 
The main purpose of the introduction of the set $Q$ is to accommodate the determinantal condition from \cite[Theorem~1]{ksv13}. The different $p$ and $\gamma$ allow for removing different proportions for different sets $S_i(A,G)$, the projections of the solution set to the coordinates of $G^m$.

Asking for the bounds on $s$, $h$ and $c$ to depend on $m$ and on the family of systems as a whole is one of the key points in the representability notion. 
	The existence of $r$ in RP\ref{prop_rep2} and the definition of $r_0(q)$, imply that the labels of the edges of each copy of $H$ in $K$, ordered by colours, form a solution of the system $(A,G)$. 

For each solution $\mathbf{x}=(x_1,\ldots,x_m)\in S(A,G)$, the set $Q$ equipartitions the copies of $H$ in $K$ related to $\mathbf{x}$. The conditions RP\ref{prop_rep2} and RP\ref{prop_rep3} guarantee, for each $\mathbf{x}$, $q$ and $i\in[1,m]$, the existence of a set of $i$-colored edges with size
	 \begin{displaymath}
	 	|E_i(\mathbf{x},q)|=\lambda \gamma_i=c\frac{|K|^s}{|G|} \gamma_i,
	 \end{displaymath}
where $c$ is lower bounded by a function of $m$ such that the following holds. For each edge $e\in E_i(\mathbf{x},q)$, there are $p \frac{\prod_{j=1}^m \gamma_j}{\gamma_i}$ copies of $H$ in $K$ related to $(\mathbf{x},q)$ containing $e$. $p$ is independent on $\mathbf{x}$, $i$, $q$ or $e$. By the existence of $r$ in RP\ref{prop_rep2}, any copy of $H$ in $K$ related to $\mathbf{x}$ through $q$ intersects $E_i(\mathbf{x},q)$ for all $i\in [1,m]$.
	
If the system is strongly representable, then $E_{i}(\mathbf{x},q)$ is the set of edges labelled with $(\mathbf{x})_i$.

In Definition~\ref{d.rep_sys}, we could have made the constants $c$ to depend on the pair $(\mathbf{x},q)$ as long as $c_{\mathbf{x},q}\geq \chi_2$ for any $(\mathbf{x},q)\in S(A,G)\times Q$. The proof of Theorem~\ref{t.rep_sys_rem_lem} in Section~\ref{s.rep_sys_proof} can be adapted to this case by using the bound $\chi_2$ instead of $c$.

If the system is $\gamma$-strongly-representable, then the new set $Q$ can be considered to be $\{1\}$ at the expense of increasing the value of $p$ to $p|Q|$. Indeed, for any $q$, the set of hypergraphs $H$ in $K$ related to $(\mathbf{x},q)$ contains all the edges labelled $(\mathbf{x})_i$. Therefore any edge labelled $(\mathbf{x})_i$ contains $p\frac{\prod_{j=1}^m\gamma_j}{\gamma_i}|Q|$ copies of $H$ related to $\mathbf{x}$ in $\cup_{q\in Q} \;r^{-1}(\mathbf{x},q)$.

\subsection{Representable systems and the removal lemma} \label{s.rep_sys_proof}

The proof of the removal lemma for representable systems, Theorem~\ref{t.rep_sys_rem_lem}, uses the coloured version of the hypergraph removal lemma, Theorem~\ref{t.rem_lem_edge_color_hyper} in this work.
Theorem~\ref{t.rem_lem_edge_color_hyper} can be deduced from Austin and Tao's \cite[Theorem~1.5]{austao10}. Alternatively, the coloured version of the hypergraph removal lemma can be proved using the arguments that lead to the colourless version of the hypergraph removal lemma \cite{elesze12,gow07,rodletal,tao06}, or it can be found in Ishigami's \cite{ishi09}.

\begin{theorem}[Removal lemma for colored hypergraphs \cite{austao10}] \label{t.rem_lem_edge_color_hyper}
 For any positive integers $r$, $h$, $s$ with  $h\ge s\ge 2$ and every $\varepsilon >0$ there exists $\delta>0$
 depending on $r$, $h$, $s$ and $\varepsilon$  such that
the following holds.

Let $H$ and $K$ be $r$-colored $s$-uniform hypergraphs with $h=|V(H)|$ and   $M=|V(K)|$ vertices respectively.
If the number of copies of $H$ in $K$ (preserving the colors of the
edges) is at most $\delta M^h$, then there is a set $E'\subseteq
E(K)$ of size at most $\varepsilon M^{s}$ such that the hypergraph
$K'$ with edge set $E(K)\setminus E'$ is $H$--free.
\end{theorem}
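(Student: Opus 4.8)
The plan is to deduce the colored removal lemma from the regularity-and-counting machinery that already proves the uncolored $s$-uniform hypergraph removal lemma, carrying the $r$ colors passively through the argument; equivalently I could quote the general repair/removal theorem of \cite{austao10}, of which this is a special case, or Ishigami's version \cite{ishi09}. I would first dispose of small hosts: if $M<h$ then $K$ has no copy of $H$ at all (copies are injective on the $h$ vertices of $H$), so $E'=\emptyset$ works; and for $h\le M\le M_0$ with $M_0=M_0(\varepsilon,r,h,s)$ any fixed bound, choosing $\delta<M_0^{-h}$ forces the hypothesis $|C(H,K)|\le\delta M^h$ to give $C(H,K)=\emptyset$, so again $E'=\emptyset$. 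Hence I may assume $M$ is as large as desired in terms of $\varepsilon,r,h,s$.

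For large $M$, I would apply the strong $s$-uniform hypergraph regularity lemma to the $r$ color classes $K^{(1)},\dots,K^{(r)}$, where $K^{(i)}$ is the $s$-uniform hypergraph with edge set $E_i(K)$, simultaneously: this produces one bounded family of partitions of $V(K)$ and of its subsets of size $<s$ that is regular for every $K^{(i)}$ at once, which is just the ordinary regularity lemma applied to the vector of hypergraphs (or to $K$ with edges weighted by color), at the cost of a tower-type bound that now also depends on $r$. I would then clean $K$: inside each color class $i$, delete every $i$-colored edge lying over an irregular, low-density, or unsupported cell of the partition. Summed over the fixed number $r$ of classes, the number of deleted edges is at most $\varepsilon M^s$ once the regularity parameters are chosen small enough relative to $\varepsilon,r,h,s$; call the result $K'$, with edge set $E(K)\setminus E'$.

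Finally I would show $K'$ is $H$-free. If $\phi$ were a colored copy of $H$ in $K'$, then for each edge $f$ of $H$, of some color $i$, the image $\phi(f)$ is an $i$-colored edge of $K'$, hence lies over a regular cell of $K^{(i)}$ of relative density bounded away from $0$ and properly supported by the lower-order structure; feeding this system of regular cells, all taken over the single common vertex partition, into the counting lemma for regular $s$-uniform hypergraphs produces at least $c\,M^h$ colored copies of $H$ in $K'\subseteq K$, with $c>0$ depending only on $h,s$ and the (already fixed) regularity parameters, not on $M$. Choosing $\delta<c$ at the very end contradicts $|C(H,K)|\le\delta M^h$, so $K'$ is $H$-free. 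The only place the colors genuinely matter is the simultaneous regularization of the $r$ classes over a shared partition and the multi-layer counting lemma it feeds; I expect this bookkeeping, though entirely standard, to be the main obstacle, and it is the sole reason $\delta$ must depend on $r$. If one prefers to avoid regularity altogether, the cleanest alternative is to observe that an $r$-colored $s$-uniform hypergraph is a hypergraph system in the sense of \cite{austao10} and that $H$-freeness for a fixed colored $H$ is a hereditary, finitely-characterizable property, so that \cite[Theorem~1.5]{austao10} applies verbatim; the same statement also appears in \cite{ishi09}, and a self-contained proof can be assembled from any of \cite{elesze12,gow07,rodletal,tao06}.
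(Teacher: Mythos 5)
Your proposal is correct and matches the paper's treatment: the paper gives no proof of this theorem either, but simply notes that it can be deduced from \cite[Theorem~1.5]{austao10}, found in \cite{ishi09}, or obtained by running the standard regularity-and-counting proof of the uncolored hypergraph removal lemma \cite{elesze12,gow07,rodletal,tao06} while carrying the $r$ colors along. Your sketch of the simultaneous regularization of the $r$ color classes over a common partition system, followed by cleaning and the counting lemma, is exactly the adaptation the paper has in mind, and your fallback of quoting Austin--Tao or Ishigami verbatim is the route the paper actually takes.
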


\begin{proof}[Proof of Theorem~\ref{t.rep_sys_rem_lem}]
	Let $(K,H)$ be the hypergraph pair that  $\gamma$-represents the system $(A,G)$, with $\gamma=(\gamma_1,\ldots,\gamma_m)$. Let us denote the labelling by $l:E(K)\to G$ and the representation function by
	$r:C(H,K)\to S(A,G)\times Q$.
The components of $r$ are given by $r_0:C(H,K)\to S(A,G)$ and $r_q:C(H,K)\to Q$. Recall that, by Definition~\ref{d.rep_sys}, if $H_0=\{e_1,\ldots,e_m\}$ is a copy of $H$ in $K$, then $r_0(H)=(l(e_1),\dots,l(e_m))$. 
	Let $K_X$ be the  subhypergraph of $K$ with the same vertex set as $K$ and the edges belonging to  $r_0^{-1}(S(A,G,X))$. In other words, $K_X\subset K$ is the hypergraph containing only the edges whose labels belong to the restricted solution set.

By the property RP\ref{prop_rep2} of the $\gamma$-representability of the system, the total number of copies of $H$ in $K$ is, for the $c$ and $p$ provided by the representation, at most
	$$
	 c\frac{|K|^s}{ |G|} p |Q||S(A, G)| \prod_{i=1}^m \gamma_i.
	$$
	Let $\lambda=c\frac{|K|^s}{ |G|}$. 
	Since $H$ has $h$ vertices, it follows that
	$$
	 \lambda p  |Q| |S(A, G)| \prod_{i=1}^m \gamma_i<|K|^h.
	$$
	On the other hand, the hypothesis $|S(A,G,X)|<\delta |S(A,G)|$, $\delta$ to be determined later, implies that the total number of copies of $H$ in $K_X$ is at most
	$$
	\lambda p |Q| |S(A, G, X)| \prod_{i=1}^m \gamma_i < \delta\lambda p|Q| |S(A, G)| \prod_{i=1}^m \gamma_i < \delta |K|^h.
	$$
	We apply the Removal Lemma for colored hypergraphs, Theorem~\ref{t.rem_lem_edge_color_hyper}, with $\varepsilon'=c\varepsilon/m$. By setting $\delta$ according to $\varepsilon'$ and $H$ in Theorem~\ref{t.rem_lem_edge_color_hyper}, we obtain a set of edges $E'\subset  E(K_X)$ with cardinality  at most $\varepsilon'|K|^s$   such that $K_X \setminus E'$  has no copy of $H$. We note that $\delta$ depends on $s,h,m$ and $\varepsilon'$, which in our context and by the representability, all depend on $m$ and $\varepsilon$.

	We next define the sets $X'_i\subset X_i$ as follows. The element $x$ is in $X_i'$ ($x$ is removed from $X_i$) if $E'$ contains at least  $\lambda \gamma_i /m $ edges labelled $x$ and colored $i$. We observe that 
\begin{displaymath}
	|X'_i|\le \frac{|E'|}{(\lambda\gamma_i/m)}  =\frac{m|G|}{c|K|^s}|E'| \frac{1}{\gamma_i}\le \varepsilon \frac{|G|}{\gamma_i}.
\end{displaymath}

	We claim that $S(A,G,X\setminus X')$, with $X\setminus X'=\prod_{i=1}^m X_i\setminus X_i'$, is empty. Indeed, pick one element  $\mathbf{x}=(x_1,\ldots ,x_m)\in S(A,G,X)$ and $q\in Q$. By RP\ref{prop_rep2} there are $p\lambda \prod_{i=1}^m \gamma_i$ copies of $H$ in $r^{-1}(\mathbf{x},q)$. Since  $\mathbf{x}\in S(A,G,X)$, all these copies belong to $K_X$. On the other hand, by RP\ref{prop_rep3}, every edge of $K$ coloured $i$ is contained in at most $p\prod_{j\in[1,m]\setminus \{i\}} \gamma_j$ copies of $H$ in $r^{-1}(\mathbf{x},q)$. Let $E'_{i,x_i}$ denote the set of edges in $E'$ labelled with $x_i$ and colored $i$. Then
	\begin{displaymath}
		\sum_{i=1}^m \left[|E'_{i,x_i}| p\prod_{j\in[1,m]\setminus \{i\}} \gamma_j \right]\geq  p \lambda\prod_{j\in[1,m]} \gamma_j
	\end{displaymath}
as there are no copies related to $(\mathbf{x},q)$ after $E'$ has been removed.
By the pigeonhole principle, at least one of the sets $E'_{i,x_i}$ is such that $|E'_{i,x_i}|>\lambda \gamma_i/m$. By the definition of $X'_i$, the element $x_i$ belongs to $X'_i$ and thus $\mathbf{x}\not\in  X\setminus X' \supset S(A,G,X\setminus X')$. This proves the claim and finishes the proof of the result.
\end{proof}

\subsection{Examples of representable systems and removal lemmas} \label{s.some_rep_syst}

\subsubsection{Subhypergraph copies}

As expected, the coloured hypergraph removal lemma can be retrieved from
Theorem~\ref{t.rem_lem_edge_color_hyper}.
The system of configurations induced by ``the copies of an $r$-coloured $k$-uniform hypergraph $H_0$ in an $r$-coloured $k$-uniform hypergraph $K_0$'' can be represented by Definition~\ref{d.rep_sys} as follows. Order the edges of $H$ arbitrarily. $H=H_0$ and $K=K_0$ as the pair of hypergraphs that represents the system. The property $A$ is the map from $E(K)^{|E(H)|}$ to $\{0,1\}$ such that $A(e_1,\ldots,e_{|E(H)|})=1$ if and only if the edges $(e_1,\ldots,e_{|E(H)|})$ conform a copy of $H$ in $K$
in which $e_i$ is the $i$-th edge of $H$ with the chosen order. The map $l$
is given by the identity map of the edge in $K$, $r_0$ is the identity map induced by the property $A$, $Q=\{1\}$, $\lambda=c=\gamma_i=1$.
The sets $X_i$ in the removal lemma Theorem~\ref{t.rep_sys_rem_lem} are the edges in $K_0$ coloured using the colour of the $i$-th edge in $H$.

\subsubsection{Permutations}\label{s.perm_rep_and_rem_lem}

The copies of $\tau\in \mathcal{S}(t)$ in $\sigma\in \mathcal{S}(n)$, as defined by the set $\Lambda^{\tau}(\sigma)$ in Section~\ref{s.intro_permutations}, can be represented using directed and coloured graphs $H$ and $K$ in Definition~\ref{d.rep_sys} as follows.
Given a finite set $V$, let ${V\choose i}$ denote the set of subsets of $i$ different elements of $V$. Let $A$ be the property
$A:{[0,n-1]\choose 2}^{\left|{[0,t-1] \choose 2}\right|} \to \{0,1\}$
such that $A(e_1,\ldots,e_{t(t-1)/2})=1$ if and only if 
the collection of endpoints of the edges $\{e_1,\ldots,e_{t(t-1)/2}\}$ configure an $m$-element set $\{x_0<\cdots<x_{t-1}\}$ in $[0,n-1]$ belonging to $\Lambda^{\tau}(\sigma)$.

Given a permutation $\sigma\in \mathcal{S}(n)$, let us define the loopless bicolored directed graph $G_{\sigma}$
as follows. The vertex set $V(G_{\sigma})$ is given by the $n$-element set $[0,n-1]$. The directed edge $e=(i,j)$ or $e=\{i\to j\}$, from $i$ to $j$, belongs to $E(G_{\sigma})$ if and only if $\sigma(i)<\sigma(j)$.
The edge $e=\{i\to j\}$ is painted blue if $i<j$ and painted red if $i>j$.
Observe that $|E(G_{\sigma})|={n \choose 2}$.

We claim that the system for the permutations involved in Proposition~\ref{p.rem_lem_permutations} is representable with $A$ as above, $H=G_{\tau}$, $K=G_{\sigma}$, $m={t \choose 2}$, $Q=\{1\}$, $\gamma_i=c=\lambda=1$ and $r_0$ given as follows.
If $\{x_0<\ldots<x_{t-1}\}\subset [0,n-1]=V(K)$ is a set of indices that generates a copy of $H$ in $K$, then $r_0(\{x_0<\ldots<x_{t-1}\})=\{x_0<\ldots<x_{t-1}\}$.

\begin{claim}\label{cl.0}
	If $H_0$, with $V(H_0)=\{x_0<\cdots<x_{t-1}\}$, is a copy of $G_{\tau}=([0,t-1],E(G_{\tau}))$
in $G_{\sigma}$, then the only map (homomorphism) from $f:V(G_{\tau}) \to V(H_0)$
with the property ``if $e=\{i\to j\}\in E(G_{\tau})$ and is coloured $c$, then
$\{f(i)\to f(j)\}\in E(G_{\sigma}{|V(H_0)})$ and is coloured $c$'' is the map
$f(i)=x_i$ for all $i\in [0,t-1]$.
\end{claim}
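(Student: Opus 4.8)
The plan is to show that a colour- and orientation-preserving embedding of $G_\tau$ into $G_\sigma$, with prescribed image vertex set $V(H_0)=\{x_0<\cdots<x_{t-1}\}$, is forced to send $i\mapsto x_i$. The key observation is that the tournament structure on $V(H_0)$ induced by $G_\sigma$ is a \emph{transitive} tournament: for any two distinct indices $a,b\in[0,n-1]$ exactly one of $\{a\to b\}$, $\{b\to a\}$ is an edge of $G_\sigma$ (since $\sigma$ is a bijection, exactly one of $\sigma(a)<\sigma(b)$, $\sigma(b)<\sigma(a)$ holds), and this orientation is transitive because it is governed by the total order on the values $\sigma(a)$. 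Likewise $G_\tau$ restricted to $[0,t-1]$ is a transitive tournament governed by the values $\tau(0),\dots,\tau(t-1)$. So both $V(G_\tau)$ and $V(H_0)$ carry linear orders: the "$\sigma$-order" on $V(H_0)$ (ordering by $\sigma$-value) and the "$\tau$-order" on $V(G_\tau)$ (ordering by $\tau$-value). A map $f$ preserving directed edges necessarily preserves these linear orders, hence is the unique order isomorphism between two $t$-element totally ordered sets.

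The steps I would carry out are as follows. First I would note that $f$ is forced to be a bijection from $V(G_\tau)$ onto $V(H_0)$: it is a homomorphism defined on all of $V(G_\tau)$ with image inside the $t$-element set $V(H_0)$, and if it identified two vertices $i\ne j$ then the edge between $i$ and $j$ in the tournament $G_\tau$ (one always exists) would have to map to a loop, which $G_\sigma$ does not contain; hence $f$ is injective, and between two $t$-element sets injective forces bijective. Second, I would observe that $f$ preserves the respective tournament orientations: if $\{i\to j\}\in E(G_\tau)$ then $\{f(i)\to f(j)\}\in E(G_\sigma)$ by hypothesis, so $\sigma(f(i))<\sigma(f(j))$, i.e. $f$ carries the $\tau$-linear-order on $V(G_\tau)$ into the $\sigma$-linear-order on $V(H_0)$. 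Being a bijection between two finite totally ordered sets that is order-preserving, $f$ must be \emph{the} unique order isomorphism. Third, I would identify that unique order isomorphism explicitly on both sides. On $V(H_0)$, the elements sorted by $\sigma$-value are exactly $x_{\tau^{-1}(0)},x_{\tau^{-1}(1)},\dots$ — wait, more carefully: since $V(H_0)$ induces a copy of $G_\tau$, the induced orientation on $\{x_0,\dots,x_{t-1}\}$ must match $G_\tau$'s under \emph{some} correspondence, and the defining property of $\{x_0<\cdots<x_{t-1}\}\in\Lambda^\tau(\sigma)$ is precisely $\sigma(x_i)<\sigma(x_j)\iff\tau(i)<\tau(j)$; so the map $x_i\mapsto i$ sends the $\sigma$-order on $V(H_0)$ to the $\tau$-order on $[0,t-1]$. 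Combining, $f$ must be the inverse of this, namely $f(i)=x_i$.

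The colours are a consistency check rather than an extra constraint: the blue/red colouring records whether the endpoints satisfy $i<j$ or $i>j$ in the index order. Since $f(i)=x_i$ and $x_0<\cdots<x_{t-1}$, the index order on $[0,t-1]$ matches the index order on $V(H_0)$, so $f$ automatically preserves colours; but I would remark that even without knowing the conclusion, the colour-preservation hypothesis already tells us $f$ respects the index order (blue edges $i<j$ go to blue edges, i.e. to pairs with $f(i)<f(j)$), which together with orientation-preservation pins down $f$ — this is really the same argument phrased via the second linear order.

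The step most likely to need care is the very first one, making fully rigorous that a directed-edge-preserving map \emph{defined only on edges present in $G_\tau$} is nonetheless forced to be globally order-consistent. The subtlety is that $G_\tau$ does not contain \emph{all} pairs as edges — it contains, for each pair $\{i,j\}$, exactly one orientation — so one must be slightly careful that "preserving the edges that are there" plus "the target tournament is transitive" genuinely forces the full order-embedding; this follows because a map between transitive tournaments that preserves the (uniquely determined) arcs is automatically a homomorphism of the associated linear orders, but I would spell out that transitivity of the target is what rules out the map "folding" the order. Once that is in place the rest is the elementary uniqueness of order isomorphisms between finite chains, and unwinding the definition of $\Lambda^\tau(\sigma)$.
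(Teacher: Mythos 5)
Your bijectivity step matches the paper, but the main route you propose has a genuine gap: in your third step you identify the "unique order isomorphism" with $i\mapsto x_i$ by invoking "the defining property of $\{x_0<\cdots<x_{t-1}\}\in\Lambda^{\tau}(\sigma)$", i.e. $\sigma(x_i)<\sigma(x_j)\iff\tau(i)<\tau(j)$. That membership is not among the hypotheses of Claim~\ref{cl.0}; it is precisely the content of Claim~\ref{cl.1}, which comes \emph{after} this claim and is deduced from knowing that the copy is realized by the correspondence $i\mapsto x_i$. From the bare hypothesis that some coloured copy with vertex set $V(H_0)$ exists, the only way to extract $\Lambda^{\tau}(\sigma)$-membership is to first show that the copy map itself sends $i$ to $x_i$ --- which is exactly the statement being proved, so the argument as ordered is circular. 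Relatedly, your framing of the colours as "a consistency check rather than an extra constraint" is misleading: orientation data alone compares $\tau$-order with $\sigma$-order and never sees the index order on $V(H_0)$, so orientation-preservation by itself can only say that $f$ is the order isomorphism between the two chains; without the colour hypothesis (or the unproved $\Lambda$-membership) nothing forces that isomorphism to be $i\mapsto x_i$. The colours are what encode the index order, and they are essential.

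The good news is that your closing remark already contains the correct and complete argument, and it is the one the paper uses: colour preservation forces $f$ to preserve the index order. Spelled out for both edge types: for a pair $i<j$, if $\tau(i)<\tau(j)$ the edge is $\{i\to j\}$ and blue, so its image $\{f(i)\to f(j)\}$ must be blue, giving $f(i)<f(j)$; if $\tau(i)>\tau(j)$ the edge is $\{j\to i\}$ and red, so its image must be red, again giving $f(i)<f(j)$. Hence the bijection $f$ is increasing with respect to the index order and therefore equals $f(i)=x_i$, with no appeal to $\Lambda^{\tau}(\sigma)$. Promote that remark to the main argument (it needs no transitive-tournament machinery at all) and drop the step that presupposes $\Lambda^{\tau}(\sigma)$-membership; then your proof coincides with the paper's.
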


\begin{proof}
	The map $f$ must be bijective. Indeed, since $G_{\tau}$ is a complete graph
if $f$ were not bijective, then the graph induced by $V(H_0)$ would contain a loop as $f$ is a homomorphism, but $G_{\sigma}$ is loopless.

If $f$ is not the map $f(i)=x_i$, then there exist a pair $i,j\in[1,t-1]$ with $i<j$ but $f(i)>f(j)$. If the edge between $i$ and $j$ is $e=\{i\to j\}$, then
$f(e)=\{f(i)\to f(j)\}$. In such case, $e$ is painted blue as $i<j$ and $f(e)$ is painted red as $f(i)>f(j)$, hence $f$ is not an homomorphism. If the edge between $i$ and $j$ is $e'=\{j\to i\}$, then $e$ is coloured red but $f(e')$ is blue. Therefore, if $f$ is a homomorphism, it has to be the isomorphism with $f(i)=x_i$.
\end{proof}

\begin{claim}\label{cl.1}
	If $H_0$, with $V(H_0)=\{x_0<\cdots<x_{t-1}\}$ is a copy of
$G_{\tau}$ in $G_{\sigma}$ where $x_i\in V(G_{\sigma})$ corresponds to the $i$-th vertex of $G_{\tau}$, then $\{x_0<\cdots <x_{m-1}\}\in \Lambda^{\tau}(\sigma)$.
\end{claim}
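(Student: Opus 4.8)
The plan is to show that a copy $H_0$ of $G_\tau$ in $G_\sigma$, with $V(H_0)=\{x_0<\cdots<x_{t-1}\}$, yields an occurrence of $\tau$ in $\sigma$ by verifying directly that the index set $\{x_0<\cdots<x_{t-1}\}$ satisfies the defining condition of $\Lambda^\tau(\sigma)$: namely $\sigma(x_i)<\sigma(x_j)$ if and only if $\tau(i)<\tau(j)$, for all $i,j\in[0,t-1]$. By Claim~\ref{cl.0}, the only color-preserving homomorphism $f\colon V(G_\tau)\to V(H_0)$ is $f(i)=x_i$, so a copy of $G_\tau$ inside $G_\sigma$ is exactly an isomorphism of this form; this is the fact I will exploit.

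First I would fix $i<j$ in $[0,t-1]$ and examine the edge of the complete graph $G_\tau$ joining $i$ and $j$. By the construction of $G_\tau$, this edge is oriented $\{i\to j\}$ precisely when $\tau(i)<\tau(j)$, and oriented $\{j\to i\}$ precisely when $\tau(i)>\tau(j)$; moreover, since $i<j$, the edge $\{i\to j\}$ is colored blue and $\{j\to i\}$ would be colored red. Under the isomorphism $f(i)=x_i$, an oriented edge $\{i\to j\}$ of $G_\tau$ must map to an oriented edge $\{x_i\to x_j\}$ of $G_\sigma$ with the same color. By the construction of $G_\sigma$, the presence of the edge $\{x_i\to x_j\}$ means $\sigma(x_i)<\sigma(x_j)$, and the blue color (consistent with $x_i<x_j$, which holds by the ordering of the $x$'s) is automatically matched. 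So: $\tau(i)<\tau(j)$ implies $\{i\to j\}\in E(G_\tau)$ implies $\{x_i\to x_j\}\in E(G_\sigma)$ implies $\sigma(x_i)<\sigma(x_j)$. Symmetrically, $\tau(i)>\tau(j)$ forces the reverse edge $\{j\to i\}$ in $G_\tau$, which maps to $\{x_j\to x_i\}$ in $G_\sigma$, giving $\sigma(x_j)<\sigma(x_i)$.

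Since for each pair $i<j$ exactly one of $\tau(i)<\tau(j)$ or $\tau(i)>\tau(j)$ holds (as $\tau$ is a bijection), and correspondingly exactly one of $\sigma(x_i)<\sigma(x_j)$ or $\sigma(x_i)>\sigma(x_j)$ holds, the two implications above combine to give the biconditional $\sigma(x_i)<\sigma(x_j)\iff\tau(i)<\tau(j)$ for every pair $i<j$, and hence for every pair $i\neq j$ by symmetry of the statement. This is exactly the defining property of membership in $\Lambda^\tau(\sigma)$, so $\{x_0<\cdots<x_{t-1}\}\in\Lambda^\tau(\sigma)$, as claimed.

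I do not anticipate a serious obstacle here; the only point requiring care is bookkeeping the correspondence between edge orientation and the inequality $\tau(i)<\tau(j)$ versus edge color and the inequality $i<j$, making sure these are tracked independently so that the color-preservation in Claim~\ref{cl.0} is used only to pin down $f$ and not confused with the orientation data that encodes the pattern. Once the isomorphism $f(i)=x_i$ is in hand from Claim~\ref{cl.0}, the rest is an immediate unwinding of the definitions of $G_\tau$ and $G_\sigma$.
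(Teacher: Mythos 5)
Your proof is correct and follows essentially the same route as the paper: both arguments unwind the definitions of $G_\tau$ and $G_\sigma$ via the edge correspondence $i\mapsto x_i$, the only cosmetic difference being that you argue forward from the edges of $G_\tau$ and use trichotomy to get the biconditional, while the paper states the implication starting from an edge of $G_\sigma$ and notes the converse. No gap to report.
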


\begin{proof}
	Let $e=\{x_i\to x_j\}$ be an edge in $G_{\sigma}$, then $\sigma(x_i)<\sigma(x_j)$. Since $H_0$ is a copy of $G_{\tau}$ where $x_i$ corresponds to the $i$-th vertex of $G_{\tau}$,  $e'=\{i\to j\}$ is an edge in $G_{\tau}$ meaning that $\tau(i)<\tau(j)$ as wanted. Since the reverse implication also holds, the result is shown.
\end{proof}

\begin{claim}\label{cl.2}
	If $\{x_0<\cdots <x_{t-1}\}\in \Lambda^{\tau}(\sigma)$ then the graph induced by $x_0,x_1,\ldots,x_{t-1}$ in $G_{\sigma}$ is a copy of $G_{\tau}$
with the map from $V(G_{\tau})=[0,t-1]$ to $\{x_0,x_1,\ldots,x_{t-1}\}\subset V(G_{\sigma})$ given by $i\mapsto x_i$ for $i\in[0,t-1]$.
\end{claim}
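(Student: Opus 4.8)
The plan is to prove the converse direction of Claims~\ref{cl.1}--\ref{cl.2}: namely, given an occurrence $\{x_0<\cdots<x_{t-1}\}\in\Lambda^{\tau}(\sigma)$, the induced subgraph of $G_{\sigma}$ on these vertices is isomorphic as a bicoloured directed graph to $G_{\tau}$, via the order-preserving bijection $i\mapsto x_i$. First I would fix the candidate map $f\colon [0,t-1]\to\{x_0,\ldots,x_{t-1}\}$, $f(i)=x_i$; since $x_0<\cdots<x_{t-1}$ this $f$ is a bijection that preserves the natural order, so it automatically preserves the blue/red colouring convention ($e=\{i\to j\}$ is blue iff $i<j$). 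The only thing left to check is that $f$ sends directed edges of $G_{\tau}$ to directed edges of $G_{\sigma}$ and, conversely, that every directed edge of the induced subgraph comes from one of $G_{\tau}$ — i.e. $f$ is an isomorphism of the directed structure, not merely a homomorphism.

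The key step is the edge-direction equivalence. By definition of $G_{\tau}$, the directed edge $\{i\to j\}$ lies in $E(G_{\tau})$ iff $\tau(i)<\tau(j)$; by definition of $G_{\sigma}$, the directed edge $\{x_i\to x_j\}$ lies in $E(G_{\sigma})$ iff $\sigma(x_i)<\sigma(x_j)$. But the defining property of $\{x_0<\cdots<x_{t-1}\}\in\Lambda^{\tau}(\sigma)$ is precisely that $\sigma(x_i)<\sigma(x_j)\iff\tau(i)<\tau(j)$ for all $i,j$. Chaining these three equivalences gives $\{i\to j\}\in E(G_{\tau})\iff\{x_i\to x_j\}\in E(G_{\sigma}\mid_{\{x_0,\ldots,x_{t-1}\}})$, which says exactly that $f$ is an isomorphism of directed graphs between $G_{\tau}$ and the induced subgraph; combined with the colour-preservation observation above, $f$ is an isomorphism of bicoloured directed graphs, so the induced subgraph is a copy of $G_{\tau}$.

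I do not expect any real obstacle here — unlike Claim~\ref{cl.0}, where one had to rule out non-bijective or non-order-preserving homomorphisms, in the present claim the map $f$ is handed to us explicitly and bijectivity is free from the strict ordering of the $x_i$. The only point requiring a word of care is that $G_{\tau}$ is a tournament on $[0,t-1]$ (between any two distinct vertices there is exactly one directed edge, in one of the two orientations), so the ``iff'' in the edge condition is genuinely needed: we must verify both that edges of $G_{\tau}$ map to edges and that no extra edges appear in the induced subgraph. Since the biconditional $\sigma(x_i)<\sigma(x_j)\iff\tau(i)<\tau(j)$ supplies both directions simultaneously, this is immediate, and the proof is essentially two lines once the setup is in place.

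\begin{proof}
	Let $f\colon V(G_{\tau})=[0,t-1]\to\{x_0,\ldots,x_{t-1}\}$ be given by $f(i)=x_i$. Since $x_0<\cdots<x_{t-1}$, the map $f$ is a bijection that preserves the order of the indices, hence it also preserves the colouring of the directed edges: an edge $\{i\to j\}$ is blue exactly when $i<j$, which happens exactly when $f(i)=x_i<x_j=f(j)$, i.e. when $\{f(i)\to f(j)\}$ is blue, and likewise for red. It remains to check that $f$ induces an isomorphism of the directed structures. For distinct $i,j\in[0,t-1]$, by the definition of $G_{\tau}$ the directed edge $\{i\to j\}$ belongs to $E(G_{\tau})$ if and only if $\tau(i)<\tau(j)$; since $\{x_0<\cdots<x_{t-1}\}\in\Lambda^{\tau}(\sigma)$, this holds if and only if $\sigma(x_i)<\sigma(x_j)$, which by the definition of $G_{\sigma}$ holds if and only if $\{x_i\to x_j\}$ is an edge of $G_{\sigma}$ (equivalently, of the subgraph of $G_{\sigma}$ induced by $\{x_0,\ldots,x_{t-1}\}$). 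Thus $\{i\to j\}\in E(G_{\tau})$ if and only if $\{f(i)\to f(j)\}$ is an edge of the induced subgraph, so $f$ is an isomorphism of bicoloured directed graphs from $G_{\tau}$ onto the subgraph of $G_{\sigma}$ induced by $\{x_0,\ldots,x_{t-1}\}$. Hence that induced subgraph is a copy of $G_{\tau}$ via the map $i\mapsto x_i$.
\end{proof}
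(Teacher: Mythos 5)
Your proof is correct and follows essentially the same route as the paper: both verify directly that the map $i\mapsto x_i$ sends edges of $G_{\tau}$ to edges of $G_{\sigma}$ with the same direction and colour, using the defining equivalence $\sigma(x_i)<\sigma(x_j)\iff\tau(i)<\tau(j)$ of $\Lambda^{\tau}(\sigma)$ (the paper spells this out as two cases, you chain the biconditionals, which in addition gives the isomorphism onto the induced subgraph).
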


\begin{proof}
Assume the pair $\{i,j\}\in {[0,t-1]\choose 2}$, with $i<j$, is such that $\sigma(x_i)<\sigma(x_j)$. By the construction of $G_{\sigma}$
we have the edge $\{x_i\to x_j\}$ and is painted blue (as $x_i<x_j$).
Since $\{x_0<\cdots <x_{t-1}\}\in \Lambda^{\tau}(\sigma)$, then $\tau(i)<\tau(j)$. Hence $G_\tau$ has the edge $\{i\to j\}$ coloured blue (as $i<j$).

Assume now that the pair $\{i,j\}\in {[0,t-1]\choose 2}$, with $i<j$, is such that $\sigma(x_i)>\sigma(x_j)$. $G_{\sigma}$ contains the edge $\{x_j\to x_i\}$ painted red (as $x_j>x_i$). On the other side we have $\tau(i)>\tau(j)$ as $\{x_0<\cdots <x_{t-1}\}\in \Lambda^{\tau}(\sigma)$.  Hence $G_\tau$ has the edge $\{j\to i\}$ coloured red (as $j>i$).

Therefore, the map $i\mapsto x_i$, for $i\in[0,t-1]$, is a graph homomorphism preserving the colours and the directions of the edges as claimed.
\end{proof}

Combining claims \ref{cl.0}-\ref{cl.2}, we observe that $r_0$ is well defined and the representation of $\Lambda^{\tau}(\sigma)$ is given by the pair $(G_{\tau},G_{\sigma})$ with the parameters described above.
Proposition~\ref{p.rem_lem_permutations} is shown by using
Theorem~\ref{t.rep_sys_rem_lem} with $X_i={[0,n-1] \choose 2}$ for all $i\in [t(t-1)/2]$. In this case the proof of Theorem~\ref{t.rep_sys_rem_lem} should use, instead of Theorem~\ref{t.rem_lem_edge_color_hyper}, a removal lemma for directed and coloured graphs that can by obtained by combining the arguments from \cite[Lemma~4.1]{alosha04} with \cite[Theorem~1.18]{komsim96}.\footnote{For a detailed argument of how to obtain a removal lemma for directed and coloured graphs, the reader may refer to \cite{vena_master}.}

\section{Equivalent systems and representability} \label{s.equiv_and_rep}

In this section we assume that the systems are defined by a homomorphism. The definition for $\mu$-equivalent systems is introduced in Section~\ref{s.equiv_systems} and in Section~\ref{s.oper_between_representable} the relations between $\mu$-equivalent systems and their representations are explored.

\subsection{Equivalent systems} \label{s.equiv_systems}

Let $\mu$ be a positive integer. The homomorphism system $(A_2,G_2)$ with $A_2:G_2^{m_1}\to G_2^{k_2}$ is said to be \emph{$\mu$-equivalent} to the homomorphism system $(A_1,G_1)$, $A_1:G_1^{m_1}\to G_1^{k_2}$, with $m_2\geq m_1$, if 
\begin{itemize}
	\item $\mu |S(A_1,G_1)|=|S(A_2,G_2)|$.
	\item There exist an injective map $\sigma: [1,m_1] \to [1,m_2]$ and affine homomorphisms $\phi_1,\ldots,\phi_{m_1}$, $\phi_i: G_2\to G_1$ such that the  map
	\begin{displaymath}
		\phi(x_1,\ldots,x_{m_2})=\left(\phi_1(x_{\sigma(1)}),\ldots,\phi_{m_1}(x_{\sigma(m_1)})\right)
		\end{displaymath}
induces a $\mu$-to-$1$ surjective map $\phi: S(A_2,G_2) \to S(A_1,G_1)$. 
\end{itemize}

An affine homomorphism is a map $\phi_i:G_2\to G_1$ with $\phi_i(x)=b+\phi_i'(x)$, where $\phi_i'$ is a homomorphism and $b$ is a fixed element in $G_1$. 
Observe that, if necessary, we can restrict $\phi_i$ to map from the subgroup $S_i(A_2,G_2)$ (or a coset of the subgroup $S_i((A_2,\mathbf{0}),G_2)$) to $S_i(A_1,G_1)$.
If $G_1=G_2$ and the $\{\phi_i\}_{i\in [1,m_1]}$ are affine automorphisms, then $\phi_i\left( S_i(A_2,G_2)\right)=S_i(A_1,G_1)$ and their sizes are the same. In this case the systems are said to be \emph{auto-equivalent}.

\subsection{Operations on equivalent systems and representability} \label{s.oper_between_representable}

The propositions \ref{p.1-auto-equiv-rep} through \ref{p.mu-equivalent_2} proved in this section expose how the property of equivalence between systems, as defined in Section~\ref{s.equiv_systems}, is related with their representability properties, Definition~\ref{d.rep_sys}. For this section $G$, $G_1$ and $G_2$ are finite abelian groups and the systems are homomorphism systems.


\subsection{$1$-auto-equivalent systems}

\begin{proposition}[$1$-auto-equivalent systems]\label{p.1-auto-equiv-rep}
	Let $((A_2,\mathbf{b}_2),G)$ be a $k_2\times m_2$ system $1$-auto-equivalent to $((A_1,\mathbf{b}_1),G)$, a $k_1\times m_1$ system. Assume $((A_2,b_2),G)$ is $\gamma'$-representable by $(K',H')$ with constants $\chi_1,\chi_2$. If the edges coloured by $\sigma(1),\ldots,\sigma(m_1)$ cover all the vertices of $H'$, then  $(A_1,G)$ is $\gamma$-representable 
	with the same constants $\chi_1,\chi_2$ and $\gamma_i=\gamma'_{\sigma(i)}$. If $((A_2,\mathbf{b}_2),G)$ is strongly representable, then so is $((A_1,\mathbf{b}_1),G)$.
\end{proposition}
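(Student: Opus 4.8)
The plan is to build the representation $(K,H)$ for $(A_1,G)$ directly out of the given representation $(K',H')$ of $(A_2,\mathbf{b}_2,G)$ by ``forgetting'' the colours and labels that do not come from the image of $\sigma$. Since $\sigma:[1,m_1]\to[1,m_2]$ is the injective map witnessing the $1$-auto-equivalence, with affine automorphisms $\phi_i:G\to G$ realizing a bijection $\phi:S(A_2,\mathbf{b}_2,G)\to S(A_1,\mathbf{b}_1,G)$, I would set $H$ to be the hypergraph $H'$ but keeping only the edges $e'_{\sigma(1)},\dots,e'_{\sigma(m_1)}$, recoloured so that $e'_{\sigma(i)}$ gets colour $i$. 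The hypothesis that the edges coloured $\sigma(1),\dots,\sigma(m_1)$ cover all vertices of $H'$ is exactly what guarantees $V(H)=V(H')$, so that $|V(H)|=h$ is unchanged and the bound $\chi_1\ge h>s\ge 2$ is inherited verbatim; likewise $K$ is obtained from $K'$ by deleting all edges whose colour is not in $\sigma([1,m_1])$ and recolouring the survivors, so $|V(K)|=|V(K')|$ and $K,H$ remain $s$-uniform $m_1$-coloured. The new labelling is $l(e)=\phi_i^{-1}(l'(e))$ on edges recoloured $i$ (using that $\phi_i$ is an affine automorphism, hence invertible).

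The core step is to check RP\ref{prop_rep1}--RP\ref{prop_rep4}. For RP\ref{prop_rep2}, I would observe that a coloured copy of $H$ in $K$ is precisely the restriction to the edges $e'_{\sigma(1)},\dots,e'_{\sigma(m_1)}$ of a coloured copy of $H'$ in $K'$ — and since those edges already span $V(H')$, the restriction map $C(H',K')\to C(H,K)$ is a bijection. Define $r_0$ on a copy $H_0$ of $H$ by reading off labels $(l(f_1),\dots,l(f_{m_1}))$ where $f_i$ is the colour-$i$ edge; because $r'_0$ of the ambient copy lands in $S(A_2,\mathbf{b}_2,G)$ and $\phi=(\phi_1,\dots,\phi_{m_1})\circ(\text{projection along }\sigma)$ sends it $1$-to-$1$ into $S(A_1,\mathbf{b}_1,G)$, one gets $r_0(H_0)=\phi(r'_0(\widetilde H_0))\in S(A_1,\mathbf{b}_1,G)$, and $r_0$ is surjective onto $S(A_1,\mathbf{b}_1,G)$. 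Take $Q=Q'$, $p=p'$, and $r_q=r'_{q}$ pulled back through the bijection of copies. The fibre count is preserved up to relabelling the $\gamma$'s: $|r^{-1}(\mathbf{x},q)| = |(r')^{-1}(\phi^{-1}(\mathbf{x}),q)| = p'\,\lambda'\prod_{j=1}^{m_2}\gamma'_j$, and I must reconcile this with the required $p\,\lambda\prod_{i=1}^{m_1}\gamma_i$ with $\gamma_i=\gamma'_{\sigma(i)}$. This forces me to absorb the extra factor $\prod_{j\notin\sigma([1,m_1])}\gamma'_j$ into $p$ (replacing $p'$ by $p:=p'\prod_{j\notin\sigma([1,m_1])}\gamma'_j$, which is a positive integer provided the $\gamma'_j$ are — if they are merely positive reals one instead absorbs it into the constant $c$, keeping $c\ge\chi_2$ since the factor is $\ge$ something bounded below; I'd state the cleanest version matching how $p$ is used). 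With this bookkeeping, $\lambda=\lambda'=c|K|^s/|G|$ (note $|K|=|K'|$) and RP\ref{prop_rep2} holds with the same $\chi_2$. For RP\ref{prop_rep3}: an edge $e$ coloured $i$ in a copy $H_0\in r^{-1}(\mathbf{x},q)$ corresponds to the edge $e'$ coloured $\sigma(i)$ in the ambient copy $\widetilde H_0\in (r')^{-1}(\phi^{-1}(\mathbf{x}),q)$, and the number of copies of $H$ in $r^{-1}(\mathbf{x},q)$ through $e$ equals the number of copies of $H'$ in $(r')^{-1}(\phi^{-1}(\mathbf{x}),q)$ through $e'$, which is $p'\prod_{j=1}^{m_2}\gamma'_j/\gamma'_{\sigma(i)}=p'\big(\prod_{j\ne\sigma(i)}\gamma'_j\big)=p\prod_{i'\ne i}\gamma_{i'}$ after the same absorption — exactly what RP\ref{prop_rep3} demands. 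Strong representability RP\ref{prop_rep4} transfers because, after the affine-automorphism relabelling, $E_i(\mathbf{x},q)$ in the new representation is the set of edges labelled $(\mathbf{x})_i=\phi_i^{-1}((\phi^{-1}(\mathbf{x}))_{\sigma(i)})$, i.e.\ exactly the image under relabelling of $E'_{\sigma(i)}(\phi^{-1}(\mathbf{x}),q)$, so the ``there exists a copy through any prescribed $e_i$'' clause is inherited.

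The main obstacle I anticipate is the precise reconciliation of the fibre-size formula, i.e.\ correctly matching $p\lambda\prod_{i=1}^{m_1}\gamma_i$ against $p'\lambda'\prod_{j=1}^{m_2}\gamma'_j$ while keeping $p$ a positive integer, $c\ge\chi_2$, and RP\ref{prop_rep3} internally consistent — all three conditions involve the product of the $\gamma$'s differently, so the leftover factor $\prod_{j\notin\sigma([1,m_1])}\gamma'_j$ must be placed consistently in all of them. Everything else (vertex counts, uniformity, colour bookkeeping, the bijection on copies) is routine once one notes that the spanning hypothesis makes the edge-deletion on $H'$ harmless at the vertex level, and that affine automorphisms are invertible so the relabelling $l=\phi_i^{-1}\circ l'$ is well-defined and bijective on labels.
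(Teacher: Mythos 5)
Your construction is essentially the paper's: keep the vertex sets of $H'$ and $K'$, retain only the edges coloured $\sigma(1),\ldots,\sigma(m_1)$, recolour $\sigma(i)\mapsto i$, relabel through the affine maps, and transport $r$ through the correspondence between copies of $H$ in $K$ and copies of $H'$ in $K'$ (the covering hypothesis giving $V(H)=V(H')$, so a copy of $H'$ is determined by its $\sigma$-coloured edges). Your bookkeeping for RP\ref{prop_rep2}--RP\ref{prop_rep3} --- absorbing the leftover factor $\prod_{j\notin\sigma([1,m_1])}\gamma'_j$ into $p$ --- is spelled out more explicitly than in the paper's own proof of this proposition (which simply keeps $p=p'$, $c=c'$ and declares the constants unchanged); it is the same device the paper uses in the $\mu$-auto-equivalent proposition, and your check that RP\ref{prop_rep3} then reads $p'\prod_{j\neq\sigma(i)}\gamma'_j=p\prod_{i'\neq i}\gamma_{i'}$ is consistent, so that part is fine.

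The one concrete defect is the direction of the relabelling. The equivalence map goes $\phi:S((A_2,\mathbf{b}_2),G)\to S((A_1,\mathbf{b}_1),G)$ with $(\mathbf{x})_i=\phi_i\bigl((\mathbf{y})_{\sigma(i)}\bigr)$, so the new label of an edge coloured $\sigma(i)$ must be $l(e)=\phi_i(l'(e))$, as in the paper. You set $l(e)=\phi_i^{-1}(l'(e))$ while simultaneously defining $r_0(H_0)=\phi(r'_0(\widetilde H_0))$; these are incompatible, because RP\ref{prop_rep2} forces $r_0(H_0)=(l(e_1),\ldots,l(e_{m_1}))$, and with your labelling this tuple is $\bigl(\phi_1^{-1}((\mathbf{y})_{\sigma(1)}),\ldots,\phi_{m_1}^{-1}((\mathbf{y})_{\sigma(m_1)})\bigr)$, which in general does not lie in $S((A_1,\mathbf{b}_1),G)$: for instance with $G=\Z_5$, $m_1=m_2=1$, $S((A_2,\mathbf{b}_2),G)=\{1\}$, $S((A_1,\mathbf{b}_1),G)=\{2\}$ and $\phi_1(y)=2y$, your label would be $3\notin S((A_1,\mathbf{b}_1),G)$. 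The same inversion recurs in your RP\ref{prop_rep4} paragraph, where the identity should read $(\mathbf{x})_i=\phi_i\bigl((\phi^{-1}(\mathbf{x}))_{\sigma(i)}\bigr)$. Replacing $\phi_i^{-1}$ by $\phi_i$ throughout repairs the argument with no other changes, after which your proof coincides with the paper's.
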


\begin{proof}[Proof of Proposition~\ref{p.1-auto-equiv-rep}]
Let $\phi$ be the map that defines the $1$-auto-equivalence $\phi: S(A_2,G) \to S(A_1,G)$ with $\phi(x_1,\ldots,x_{m_2})=\left(\phi_1(x_{\sigma(1)}),\ldots,\phi_{m_1}(x_{\sigma(m_1)})\right)$.
Let $(K',H', \gamma',\linebreak[1] l', r', Q', p', c')$ be the vector defining the $\gamma'$-representation for $(A_2,G)$. Let $s$ be the uniformity of the edges of $H'$. 

The vector $(K,H,\gamma,l,r,Q',p',c')$ defines the $\gamma$-representation of $((A_1,\mathbf{b}_1),G)$  
as follows.
$\gamma_i=\gamma'_{\sigma(i)}$ for $i\in[1,m_1]$. $H$ and $K$ are the hypergraph on the same vertex set of $H'$ and $K'$ respectively, and with the edges given by the colours $\sigma(1),\ldots,\sigma(m_1)$. Repaint the edge coloured $\sigma(i)$ with colour $i$. If $e=\{v_1,\ldots,v_s\}$ is an edge coloured $\sigma(i)$ in $K'$ and labelled $l'(e)$, then $e$ is an edge coloured $i$ in $K$ and labelled $l(e)=\phi_i(l'(e))$. $r_q(H_0)=r_q'(H_0')$ where $H_0'$ is the unique copy of $H'$ in $K'$ spanned by the vertices of $H_0$ seen as vertices of $K'$.

Each copy of $H$ in $K$ induces a unique copy of $H'$ in $K'$ and vice-versa. Moreover, $\phi$ is a bijection between the solution sets and $(K',H',\gamma',l',r',Q',p',c')$ is a $\gamma'$-representation for $((A_2,\mathbf{b}_2),G)$. Therefore, $(K,H,\gamma,l,r,Q,p,c)$ as defined above induces a $\gamma$-representation for $((A_1,\mathbf{b}_1,G)$ and have the same constants $\chi_1$ and $\chi_2$ as $(K',H',\gamma',l',r,Q',p',c')$. Since $\phi_i$ are affine automorphisms, if the representation for $((A_2,\mathbf{b}_2),G)$ is strong, the so is the representation for $((A_1,\mathbf{b}_1,G)$ here presented. 
\end{proof}


\subsection{$\mu$-auto-equivalent systems}

\begin{proposition}[$\mu$-auto-equivalent systems] \label{p.mu-auto-equivalent}
Let $((A_2,\mathbf{b}_2),G)$ be a $k_2\times m_2$ system $\mu$-auto-equivalent to the $k_1\times m_1$ system $((A_1,\mathbf{b}_1),G)$, $m_2\geq m_1$. Let
\begin{align}
		\phi:S((A_2,\mathbf{b}_2),G)&\longrightarrow S((A_1,\mathbf{b}_1),G) \nonumber \\
	(x_1,\ldots,x_{m_2}) &\longmapsto  \left(x_{1},\ldots,x_{m_1}\right) \nonumber
\end{align}
be the map that defines the $\mu$-auto-equivalence.
Assume $((A_2,\mathbf{b}_2),G)$ is $(\gamma'_1,\ldots,\gamma'_{m_2})$-representable by $(K',H')$ 
with constants $\chi_1,\chi_2$. If 
	the edges coloured by $1,\ldots,m_1$ cover all the vertices of $H'$, 	 then  $((A_1,\mathbf{b}_1),G)$ is $(\gamma'_{1},\ldots,\gamma'_{m_1})$-representable 
	with $\chi_1,\chi_2$ as constants.
	If $((A_2,\mathbf{b}_2),G)$ is strongly representable, then so is $((A_1,\mathbf{b}_1),G)$.
\end{proposition}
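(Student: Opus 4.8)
The plan is to mirror the proof of Proposition~\ref{p.1-auto-equiv-rep} almost verbatim, replacing the use of affine automorphisms $\phi_i$ by the coordinate projection $\phi$ and absorbing the loss of injectivity into the multiplier $\mu$. Concretely, I would start from the $(\gamma'_1,\ldots,\gamma'_{m_2})$-representation $(K',H',\gamma',l',r',Q',p',c')$ of $((A_2,\mathbf{b}_2),G)$ and build a representation of $((A_1,\mathbf{b}_1),G)$ whose underlying hypergraphs $H,K$ are obtained from $H',K'$ by deleting every edge whose colour lies outside $\{1,\ldots,m_1\}$ and keeping the full vertex sets; the hypothesis that the edges coloured $1,\ldots,m_1$ already cover $V(H')$ guarantees that no isolated vertices are created and that $|V(H)|=|V(H')|\le\chi_1$, so RP\ref{prop_rep1} survives with the same $\chi_1$. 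The labelling $l$ is just the restriction of $l'$ (here $\phi_i$ is the identity, since the auto-equivalence map only forgets coordinates $m_1+1,\ldots,m_2$), and the colours $1,\ldots,m_1$ need no repainting.

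Next I would define the new representation map. Every copy of $H$ in $K$ extends uniquely to a copy of $H'$ in $K'$ (it is spanned by the same vertex set, and the edges coloured $m_1+1,\ldots,m_2$ of that vertex set in $K'$ are then forced), and conversely every copy of $H'$ in $K'$ restricts to a copy of $H$ in $K$; this is a bijection $C(H,K)\to C(H',K')$. Compose $r'$ with this bijection and with $\phi\times\mathrm{id}_{Q'}$ to get $r:C(H,K)\to S((A_1,\mathbf{b}_1),G)\times Q'$, i.e.\ $r_0$ is the projection onto the first $m_1$ labels and $r_q$ is inherited. The key computation is the fibre count: for $\mathbf{x}\in S((A_1,\mathbf{b}_1),G)$ and $q\in Q'$, the fibre $r^{-1}(\mathbf{x},q)$ is the disjoint union, over the $\mu$ preimages $\mathbf{y}\in\phi^{-1}(\mathbf{x})$, of the sets (identified with) $r'^{-1}(\mathbf{y},q)$, each of size $p'\lambda'\prod_{j=1}^{m_2}\gamma'_j$ with $\lambda'=c'|K'|^s/|G|$. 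Since $|K|=|K'|$, this gives $|r^{-1}(\mathbf{x},q)| = \mu\,p'\,\lambda\prod_{j=1}^{m_2}\gamma'_j$ where $\lambda=c'|K|^s/|G|$; to put this in the RP\ref{prop_rep2} normal form $p\lambda\prod_{i=1}^{m_1}\gamma_i$ with $\gamma_i=\gamma'_i$, I set $p := \mu\, p'\prod_{j=m_1+1}^{m_2}\gamma'_j$ if this is an integer, and otherwise absorb the product of the "forgotten'' $\gamma'_j$ into $c$ (the definition allows $c$ to be any real $\ge\chi_2$, and one can equally let $c$ depend on $(\mathbf{x},q)$ per the comments after Definition~\ref{d.rep_sys}), keeping $c\ge\chi_2$. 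The constants $\chi_1,\chi_2$ are unchanged.

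For RP\ref{prop_rep3}: an edge $e_i$ coloured $i\le m_1$ in a copy $H_0\in r^{-1}(\mathbf{x},q)$ is the same edge, coloured $i$, in the corresponding copies of $H'$; summing over the $\mu$ preimages $\mathbf{y}$, the number of copies of $H$ in $r^{-1}(\mathbf{x},q)$ through $e_i$ is $\mu$ times the RP\ref{prop_rep3}-count for $H'$, namely $\mu\,p'\,\prod_{j\in[1,m_2]\setminus\{i\}}\gamma'_j = p\,\prod_{j\in[1,m_1]\setminus\{i\}}\gamma_j$ with the same $p$ chosen above, as required. For strong representability: if $(K',H')$ is strong then $E_i(\mathbf{y},q)$ is exactly the set of edges labelled $(\mathbf{y})_i=(\mathbf{x})_i$ for each $i\le m_1$ and each preimage $\mathbf{y}$, hence the edge set $E_i(\mathbf{x},q)$ for the new representation is again exactly the set of edges coloured $i$ and labelled $(\mathbf{x})_i$, so RP\ref{prop_rep4} holds for $((A_1,\mathbf{b}_1),G)$.

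The only genuinely delicate point, and the one I would treat most carefully, is bookkeeping the constants so that they remain $\ge\chi_2$, $\le\chi_1$ and independent of the particular member of the family: the multiplier $\mu$ and the extra $\gamma'_j$ ($j>m_1$) must be shunted consistently into $p$ or $c$ without making $c$ drop below $\chi_2$; since $\mu\ge 1$ and the $\gamma'_j$ are positive, multiplying $c'$ by $\mu\prod_{j>m_1}\gamma'_j$ only increases it when that product is $\ge1$, and when it is $<1$ one instead enlarges $p$ — in either case one checks $c\ge\chi_2$ and the integrality of $p$. A secondary point is verifying that $r$ is still \emph{surjective} onto $S((A_1,\mathbf{b}_1),G)\times Q'$: this follows because $r'$ is surjective and $\phi$ is surjective. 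Everything else is a routine transcription of the Proposition~\ref{p.1-auto-equiv-rep} argument with "bijection of copies'' replaced by "bijection of copies together with a $\mu$-fold grouping of solutions''.
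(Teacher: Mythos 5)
Your construction diverges from the paper's at one crucial point, and the divergence creates a real gap. You keep $Q=Q'$ and let the new fibre $r^{-1}(\mathbf{x},q')$ be the union of the $\mu$ old fibres $r'^{-1}(\mathbf{y},q')$ over $\mathbf{y}\in\phi^{-1}(\mathbf{x})$. For RP\ref{prop_rep2} this is harmless, but for RP\ref{prop_rep3} you claim that an edge $e_i$ lying in some copy of the merged fibre lies in exactly $\mu$ times the per-fibre count. That is only true if \emph{every} one of the $\mu$ preimages $\mathbf{y}$ contributes copies through $e_i$. What RP\ref{prop_rep3} for $(K',H')$ gives you is conditional: \emph{if} $e_i$ lies in some copy of $r'^{-1}(\mathbf{y},q')$, then it lies in exactly $p'\prod_{j\neq i}\gamma'_j$ of them; it does not guarantee that $e_i$ meets the fibre of every $\mathbf{y}\in\phi^{-1}(\mathbf{x})$ just because $l'(e_i)=(\mathbf{y})_i=(\mathbf{x})_i$. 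That guarantee is precisely RP\ref{prop_rep4}, which you are not allowed to assume: the proposition must conclude (plain) representability from (plain) representability, with strongness only as an add-on. In the non-strong case the merged count can be $t\cdot p'\prod_{j\neq i}\gamma'_j$ for any $t\in[1,\mu]$ depending on $e_i$, so the exact count demanded by RP\ref{prop_rep3} fails. The paper sidesteps this exactly by \emph{not} merging: it introduces an indexing map $\iota:S((A_2,\mathbf{b}_2),G)\to[1,\mu]$ that separates the preimages of each $\mathbf{x}$ and sets $Q=Q'\times[1,\mu]$, so each new fibre $r^{-1}(\mathbf{x},(q',j))$ coincides with a single old fibre $r'^{-1}(\mathbf{y},q')$ and RP\ref{prop_rep2}, RP\ref{prop_rep3} transfer verbatim with $p=p'\prod_{i=m_1+1}^{m_2}\gamma'_i$ and $c=c'$.

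A secondary problem is your fallback of absorbing the factor $\mu\prod_{j>m_1}\gamma'_j$ into $c$ instead of $p$. This keeps RP\ref{prop_rep2} balanced (only the product $p\,c$ enters there), but RP\ref{prop_rep3} prescribes the per-edge count as $p\prod_{j\in[1,m_1]\setminus\{i\}}\gamma_j$, with no $c$ in it; moving the factor into $c$ makes the prescribed count smaller than the actual one, so RP\ref{prop_rep2} and RP\ref{prop_rep3} can no longer be satisfied simultaneously with that choice. So the repair is: follow the paper and enlarge $Q$ by a factor $[1,\mu]$ (your merging argument is fine as a remark in the strong case, where the comments after Definition~\ref{d.rep_sys} already note that $Q$ can be collapsed at the cost of multiplying $p$ by $|Q|$), and keep the surplus $\gamma'_j$, $j>m_1$, inside $p$ rather than $c$.
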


\begin{proof}[Proof of Proposition~\ref{p.mu-auto-equivalent}]
	
			Let $\iota$ be a map from $S((A_2,\mathbf{b}_2),G)$ to $[1,\mu]$ where, given $\mathbf{x}_1, \mathbf{x}_2\in S((A_2,\mathbf{b}_2),G)$ such that  $\phi(\mathbf{x}_1)=\phi(\mathbf{x}_2)$ and $\mathbf{x}_1\neq \mathbf{x}_2$, then $\iota(\mathbf{x}_1)\neq \iota(\mathbf{x}_2)$. If $\phi$ is a $\mu$-to-$1$ map, such $\iota$ exist, is exhaustive and induces an equipartition in $S((A_2,\mathbf{b}_2),G)$.
	
	Let $(K',H',\gamma',l',r',Q',p',c')$ be the vector defining the $\gamma'$-representation for $((A_2,\mathbf{b}_2),G)$. Let $s$ be the uniformity of the edges of $H'$. The candidate vector $(K,H,\gamma,l,r,Q,p,c)$ 
	is defined as follows.
	\begin{itemize}
		\item $Q=Q'\times [1,\mu]$, $p=p'\prod_{i=m_1+1}^{m_2} \gamma'_i$, $c=c'$, $\gamma$ is such that $\gamma_i=\gamma'_i $ for $i\in[1,m_1]$.
		\item $H$ and $K$ are the hypergraphs on the vertex sets of $H'$ and $K'$ respectively. $e=\{v_1,\ldots,v_s\}$ is an edge in $K$ coloured $i\in[1,m_1]$ if and only $e$ is an edge coloured $i\in[1,m_1]$ in $K'$. 
		\item $l$ is defined by $l(e)=l'(e)$ for $e$ an edge coloured $i\in[1,m_1]$.
		\item 
If $H_0\in C(H,K)$, then
$r_q(H_0)=(r_q'(H_0'),\iota(r_0'(H_0')))$ where $H_0'$ is the unique copy of $H'$ in $K'$ spanned by the vertices of $H_0$, seen as vertices of $K'$.
	\end{itemize}

	Selecting $\mathbf{x}\in S((A_1,\mathbf{b}_1),G)$ and $q=(q',j)\in Q=Q'\times[1,\mu]$ is equivalent to select the $\mathbf{y}\in S((A_2,\mathbf{b}_2),G)$, with $\mathbf{y}\in \phi^{-1}(\mathbf{x})$ 
	 such that $\iota(\mathbf{y})=j$, 
and $q'\in Q'$, first coordinate of $q$. Moreover, each copy of $H$ in $K$ induces a unique copy of $H'$ in $K'$ and vice-versa. Therefore, the class of copies of $H$ related to $(\mathbf{x},q)$ is the same as the copies of $H'$ related to $(\mathbf{y},q')$.
	
	Since each edge $e_i\in E(K')$, $i\in [1,m_1]$, is contained in $p'\frac{\prod_{j=1}^{m_2}\gamma'_j}{\gamma'_i}$ copies of $H'$ related to $(\mathbf{y},q')$, then it also contains, seen as an edge in $K$, $p'\frac{\prod_{j=1}^{m_2}\gamma'_j}{\gamma'_i}=p\frac{\prod_{j=1}^{m_1}\gamma'_j}{\gamma_i}$ copies of $H$ related to $(\mathbf{x},q)$. Therefore, $(K,H,\gamma,l,r,Q,p,c)$ as defined above induces a $\gamma$-representation for $(A_1,G)$ and have the same constants $\chi_1$ and $\chi_2$ as $(K',H',\gamma',l',r',Q',p',c')$. Moreover, since $\phi_i$ is the identity map for each $i$, if the representation for $((A_2,\mathbf{b}_2),G)$ is strong, then so is the presented representation for $((A_1,\mathbf{b}_1),G)$. 
\end{proof}

\subsection{$\mu$-equivalent systems}

\begin{proposition}[$\mu$-equivalent systems 1] \label{p.mu-equivalent_1}
Let $((A_2,\mathbf{b}_2),G_2)$ be a $k_2\times m_1$ system $\mu$-equivalent to the $k_1\times m_1$ system $((A_1,\mathbf{b}_1),G_1)$ with
\begin{align}
	\phi:S((A_2,\mathbf{b}_2),G_2)&\longrightarrow S((A_1,\mathbf{b}_1),G_1) \nonumber \\
	(x_1,\ldots,x_{m_1}) &\longmapsto  \left(\phi_1(x_{1}),\ldots,\phi_1(x_{m_1})\right) \nonumber
\end{align}
be the map that defines the $\mu$-equivalence.
Assume $(A_2,G_2)$ is $(\gamma_1,\ldots,\gamma_{m_1})$-representable by $(K',H')$ with constants $\chi_1,\chi_2$. If 
$\phi_1:G_2\to G_1$ is surjective and $\phi^{-1}(\mathbf{x})=\prod_{i=1}^{m_1} \phi_1^{-1}((\mathbf{x})_i)$ then  $((A_1,\mathbf{b}_1),G_1)$ is $(\gamma_{1},\ldots,\gamma_{m_1})$-representable 
	with the same constants $\chi_1,\chi_2$. If $((A_2,\mathbf{b}_2),G)$ is strongly representable, then so is $((A_1,\mathbf{b}_1),G)$.
\end{proposition}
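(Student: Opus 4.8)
The approach mimics the proofs of Proposition~\ref{p.1-auto-equiv-rep} and Proposition~\ref{p.mu-auto-equivalent}: keep the coloured hypergraphs unchanged, relabel each edge along $\phi_1$, and push the extra bookkeeping into the constant $p$ and into the denominator $|G|$ of $\lambda$. Writing $\phi_1(x)=b+\phi_1'(x)$ with $\phi_1'$ a homomorphism, surjectivity of $\phi_1$ makes every fibre $\phi_1^{-1}(y)$, $y\in G_1$, a coset of $\ker\phi_1'$, so that $\nu:=|G_2|/|G_1|$ is a well-defined positive integer and $|\phi_1^{-1}(y)|=\nu$ for all $y$. The hypothesis $\phi^{-1}(\mathbf{x})=\prod_{i=1}^{m_1}\phi_1^{-1}((\mathbf{x})_i)$ then forces $\mu=|\phi^{-1}(\mathbf{x})|=\nu^{m_1}$ for every $\mathbf{x}\in S((A_1,\mathbf{b}_1),G_1)$, and $S((A_2,\mathbf{b}_2),G_2)$ is the disjoint union of the fibres $\phi^{-1}(\mathbf{x})$, each of size $\nu^{m_1}$.

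Let $(K',H',\gamma,l',r',Q',p',c')$ be the given $(\gamma_1,\dots,\gamma_{m_1})$-representation of $((A_2,\mathbf{b}_2),G_2)$ and let $s$ be the uniformity of $H'$. I would build the candidate $(K,H,\gamma,l,r,Q,p,c)$ for $((A_1,\mathbf{b}_1),G_1)$ by taking $H=H'$ and $K=K'$ as coloured hypergraphs (so $C(H,K)=C(H',K')$), $l(e)=\phi_1(l'(e))$ for each edge $e$, $Q=Q'$, and, for a copy $H_0\in C(H,K)$ identified with the equal copy $H_0'\in C(H',K')$, $r_0(H_0)=\phi(r_0'(H_0'))$, $r_q(H_0)=r_q'(H_0')$; finally $p=\nu^{m_1-1}p'$ and $c=c'$, so that $r=(\phi\times\mathrm{id}_{Q'})\circ r'$.

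The easy properties of Definition~\ref{d.rep_sys} come for free. RP\ref{prop_rep1} holds with the same $\chi_1,\chi_2$: $|V(H)|=|V(H')|\le\chi_1$, the uniformity $s$ and the number $m_1$ of colours are unchanged, the labels lie in $G_1$, and $c=c'\ge\chi_2$. For RP\ref{prop_rep2}, $r_0(H_0)=(l(e_1),\dots,l(e_{m_1}))=\phi(r_0'(H_0'))\in S((A_1,\mathbf{b}_1),G_1)$ since $r_0'(H_0')$ solves $(A_2,G_2)$ and $\phi$ maps into $S((A_1,\mathbf{b}_1),G_1)$; surjectivity of $r$ follows from surjectivity of $\phi_1$ and of $r'$. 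Since $r^{-1}(\mathbf{x},q)=\bigcup_{\mathbf{y}\in\phi^{-1}(\mathbf{x})}r'^{-1}(\mathbf{y},q)$ is a disjoint union and $|r'^{-1}(\mathbf{y},q)|=p'\tfrac{c'|K'|^s}{|G_2|}\prod_i\gamma_i$, we get $|r^{-1}(\mathbf{x},q)|=\nu^{m_1}p'\tfrac{c'|K'|^s}{|G_2|}\prod_i\gamma_i=p\tfrac{c|K|^s}{|G_1|}\prod_i\gamma_i$ after inserting $|K|=|K'|$, $|G_1|=|G_2|/\nu$, $c=c'$ and $p=\nu^{m_1-1}p'$; this is RP\ref{prop_rep2} with $\lambda=c|K|^s/|G_1|$ and $c\ge\chi_2$.

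The crux is RP\ref{prop_rep3}. Fix $\mathbf{x}$, $q\in Q$ and an edge $e_i$ coloured $i$ appearing in some copy of $r^{-1}(\mathbf{x},q)$, and put $z_i=l'(e_i)$, so $\phi_1(z_i)=(\mathbf{x})_i$. A copy of $H$ in $r^{-1}(\mathbf{x},q)$ through $e_i$ is exactly a copy of $H'$ through $e_i$ lying in $r'^{-1}(\mathbf{y},q)$ for some $\mathbf{y}\in\phi^{-1}(\mathbf{x})$ whose $i$-th coordinate is $z_i$; by the product structure of $\phi^{-1}(\mathbf{x})$ there are exactly $\nu^{m_1-1}$ such $\mathbf{y}$, and RP\ref{prop_rep3} for $(K',H')$ contributes $p'\frac{\prod_{j=1}^{m_1}\gamma_j}{\gamma_i}$ copies for each, so the total is $\nu^{m_1-1}p'\frac{\prod_{j}\gamma_j}{\gamma_i}=p\frac{\prod_{j}\gamma_j}{\gamma_i}$. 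For the strong case RP\ref{prop_rep4}: given $e_i$ with $l(e_i)=(\mathbf{x})_i$ and any $q\in Q$, the product structure of $\phi^{-1}(\mathbf{x})$ lets me choose $\mathbf{y}\in\phi^{-1}(\mathbf{x})$ with $i$-th coordinate $l'(e_i)$, and the copy given by RP\ref{prop_rep4} for $(K',H')$ in $r'^{-1}(\mathbf{y},q)$ is the desired copy in $r^{-1}(\mathbf{x},q)$. I expect the only genuine obstacle to be this constant bookkeeping --- recognising that the $\mu=\nu^{m_1}$-fold multiplicity of copies per solution splits as a factor $\nu^{m_1-1}$ absorbed into $p$ and a factor $\nu$ absorbed into the change of ground set size in $\lambda=c|K|^s/|G|$ --- everything else being a direct transcription of the proofs of Propositions~\ref{p.1-auto-equiv-rep} and \ref{p.mu-auto-equivalent}.
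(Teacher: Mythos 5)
Your construction (keep $(K',H')$ unchanged, relabel each edge by $\phi_1$, take $Q=Q'$ and absorb the multiplicity into $p=\nu^{m_1-1}p'$) handles RP\ref{prop_rep1} and RP\ref{prop_rep2} correctly, but your verification of RP\ref{prop_rep3} has a genuine gap in the setting of Proposition~\ref{p.mu-equivalent_1}, which assumes only (weak) representability of $((A_2,\mathbf{b}_2),G_2)$. When you count the copies of $H$ through an $i$-coloured edge $e_i$ inside $r^{-1}(\mathbf{x},q)$, you sum over the $\nu^{m_1-1}$ solutions $\mathbf{y}\in\phi^{-1}(\mathbf{x})$ with $(\mathbf{y})_i=l'(e_i)$ and credit each old fibre $r'^{-1}(\mathbf{y},q)$ with exactly $p'\prod_{j}\gamma_j/\gamma_i$ copies. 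However, RP\ref{prop_rep3} for $(K',H')$ yields that number only for those fibres in which $e_i$ already lies in some copy; for the remaining fibres the count may be $0$, and nothing in the weak definition excludes this --- ruling it out is precisely the content of RP\ref{prop_rep4}. Consequently, with your choice of $p$ the number of copies through an edge of a new fibre need not equal the required constant $p\prod_j\gamma_j/\gamma_i$, and RP\ref{prop_rep3} can fail. Your argument does establish the final sentence of the proposition (strong representability of $((A_2,\mathbf{b}_2),G_2)$ transfers, and then in particular representability transfers), but not the main weak-to-weak claim.

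The paper's proof is built exactly to dodge this point: instead of lumping all $\mu=\beta^{m_1}$ old fibres over a given $\mathbf{x}$ into one, it introduces a labelling $\iota:G_2\to\Z_\beta$ of the $\phi_1$-fibres and sets $Q=Q'\times\left[\Z_\beta^{m_1}/\langle(1,\ldots,1)\rangle\right]$, so that each new fibre $r^{-1}(\mathbf{x},q)$ is the union of only $\beta$ old fibres whose $i$-th coordinates are pairwise distinct for every $i\in[1,m_1]$. Then two copies of $H$ sharing an $i$-coloured edge necessarily come from the same old fibre, so RP\ref{prop_rep3} transfers verbatim with $p=p'$, and the factor $\beta$ is absorbed by the change of denominator $|G_2|\to|G_1|$ in $\lambda$, rather than by inflating $p$. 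To prove the proposition as stated you need some such device (the diagonal quotient in $Q$, or an equivalent partition of $\phi^{-1}(\mathbf{x})$ into classes meeting each coordinate fibre exactly once); keeping your bookkeeping would force you to add the strong hypothesis on the representation of $((A_2,\mathbf{b}_2),G_2)$.
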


\begin{proof}[Proof of Proposition~\ref{p.mu-equivalent_1}]

 Observe that, for $i\in[1,m_1]$, $\phi_1(S_i((A_2,\mathbf{b}_2),G_2))=S_i((A_1,\mathbf{b}_1),G_1)$ as $\phi$ is surjective. Since $\phi_1$ is affine, $|\{y_i\in S_i((A_2,\mathbf{b}_2),G_2) : \phi_1(y_i)=x_i\}|$ is the same for each $x_i\in S_i((A_1,\mathbf{b}_1),G_1)$.
Since $\phi^{-1}(\mathbf{x})=\prod_{i=1}^{m_1} \phi_1^{-1}((\mathbf{x})_i)$ and $\phi_1$ is affine, then we can let
 $\beta=|S_i((A_2,\mathbf{b}_2),G_2)|/|S_i((A_1,\mathbf{b}_1),G_1)|$, as its value is independent of $i\in[1,m_1]$. Therefore, $\mu=\beta^{m_1}$. Additionally, since $\phi_1$ is surjective, $\beta=|G_2|/|G_1|$.

	Let $\iota$ be a map from $G_2$ to $\Z_{\beta}$ such that, if $y_1,y_2\in G_2$ with $\phi_1(y_1)=\phi_1(y_2)$ and $y_1\neq y_2$, then $\iota(y_1)\neq \iota(y_2)$. Since $\phi_1$ is a $\beta$-to-$1$ map between $G_2$ and $\phi_1(G_2)=G_1$, then such $\iota$ exist, is exhaustive and induces an equipartition of $G_2$ in $\beta$ classes. Moreover, $\iota$ induces the bijections
	\begin{displaymath}
	\begin{array}{cl}
		G_2 &\longrightarrow \phi_1(G_2)\times \Z_{\beta} \\ 
		y &\longmapsto  (\phi_1(y),\iota(y)) 
	\end{array}
	\text{ and }
	\begin{array}{cl}
		S((A_2,\mathbf{b}_2),G_2) &\longrightarrow S((A_1,\mathbf{b}_1),G_1)\times \Z_{\beta}^{m_1} \\ 
		\mathbf{y} &\longmapsto  (\phi(\mathbf{y}),\iota(\mathbf{y})) 
	\end{array}
\end{displaymath}
where $\iota((\mathbf{y}_1,\ldots,\mathbf{y}_{m_1}))=(\iota(\mathbf{y}_1),\ldots,\iota(\mathbf{y}_{m_1}))$.
	Let $\pi:\Z_{\beta}^{m_1}\to \Z_{\beta}^{m_1}/\langle 1,\ldots,1\rangle$ be the quotient map.

		Let $(K',H',\gamma',l',r',Q',p',c')$ be the vector defining the $\gamma'$-representation for $((A_2,\mathbf{b}_2),G_2)$. Let $s$ be the uniformity of the edges of $H'$. The candidate vector $(K,H,\gamma,l,r,Q,p,c)$ 
		is defined as follows.
		\begin{itemize}
			\item $Q=Q'\times \left[\Z_{\beta}^{m_1}/\langle 1,\ldots,1\rangle\right]$, $p=p'$, $c=c'$, $\gamma_i=\gamma'_i $ for $i\in[1,m_1]$.
			\item $H$ and $K$ are the hypergraphs on the same vertex sets and edge sets as $H'$ and $K'$ respectively. $e=\{v_1,\ldots,v_s\}$ is an edge in $K$ coloured $i\in[1,m_1]$ if and only $e$ is an edge coloured $i\in[1,m_1]$ in $K'$.

			\item $l(e)=\phi_1(l'(e))$ if $e$ is an edge coloured $i\in[1,m_1]$ as an edge in $K'$.
			\item
Given $H_0\in C(H,K)$, let $H_0'$ be the unique copy of $H'$ in $K'$ spanned by the vertices of $H_0$ and let $\mathbf{y}=(\mathbf{y}_1,\ldots,\mathbf{y}_{m_1})=r_0'(H_0')\in S((A_2,\mathbf{b}_2),G_2)$ the solution spanned by $H_0'$. Then
		 $r_q(H_0)=(r_q'(H_0'),\pi(\iota(\mathbf{y})))$. 
		\end{itemize}

	Property RP\ref{prop_rep1} is fulfilled with the same parameters and each edge bears a label given by $l$. The function $r=(r_0,r_q)$ goes from $C(H,K)$ to $S((A_1,\mathbf{b}_1),G_1)\times Q$ by the definition of $r'=(r_0',r_q')$, $\iota$, $\phi_1$ and $\pi$. $r$ is surjective because $r'$ is surjective and $S((A_2,\mathbf{b}_2),G_2)$ is in bijection with $S((A_1,\mathbf{b}_1),G_1)\times \Z_{\beta}^{m_1}$. Observe that $r^{-1}(\mathbf{x},q)$ is the union of those $r'^{-1}(\mathbf{y},q')$, with $\mathbf{y}\in S((A_2,\mathbf{b}_2),G_2)$, 
	such that $\phi(\mathbf{y})=\mathbf{x}$ and $q=(q',\pi(\iota(\mathbf{y})))$. This union has $\beta$ elements, as this is the size of each class in the quotient $\Z_\beta^{m_1}/\langle(1,\ldots,1)\rangle$. Therefore,
	\begin{align}
		\left|r^{-1}(\mathbf{y},q)\right|=\beta \left|r'^{-1}(\mathbf{x},q')\right|=\beta p'c' \frac{|K'|^s}{|G_2|} \prod_{i=1}^{m_1}  \gamma_i' =p c \frac{|K|^s}{|G_1|} \prod_{i=1}^{m_1}  \gamma_i, \nonumber
	\end{align}
	which shows RP\ref{prop_rep2}.
	
All the solutions $(\mathbf{y}_1,\ldots,\mathbf{y}_m)=\mathbf{y}\in S((A_2,\mathbf{b}_2),G_2)$ that conform the union just mentioned have the property that any component $\mathbf{y}_i$ takes all the possible $\beta$ values of $\phi_1^{-1} ((\mathbf{x})_i)$. Indeed, from all the solutions $(\mathbf{y}_1,\ldots,\mathbf{y}_m)=\mathbf{y}$ that, along with the $q'$, conform the sets of copies of $H$ given by $r^{-1}(\mathbf{x},q)$, there is only one solution $\mathbf{y}$ with $\mathbf{y}_i$ having a particular value in $\phi_1^{-1}((\mathbf{x})_i)$. Therefore,
	if two copies of $H$ in $K$ share an edge $e_i\in H_0\in r^{-1}(\mathbf{x},q)$, then they belong to the same set $r'^{-1}(\mathbf{y},q')$ if seen as copies of $H'$ in $K'$. Thus, there are $p'\frac{\prod_{j=1}^m  \gamma'_j}{\gamma'_i}=p\frac{\prod_{j=1}^m  \gamma_j}{\gamma_i}$ copies of $H$ in $K$ sharing $e_i$. This shows RP\ref{prop_rep3}.

	Let $\mathbf{x}\in S((A_1,\mathbf{b}_1),G_1)$ and $q=(q',j)\in Q$. Pick $e_i$ with $l(e_i)=(\mathbf{x})_i$ for some $i\in[1,m_1]$. Let $y_i=l'(e_i)$ by seen $e_i$ as an edge in $K'$. Let $\mathbf{y}$ be the unique solution to  $S((A_2,\mathbf{b}_2),G_2)$ such that $\phi(\mathbf{y})=\mathbf{x}$, $(\mathbf{y})_i=y_i$, and $\pi(\iota(\mathbf{y}))=j$. If $((A_2,\mathbf{b}_2),G_2)$ is strongly representable, there exists a $H_0'$, $H_0'\in r'^{-1}(\mathbf{y},q')$, with $e_i\in H_0'$. If $H_0$ is the unique copy of $H$ in $K$ on the vertices of $H_0'$, then $e_i\in H_0$ and $H_0\in r^{-1}(\mathbf{x},q)$. This shows RP\ref{prop_rep4} for the system $((A_1,\mathbf{b}_1),G_1)$ when $((A_2,\mathbf{b}_2),G_2)$ is strongly representable and finishes the proof of the proposition.
\end{proof}

\begin{proposition}[$\mu$-equivalent systems 2] \label{p.mu-equivalent_2}
Let $((A_2,\mathbf{b}_2),G_2)$ be a $k_2\times m_2$ system $\mu$-equivalent to the $k_1\times m_1$ system $((A_1,\mathbf{b}_1),G_1)$ with $m_2\geq m_1$. Let
	\begin{align}
		\phi:S((A_2,\mathbf{b}_2),G_2)&\longrightarrow S((A_1,\mathbf{b}_1),G_1) \nonumber \\
		(x_1,\ldots,x_{m_2}) &\longmapsto  \left(\phi_1(x_{1}),\ldots,\phi_{m_1}(x_{m_1})\right) \nonumber
	\end{align}
	be the map that defines the $\mu$-equivalence.
	Assume $((A_2,\mathbf{b}_2),G_2)$ is 
$\gamma'$-strongly-representable
	by $(K',H')$ with constants $\chi_1,\chi_2$. Assume the following.
 \begin{enumerate}[(i)]
 \item \label{h.1} The edges coloured by $[1,m_1]$ cover all the vertices of $H'$.
\item \label{h.2} Given $\mathbf{x}\in S((A_1,\mathbf{b}_1),G_1)$ and $i\in[1,m_1]$,
then 
\begin{displaymath}
	\left\vert \{\mathbf{y}\in S((A_2,\mathbf{b}_2),G_2)\; : \;  \phi(\mathbf{y})=\mathbf{x} \text{ and }  (\mathbf{y})_i = y_i \}\right|
	\end{displaymath} 
is constant for any $y_i\in S_i((A_2,\mathbf{b}_2),G_2)$ with $\phi_i(y_i)=(\mathbf{x})_i$.
 \end{enumerate} 
Then  $((A_1,\mathbf{b}_1),G)$ is strongly $\gamma$-representable 
with
$\gamma_i=\gamma_i'\frac{|S_i((A_2,\mathbf{b}_2),G_2)|}{|S_i((A_1,\mathbf{b}_1),G_1)|}\frac{|G_1|}{|G_2|}$ for $i\in[1,m_1]$ and the
 same constants $\chi_1,\chi_2$.
\end{proposition}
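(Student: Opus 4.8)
The plan is to mimic the construction used in Proposition~\ref{p.mu-equivalent_1}, where we moved from $G_2$ to $G_1$ by collapsing the fibres of $\phi_1$, but now we must also handle two extra complications: the map $\phi$ only looks at the first $m_1$ of the $m_2$ coordinates of a solution in $S((A_2,\mathbf{b}_2),G_2)$ (as in Proposition~\ref{p.mu-auto-equivalent}), and the maps $\phi_i:G_2\to G_1$ can differ from coordinate to coordinate and have different-sized kernels. So the argument will be a fusion of the proofs of Propositions~\ref{p.mu-auto-equivalent} and \ref{p.mu-equivalent_1}. Concretely, first I would set up the combinatorial bookkeeping: let $\beta_i=|S_i((A_2,\mathbf{b}_2),G_2)|/|S_i((A_1,\mathbf{b}_1),G_1)|$ be the fibre size of $\phi_i$ restricted to the $i$-th projections, note that hypothesis~\ref{h.2} guarantees that the number of $\mathbf{y}\in\phi^{-1}(\mathbf{x})$ with a prescribed $i$-th coordinate is a constant $\mu/\beta_i$, and record that $|\phi^{-1}(\mathbf{x})|=\mu$ is independent of $\mathbf{x}$. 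Then define an indexing map $\iota:S((A_2,\mathbf{b}_2),G_2)\to[1,\mu]$ that separates the fibres of $\phi$, exactly as in the proof of Proposition~\ref{p.mu-auto-equivalent}, so that selecting $(\mathbf{x},j)\in S((A_1,\mathbf{b}_1),G_1)\times[1,\mu]$ is the same as selecting $\mathbf{y}\in S((A_2,\mathbf{b}_2),G_2)$.

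Next I would define the candidate representation vector $(K,H,\gamma,l,r,Q,p,c)$ for $((A_1,\mathbf{b}_1),G)$. Because of hypothesis~\ref{h.1}, I take $H$ and $K$ to have the same vertex sets as $H'$ and $K'$ but keep only the edges coloured $1,\dots,m_1$, repainting colour $\sigma(i)$ as $i$ if needed (here $\sigma$ is the inclusion, since the domains already agree up to the first $m_1$ coordinates). I set $l(e)=\phi_i(l'(e))$ for an edge coloured $i$, $Q=Q'\times[1,\mu]$, $c=c'$, and $r_q(H_0)=(r_q'(H_0'),\iota(r_0'(H_0')))$ where $H_0'$ is the unique copy of $H'$ in $K'$ on the vertices of $H_0$. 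The value of $p$ and of $\gamma_i$ is where the two effects — forgetting the last $m_2-m_1$ colours, and collapsing $\phi_i$-fibres — both enter. Since $((A_2,\mathbf{b}_2),G_2)$ is \emph{strongly} representable, by the remark following Definition~\ref{d.rep_sys} I may assume $Q'=\{1\}$; more importantly, strong representability tells me that $E_i(\mathbf{x},q)$ is exactly the set of edges of $K'$ labelled $(\mathbf{x})_i$, which lets me count precisely how the edge $e_i$ sits inside the copies related to $(\mathbf{x},q)$. The key identity to track is the number of copies of $H$ in $K$ related to a given $(\mathbf{x},q)$, which is the number of copies of $H'$ in $K'$ related to the corresponding $\mathbf{y}$; an edge $e_i$ labelled $(\mathbf{x})_i$ (in $K$) corresponds to a set of $\beta_i$ edges in $K'$, one labelled $y_i$ for each $y_i\in\phi_i^{-1}((\mathbf{x})_i)$, and by hypothesis~\ref{h.2} the copies of $H'$ attached to these split evenly. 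Balancing RP\ref{prop_rep2} and RP\ref{prop_rep3} then forces $\gamma_i=\gamma_i'\,|S_i((A_2,\mathbf{b}_2),G_2)|\,|G_1|\,/\,(|S_i((A_1,\mathbf{b}_1),G_1)|\,|G_2|)$ and $p=p'\prod_{j=m_1+1}^{m_2}\gamma_j'$ times a factor absorbing the $\beta_i$'s; I would verify that the $|G|$-in-the-denominator normalisation in the definition of $\lambda$ is consistent with the $|G_1|$ versus $|G_2|$ discrepancy, which is precisely why the stated $\gamma_i$ carries the ratio $|G_1|/|G_2|$.

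After the counting is pinned down, checking RP\ref{prop_rep1}–RP\ref{prop_rep4} is mostly routine: RP\ref{prop_rep1} holds with the same $\chi_1$ (the vertex set is unchanged, so $h$ is the same) by hypothesis~\ref{h.1}; surjectivity of $r$ follows from surjectivity of $r'$ together with the bijection $S((A_2,\mathbf{b}_2),G_2)\leftrightarrow S((A_1,\mathbf{b}_1),G_1)\times[1,\mu]$ given by $\iota$; RP\ref{prop_rep2} is the displayed identity above with $c=c'\ge\chi_2$ unchanged; RP\ref{prop_rep3} uses that two copies of $H$ sharing $e_i$ lift to copies of $H'$ sharing one of the $\beta_i$ edges over $(\mathbf{x})_i$, and hypothesis~\ref{h.2} makes this count uniform; and for the strong conclusion RP\ref{prop_rep4}, given $e_i$ with $l(e_i)=(\mathbf{x})_i$ I pick $y_i=l'(e_i)$ in $K'$, use hypothesis~\ref{h.2} to find $\mathbf{y}\in\phi^{-1}(\mathbf{x})$ with $(\mathbf{y})_i=y_i$ and $\iota(\mathbf{y})$ matching the chosen index, and then strong representability of $((A_2,\mathbf{b}_2),G_2)$ produces the desired copy $H_0'$ containing $e_i$, whose vertex set spans the copy $H_0$ of $H$ in $K$. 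I expect the main obstacle to be precisely the computation of $\gamma_i$: one has to be careful that the $|G|$ appearing in $\lambda=c|K|^s/|G|$ refers to $|G_1|$ on the $((A_1,\mathbf{b}_1),G)$ side but to $|G_2|$ on the $((A_2,\mathbf{b}_2),G_2)$ side, and that the number of edges in $E_i(\mathbf{x},q)$ scales by $\beta_i=|S_i((A_2,\mathbf{b}_2),G_2)|/|S_i((A_1,\mathbf{b}_1),G_1)|$ rather than by $|G_2|/|G_1|$ when the $\phi_i$ are not all surjective in the same way — reconciling these two ratios is exactly what produces the stated formula $\gamma_i=\gamma_i'\frac{|S_i((A_2,\mathbf{b}_2),G_2)|}{|S_i((A_1,\mathbf{b}_1),G_1)|}\frac{|G_1|}{|G_2|}$, and the need for \emph{strong} representability (so that $E_i(\mathbf{x},q)$ is literally the set of edges labelled $(\mathbf{x})_i$) is what makes this bookkeeping possible at all.
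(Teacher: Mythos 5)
There is a genuine gap, and it lies exactly where you located the ``main obstacle'': your choice of $Q=Q'\times[1,\mu]$ with a fibre-separating map $\iota$ (imported from Proposition~\ref{p.mu-auto-equivalent}) is incompatible with the $\gamma_i$ you are supposed to produce, and it destroys the strong property. If each class $r^{-1}(\mathbf{x},(q',j))$ equals $r'^{-1}(\mathbf{y},q')$ for the single preimage $\mathbf{y}$ with $\iota(\mathbf{y})=j$, then its size is $p'c'|K'|^s|G_2|^{-1}\prod_{i\le m_2}\gamma_i'$ and the number of copies through an edge $e_i$ of such a class is $p'\prod_{j\le m_2}\gamma_j'/\gamma_i'$; balancing RP\ref{prop_rep2} against RP\ref{prop_rep3} then forces $\lambda\gamma_i=\lambda'\gamma_i'$, i.e.\ $\gamma_i=\gamma_i'|G_1|/|G_2|$, with no factor $\beta_i=|S_i((A_2,\mathbf{b}_2),G_2)|/|S_i((A_1,\mathbf{b}_1),G_1)|$. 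There is no leftover freedom to ``absorb the $\beta_i$'s into $p$'': $p$ is a single number, while the discrepancy is colour-dependent. Moreover RP\ref{prop_rep4} fails for your $r$: an edge of $K$ labelled $(\mathbf{x})_i$ whose $K'$-label is a preimage $y_i\neq(\mathbf{y})_i$ carries no copy in $r^{-1}(\mathbf{x},(q',j))$, and your fix of choosing $\mathbf{y}$ with $(\mathbf{y})_i=y_i$ ``and $\iota(\mathbf{y})$ matching the chosen index'' cannot work, because only $\mu/\beta_i$ of the $\mu$ fibre elements have $i$-th coordinate $y_i$, so for most indices $j$ no such $\mathbf{y}$ exists. (The analogous step in Proposition~\ref{p.mu-equivalent_1} survives only because there all $\beta_i$ are equal and the fibre has a product structure, which is why that proof quotients by the diagonal rather than separating fibres completely; neither device is available here.)

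The paper's construction does the opposite of separating: it keeps $Q=Q'$ and sets $r_q(H_0)=r_q'(H_0')$, so that $r^{-1}(\mathbf{x},q)=\bigcup_{\mathbf{y}\in\phi^{-1}(\mathbf{x})}r'^{-1}(\mathbf{y},q)$ is the union over the whole fibre, and the factor $\mu$ is pushed into $p$ (explicitly, $p=\mu p'\frac{|G_2|^{m_1-1}}{|G_1|^{m_1-1}}\bigl[\prod_{i>m_1}\gamma_i'\bigr]\bigl[\prod_{i\le m_1}|S_i((A_1,\mathbf{b}_1),G_1)|/|S_i((A_2,\mathbf{b}_2),G_2)|\bigr]$). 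Merging the fibre is precisely what makes the edge class $E_i(\mathbf{x},q)$ consist of \emph{all} edges labelled $(\mathbf{x})_i$ — hence $\beta_i$ times as many edges, which is where the stated $\gamma_i=\gamma_i'\beta_i|G_1|/|G_2|$ comes from — and what rescues RP\ref{prop_rep4}. Hypothesis~(\ref{h.2}) is then used exactly where you sensed it should be, but for RP\ref{prop_rep3}: each edge with $K'$-label $y_i$ lies in copies coming from the $\mu_i=\mu/\beta_i$ preimages $\mathbf{y}$ with $(\mathbf{y})_i=y_i$, giving the uniform count $\mu_i\,p'\prod_{j\le m_2}\gamma_j'/\gamma_i'=p\prod_{j\le m_1}\gamma_j/\gamma_i$. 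So your bookkeeping of $\beta_i$, $\mu_i$ and the $|G_1|$ versus $|G_2|$ normalisation is on target, but the representation function must pool the whole fibre of $\phi$ into one class rather than index it by $[1,\mu]$; as written, your construction proves a different (and for the paper's purposes insufficient, non-strong) representation.
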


The condition (\ref{h.2}) is not superfluous. If $A_2=(1,1)$, $A_1=(1,2)$, $b_1=b_2=0$, $G_1=G_2=\Z_2$ and $\phi(x_1,x_2)=(2x_1,x_2)$, then $\phi$ is one to one but $\phi_1$ does not satisfy (\ref{h.2}) for the solution $(0,0)\in S(A_1,G_1)$. However, the number of solutions $\mathbf{y}\in S(A_2,G_2)$ with $\phi(\mathbf{y})=\mathbf{x}$ and $(\mathbf{y})_i=y_i$ is either zero (if there is no such solution $\mathbf{y}$ with $(\mathbf{y})_i=y_i$), or it is a positive fixed value for any $i\in[1,m_1]$ if there exist some solution $\mathbf{y}\in \phi^{-1}(\mathbf{x})$ with $(\mathbf{y})_i=y_i$. The reason being that $\phi$ and $\phi_i$ are affine homomorphisms and the preimage by $\phi$ and $\phi_i$ has a coset/subgroup-like structure. Therefore, the condition (\ref{h.2}) can be rephrased as
\begin{enumerate}[(i')]
		  \setcounter{enumi}{1}
	\item \label{h.2'} Given $\mathbf{x}\in S((A_1,\mathbf{b}_1),G_1)$ and $i\in[1,m_1]$
then, for any  $y_i\in S_i((A_2,\mathbf{b}_2),G_2)$ with $\phi_i(y_i)=(\mathbf{x})_i$, there exists a $\mathbf{y}\in S((A_2,\mathbf{b}_2),G_2)$ with $\phi(\mathbf{y})=\mathbf{x}$ and $(\mathbf{y})_i=y_i$.
\end{enumerate}

\begin{proof}[Proof of Proposition~\ref{p.mu-equivalent_2}]
	Since $\phi$ is surjective, so is $\phi_i:S_i((A_2,\mathbf{b}_2),G_2)\to S_i((A_1,\mathbf{b}_1),G_1)$ for $i\in[1,m_1]$. 
	Since $\phi_i$ is affine, $|\{y_i\in S_i((A_2,\mathbf{b}_2),G_2) : \phi_i(y_i)=x_i\}|$ is the same for each $x_i\in S_i((A_1,\mathbf{b}_1),G_1)$. As $\phi$ and $\phi_i$ are affine homomorphisms, given $\mathbf{x}\in S((A_1,\mathbf{b}_1),G_1)$, the solutions $\mathbf{y}\in S((A_2,\mathbf{b}_2),G_2)$ such that $\phi(\mathbf{y})=\mathbf{x}$ can be partitioned into
	\begin{multline} \label{e.union}
		\left\{\mathbf{y}\in S((A_2,\mathbf{b}_2),G_2) : \phi(\mathbf{y})=\mathbf{x}\right\}= \\ \bigcup_{\substack{y_i \in S_i((A_2,\mathbf{b}_2),G_2)\\ \phi_i(y_i)=(\mathbf{x})_i}} \left\{\mathbf{y}\in S((A_2,\mathbf{b}_2),G_2)\; :\; \phi(\mathbf{y})=\mathbf{x} \text{ and } (\mathbf{y})_i=y_i \right\}.
\end{multline}
By the assumptions, the size of the sets $\left\{\mathbf{y}\in S((A_2,\mathbf{b}_2),G_2)\; :\; \phi(\mathbf{y})=\mathbf{x} \text{ and } (\mathbf{y})_i=y_i  \right\}$ is independent of each $y_i$ with $\phi_i(y_i)=(\mathbf{x})_i$ and we denote it by $\mu_i$. Therefore (\ref{e.union}) is an equipartition.
	Let $\beta_i=|S_i((A_2,\mathbf{b}_2),G_2)|/|S_i((A_1,\mathbf{b}_1),G_1)|$ be the number of preimages by $\phi_i$ of each $x_i\in S_i((A_1,\mathbf{b}_1),G_1)$ in $S_i((A_2,\mathbf{b}_2),G_2)$. Then $\mu_i$ is such that $\mu_i\beta_i=\mu$.

	Let $(K',H',\gamma',l',r',Q',p',c')$ be the vector defining the $\gamma'$-strong-representation for $((A_2,\mathbf{b}_2),G_2)$. Let $s$ be the uniformity of the edges of $H'$. The candidate vector $(K,H,\gamma,l,r,Q,p,c)$ 
	is defined as follows.
	\begin{itemize}
		\item $Q=Q'$, 
		$c=c'$, 
		\item 
		$\gamma_i=\gamma'_i \frac{|S_i((A_2,\mathbf{b}_2),G_2)|}{|S_i((A_1,\mathbf{b}_1),G_1)|}\frac{|G_1|}{|G_2|}$ for $i\in[1,m_1]$. $p=\mu p'\frac{|G_2|^{m_1-1}}{|G_1|^{m_1-1}}\left[\prod_{i=m_1+1}^{m_2}\gamma_i'\right] \left[\prod_{i=1}^{m_1} \frac{|S_i((A_1,\mathbf{b}_1),G_1)|}{|S_i((A_2,\mathbf{b}_2),G_2)|}\right]$.

\item $H$ and $K$ are hypergraphs on the same vertex sets as $H'$ and $K'$ respectively. $e=\{v_1,\ldots,v_s\}$ is an edge in $K$ (respectively $H$) coloured $i\in[1,m_1]$ if and only if $e$ is an edge coloured $i\in[1,m_1]$ in $K'$ (respectively $H'$.)
			\item $l(e)=\phi_i(l'(e))$ if $e$ is an edge coloured $i\in[1,m_1]$ as an edge in $K'$.
		\item Given $H_0\in C(H,K)$, let $H_0'$ be the unique copy of $H'$ in $K'$ spanned by the vertices of $H_0$. Then $r_q(H_0)=r_q'(H_0')$.	\end{itemize}
	
RP\ref{prop_rep1} is satisfied for $(K,H)$ with the same bounds and the labelling function $l'$.
By the hypothesis (\ref{h.1}), each copy of $H'$ in $K'$ spans a unique copy of $H$ in $K$ and vice-versa.
Since
\begin{equation} \label{e.345}
	r^{-1}(\mathbf{x},q)=\bigcup_{\mathbf{y}\in \phi^{-1}(S((A_1,\mathbf{b}_1),G_1))\cap S((A_2,\mathbf{b}_2),G_2)}  r'^{-1}(\mathbf{y},q)
\end{equation}
and there are $\mu$ different $\mathbf{y}\in S((A_2,\mathbf{b}_2),G_2)$ with $\phi(\mathbf{y})=\mathbf{x}$, then the union (\ref{e.345}) is disjoint and
\begin{displaymath}
		|r^{-1}(\mathbf{x},q)|=\mu|r'^{-1}(\mathbf{y},q)|=\mu p' c\frac{|K'|^s}{|G_2|}\prod_{i=1}^{m_2} \gamma_i'.
\end{displaymath}
as each set $r'^{-1}(\mathbf{y},q)$ contains  $p' c\frac{|K'|^s}{|G_2|}\prod_{i=1}^{m_2} \gamma_i'$ copies of $H'$ in it.

By the definition of $\gamma$ and $p$ we have
\begin{multline}
		|r^{-1}(\mathbf{x},q)|=\mu p' c\frac{|K'|^s}{|G_2|}\prod_{i=1}^{m_2} \gamma_i'= \\ \mu p' \frac{|K|^{s}}{|G_2|} \left[\prod_{i=m_1+1}^{m_2} \gamma_i' \right]  \left[\prod_{i=1}^{m_1}\gamma_i\frac{|S_i((A_1,\mathbf{b}_1),G_1)|}{|S_i((A_2,\mathbf{b}_2),G_2)|} \right] \frac{|G_2|^{m_1}}{|G_1|^{m_1}}  =p c\frac{|K|^s}{|G_1|}\prod_{i=1}^{m_1} \gamma_i.
\end{multline}
Since $r'$ is a $\gamma'$-representation function and $\phi=(\phi_1,\ldots,\phi_{m_1})$ defines the $\mu$-equivalence between systems (in particular, is surjective),
RP\ref{prop_rep2} is satisfied for $r$.

Given $\mathbf{x}\in S((A_1,\mathbf{b}_1),G_1)$ and $q\in Q$, let $e_i$ be an edge coloured $i$ and with  $l(e_i)=(\mathbf{x})_i$. $H_0$, a copy of $H$ in $K$, belongs to $r( \mathbf{x},q)$ and contains $e_i$ if and only if $H_0$, as a copy of $H'$ in $K'$, contains $e_i$ and belongs to one of the $r'^{-1}(\mathbf{y},q)$ with $\phi(\mathbf{y})=\mathbf{x}$.
Since $((A_2,\mathbf{b}_2),G_2)$ is $\gamma'$-strongly-represented, each set $r'^{-1}(\mathbf{y},q)$ with $(\mathbf{y})_i=l'(e_i)$ contains an $H_0'$, a copy of $H'$ in $K'$, with $e_i\in H_0'$. By RP\ref{prop_rep3}, there are $p' \frac{\prod_{j=1}^{m_2} \gamma_j'}{\gamma_i'}$ copies of $H'$ containing $e_i$ in any set $r'^{-1}(\mathbf{y},q)$ whenever $(\mathbf{y})_i=l'(e_i)$.

There are $\mu_i=\mu/\beta_i$ solutions $\mathbf{y}\in S((A_2,\mathbf{b}_2),G_2)$ such that $\phi(\mathbf{y})=\mathbf{x}$ with $(\mathbf{y})_i=l'(e_i)$. Therefore, there is a total of
\begin{align}
	\mu_i &p' \frac{\prod_{j=1}^{m_2} \gamma_j'}{\gamma_i'}
	=\frac{\mu}{\beta_i} p'
	\left[\prod_{j=m_1+1}^{m_2} \gamma_j' \right]\frac{\prod_{j=1}^{m_1} \gamma_j'}{\gamma_i'}\nonumber \\
	&= \mu {p'} \left[\prod_{j=m_1+1}^{m_2} \gamma_j'\right] 
	\frac{\left[\prod_{j=1}^{m_1} \gamma_j' \right] \left[\prod_{j=1}^{m_1}\frac{ |S_j((A_2,\mathbf{b}_2),G_2)|}{ |S_j((A_1,\mathbf{b}_1),G_1)|}\right] \frac{|G_1|^{m_1}}{|G_2|^{m_1}}}{\gamma_i' \beta_i \frac{|G_1|}{|G_2|} } \frac{|G_2|^{m_1-1}}{|G_1|^{m_1-1}} \prod_{j=1}^{m_1} \frac{ |S_j((A_1,\mathbf{b}_1),G_1)|}{ |S_j((A_2,\mathbf{b}_2),G_2)|} \nonumber \\
	&= p \frac{\prod_{j=1}^{m_1} \gamma_j}{\gamma_i} \nonumber
\end{align}
copies of $H'$ in $K'$ through $e_i$ that, seeing as copies of $H$ in $K$, belong to $r^{-1}(\mathbf{x},q)$. Hence, the vector $(K,H,\gamma,l,r,Q,p,c)$ fulfills RP\ref{prop_rep3}. 

To show RP\ref{prop_rep4}, choose $q$ and 
let $e_i$ be an edge in $K$ and 
let $\mathbf{x}$ be a solution to $((A_1,\mathbf{b}_1),G_1)$ such that $(\mathbf{x})_i=l(e_i)$. By the surjectivity of $\phi$ there exists a $\mathbf{y}\in S((A_2,\mathbf{b}_2),G_2)$ with $\phi(\mathbf{y})=\mathbf{x}$. By the assumption (\ref{h.2}), we can choose the solution $\mathbf{y}$ such that $(\mathbf{y})_i=l'(e_i)$. Since $r'^{-1}(\mathbf{y},q)$ contains a copy $H_0'$ of $H'$ with $e_i\in H_0'$, then 
$r^{-1}(\mathbf{x},q)$ contains $H_0$, the copy of $H$ over the vertices of $H_0'$, and satisfies $e_i\in H_0$. This shows RP\ref{prop_rep4} and finishes the proof of Proposition~\ref{p.mu-equivalent_2}.
\end{proof}

\paragraph{Comment.}
 In Propositions \ref{p.mu-auto-equivalent}, \ref{p.mu-equivalent_1}, and \ref{p.mu-equivalent_2}, the permutation $\sigma$ has been omitted as the variables are assumed to be properly ordered so that $\sigma(i)=i$ for $i\in[1,m_1]$.


\section{Proof of Theorem~\ref{t.rem_lem_ab_gr}: from $G$ to $\Z_n^t$} \label{s.proof_rl-lsg-1}

\paragraph{Sketch of the proof of Theorem~\ref{t.rem_lem_ab_gr}.}

Let $((A,\mathbf{b}),G)$ be a homomorphism system with $A:G^m\to G^k$.
We will see that each element of the family of the homomorphism systems on $m$ variables and $k$ equations, with $m\geq k+2$, admits a representation where the constants $\chi_1$ and $\chi_2$ involved only depend on $m$.
For any given system $((A,\mathbf{b}),G)$,
 we find a sequence of $\mu$-equivalent systems $\{((A^{(i)},\mathbf{b}^{(i)},G^{(i)})\}$, $i\in[1,\kappa]$ for some $\kappa\in \mathbb{N}$, such that $((A^{(\kappa)},\mathbf{b}^{(\kappa)}),G^{(\kappa)})$ is strongly representable. Moreover, the sequence is equipped with affine morphisms $\phi:S((A^{(i+1)},\mathbf{b}^{(i+1)},G^{(i+1)})\to S((A^{(i)},\mathbf{b}^{(i)},G^{(i)})$ that fulfill the hypotheses of
an appropriate proposition from Section~\ref{s.oper_between_representable}. By concatenating these propositions, we obtain the final result Proposition~\ref{p.repr_hom}. The final argument of the construction is summarized in Section~\ref{s.unwrap_const}. For the cases regarding  $m< k+2$ and to show the second part of Theorem~\ref{t.rem_lem_ab_gr}, the additional argument from Section~\ref{s.finish_rem_lem_dkA1} is used.

The sequence of systems $\{((A^{(i)},\mathbf{b}^{(i)},G^{(i)})\}_{i\in[1,\kappa]}$ deals with different features of the solution set $S((A,\mathbf{b}),G)$ so that, for the last element of the sequence, a $1$-strong-representation can be found using the methods from \cite{ksv13}. In Section~\ref{s.rep_indep_vector} the case of non-homogeneous systems is reduced to the homogeneous case $(A,G)$. In Section~\ref{s.repr_for_Zt_implies_G}, we observe that the representation for any abelian group can be reduced to the homocyclic case $\Z_n^t$, for some appropriate $t$ and $n$.
Section~\ref{s.representability_product_cyclics} is
devoted to the $\gamma$-representation for any system with $G=\Z_n^t$. In Section~\ref{s.hom_mat_to_integer_mat} we describe the interpretation of $A$ as an integer matrix in the case of $G=\Z_n^t$. Once we have an integer matrix, we prepare the system for any determinantal in Section~\ref{s.union_of_systems} while Section~\ref{s.gamma-effective} prepares the systems to deal with the cases where $\gamma\neq 1$. Sections~\ref{s.construction_circular_matrix} and Section~\ref{s.final_composition} are devoted to the representation by hypergraphs using the tools detailed in Section~\ref{s.circ_mat_properties}.


\subsection{Representation and the independent vector} \label{s.rep_indep_vector}

Proposition~\ref{p.hom_to_all} below shows that we can restrict ourselves to consider homogeneous systems $A\mathbf{x}=0$.

\begin{proposition}[Representation: any independent vector] \label{p.hom_to_all}
Either there is no solution to $A\mathbf{x}=\mathbf{b}$, $x\in G^m$ or
 $((A,\mathbf{0}),G)$ is $1$-auto-equivalent to $((A,\mathbf{b}),G)$. \end{proposition}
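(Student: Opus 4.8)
The plan is to use the standard observation that the solution set of an inhomogeneous linear system is either empty or a coset of the solution set of the associated homogeneous system, and that translation by any representative of this coset furnishes the required $1$-auto-equivalence.

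First, suppose $A\mathbf{x}=\mathbf{b}$ has at least one solution, say $\mathbf{x}_0\in G^m$; otherwise we are in the first alternative and there is nothing to prove. Since $A$ is a group homomorphism, for every $\mathbf{y}\in G^m$ we have $A(\mathbf{x}_0+\mathbf{y})=\mathbf{b}$ if and only if $A\mathbf{y}=\mathbf{0}$, and hence $S((A,\mathbf{b}),G)=\mathbf{x}_0+S((A,\mathbf{0}),G)$. In particular the two solution sets are in bijection, so they have the same cardinality; thus taking $\mu=1$ is consistent with the first requirement in the definition of $\mu$-equivalence.

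Next I would exhibit the maps. Put $\sigma=\mathrm{id}_{[1,m]}$ (so $m_1=m_2=m$ and $G_1=G_2=G$) and, for each $i\in[1,m]$, define the affine map $\phi_i:G\to G$ by $\phi_i(y)=(\mathbf{x}_0)_i+y$. Each $\phi_i$ is an affine automorphism of $G$: its linear part is the identity homomorphism and its translation part is the fixed element $(\mathbf{x}_0)_i\in G$. The induced map $\phi(\mathbf{y})=(\phi_1(y_1),\ldots,\phi_m(y_m))=\mathbf{x}_0+\mathbf{y}$ sends $S((A,\mathbf{0}),G)$ onto $S((A,\mathbf{b}),G)$ by the coset identity above, and it is $1$-to-$1$ and surjective with inverse $\mathbf{z}\mapsto\mathbf{z}-\mathbf{x}_0$. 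This is precisely the statement that $((A,\mathbf{0}),G)$ is $1$-auto-equivalent to $((A,\mathbf{b}),G)$.

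I do not anticipate any genuine obstacle: the only points to verify are that a constant shift of the identity is an affine automorphism — immediate from the definition of affine homomorphism — and that $\phi$ has the componentwise form demanded by the definition, which holds by construction. One may also record, as the definition of auto-equivalence notes, that $\phi_i\big(S_i((A,\mathbf{0}),G)\big)=(\mathbf{x}_0)_i+S_i((A,\mathbf{0}),G)=S_i((A,\mathbf{b}),G)$, so the projected solution sets have equal size as well; this is the form in which the proposition will be used when combined with Proposition~\ref{p.1-auto-equiv-rep}.
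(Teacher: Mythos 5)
Your proposal is correct and follows essentially the same route as the paper: translate by a particular solution $\mathbf{x}_0$ of $A\mathbf{x}=\mathbf{b}$, using the coset structure of the solution set, with each $\phi_i$ the affine shift $y\mapsto(\mathbf{x}_0)_i+y$ and $\sigma$ the identity. Your additional verifications (the coset identity, the inverse map, and the equality of projected solution sets) are straightforward elaborations of the same one-line argument the paper gives.
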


\begin{proof}[Proof of Proposition~\ref{p.hom_to_all}]
	Assume that $A\mathbf{x}=\mathbf{b}$ has a solution $\mathbf{y}=(\mathbf{y}_1,\ldots,\mathbf{y}_m)$. The map $\phi: S((A,\mathbf{0}),G) \to S((A,\mathbf{b}),G)$ with $\phi((\mathbf{x}_1,\ldots,\mathbf{x}_m))=(\mathbf{x}_1+\mathbf{y}_1,\ldots,\mathbf{x}_m+\mathbf{y}_m)$ 
defines a $1$-auto-equivalence.
\end{proof}


\subsection{Representability for $\Z_n^t$ implies representability for $G$}
\label{s.repr_for_Zt_implies_G}

This section shows how to obtain, for some system $A'$ and integers $t$ and $n$, a system $(A',\Z_n^t)$ $\mu$-equivalent to the given homogeneous system $(A,G)$. Moreover, the map defining the $\mu$-equivalence fulfills the hypothesis of Proposition~\ref{p.mu-equivalent_1}. 
Thus, a representation result for any system $(A,\Z_n^t)$ is enough.


By the Fundamental Theorem of Finite Abelian Groups, $G$ can be expressed, for some $n_1,\ldots,n_t> 1$ as the product of cyclic groups
$
G=\Z_{n_1}\times \cdots \times \Z_{n_t}, \text{ with } n_i|n_j \text{ for } i\geq j.
$
Let $G'=\Z_{n_1}^t$. The group $G$ can be seen as a quotient of $G'$. Let us denote by $\tau:G'\to G$ the quotient map
\begin{displaymath}
	\tau(a_1,\ldots,a_t)= \left(\frac{n_1}{n_1}a_1,\ldots,\frac{n_1}{n_t}a_t\right)
\end{displaymath}
and let $\beta=|G'|/|G|$. Let $\tau'$ denote the extension of $\tau$ from $G'$ to $G'^m$ using the diagonal action; if $(x_1,\ldots,x_m)=\mathbf{x}\in G'^m$ then $\tau'(\mathbf{x})=(\tau(x_1),\ldots,\tau(x_m))$.

Recall that the set of homomorphisms $A:G^m\to G^k$ are in bijection with $k\times m$ homomorphism matrices $(\vartheta_{i,j})$ for some homomorphisms $\vartheta_{i,j}:G\to G$ with
\begin{equation}
\left(
	\begin{array}{ccc}
		\vartheta_{1,1} & \cdots & \vartheta_{1,m}  \\
		\vdots  &\ddots & \vdots \\
		\vartheta_{k,1} &  \cdots & \vartheta_{k,m} \\
		\end{array}\right)
		\left(\begin{array}{c}
		x_1 \\
		\vdots \\
			x_m \\
			\end{array}\right)	
	=	\left(\begin{array}{c}
			b_1 \\
			\vdots \\
				b_k \\
				\end{array}\right)
				\iff
\sum_{i=1}^m \vartheta_{j,i}(x_i)=b_j, \; \forall j\in [1,k].\nonumber
\end{equation}
See, for instance \cite[Section 13.10, p. 66]{vandW91-2}.

By considering the matrix of homomorphisms $(\vartheta_{i,j}')=(\vartheta_{i,j}\circ \tau)$, any homomorphism $A:G^m\to G^k$ induces a homomorphism $A':G'^m\to G'^k$. If we see $\mathbf{b}\in G'^k\supset G^k$, then the system $((A,\mathbf{b}),G)$ induces a system $((A',\mathbf{b}),G')$. Indeed if $\mathbf{y}\in S((A',\mathbf{b}),G')$ then $\tau'(\mathbf{y})\in S((A,\mathbf{b}),G)$ and for any $\mathbf{x}\in S((A,\mathbf{b}),G)$, then $\tau'^{-1}(\mathbf{x})\subset S((A',\mathbf{b}),G')$ and $\tau'^{-1}(\mathbf{x})\neq \emptyset$.

\begin{observation} \label{o.same_gamma} $\tau':S((A',\mathbf{b}),G')\to S((A,\mathbf{b}),G)$ is surjective.
	If $S_i((A,\mathbf{b}),G)$ is the translated subgroup obtained by projecting the solution set to the $i$-th coordinate of $G^m$, then $\tau^{-1}(S_i((A,\mathbf{b}),G))=S_i((A',\mathbf{b}),G')$ and
\begin{displaymath}
	\frac{|G|}{|S_i((A,\mathbf{b}),G)|}=\frac{|G'|}{|S_i((A',\mathbf{b}),G')|}.
\end{displaymath}
Moreover, for any $\mathbf{x}\in S((A,\mathbf{b}),G)$, $\tau'^{-1}(\mathbf{x})=\prod_{i=1}^m \tau^{-1}((\mathbf{x})_i)$.
\end{observation}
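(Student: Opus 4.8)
The plan is to verify the four assertions of Observation~\ref{o.same_gamma} one at a time, using only that $\tau\colon G'\to G$ is a surjective group homomorphism all of whose fibres have size $\beta=|G'|/|G|$ and that $\tau'$ is its coordinate-wise extension. (Throughout one may assume the solution sets are nonempty, e.g.\ after the reduction of Proposition~\ref{p.hom_to_all}; otherwise every assertion is vacuous.) First I would record the surjectivity of $\tau'$ on solution sets: given $\mathbf{x}=(x_1,\dots,x_m)\in S((A,\mathbf{b}),G)$, choose for each $i\in[1,m]$ some $\mathbf{y}_i\in\tau^{-1}(x_i)$ (nonempty, since $\tau$ is onto) and put $\mathbf{y}=(\mathbf{y}_1,\dots,\mathbf{y}_m)$. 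Then $\tau'(\mathbf{y})=\mathbf{x}$, and since $\vartheta'_{j,i}=\vartheta_{j,i}\circ\tau$ one has $(A'\mathbf{y})_j=\sum_{i=1}^m\vartheta_{j,i}(\tau(\mathbf{y}_i))=\sum_{i=1}^m\vartheta_{j,i}(x_i)=b_j$ for every $j\in[1,k]$, so $\mathbf{y}\in S((A',\mathbf{b}),G')$. This is the computation already displayed just before the Observation; the same computation shows $\tau'^{-1}(\mathbf{x})\subseteq S((A',\mathbf{b}),G')$ for every $\mathbf{x}\in S((A,\mathbf{b}),G)$, and $\tau'^{-1}(\mathbf{x})=\prod_{i=1}^m\tau^{-1}((\mathbf{x})_i)$ is immediate from $\tau'$ being the diagonal extension of $\tau$, which is the last assertion.

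Next I would prove $\tau^{-1}(S_i((A,\mathbf{b}),G))=S_i((A',\mathbf{b}),G')$ by double inclusion. If $z=(\mathbf{y})_i$ for some $\mathbf{y}\in S((A',\mathbf{b}),G')$, then $\tau(z)=(\tau'(\mathbf{y}))_i$ lies in $S_i((A,\mathbf{b}),G)$ because $\tau'(\mathbf{y})$ is a solution over $G$; hence $z\in\tau^{-1}(S_i((A,\mathbf{b}),G))$. Conversely, if $\tau(z)\in S_i((A,\mathbf{b}),G)$ pick $\mathbf{x}\in S((A,\mathbf{b}),G)$ with $x_i=\tau(z)$ and lift $\mathbf{x}$ coordinate-wise exactly as above, but taking the $i$-th coordinate of the lift to be $z$; this is legitimate because $z\in\tau^{-1}(x_i)$ and, by the product formula for $\tau'^{-1}(\mathbf{x})$, the coordinates of a lift can be chosen independently. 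The resulting $\mathbf{y}$ lies in $S((A',\mathbf{b}),G')$ with $(\mathbf{y})_i=z$, so $z\in S_i((A',\mathbf{b}),G')$.

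For the proportion identity I would first note that $S_i((A,\mathbf{b}),G)$ is a coset of $S_i((A,\mathbf{0}),G)$: when $S((A,\mathbf{b}),G)\neq\emptyset$ it is a coset of $\ker A$, and the $\pi_i$-image of a coset of a subgroup is again a coset. Since $\tau$ is a $\beta$-to-$1$ homomorphism, $|\tau^{-1}(C)|=\beta|C|$ for any coset $C\subseteq G$; applying this with $C=S_i((A,\mathbf{b}),G)$ and combining with the previous step gives $|S_i((A',\mathbf{b}),G')|=\beta\,|S_i((A,\mathbf{b}),G)|$. Together with $|G'|=\beta|G|$ this yields $|G'|/|S_i((A',\mathbf{b}),G')|=|G|/|S_i((A,\mathbf{b}),G)|$, as claimed.

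I do not expect a genuine obstacle here; the only points needing care are bookkeeping ones: reading $\vartheta_{j,i}\circ\tau$ as a self-homomorphism of $G'$ under the usual identification so that $A'\mathbf{y}=\mathbf{b}$ makes sense for $\mathbf{b}\in G^k\subseteq G'^k$, and using the product description $\tau'^{-1}(\mathbf{x})=\prod_i\tau^{-1}((\mathbf{x})_i)$ to produce a lift with one prescribed coordinate. Both are routine, so the whole Observation reduces to the coordinate-wise structure of $\tau'$ together with the constant fibre size of $\tau$.
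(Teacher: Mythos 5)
Your proposal is correct and follows the same route the paper intends: the Observation is stated there without a separate proof, being treated as immediate from the computation $(A'\mathbf{y})_j=\sum_i\vartheta_{j,i}(\tau(\mathbf{y}_i))$ in the paragraph preceding it, and your write-up is precisely that coordinate-wise lifting/projection argument spelled out, together with the constant fibre size $|G'|/|G|$ of $\tau$ for the cardinality identity. No gaps; the bookkeeping points you flag (viewing $\vartheta_{j,i}\circ\tau$ as a self-homomorphism of $G'$ and prescribing one coordinate of a lift) are handled correctly.
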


\begin{remark} \label{r.1}
Observe that $((A',\mathbf{b}),\Z_n^t)$ is $\mu$-equivalent to $((A,\mathbf{b}),G)$ with the surjective map $\tau':S(A',G')\to S(A,G)$ and that the hypotheses of Proposition~\ref{p.mu-equivalent_1} regarding the map $\phi=\tau'$ hold by Observation~\ref{o.same_gamma}. 
\end{remark}

Therefore, using Proposition~\ref{p.hom_to_all}, it is enough to find a $\gamma$-representation for $((A',0),\Z_n^t)$, alternatively denoted by $(A',\Z_n^t)$, with $\gamma_i=|\Z_n^t|/|S_i(A',\Z_n^t)|$.


\section{Proof of Theorem~\ref{t.rem_lem_ab_gr}: $\gamma$-representability of $(A,\Z_n^t)$} \label{s.representability_product_cyclics}

In this section we prove the $\gamma$-representability of $(A,\Z_n^t)$ for homomorphism systems $A$ with $m\geq k+2$. The other cases with $m<k+2$ are treated in Section~\ref{s.finish_rem_lem_dkA1}. 
Following Section~\ref{s.repr_for_Zt_implies_G}, $A$ can be seen as a $k\times m$ matrix of homomorphisms. As previously mentioned, the construction involves creating a sequence of $\mu$-equivalent sequence, each element of the sequence being a modification of the pair matrix-group from the previous one.

\subsection{From a homomorphism to an integer matrix} \label{s.hom_mat_to_integer_mat}

Let $g_i=(0,\ldots,0,\stackrel{i}{1},0,\ldots,0)$, $i\in [1,t]$, 
be the canonical generators of $G=\Z_n^t$. 
Any variable $x_i$ in $\Z_n^t$ can be decomposed into $t$ variables $x_i=(x_{i,1},\ldots,x_{i,t})$, with $x_{i,j}\in \Z_n$. Therefore, any $k\times m$ homomorphism matrix in $\Z_n^t$ can be expressed as a $t k \times  t m$ integer matrix by replacing each homomorphism $\psi:\Z_n^t\to\Z_n^t$ by a $t\times t$ integer matrix $\Psi=\left(
			\psi_{i,j}\right)$; $\psi_{i,j}$ is the coefficient of $g_i$ in the image of $g_j$ by $\psi$ expressed as a linear combination of the generators $g_1,\ldots,g_t$. Indeed, the image of $g_j$ by $\psi$ is an element of $\Z_n^t$, hence it can be thought of as a tuple in $[0,n-1]^t$; $\psi_{i,j}$ is the $i$-th component of such tuple.

With these considerations, the system $A$ can be interpreted as an integer system of dimensions $t k\times  t m$ with the variables in $\Z_n$:	$A \left(x_{1,1},\cdots,x_{1,t},
	\cdots,\linebreak[1] x_{m,1}\linebreak[1],\linebreak[1]\cdots\linebreak[1],\linebreak[1]x_{m,t} \right)^{\top}=0$ 
with
\begin{displaymath}
A=\left(
		\begin{array}{ccc}
			\Psi_{1,1} & \cdots & \Psi_{1,m} \\
			\vdots & \ddots & \vdots \\
			\Psi_{k,1} & \cdots & \Psi_{k,m} \\
		\end{array}\right), \text{ where $\Psi_{i,j}$ is a $t\times t$ block of integers.} 
\end{displaymath}

If $A^i$ is a column of zeros in $A$, we can exchange it with any column vector whose components are multiples of $n$. 
If the determinant of the $kt\times kt$ submatrix of $A$ formed by the first $kt$ columns is zero, as a matrix with coefficients in $\Z$, then we add appropriate multiples of $n$ to the main diagonal so that the modified matrix has non-zero determinant in $\Z$.\footnote{This can be done as, for instance, $n^{i kt}$ grows faster than $(kt)! n^i$ when $i$ increases and $n\geq2$.} 
The modified matrix and the original 
are equivalent in $\Z_n$.

Even though $A$ is treated as an integer matrix for most of Section~\ref{s.representability_product_cyclics}, the arguments should take in consideration the origins of $A$ as a homomorphism matrix. In particular, the $t$ variables $x_{i,1},\ldots,x_{i,t}$ coming from $x_i$ are kept consecutive as they represent a unique variable $x_i$.


\subsection{Union of systems: independent vectors simulation}
\label{s.union_of_systems}

Let $S(A)$ denote the Smith Normal Form of $A$.
 Recall that the $i$-th determinantal divisor of $A$, denoted by $D_i(A)$ and named $i$-th determinantal for short, is the greatest common divisor of the determinants of all the $i\times i$ submatrices of $A$ (choosing $i$ rows and $i$ columns). 
The product of the first $i$ elements in the diagonal of $S(A)$, $\prod_{j=1}^i d_j$, equals the $i$-th determinantal of $A$, so $D_i(A)=\prod_{j=1}^i d_j$. $A_i$ denotes the $i$-th row of $A$ while $A^i$ its $i$-th column.

\begin{proposition}[Row multiples] \label{p.union_systems_d_k}
	Let $A$ be a $k\times m$, $m\geq k$ integer matrix. Let $d_1,\ldots, d_k$ denote its diagonal elements of the Smith Normal Form of $A$. There is a matrix $A^{\text{\tiny{(1)}}}$, equivalent to $A$ (row reduced), such that the row $A_j^\text{\tiny{(1)}}$ satisfies \[\gcd\left(\left\{A_{j,i}^{\text{\tiny{(1)}}}\right\}_{i\in[1,m]}\right)=d_j.\]
	
	 Furthermore, assume that $d_i\neq 0$ for $i\in[1,k]$. The matrix $A^{\text{\tiny{(2)}}}$, obtained from $A^{\text{\tiny{(1)}}}$ by dividing the row $A_j^\text{\tiny{(1)}}$ by $d_j$, has $k$-th determinantal one.
\end{proposition}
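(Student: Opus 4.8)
The plan is to use the Smith Normal Form decomposition $A = U\,S(A)\,V$, where $U$ is a $k\times k$ unimodular (integer) matrix, $V$ is an $m\times m$ unimodular matrix, and $S(A)$ is the $k\times m$ diagonal matrix with the invariant factors $d_1 \mid d_2 \mid \cdots \mid d_k$ on the diagonal. The key observation is that right-multiplication by the unimodular $V$ is a sequence of integer column operations, which preserves the row-equivalence class in $\Z_n$ (and, more to the point, does not change the gcd of the entries in any fixed row). So I would set $A^{\text{\tiny{(1)}}} := S(A)\,V$. Then the $j$-th row of $A^{\text{\tiny{(1)}}}$ is $d_j$ times the $j$-th row of $V$, and since $V$ is unimodular its $j$-th row is a primitive integer vector (its entries have gcd $1$, because $V$ has an integer inverse, so some integer combination of the entries of any row equals $1$). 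Hence $\gcd\bigl(\{A^{\text{\tiny{(1)}}}_{j,i}\}_{i\in[1,m]}\bigr) = d_j$, which is the first claim. One must also check that $A^{\text{\tiny{(1)}}}$ is ``equivalent to $A$ (row reduced)'' in the sense used in the paper: $A^{\text{\tiny{(1)}}} = S(A)V = U^{-1}AV$, and since $U^{-1}$ is unimodular this is an integer row combination of $AV$; because $V$ is unimodular over $\Z$ (hence invertible over $\Z_n$), the systems $A\mathbf{x}=\mathbf{0}$ and $A^{\text{\tiny{(1)}}}\mathbf{x}=\mathbf{0}$ over $\Z_n$ have the same solution set up to the change of variables $\mathbf{x}\mapsto V^{-1}\mathbf{x}$, which is exactly the equivalence needed downstream.

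For the second claim, assume $d_i\neq 0$ for all $i\in[1,k]$ and let $A^{\text{\tiny{(2)}}}$ be obtained from $A^{\text{\tiny{(1)}}}$ by dividing row $j$ by $d_j$; this is legitimate since every entry of that row is a multiple of $d_j$, and it yields $A^{\text{\tiny{(2)}}} = D^{-1}S(A)V$ where $D=\mathrm{diag}(d_1,\ldots,d_k)$. But $D^{-1}S(A)$ is just the $k\times m$ matrix $[\,I_k \mid 0\,]$, so $A^{\text{\tiny{(2)}}}$ equals the first $k$ rows taken from $V$ — i.e.\ $A^{\text{\tiny{(2)}}}$ consists of $k$ rows of a unimodular $m\times m$ matrix. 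The $k$-th determinantal $D_k(A^{\text{\tiny{(2)}}})$ is the gcd of all $k\times k$ minors of this $k\times m$ submatrix of $V$; since $V$ is unimodular, the family of all its maximal minors in any $k$ rows has gcd $1$ (this is the standard fact that the top $k$ rows of a unimodular matrix form a primitive/unimodular $k\times m$ integer matrix, equivalently they can be completed to a $\Z$-basis, equivalently the $k$-th elementary divisor of that submatrix is $1$). Hence $D_k(A^{\text{\tiny{(2)}}})=1$, as claimed.

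The only genuine point requiring a little care — the ``main obstacle'', though it is really a standard lemma — is the claim that any $k$ rows of a unimodular integer matrix form a $k\times m$ matrix of $k$-th determinantal $1$. I would prove it by noting that if $W$ is the $k\times m$ matrix formed by those rows, then $W$ has a right inverse over $\Z$ (namely the corresponding $k$ columns of $V^{-1}$, read appropriately — more precisely, $W$ together with the remaining $m-k$ rows of $V$ reconstitutes $V$, and $V V^{-1}=I_m$ gives $W\cdot(\text{first }k\text{ columns of }V^{-1})=I_k$ after picking out the right block). From $WR = I_k$ with $R$ integer, the Cauchy–Binet formula gives $1 = \det(I_k) = \sum_{|S|=k} \det(W_S)\det(R_S)$ over $k$-subsets $S$ of columns, so the $\gcd$ of the $\det(W_S)$ divides $1$, i.e.\ $D_k(W)=1$. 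I should double-check the bookkeeping of which block of $V^{-1}$ serves as the right inverse (this is where a careless index slip could occur), but conceptually the argument is: Smith Normal Form $\Rightarrow$ the row gcds are exactly the $d_j$'s and dividing them out leaves rows of a unimodular matrix $\Rightarrow$ Cauchy–Binet forces the $k$-th determinantal to be $1$. No regularity-lemma-style machinery is needed here; everything is elementary integer linear algebra, and the earlier sections (notably the discussion of determinantals in Section~\ref{s.union_of_systems}) already supply the vocabulary.
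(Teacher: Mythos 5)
Your proof is correct and takes essentially the same route as the paper: set $A^{\text{\tiny{(1)}}}=S(A)V=U^{-1}A$, read off the row gcds from the primitivity of the rows of the unimodular $V$, and note that $A^{\text{\tiny{(2)}}}=(I_k\mid 0)V$ has $k$-th determinantal $1$ (the paper simply observes that $(I_k\mid 0)$ is the Smith Normal Form of $A^{\text{\tiny{(2)}}}$, i.e.\ invariance of determinantal divisors under unimodular multiplication, while your Cauchy--Binet argument makes that step explicit). Only beware the small slip ``$S(A)V=U^{-1}AV$'': with your convention $A=U\,S(A)\,V$ one has $S(A)V=U^{-1}A$, so $A^{\text{\tiny{(1)}}}$ is row-equivalent to $A$ outright and no change of variables is needed.
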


\begin{proof}[Proof of Proposition~\ref{p.union_systems_d_k}]
	Let $S=U^{-1}AV^{-1}$ be the Smith Normal Form of $A$, where $U$
	and  $V$ are integer unimodular matrices that convey, respectively, the row and column operations
	 that transform $A$ into $S$. We have $S=\left(D|0\right)$,
	where $D$ is a $k\times k$ diagonal integer matrix with
	$\det(D)=D_k(A)$ and $0$ is an all--zero $k\times (m-k)$
	matrix. $d_i$ is the $i$-th element in the main diagonal of $D$. Let $A^{\text{\tiny{(1)}}}=U^{-1}A=SV$. Notice that the system $A^{\text{\tiny{(1)}}}\textbf{x}=\mathbf{0}$ is equivalent to $A\textbf{x}=\mathbf{0}$. 
	
	As $A^{\text{\tiny{(1)}}}$ has been obtained from $S$ by column operations using integer coefficients, the $j$-th row $A_j^\text{\tiny{(1)}}$ is formed by integer multiples of $d_j$. Since $V$ is unimodular, then \[\gcd\left(\left\{A_{j,i}^{\text{\tiny{(1)}}}\right\}_{i\in[1,m]}\right)=d_j,\] which proves the first part of the statement. Let $A^{\text{\tiny{(2)}}}$ be the matrix obtained by dividing each row $A_j^{\text{\tiny{(1)}}}$ by $d_j$. We have $A^{\text{\tiny{(2)}}}=S^{\text{\tiny{(2)}}}V$, where $S^{\text{\tiny{(2)}}}=\left(I_k|0\right)$ is the Smith Normal Form of $A^{\text{\tiny{(2)}}}$ and $I_k$ is the $k\times k$ identity matrix. 
	This completes the proof.
	\end{proof}

The integer $d_i$ induces a homomorphism $d_i:G\to G$ with $d_i(x)=d_i x=\sum_{j=1}^{d_i}x$.
Let $\mathcal{P}_{d_i}(G)$ denote the set $d_i^{-1}(0)\subset G$, this is, the subgroup of preimages of $0$ by the homomorphism induced by $d_i$ inside $G$.

\begin{observation}[Solution set] \label{o.sol_set}
	Using Proposition~\ref{p.union_systems_d_k}: 
	\begin{displaymath}
		S(A,G)=\bigcup_{\mathbf{b} 
		} S((A^{\text{\tiny{(2)}}},\mathbf{b}),G), \; \text{for } \mathbf{b}\in\prod_{i=1}^k \mathcal{P}_{d_i}(G),
	\end{displaymath}
	 where $d_i$ is the greatest common divisor of the $i$-th row of $A^{\text{\tiny{(1)}}}$. 
\end{observation}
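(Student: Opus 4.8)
The plan is to first peel off the row reduction and then treat each equation separately. By Proposition~\ref{p.union_systems_d_k} we have $A^{\text{\tiny{(1)}}}=U^{-1}A$ for an integer unimodular matrix $U$, so multiplication by $U^{-1}$ is an automorphism of $G^k$; hence $A\mathbf{x}=0$ if and only if $A^{\text{\tiny{(1)}}}\mathbf{x}=0$, that is, $S(A,G)=S(A^{\text{\tiny{(1)}}},G)$. It therefore suffices to prove $S(A^{\text{\tiny{(1)}}},G)=\bigcup_{\mathbf{b}}S((A^{\text{\tiny{(2)}}},\mathbf{b}),G)$ with $\mathbf{b}$ ranging over $\prod_{i=1}^{k}\mathcal{P}_{d_i}(G)$.

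The key point is that, by the very definition of $A^{\text{\tiny{(2)}}}$, one has $A^{\text{\tiny{(1)}}}_{j,i}=d_j\,A^{\text{\tiny{(2)}}}_{j,i}$ for all $i$, with $A^{\text{\tiny{(2)}}}_{j,i}\in\Z$. Using the $\Z$-module structure of $G$, for any $\mathbf{x}\in G^m$ and any $j\in[1,k]$ this yields $(A^{\text{\tiny{(1)}}}\mathbf{x})_j=d_j\,(A^{\text{\tiny{(2)}}}\mathbf{x})_j$ in $G$, whence $(A^{\text{\tiny{(1)}}}\mathbf{x})_j=0$ if and only if $(A^{\text{\tiny{(2)}}}\mathbf{x})_j\in d_j^{-1}(0)=\mathcal{P}_{d_j}(G)$.

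Combining these equivalences over all $j$ shows that $\mathbf{x}\in S(A^{\text{\tiny{(1)}}},G)$ exactly when $A^{\text{\tiny{(2)}}}\mathbf{x}=\mathbf{b}$ for some $\mathbf{b}=(b_1,\ldots,b_k)$ with each $b_j\in\mathcal{P}_{d_j}(G)$, which is precisely membership in the displayed union. For the forward inclusion one takes $\mathbf{b}:=A^{\text{\tiny{(2)}}}\mathbf{x}$; for the reverse one runs the equivalences backwards, using $d_j b_j=0$. Since $\mathbf{b}$ is then forced to be $A^{\text{\tiny{(2)}}}\mathbf{x}$, the union is in fact disjoint, although disjointness is not needed here.

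There is no genuinely hard step; the only thing to watch is the degenerate case $d_j=0$ (an all-zero row of $A^{\text{\tiny{(1)}}}$), where the division producing $A^{\text{\tiny{(2)}}}$ is not literally defined. There one simply keeps $A^{\text{\tiny{(2)}}}_j=A^{\text{\tiny{(1)}}}_j=0$ and uses $\mathcal{P}_0(G)=G$, so the $j$-th equation constrains neither side and the argument is unaffected; in the setting of Section~\ref{s.hom_mat_to_integer_mat}, where the pertinent square submatrix has been arranged to have nonzero determinant over $\Z$, all the $d_j$ are nonzero and this case does not arise.
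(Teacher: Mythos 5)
Your proof is correct and follows essentially the same route as the paper: both reduce to $A^{\text{\tiny{(1)}}}\mathbf{x}=\mathbf{0}$ via the unimodular row reduction, observe that the $j$-th equation reads $d_j(A^{\text{\tiny{(2)}}}\mathbf{x})_j=0$, i.e.\ $(A^{\text{\tiny{(2)}}}\mathbf{x})_j\in\mathcal{P}_{d_j}(G)$, and obtain the reverse inclusion by multiplying each equation by $d_j$. Your extra remarks on the degenerate case $d_j=0$ and on disjointness of the union are consistent with the paper's setup (where the $d_j$ are arranged to be nonzero) but are not needed.
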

\begin{proof}[Proof of Observation~\ref{o.sol_set}]
	Let $\textbf{x}\in G^m$ be a solution to $A\mathbf{x}=\mathbf{0}$, or, equivalently, $A^{\text{\tiny{(1)}}}\mathbf{x}=\mathbf{0}$.
	Observe the $j$-th equation for $A^{\text{\tiny{(1)}}}$:
	\begin{displaymath}
	A^{\text{\tiny{(1)}}}_{j,1}x_1 + \cdots + A^{\text{\tiny{(1)}}}_{j,m} x_m=0
	\iff
	d_j\left(A^{\text{\tiny{(2)}}}_{j,1}x_1+ \cdots + A^{\text{\tiny{(2)}}}_{j,m} x_m\right)=0.
	\end{displaymath}
Thus, $A^{\text{\tiny{(2)}}}_{j,1}x_1+ \cdots + A^{\text{\tiny{(2)}}}_{j,m} x_m$ is an element of $\mathcal{P}_{d_j}(G)$. Doing the same for all the rows (equations) of the system gives us that $A^{\text{\tiny{(2)}}}\mathbf{x}=\mathbf{b}$ for some independent vector $\mathbf{b}$ in $\prod_{i=1}^k \mathcal{P}_{d_i}(G) \subset G^k$. Also, any solution to $A^{\text{\tiny{(2)}}}\mathbf{x}=\mathbf{b}$ for some $\mathbf{b}\in\prod_{i=1}^k \mathcal{P}_{d_i}(G)$ is a solution to $A^{\text{\tiny{(1)}}}\mathbf{x}=\mathbf{0}$ by multiplying the $i$-th equation by $d_i$.
\end{proof}

We introduce dummy variables $y_j\in G$ to account for those independent vectors that occur by Observation~\ref{o.sol_set}. The variables $y_i\in G$ are called \emph{simulating} variables.

\begin{observation}[Simulating the independent vector for $\Z_n^s$]\label{o.simul_small_tor_p-groups}
	Assume $G=\Z_n^{s}$.
 For each row $A_j^{\text{\tiny{(1)}}}$, the equation
	\begin{displaymath}
		A_{j,1}^{\text{\tiny{(2)}}}x_1+\cdots+A_{j,m}^{\text{\tiny{(2)}}}x_m -\frac{n}{\gcd(n,d_j)}y_j=0,
	\end{displaymath}
where $y_j$ is a new variable with $y_j\in G$,
	is $|\mathcal{P}_{n/\gcd(n,d_j)}(G)|$-auto-equivalent to
\begin{displaymath} 
	A_{j,1}^{\text{\tiny{(1)}}}x_1+\cdots+A_{j,m}^{\text{\tiny{(1)}}}x_m=d_j\left(A_{j,1}^{\text{\tiny{(2)}}}x_1+\cdots+A_{j,m}^{\text{\tiny{(2)}}}x_m\right)=0.
	\end{displaymath}
 The application  $(x_1,\ldots,x_m,y_i) \to (x_1,\ldots,x_m)$ gives the $|\mathcal{P}_{n/\gcd(n,d_j)}(G)|$-auto-equivalence.
	
	Moreover, for each value of the $j$-th component of the independent vector	$g\in \mathcal{P}_{d_j}(G)$, 
	there are $|G|/|\mathcal{P}_{d_j}(G)|$ values for $y_j$ with 
	\begin{displaymath}
		\frac{n}{\gcd(n,d_j)}y_j=g.
	\end{displaymath}
\end{observation}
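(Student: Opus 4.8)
The plan is to verify both assertions directly, the substantive content being a short computation inside the cyclic group $\Z_n$. Write $d:=d_j$ and $e:=\gcd(n,d_j)$, and recall from Proposition~\ref{p.union_systems_d_k} that $A_{j,i}^{\text{\tiny{(1)}}}=d\,A_{j,i}^{\text{\tiny{(2)}}}$ as integers for every $i$; hence, setting $L(\mathbf{x}):=\sum_{i=1}^{m}A_{j,i}^{\text{\tiny{(2)}}}x_i$, a homomorphism $G^m\to G$ with $G=\Z_n^{s}$, we have $\sum_i A_{j,i}^{\text{\tiny{(1)}}}x_i=d\,L(\mathbf{x})$ in $G$. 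The key observation is that, in $\Z_n$, the subgroup $\mathcal{P}_d(\Z_n)=\ker(\,\cdot\, d\colon\Z_n\to\Z_n)$ and the subgroup $\operatorname{Im}(\,\cdot\,\tfrac{n}{e}\colon\Z_n\to\Z_n)=\tfrac{n}{e}\Z_n$ coincide: both have order $e$ (the first because $|\ker(\,\cdot\, d)|=\gcd(n,d)=e$, the second because $|\tfrac{n}{e}\Z_n|=n/\gcd(n,\tfrac n e)=n/\tfrac n e=e$, using that $\tfrac n e$ divides $n$), and a cyclic group has a unique subgroup of each order dividing its size. Taking the $s$-fold direct product, $\mathcal{P}_d(G)=\operatorname{Im}(\,\cdot\,\tfrac n e\colon G\to G)$ while $\ker(\,\cdot\,\tfrac n e\colon G\to G)=\mathcal{P}_{n/e}(G)$ has order $(n/e)^s=|G|/|\mathcal{P}_d(G)|$.

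This immediately yields the ``moreover'' part: for $g\in\mathcal{P}_d(G)=\operatorname{Im}(\,\cdot\,\tfrac n e)$ the equation $\tfrac n e\,y_j=g$ is solvable in $G$, and its solution set is a coset of $\ker(\,\cdot\,\tfrac n e)=\mathcal{P}_{n/e}(G)$, hence has exactly $|\mathcal{P}_{n/e}(G)|=|G|/|\mathcal{P}_d(G)|$ elements. For the auto-equivalence, I take $\sigma$ to be the inclusion $[1,m]\hookrightarrow[1,m+1]$ and each $\phi_i=\mathrm{id}_G$ (affine automorphisms), so the associated map is $\phi(x_1,\dots,x_m,y_j)=(x_1,\dots,x_m)$, exactly the one named in the statement. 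It remains to check that $\phi$ restricts to a surjective $\mu$-to-$1$ map from the solution set of the equation with $y_j$ onto the solution set of $\sum_i A_{j,i}^{\text{\tiny{(1)}}}x_i=0$, where $\mu=|\mathcal{P}_{n/\gcd(n,d_j)}(G)|$. If $(x_1,\dots,x_m,y_j)$ satisfies $L(\mathbf{x})=\tfrac n e\,y_j$, then $d\,L(\mathbf{x})=\tfrac d e\,(n\,y_j)=0$ in $\Z_n^{s}$ (using $e\mid d$), so $\sum_i A_{j,i}^{\text{\tiny{(1)}}}x_i=0$; conversely, if $\sum_i A_{j,i}^{\text{\tiny{(1)}}}x_i=d\,L(\mathbf{x})=0$, then $L(\mathbf{x})\in\mathcal{P}_d(G)=\operatorname{Im}(\,\cdot\,\tfrac n e)$, so some $y_j$ with $\tfrac n e\,y_j=L(\mathbf{x})$ exists and $(\mathbf{x},y_j)$ lies in the larger solution set and maps onto $\mathbf{x}$. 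Finally, for a fixed solution $\mathbf{x}$ of $\sum_i A_{j,i}^{\text{\tiny{(1)}}}x_i=0$, the fibre $\phi^{-1}(\mathbf{x})$ inside the larger solution set is $\{y_j : \tfrac n e\,y_j=L(\mathbf{x})\}$, a coset of $\mathcal{P}_{n/e}(G)$ of size $\mu$ independent of $\mathbf{x}$; summing over all such $\mathbf{x}$ gives the required relation between the two solution-set cardinalities, so the systems are $\mu$-auto-equivalent.

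The argument is elementary throughout; the one step needing a little care is the identification $\mathcal{P}_d(\Z_n)=\tfrac n e\,\Z_n$ in the cyclic group, i.e. that the kernel of multiplication by $d$ equals the image of multiplication by $n/\gcd(n,d)$, together with the attendant cardinality bookkeeping after passing to $\Z_n^{s}$. Once this is in place, the fact that multiplying the new equation by $d$ annihilates the $y_j$ term (because $\gcd(n,d)\mid d$), combined with the standard description of fibres of a homomorphism over points of its image as cosets of its kernel, delivers the $\mu$-to-$1$ surjectivity and hence the claimed auto-equivalence. (We may and do assume $d_j\ge 1$, as guaranteed by Proposition~\ref{p.union_systems_d_k} and the normalisation of $A$ in Section~\ref{s.hom_mat_to_integer_mat}.)
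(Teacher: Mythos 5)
Your proposal is correct and follows essentially the same route as the paper: the heart of both arguments is that multiplication by $n/\gcd(n,d_j)$ on $\Z_n^{s}$ has image exactly $\mathcal{P}_{d_j}(G)$ and kernel $\mathcal{P}_{n/\gcd(n,d_j)}(G)$, which simultaneously gives the surjectivity onto the original solution set and the constant fibre size $|G|/|\mathcal{P}_{d_j}(G)|$. You merely skip the paper's intermediate variable $\overline{y}_j\in\Z_{\gcd(n,d_j)}^{s}$ and verify the $\mu$-to-$1$ property directly, which is a presentational difference, not a different argument.
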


\begin{proof}[Proof of Observation~\ref{o.simul_small_tor_p-groups}]
	Since $G=\mathbb{Z}_{n}^{s}$ then $\mathcal{P}_{d_j}(G)\cong \mathbb{Z}_{\gcd(n,d_j)}^{s}$. 
Observe that the introduction of $\overline{y}_j$ in
	\begin{displaymath}
		A_{j,1}^{\text{\tiny{(2)}}}x_1+\cdots+A_{j,m}^{\text{\tiny{(2)}}}x_m -\overline{y}_j=0
	\end{displaymath}
	with $\overline{y}_j\in \mathbb{Z}_{\gcd(n,d_j)}^{s}$ simulates the independent vector.
	
	As $\frac{n}{\gcd(n,d_j)}: \mathbb{Z}_{n}^{s} \to \mathbb{Z}_{\gcd(n,d_j)}^{s}$ with $\frac{n}{\gcd(n,d_j)}(g)=\frac{n}{\gcd(n,d_j)}g$ is a $|\mathcal{P}_{n/\gcd(n,d_j)}(G)|$-to-$1$ surjective homomorphism, we can replace the variable $\overline{y}_j\in \mathbb{Z}_{\gcd(n,d_j)}^{s}$ by the variable $y_j\in \mathbb{Z}_{n}^{s}$ multiplied by $\frac{n}{\gcd(n,d_j)}$ and obtain the two parts of the observation.
\end{proof}

	Let $A^{\text{\tiny{(3)}}}$ denote the new matrix of the system with the simulating variables. This is, $A^{\text{\tiny{(3)}}}=(A^{\text{\tiny{(2)}}} \; Y)$
	where $Y$\label{page.Y} is a collection of columns of a $k\times k$ diagonal integer matrix.

\textbf{Remark.} If $A$ is a $tk\times tm$ integer matrix coming from a homomorphism matrix, then we use Observation~\ref{o.simul_small_tor_p-groups} on each row with $G=\Z_n$ (or $s=1$). Additionally, Observation~\ref{o.sol_set} should consider the matrices as $tk\times tm$ integer matrices and $\mathbf{b}\in \Z_n^{tk}$. Adding the simulating variables is only needed when $\gcd(d_{i},n)\neq 1$. To simplify the arguments, we may add some additional columns in the matrix $Y$, with its coefficients being multiples of $n$, so that the final matrix $A^{\text{\tiny{(3)}}}$ has dimensions $tk\times tm^{\text{\tiny{(3)}}}$, with $m^{\text{\tiny{(3)}}}=m+k$. 
Since $D_{tk}(A^{\text{\tiny{(2)}}})=1$, then $D_{tk}(A^{\text{\tiny{(3)}}})=1$.

\begin{remark}\label{r.2}
	The system $(A^{\text{\tiny{(3)}}},\Z_n^t)$ is $\mu$-auto-equivalent to $(A^{\text{\tiny{(1)}}},\Z_n^t)$ with 
	\begin{align}
		\phi: S(A^{\text{\tiny{(3)}}},\Z_n^t) &\longrightarrow S(A^{\text{\tiny{(1)}}},\Z_n^t) \nonumber \\
		(x_1,\ldots,x_m,x_{m+1},\ldots,x_{m^{\text{\tiny{(3)}}}}) &\longmapsto (x_1,\ldots,x_m), \nonumber
	\end{align}
	where $\mu$ is the number of preimages by $\phi$ of each $\mathbf{x}\in S(A^{\text{\tiny{(1)}}},\Z_n^t)$ in $S(A^{\text{\tiny{(3)}}},\Z_n^t)$. If $m^{\text{\tiny{(3)}}}=m+k$ then $\mu=\prod_{i=1}^{tk}\frac{n}{\gcd(d_i,n)}$. \end{remark}


\subsection{From the determinantal to the determinant} \label{s.determinantal_to_determinant}

\begin{lemma}[Matrix extension, Lemma~9 in \cite{ksv13}]
 \label{lem:ext-mat}
	Let $A$ be a $k\times m$ integer matrix, $m\geq k$. 
There is an $m\times m$ integer matrix $N$ that contains $A$ in its   first $k$ rows and is such that $\det(N)=D_k(A)$.
\end{lemma}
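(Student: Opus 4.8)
The plan is to build the extension $N$ row by row, using the Smith Normal Form of $A$ to control determinants. Write $S = U^{-1} A V^{-1}$ for the Smith Normal Form, where $U$ is a $k\times k$ unimodular matrix, $V$ is an $m\times m$ unimodular matrix, and $S = (D \mid 0)$ with $D = \mathrm{diag}(d_1,\ldots,d_k)$ and $\det(D) = D_k(A)$. The key observation is that $A = U S V$, so the row space of $A$ over $\mathbb{Q}$ is spanned by the first $k$ rows of $V$ (scaled), and more precisely $A V^{-1} = U S = U(D\mid 0)$. The idea is to append to $A$ the last $m-k$ rows of $V^{-1}$; call the resulting $m\times m$ matrix $N$. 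Then one computes $N V^{-1}$ as a block matrix and reads off its determinant.

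The key steps, in order, are as follows. First, set $N = \begin{pmatrix} A \\ B \end{pmatrix}$ where $B$ is the $(m-k)\times m$ matrix consisting of the bottom $m-k$ rows of $V^{-1}$ — equivalently, $B = (0 \mid I_{m-k}) V^{-1}$. Second, multiply on the right by $V$: the top block becomes $AV$, and since $A = USV$... wait, more carefully, $AV^{-1}$ is what simplifies, so instead multiply $N$ on the right by $V^{-1}$ is not quite it either; the cleanest route is to note $A = USV$ gives $AV^{-1}V = A$, so consider $N' = N \cdot W$ for a suitable unimodular $W$. Concretely, take $W = V^{-1}$ so that the bottom block of $NW = NV^{-1}$ is $(0\mid I_{m-k})V^{-1}V^{-1}$ — this is getting circular, so instead define $B$ to be the bottom $m-k$ rows of $V$ directly and compute $NV^{-1}$: the top block is $AV^{-1} = US = (UD \mid 0)$ and the bottom block is $(0\mid I_{m-k})$, giving a block lower-triangular-ish matrix whose determinant is $\det(UD)\cdot \det(I_{m-k}) = \pm D_k(A)$. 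Third, absorb the sign and the unimodular factors: since $\det(V^{-1}) = \pm 1$, we get $\det(N) = \pm \det(U)\det(D) = \pm D_k(A)$, and if the sign is wrong we negate one of the appended rows of $B$ (for $m > k$) or, in the edge case $m = k$, we take $N = A$ directly so $\det(N) = \det(A)$; a separate check confirms $\det(A) = D_k(A)$ up to sign, and again negate a row if needed. Finally, observe $N$ has integer entries because $A$ is integral and $V$ (hence its rows) is integral, and $N$ contains $A$ in its first $k$ rows by construction.

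The main obstacle I expect is purely bookkeeping: getting the block computation of $NV^{-1}$ (or $NV$, depending on convention) arranged so that the product is genuinely block-triangular with the right blocks in the right places, and tracking the sign of the determinant through the unimodular factors. There is no deep difficulty — the substance is entirely contained in the structure $A = USV$ with $S = (D\mid 0)$ — but one must be careful about whether $V$ acts on columns of $A$ or $V^{-1}$ does, and about which of $V$, $V^{-1}$ supplies the extra rows. The sign ambiguity is handled trivially by negating an appended row when $m > k$; the only genuinely distinct case to note explicitly is $m = k$, where there are no rows to append and one simply verifies $|\det A| = D_k(A)$, which is immediate since $D_k(A)$ is by definition the gcd of all $k\times k$ minors and here there is only the one minor $\det A$.
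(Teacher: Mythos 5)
Your construction is essentially the paper's: both take the Smith Normal Form (in your convention $A=U(D\mid 0)V$), append to $A$ the bottom $m-k$ rows of $V$, and read off the determinant from the block-diagonal product $NV^{-1}=\bigl(\begin{smallmatrix} UD & 0 \\ 0 & I_{m-k}\end{smallmatrix}\bigr)$; the paper packages the very same matrix as $N=U'^{-1}S'V^{-1}$ with $S'=\bigl(\begin{smallmatrix} D & 0 \\ 0 & I_{m-k}\end{smallmatrix}\bigr)$, whose last $m-k$ rows are exactly those rows of the inverse column-operation matrix. The sign issue you flag (fixable by negating an appended row when $m>k$, and unavoidable only in the degenerate case $m=k$ with $\det A<0$) is glossed over by the paper as well and is immaterial in every application, so your argument matches the intended proof.
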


Let us include a proof for completeness.
\begin{proof}[Proof of Lemma~\ref{lem:ext-mat}]
	Let $S=UAV=(D|0)$ be the Smith Normal Form of $A$, where $U$ and $V$ are unimodular matrices and $D$ is a $k\times k$ diagonal matrix. Consider 
\begin{displaymath}
	S'=\begin{pmatrix}
	D & 0\\
	0 & I_{m-k}\\
\end{pmatrix}\; \text{ and }\;
U'=\begin{pmatrix}
U & 0 \\
0 & I_{m-k} \\
\end{pmatrix}.
\end{displaymath}
Then $N=U'^{-1} S' V^{-1}$ is an integer matrix as $U'$ is unimodular and satisfy the thesis of the lemma.
\end{proof}

As $D_{tk}(A^{\text{\tiny{(2)}}})=1$, we use Lemma~\ref{lem:ext-mat} to extend the $tk\times tm$ integer matrix $A^{\text{\tiny{(2)}}}$ to
 a $tm\times tm$ determinant $1$ integer matrix
\begin{displaymath}
	N=\begin{pmatrix}
	A^{\text{\tiny{(2)}}}\\
	M\\
\end{pmatrix},\;
		\det\begin{pmatrix}
		A^{\text{\tiny{(2)}}}\\
		M\\
	\end{pmatrix}=\det(N)=1, 
\end{displaymath}
which is a part of the matrix
\begin{displaymath}
	A^{\text{\tiny{(4)}}}=\begin{pmatrix}
	A^{\text{\tiny{(2)}}} & 0 & Y\\
	M & I_{tm-tk} & 0\\
	\end{pmatrix}.
\end{displaymath}
Therefore, the matrix $A^{\text{\tiny{(4)}}}$ can be row reduced into a new matrix $A^{\text{\tiny{(5)}}}$ in such a way that
\begin{displaymath}
	A^{\text{\tiny{(5)}}}=\begin{pmatrix}I_{tm} &B\end{pmatrix}\sim A^{\text{\tiny{(4)}}}
\end{displaymath}
for some $tm\times\left[(tm-tk)+(tm^{\text{\tiny{(3)}}}-tm) \right]= tm\times t\overline{m}^{\text{\tiny{(4)}}}$ integer matrix $B$. 
Moreover, we can assume that the columns of the matrix $I_{tm}$ from $A^{\text{\tiny{(5)}}}$ correspond to the ordered original variables $((x_{1,1},\ldots,x_{1,t}),\linebreak[1]\cdots,(x_{m,1},\ldots,x_{m,t}))$.
 Observe that $A^{\text{\tiny{(5)}}}$ has $tm$ rows and $tm^{\text{\tiny{(5)}}}$ columns, where  $m^{\text{\tiny{(5)}}}=m+\overline{m}^{\text{\tiny{(4)}}}$.

\begin{remark} \label{r.3}
The system $((A^{\text{\tiny{(4)}}},\mathbf{0}),G)$, hence $((A^{\text{\tiny{(5)}}},\mathbf{0}),G)$, is $1$-auto-equivalent to 
$((A^{\text{\tiny{(3)}}},\mathbf{0}),G)$.
Indeed, for any solution $\mathbf{y}\in S(A^{\text{\tiny{(4)}}},G)$ there exists one, and only one, solution $\mathbf{x}\in S(A^{ \text{\tiny{(3)}}},G)$ such that the projection 
\begin{displaymath}
	\mathbf{y}=(\mathbf{y}_1,\ldots,\mathbf{y}_{tm^{\text{\tiny{(5)}}}}) \longmapsto (\mathbf{y}_1,\ldots,\mathbf{y}_{tm},\mathbf{y}_{tm+(tm-tk)+1},\ldots,\mathbf{y}_{tm^{\text{\tiny{(5)}}}} )
\end{displaymath}
gives $\mathbf{x}$.\footnote{The coordinates to be omitted correspond to the columns of $\begin{pmatrix}0 & I_{tm-tk}\end{pmatrix}^{\top}$ for $A^{\text{\tiny{(4)}}}$. The value of these variables is determined by the values on the first $m$ coordinates.}
\end{remark}

Let us show an observation that is helpful in Section~\ref{s.final_composition}.

\begin{observation} \label{o.number_of_solutions}
	Let $A=\begin{pmatrix}A' &B\end{pmatrix}$, with $B$ being a $k\times m$, $m\geq k$ integer matrix and $A'$ denotes a square matrix of dimension $k$. Let $n$ be a positive integer and assume that $\gcd(D_{k}(B),n)=1$. Then, for any value of $x_1,\ldots,x_k$, $x_i\in \Z_n$, there are $n^{m-k}$ values for $(x_{k+1},\ldots,x_{k+m})\in \Z_n^{m}$ with $A\mathbf{x}=0$.
\end{observation}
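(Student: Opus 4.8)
The plan is to reduce the count to a diagonal system by passing to the Smith Normal Form of $B$, in the same spirit as the proof of Proposition~\ref{p.union_systems_d_k}. First I would dispose of the degenerate case: if $B$ has rank strictly less than $k$ over $\Q$, then every $k\times k$ minor of $B$ vanishes, so $D_k(B)=0$ and $\gcd(D_k(B),n)=\gcd(0,n)=n$; the hypothesis $\gcd(D_k(B),n)=1$ then forces $n=1$ and the statement is vacuous. So from now on one may assume $\operatorname{rank}_{\Q}(B)=k$, hence $D_k(B)\neq 0$.

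Next, fix $x_1,\dots,x_k\in\Z_n$ and put $\mathbf{b}=-A'(x_1,\dots,x_k)^{\top}\in\Z_n^k$. The tuples $(x_{k+1},\dots,x_{k+m})\in\Z_n^{m}$ we want to count are exactly the solutions $\mathbf{y}\in\Z_n^{m}$ of $B\mathbf{y}=\mathbf{b}$. Thus it suffices to prove that for every $\mathbf{b}\in\Z_n^k$ the fibre $\{\mathbf{y}\in\Z_n^{m} : B\mathbf{y}=\mathbf{b}\}$ has exactly $n^{m-k}$ elements (in particular the reduction map $\mathbf{y}\mapsto B\mathbf{y}$ is then automatically onto $\Z_n^k$).

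Now write the Smith Normal Form $S=UBV=(D\,|\,0)$ with $U\in\mathrm{GL}_k(\Z)$, $V\in\mathrm{GL}_m(\Z)$ and $D=\operatorname{diag}(d_1,\dots,d_k)$; recall, as noted in Section~\ref{s.union_of_systems}, that $\prod_{i=1}^{k} d_i=\pm D_k(B)$. Since $U$ and $V$ are unimodular they stay invertible modulo $n$, so $\mathbf{y}\mapsto V^{-1}\mathbf{y}$ is a bijection of $\Z_n^{m}$ and $\mathbf{b}\mapsto U\mathbf{b}$ a bijection of $\Z_n^k$. Substituting $\mathbf{z}=V^{-1}\mathbf{y}$ and $\mathbf{c}=U\mathbf{b}$ turns $B\mathbf{y}=\mathbf{b}$ into $S\mathbf{z}=\mathbf{c}$, i.e. $d_i z_i=c_i$ for $i\in[1,k]$ with $z_{k+1},\dots,z_m$ unconstrained. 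The hypothesis $\gcd(D_k(B),n)=1$ forces $\gcd(d_i,n)=1$ for every $i$, so each $d_i$ is a unit in $\Z_n$ and the $i$-th equation has the unique solution $z_i=d_i^{-1}c_i$. Hence there are exactly $n^{m-k}$ admissible $\mathbf{z}$ (free choice of the last $m-k$ coordinates), thus exactly $n^{m-k}$ admissible $\mathbf{y}$, which is the claim.

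I do not expect a genuine obstacle here; the only points needing a word of care are the degenerate rank case treated above and the identity $\prod_i d_i=\pm D_k(B)$, which is standard and already recalled in the paper.
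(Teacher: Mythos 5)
Your argument is correct. It does, however, take a slightly different route from the paper's own proof of Observation~\ref{o.number_of_solutions}: there, the matrix $B$ is extended via Lemma~\ref{lem:ext-mat} to a square $m\times m$ matrix $\left(\begin{smallmatrix} B \\ M \end{smallmatrix}\right)$ with determinant $D_k(B)$, coprime with $n$; one then adjoins $m-k$ dummy variables and equations to get a $1$-auto-equivalent system, and the solutions $(x_{k+1},\ldots,x_{k+m})$ are parameterized by the $n^{m-k}$ choices of the dummy variables, uniqueness coming from invertibility of the extended square matrix modulo $n$. You instead diagonalize $B$ directly by its Smith Normal Form and count solutions of the diagonal system after a unimodular change of variables. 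The two proofs rest on the same algebra (unimodular matrices stay invertible mod $n$, and $\gcd(D_k(B),n)=1$ forces each invariant factor $d_i$ to be a unit in $\Z_n$, since $\prod_i d_i=D_k(B)$ as recalled in Section~\ref{s.union_of_systems}); indeed Lemma~\ref{lem:ext-mat} is itself proved through the Smith Normal Form. What your version buys: it is self-contained (no extension lemma), it makes explicit that the map $\mathbf{y}\mapsto B\mathbf{y}$ is onto $\Z_n^k$, and it treats the degenerate case $D_k(B)=0$ (forcing $n=1$), which the paper leaves implicit. What the paper's version buys is consistency with the viewpoint used throughout Section~\ref{s.representability_product_cyclics}, where solutions are repeatedly parameterized by appended variables, so the observation is stated in exactly the form in which it is later applied (e.g.\ in the proof of Remark~\ref{r.last}).
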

\begin{proof}[Proof of Observation~\ref{o.number_of_solutions}]
	Extend the matrix $A$ with Lemma~\ref{lem:ext-mat} to a $1$-auto-equivalent system
\begin{displaymath}
	A'=
	\begin{pmatrix}
		A' & B & 0\\
		0 & M & I_{m-k}\\
\end{pmatrix} \text{ with } \gcd\left(\det\begin{pmatrix} B \\ M \end{pmatrix},n\right)=1.
\end{displaymath}
Select a value for $x_1,\ldots,x_k$ and any value for the last $m-k$ variables of $A'$. Then the value of the variables $x_{k+1},\ldots,x_{k+m}$ in $\Z_n$ is uniquely determined as the determinant is coprime with $n$.
\end{proof}

\subsection{Grouping the variables: on the matrix $B$} \label{s.group_on_B}

In Section~\ref{s.hom_mat_to_integer_mat} we have assigned an integer matrix in $\Z_n$ to a given homomorphism matrix. Let us partially reverse this transformation. Consider $A^{\text{\tiny{(5)}}}$ to be formed by $mm^{\text{\tiny{(5)}}}$ blocks of size $t\times t$, where $m^{\text{\tiny{(5)}}}=\overline{m}^{\text{\tiny{(4)}}}+m$. 
Let $\mathcal{A}^{\text{\tiny{(5)}}}_i$ be the matrix formed by the $i$-th row of blocks. Omitting the blocks of zeroes from the $I_{tm}$ part of $A^{\text{\tiny{(5)}}}$, $\mathcal{A}^{\text{\tiny{(5)}}}_i$ can be written as
\begin{displaymath}
	\mathcal{A}^{\text{\tiny{(5)}}}_i=\begin{pmatrix}I_t & \mathcal{B}_i\end{pmatrix},
\end{displaymath}
where $\mathcal{B}_i$ corresponds to the rows $B_{(i-1)t+1},\ldots,B_{(i-1)t+t}$ from $A^{\text{\tiny{(5)}}}$. We can assume that $D_t(\mathcal{B}_i)\neq 0$ in $\Z$. Otherwise, we can add an appropriate multiple of $n$ to each of the elements of $\mathcal{B}_i$; the new matrix is equivalent in $\Z_n$ and has non-zero determinantal in $\Z$.

By Proposition~\ref{p.union_systems_d_k}, $\mathcal{B}_i$ has an equivalent, row reduced, matrix $\mathcal{B}_i^{\text{\tiny{(1)}}}$ where the greatest common divisors of the rows are the elements in the diagonal of the Smith Normal Form of $\mathcal{B}_i$.
By performing such row reductions into $\mathcal{A}^{\text{\tiny{(5)}}}_i$, or in the whole $A^{\text{\tiny{(5)}}}$ using the corresponding rows, the matrix $I_t$ turns into a unimodular matrix $U_i$ related with the row operations conducted on $\mathcal{B}_i$ to obtain $\mathcal{B}_i^{\text{\tiny{(1)}}}$.  Since $D_t(\mathcal{B}_i^{\text{\tiny{(1)}}})=D_t(\mathcal{B}_i)\neq 0$, $\mathcal{B}_i^{\text{\tiny{(1)}}}$ has no zero row in $\Z$.

As $U_i$ is unimodular, it induces an automorphism in $G=\Z_n^t$ denoted by $\phi_i^{\text{\tiny{(1)}}}: G\to G$, with 
\begin{displaymath}
	\phi_i^{\text{\tiny{(1)}}}(x)=\phi_i^{\text{\tiny{(1)}}}((x_1,\ldots,x_t))=\left[U_i^{-1}\begin{pmatrix}
	x_1\\
	\vdots\\
	x_t\\
\end{pmatrix}\right]^{\top}.
\end{displaymath}

Consider the  matrix 
$A^{\text{\tiny{(6)}}}=\begin{pmatrix}I_{tm}& B^{\text{\tiny{(1)}}}\end{pmatrix}$ where $B^{\text{\tiny{(1)}}}$ is formed by collecting all the rows from $\mathcal{B}_i^{\text{\tiny{(1)}}}$, $i\in[1,m]$.

\begin{remark} \label{r.4}
 $(A^{\text{\tiny{(6)}}},\Z_n^t)$ is $1$-auto-equivalent to $(A^{\text{\tiny{(5)}}},\Z_n^t)$ with
\begin{align}
	\phi:S(A^{\text{\tiny{(6)}}},\Z_n^t) &\longrightarrow S(A^{\text{\tiny{(5)}}},\Z_n^t) \nonumber \\
	\mathbf{x}=(\mathbf{x}_1,\ldots,\mathbf{x}_{m^{\text{\tiny{(5)}}}}) &\longmapsto (\phi_1^{\text{\tiny{(1)}}}(\mathbf{x}_1),\ldots,\phi_{m^{\text{\tiny{(5)}}}}^{\text{\tiny{(1)}}}(\mathbf{x}_{m^{\text{\tiny{(5)}}}})) \nonumber
\end{align}
being the map between the solutions sets.
\end{remark}

\subsection{Towards $\gamma\neq 1$: constructing several systems} \label{s.gamma-effective}

We create several auxiliary systems to achieve an appropriate $\gamma\neq 1$ that are combined in Section~\ref{s.final_composition}. The purpose of its combination is to create a strongly $1$-representable system $(A^{\text{\tiny{(7)}}},G^{\text{\tiny{(7)}}})$ with $S_i(A^{\text{\tiny{(7)}}},G^{\text{\tiny{(7)}}})=G^{\text{\tiny{(7)}}}$ for any $i$. $(A^{\text{\tiny{(7)}}},G^{\text{\tiny{(7)}}})$ is $\mu$-equivalent to $(A^{\text{\tiny{(6)}}},\Z_n^t)$ and the map of the  $\mu$-equivalence fulfills the hypotheses of Proposition~\ref{p.mu-equivalent_2}. See Remark~\ref{r.last}.

 Let $\mathcal{B}_{i}^{\text{\tiny{(2)}}}$ be the matrix obtained from $\mathcal{B}_{i}^{\text{\tiny{(1)}}}$ by dividing each row of $\mathcal{B}_i^{\text{\tiny{(1)}}}$, denoted by $\mathcal{B}_{i,[j]}^{\text{\tiny{(1)}}}$ with $j\in[1,t]$, by $d_{i,j}=\gcd(\mathcal{B}_{i,[j]}^{\text{\tiny{(1)}}})$. Therefore, the greatest common divisor of each row in $\mathcal{B}_{i}^{\text{\tiny{(2)}}}$ is one.\footnote{We could have chosen to divide the coefficients of the row $\mathcal{B}_{i,j}^{\text{\tiny{(1)}}}$ by the the minimum  $\overline{d}_{i,j}$ such that $\gcd(d_{i,j}/\overline{d}_{i,j},n)=1$.}
Let $B^{\text{\tiny{(2)}}}$ be the matrix formed by collecting the rows in $\mathcal{B}_{i}^{\text{\tiny{(2)}}}$, $i\in[1,m]$. That is to say, for $i\in[1,m]$ and $j\in[1,t]$, the $(i-1)t+j$-th row of $B^{\text{\tiny{(2)}}}$ is the $j$-th row of 
$\mathcal{B}_{i}^{\text{\tiny{(2)}}}$.

Given $i\in[1,m]$ and $j\in[1,t]$, let $\mathcal{B}^{\text{\tiny{(2)}}}_{i(j)}$ denote the matrix
\begin{displaymath}
\mathcal{B}^{\text{\tiny{(2)}}}_{i(j)}=
\begin{pmatrix}
	B^{\text{\tiny{(2)}}}_{[t(i-1)+1,t(i-1)+j-1]} \\
\mathcal{B}_{i,[j]}^{\text{\tiny{(1)}}} \\
B^{\text{\tiny{(2)}}}_{[t(i-1)+j+1,ti]}\\
\end{pmatrix}
\end{displaymath}
where $B^{\text{\tiny{(2)}}}_{[i_1,i_2]}$ denotes the set of rows with indices in $[i_1,i_2]$ from $B^{\text{\tiny{(2)}}}$ and $\mathcal{B}_{i,[j]}^{\text{\tiny{(1)}}}$ denotes the $j$-th row of $\mathcal{B}_{i}^{\text{\tiny{(1)}}}$. This is, all the rows of $\mathcal{B}^{\text{\tiny{(2)}}}_{i(j)}$ are the same as the rows of
$\mathcal{B}^{\text{\tiny{(2)}}}_{i}$ except the $j$-th, which is the same as the $j$-th row in
$\mathcal{B}_{i}^{\text{\tiny{(1)}}}$.

For $i\in [1,m]$ and $j\in[1,t]$ let $J_{(i,j)}$ be the matrix formed by
\begin{align}
	J_{(i,j)}'&=
		\begin{pmatrix}
			I_{t(i-1)} & 0  &0&0&0& B^{\text{\tiny{(2)}}}_{[1,t(i-1)]} & 0&0\\
			0 &  I_t &  0 & e_j &0& \mathcal{B}_{i(j)}^{\text{\tiny{(2)}}} & 0&0 \\
			0 & 0     &  I_{t(m-i)} &0 &0& B^{\text{\tiny{(2)}}}_{[ti+1,tm]} & 0&0\\
			0 & 0    & 0 &      1& 0 &  0 &1&0 \\
			0 & 0    & 0 &      0& I_{t-1} &  0 &0&I_{t-1} \\
			\end{pmatrix}\nonumber \\
			&\sim
			\begin{pmatrix}
	I_{t(i-1)} & 0   &0&0&0& B^{\text{\tiny{(2)}}}_{[1,t(i-1)]} & 0&0\\
	0 &   I_t  & 0 & 0 &0& \mathcal{B}_{i(j)}^{\text{\tiny{(2)}}} & -e_j&0 \\
	0 & 0     &  I_{t(m-i)} &0 &0& B^{\text{\tiny{(2)}}}_{[ti+1,tm]} & 0&0\\
	0 & 0    & 0 &      1& 0 &  0 &1&0 \\
	0 & 0    & 0 &      0& I_{t-1} &  0 &0&I_{t-1} \\
\end{pmatrix} \nonumber\\ &=\begin{pmatrix} I_{t(m+1)} & B_{(i,j)}^{\text{\tiny{(3)}}}\end{pmatrix}=J_{(i,j)}\nonumber 
\end{align}
where $e_j=(0,\ldots,0,\stackrel{j}{1},0,\ldots,0)^{\top}\in\Z^t$.

The variables in the system associated to $J_{(i,j)}$ take values over $G_{(i,j)}=d_{i,j}^{-1}(0)\subset 
\Z_n$, the subgroup of $\Z_n$ formed by the preimage of zero by the 
homomorphism induced by $d_{i,j}$ in $\Z_n$.\footnote{Observe that, if $\gcd(d_{i,j},n)=1$, then $G_{(i,j)}=\{0\}$.} The matrix $J_{(i,j)}$ can be considered as a homomorphisms system over $G_{(i,j)}$ or over $G_{(i,j)}^t$, by considering $J_{(i,j)}$ to be formed by blocks of size $t\times t$. In the first case the system is denoted by $(J_{(i,j)},G_{(i,j)})$ and, in the second, by  $(J_{(i,j)},G_{(i,j)}^t)$.

With respect to $A^\text{\tiny{(6)}}$, $t$ equations and $2t$ variables in $G_{(i,j)}$ have been added to $J_{(i,j)}$. The added variables form the $(m+1)$-th block of $t$ variables in $J_{(i,j)}$ and the last block of $t$ variables over $G_{(i,j)}$. The added equation corresponds to the last one in $J_{(i,j)}$ and it involves the $2t$ variables added.

Let $J_{(1,0)}$ be the system induced by the matrix
\begin{displaymath}
	J_{(1,0)}=
	\begin{pmatrix}
		I_{tm} &0 & B^{\text{\tiny{(2)}}} & 0\\
		0 &    I_t &  0 &I_t \\
	\end{pmatrix} =
	\begin{pmatrix}
			I_{t(m+1)} & B_{(1,0)}^{\text{\tiny{(3)}}} \\
		\end{pmatrix}
\end{displaymath}
that configures a system $(J_{(1,0)},\Z_n)$ or $(J_{(1,0)},\Z_n^t)$ if $J_{(1,0)}$ is seen as a block matrix. Let us denote $G_{(1,0)}=\Z_n$.

Let $\Upsilon=(1,0)\cup \left\{[1,m]\times [1,t]\right\}$. The systems $J_{\kappa}$, $\kappa\in \Upsilon$, thought of as integer matrices, have some common properties.
\begin{enumerate}[(i)]
	\item \label{prop.G0} $J_{\kappa}$ have $m^{\text{\tiny{(J)}}}=m^{\text{\tiny{(6)}}}+2=m^{\text{\tiny{(5)}}}+2$ variables and $k^{\text{\tiny{(J)}}}=k^{\text{\tiny{(6)}}}+1=m+1$  equations over $G_{\kappa}^t$.
	\item\label{prop.G1} The groups $G_{\kappa}$ are cyclic: $G_{\kappa}=\frac{n}{\gcd(d_{\kappa},n)}\cdot \Z_n \subseteq \Z_n$. $J_{\kappa}$ can be seen as a homomorphism system $(J_{\kappa},G_{\kappa}^t)$.
	\item\label{prop.G2} $J_{\kappa}$ can be displayed as $\begin{pmatrix}I_{t(m+1)} & B^{\text{\tiny{(3)}}}\end{pmatrix}$ for certain $B^{\text{\tiny{(3)}}}$ depending on $\kappa$ and with dimensions $t(m+1)\times t((m^{\text{\tiny{(5)}}}+2)-(m+1))$. All the rows $B^{\text{\tiny{(3)}}}_{[i]}$ from $B^{\text{\tiny{(3)}}}$ have $\gcd(B^{\text{\tiny{(3)}}}_{[i]},|G_{\kappa}|)=1$. Moreover, the block of $t$ consecutive rows $\mathcal{B}_i^\text{\tiny{(3)}}=B^{\text{\tiny{(3)}}}_{[(i-1)t+1,\ldots,(i-1)t+t]}$, $i\in [1,m+1]$, is such that $\gcd(D_{t}(\mathcal{B}_i^\text{\tiny{(3)}}),|G_{\kappa}|)=1$. Even more, $\gcd(D_{t}(\mathcal{B}_i^\text{\tiny{(3)}}),n)=1$
\end{enumerate}
\begin{remark}\label{r.prop.G3}
	For any $\kappa\in\Upsilon$,
	the homomorphism
	\begin{align}
		f_{\kappa}:S(J_{\kappa},G_{\kappa}^t)&\longrightarrow S(A^{\text{\tiny{(6)}}},G_{\kappa}^t)\subset S(A^{\text{\tiny{(6)}}},\Z_n^t) \nonumber \\
		\begin{pmatrix} 
			\left(x_{(1,1)},\ldots,x_{(1,t)}\right) \\
		\vdots \\
		\left(x_{(m^{\text{\tiny{(J)}}},1)},\ldots,x_{(m^{\text{\tiny{(J)}}},t)}\right)
	\end{pmatrix}
		&\longmapsto
		\begin{pmatrix}
			\left(d_{1,1}\; x_{(1,1)},\ldots, d_{1,t}\; x_{(1,t)}\right) \\
			\vdots \\
			\left(d_{k^{\text{\tiny{(6)}}},1} \; x_{(k^{\text{\tiny{(6)}}},1)},\ldots, d_{k^{\text{\tiny{(6)}}},t} \; x_{(k^{\text{\tiny{(6)}}},t)}\right) \\
			\left(x_{(k^{\text{\tiny{(J)}}}+1,1)},\ldots,x_{(k^{\text{\tiny{(J)}}}+1,t)}\right) \\
			\vdots \\
			\left(x_{(m^{\text{\tiny{(J)}}}-1,1)},\ldots,x_{(m^{\text{\tiny{(J)}}}-1,t)}\right) \\
		\end{pmatrix}
		\nonumber
	\end{align}
is surjective and $|G_\kappa^t|$-to-$1$.
\end{remark}

\begin{proof}[Proof of Remark~\ref{r.prop.G3}]
	The variables with indices $[k^{\text{\tiny{(6)}}}+1,m^{\text{\tiny{(6)}}}]$ from $(A^{\text{\tiny{(6)}}},G_{\kappa}^t)$ parameterize the solution set $(A^{\text{\tiny{(6)}}},G_{\kappa}^t)$. This is, any choice of $x_{(k^{\text{\tiny{(6)}}}+1,\cdot)},\ldots,x_{(m^{\text{\tiny{(6)}}},\cdot)}\in G_{\kappa}^t$ provide a unique solution to $(A^{\text{\tiny{(6)}}},G_{\kappa}^t)$.
	The same holds true for the variables indexed by $[k^{\text{\tiny{(J)}}}+1,m^{\text{\tiny{(J)}}}]$ in the system $(J_{\kappa},G_\kappa^t)$.
	
	Assume $(i,j)\in \Upsilon \setminus \{(1,0)\}$. The $(i,j)$-th equation for $(A^{\text{\tiny{(6)}}},G_{\kappa}^t)$ can be written as
	\begin{displaymath}
	x_{(i,j)}+d_{i,j} B_{[(i-1)t+j]}^{\text{\tiny{(2)}}}\cdot \left(x_{(k^{\text{\tiny{(6)}}}+1,j)},\ldots,x_{(m^{\text{\tiny{(6)}}},j)}\right)^{\top}=0
	\end{displaymath}
	On the other hand, the $(i,j)$-th equation in $(J_{\kappa},G_{\kappa}^t)$ is
	\begin{displaymath}
		\left\{\begin{array}{cr}
		\left.\begin{array}{c}
		y_{(i,j)} + B_{\kappa,[(i-1)t+j]}^{\text{\tiny{(3)}}}\cdot \left(y_{(k^{\text{\tiny{(J)}}}+1,j)},\ldots,y_{(m^{\text{\tiny{(J)}}},j)}\right)^{\top}= \\ =y_{(i,j)} + B_{[(i-1)t+j]}^{\text{\tiny{(2)}}}\cdot \left(y_{(k^{\text{\tiny{(J)}}}+1,j)},\ldots,y_{(m^{\text{\tiny{(J)}}}-1,j)}\right)^{\top} =0  \end{array}\right\} & \text{ if } (i,j)\neq \kappa \\
	
		\left.\begin{array}{c}
		y_{(i,j)} + B_{\kappa,[(i-1)t+j]}^{\text{\tiny{(3)}}}\cdot \left(y_{(k^{\text{\tiny{(J)}}}+1,j)},\ldots,y_{(m^{\text{\tiny{(J)}}},j)}\right)^{\top}= \\
		y_{(i,j)} + d_{i,j} B_{[(i-1)t+j]}^{\text{\tiny{(2)}}}\cdot \left(y_{(k^{\text{\tiny{(J)}}}+1,j)},\ldots,y_{(m^{\text{\tiny{(J)}}}-1,j)}\right)^{\top}-y_{(m^{\text{\tiny{(J)}}},j)}=0 
	\end{array}\right\} &  \text{ if }  (i,j)= \kappa. \\
	\end{array}\right.
	\end{displaymath}
Therefore, if we let 
$\left(x_{(k^{\text{\tiny{(6)}}}+1,\cdot)},\ldots,x_{(m^{\text{\tiny{(6)}}},\cdot)}\right)=\left(y_{(k^{\text{\tiny{(J)}}}+1,\cdot)},\ldots,y_{(m^{\text{\tiny{(J)}}}-1,\cdot)}\right)$,
 the variables $y_{(i,j)}$ and $x_{(i,j)}$ are such that
$d_{i,j}y_{(i,j)}=x_{(i,j)}$ for $(i,j)\neq \kappa$. If $(i,j)=\kappa$, then 
 $d_{i,j}G_{\kappa}=0$ and $(\mathbf{x})_{\kappa}=0=d_{i,j} (\mathbf{y})_{\kappa}$ for any pair of solutions $\mathbf{x}\in S(A^{\text{\tiny{(6)}}},G_{\kappa}^t)$ and $\mathbf{y}\in S(J_{\kappa},G_{\kappa}^t)$. This shows that the map $f_{\kappa}$ exists.

Since $f_{\kappa}$ maps the subset of parameterizing variables $(y_{(k^{\text{\tiny{(J)}}}+1,\cdot)},\ldots,y_{(m^{\text{\tiny{(J)}}}-1,\cdot)})$ to the parameterizing variables $(x_{(k^{\text{\tiny{(6)}}}+1,\cdot)},\ldots,x_{(m^{\text{\tiny{(6)}}},\cdot)})$ using the identity map, $f_{\kappa}$ is surjective. Moreover, since the image by $f_{\kappa}$ is independent of the variable $y_{(m^{\text{\tiny{(J)}}},\cdot)}$, the map is $|G_{\kappa}^t|$-to-$1$.
\end{proof}

In the following part, Section~\ref{s.circ_mat_properties}, we adapt to the case of homomorphisms matrices the properties of the $n$-circular matrices used to show $1$-strong-representations in \cite{ksv13} and \cite{can_tesis_09,kraserven08}.\footnote{Although the final representation to prove \cite[Theorem~1]{ksv13} is not strong, the one established in \cite[Lemma~4]{ksv13} has the strong property.} In particular, Proposition~\ref{p.constr_c} in Section~\ref{s.circ_mat_properties} constructs, given an $n$-circular matrix, a matrix $C$ with good properties for the representation.

In Section~\ref{s.construction_circular_matrix} an $n$-circular matrix $\overline{J}_{\kappa}$ is constructed for each matrix $J_{\kappa}$, $\kappa\in \Upsilon$. The final construction of the $1$-strong-representation is conducted in Section~\ref{s.final_composition}; it involves combining the matrices $\overline{J}_{\kappa}$, $\kappa\in \Upsilon$, in a single matrix $A^{\text{\tiny{(7)}}}$, as well as combining all the matrices $C_{\kappa}$, provided by Section~\ref{s.circ_mat_properties}, in a single matrix $C$.

\subsection{$n$-circular matrices and properties} \label{s.circ_mat_properties}

An integer matrix $A$ formed by $k\times m$ square blocks, $m\geq k$, is said to be \emph{block $n$--circular} if all the matrices formed by $k$ consecutive columns of blocks of $A$, $(A^i, \ldots, A^{i+k-1})$ (considering the indices modulo $m$) have determinant coprime with $n$. A matrix is called standard $n$--circular if it is $n$-circular and with the shape $\begin{pmatrix} I_k &B\end{pmatrix}$. When the size of the blocks is one this definition coincides with the one provided in \cite[Definition~3]{ksv13}. The properties of the $n$-circular matrices described in Proposition~\ref{p.constr_c} are used in the construction of the representation described in Section~\ref{s.final_composition}.

\begin{proposition} \label{p.constr_c}
	Let $A$ be a $kt\times mt$ integer matrix, $m\geq k$, formed by $km$ blocks of size $t\times t$.
Assume that $A$ is block $n$--circular. Then there exists a $m\times m$ block integer matrix $C=(\mathcal{C}_{i,j})$,  each block of size $t\times t$ and $(i,j)\in[1,m]^2$, with the following properties.
\begin{enumerate}[i.]
\item \label{p.p0}
$AC=0$
\item \label{p.p1} The $i$-th row of $t\times t$ blocks is such that $\mathcal{C}_{i,j}=0$, for $j\in \{i+k+1,\ldots,i-1\}$ with indices modulo $m$. So, the matrix looks like
\begin{displaymath} 
	C=	\begin{pmatrix}
\ast & \ast & \ast & \ast & 0 & 0 \\
0& \ast & \ast & \ast & \ast & 0 \\
0&0& \ast & \ast & \ast & \ast  \\
\ast&0&0& \ast & \ast & \ast   \\
\ast&\ast&0&0& \ast & \ast    \\
\ast&\ast&\ast&0&0& \ast  \\
\end{pmatrix}
\end{displaymath}

\item \label{p.p2}
$\gcd(\det(\mathcal{C}_{i,i}),n)=1$ and $\gcd(\det(\mathcal{C}_{i,k+i}),n)=1$ for all $i\in[1,m]$ with indices taken modulo $m$.
\end{enumerate}
\end{proposition}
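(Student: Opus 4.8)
The strategy is to build $C$ one block-column at a time, moving cyclically, exactly as in the scalar $n$-circular case of \cite[Lemma~5]{ksv13} but keeping track of the $t\times t$ block structure. Write $A$ in block-columns $A^1,\dots,A^m$, each of size $kt\times t$. By the block $n$-circularity hypothesis, for every $i$ the $kt\times kt$ matrix $(A^{i},A^{i+1},\dots,A^{i+k-1})$ (indices mod $m$) has determinant coprime to $n$, hence is invertible over $\Z_n$. The plan is to produce, for each $i\in[1,m]$, a block-column $\mathcal{C}^{i}$ of $C$ (a $mt\times t$ vector of blocks) supported only on block-rows $\{i,i+1,\dots,i+k\}$ (that is $k+1$ consecutive block-rows, indices mod $m$), with $A\mathcal{C}^{i}=0$, and with the two corner blocks $\mathcal{C}_{i,i}$ and $\mathcal{C}_{i+k,i}$ having determinant coprime to $n$. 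Transposing the bookkeeping, if we let the $i$-th block-\emph{row} of $C$ be supported on block-columns $\{i,i-1,\dots,i-k\}$ we obtain exactly the band shape in property~\ref{p.p1}; I will set up the indexing so that this matches the displayed picture (nonzero blocks $\mathcal{C}_{i,j}$ for $j\in\{i,i+1,\dots,i+k\}$ cyclically).

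Concretely, fix $i$. The relation $A\mathcal{C}^{i}=0$ with $\mathcal{C}^{i}$ supported on block-rows $i,\dots,i+k$ reads
\[
A^{i}\mathcal{C}_{i,i}+A^{i+1}\mathcal{C}_{i+1,i}+\cdots+A^{i+k}\mathcal{C}_{i+k,i}=0 .
\]
Choose $\mathcal{C}_{i,i}$ and $\mathcal{C}_{i+k,i}$ to be any $t\times t$ integer matrices whose determinants are coprime to $n$ — e.g. suitable integer matrices congruent to $I_t$ modulo the relevant primes; a single clean choice is $\mathcal{C}_{i,i}=I_t=\mathcal{C}_{i+k,i}$, which already has determinant $1$. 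Then the remaining middle blocks $\mathcal{C}_{i+1,i},\dots,\mathcal{C}_{i+k-1,i}$ (there are $k-1$ of them, a total of $t(k-1)$ columns of unknowns) are forced to satisfy
\[
(A^{i+1},\dots,A^{i+k-1})
\begin{pmatrix}\mathcal{C}_{i+1,i}\\ \vdots\\ \mathcal{C}_{i+k-1,i}\end{pmatrix}
= -\,A^{i}\mathcal{C}_{i,i}-A^{i+k}\mathcal{C}_{i+k,i}.
\]
This is where block $n$-circularity is used: the coefficient matrix $(A^{i+1},\dots,A^{i+k-1})$ has size $kt\times (k-1)t$, and one checks (again by the circularity of $A$, applied to the window starting at $i+1$, which contains these $k-1$ block-columns together with one more) that this system is solvable over $\Z_n$; after solving over $\Z_n$ we lift each entry to an integer representative. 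Doing this for every $i\in[1,m]$ and assembling the columns $\mathcal{C}^{1},\dots,\mathcal{C}^{m}$ yields an integer matrix $C=(\mathcal{C}_{i,j})$ with $AC=0$ over $\Z_n$, i.e. property~\ref{p.p0}; the support constraint gives property~\ref{p.p1}; and the chosen corner blocks give $\gcd(\det\mathcal{C}_{i,i},n)=\gcd(\det\mathcal{C}_{i,i+k},n)=1$, i.e. property~\ref{p.p2} (after matching the index convention to the statement).

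The main obstacle is the solvability of the forced linear system for the middle blocks over $\Z_n$ — equivalently, showing that $-A^{i}\mathcal{C}_{i,i}-A^{i+k}\mathcal{C}_{i+k,i}$ lies in the column span (over $\Z_n$) of $(A^{i+1},\dots,A^{i+k-1})$. The clean way to see this is to note that $(A^{i},A^{i+1},\dots,A^{i+k-1})$ is invertible over $\Z_n$, so $A^{i+k}$ can be written as a $\Z_n$-combination $A^{i+k}=\sum_{\ell=0}^{k-1}A^{i+\ell}R_\ell$ for some $t\times t$ matrices $R_\ell$ over $\Z_n$ (solve $(A^{i},\dots,A^{i+k-1})(R_0;\dots;R_{k-1})^\top=A^{i+k}$); then $\mathcal{C}_{i,i}$, $\mathcal{C}_{i+1,i},\dots,\mathcal{C}_{i+k-1,i}$ are obtained directly in terms of $R_0,\dots,R_{k-1}$ and $\mathcal{C}_{i+k,i}$, and one reads off that $\mathcal{C}_{i,i}=-R_0\mathcal{C}_{i+k,i}$ — so one must instead choose $\mathcal{C}_{i+k,i}$ freely (coprime determinant) and set $\mathcal{C}_{i,i}:=-R_0\,\mathcal{C}_{i+k,i}$, then check that $\det(R_0)$ is a unit mod $n$. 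That last point follows because cycling the window one step and using the $n$-circularity of $(A^{i+1},\dots,A^{i+k})$ forces $A^{i}$'s coefficient, namely $R_0$ up to a unit, to be invertible mod $n$ as well. I would carry out this linear-algebra-over-$\Z_n$ bookkeeping carefully, lift to integers at the end, and then simply verify the three itemized properties against the displayed band picture, adjusting the cyclic index labels so that ``$\mathcal{C}_{i,j}=0$ for $j\in\{i+k+1,\dots,i-1\}$'' is literally what the construction gives.
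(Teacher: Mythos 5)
Your final construction (the $R_\ell$ version) is, in substance, the paper's own argument: for each cyclic window you express $A^{i+k}$ through the invertible window $(A^{i},\ldots,A^{i+k-1})$, place the resulting coefficient blocks (times a free corner block) into one banded block-column of $C$, and obtain coprimality of the two corner blocks from the determinant identity $\det(A^{i},\ldots,A^{i+k-1})\cdot(\pm\det R_0)\equiv\det(A^{i+1},\ldots,A^{i+k})\pmod n$, which is exactly the computation the paper performs with $\overline{A}^{[i]}$ in the proof of property~\ref{p.p2}. (Your first attempt, fixing both corner blocks to $I_t$ and solving an overdetermined $kt\times(k-1)t$ system for the middle blocks, is indeed not solvable in general, but you correctly abandoned it, so it does not affect the final plan.) The relabelling of block-columns needed to match the displayed band is harmless bookkeeping.

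The genuine gap is in property~\ref{p.p0}. You solve the window relations over $\Z_n$ and then ``lift each entry to an integer representative,'' which only yields $AC\equiv 0\pmod n$, whereas the proposition (and the paper's proof) asserts the exact integer identity $AC=0$; an arbitrary entrywise lift of a $\Z_n$-solution is not an integer solution of the exact system. The paper avoids this by working over $\Q$: each window matrix is a nonsingular integer matrix whose determinant is coprime to $n$, so each column of $A^{i+k}$ is a rational combination of the window's columns, and multiplying that relation by an integer coprime to $n$ that clears the denominators (a multiple of the window determinant works) produces an exact integer relation. These integer relations are precisely the columns of $C$, so $AC=0$ over $\Z$; the clearing factors become the diagonal entries of $\mathcal{C}_{i,i}$ (whence $\gcd(\det\mathcal{C}_{i,i},n)=1$), and the same determinant identity then gives $\gcd(\det\mathcal{C}_{i,i+k},n)=1$. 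Your mod-$n$ version would in fact suffice where the proposition is applied (in Section~\ref{s.final_composition} the matrix $C$ only acts on groups of exponent dividing $n$), but as a proof of the statement as written it falls short, and the repair is exactly this clearing-of-denominators step.
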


\begin{proof}[Proof of Proposition~\ref{p.constr_c}]
	Consider the square matrix formed by the column blocks $A^{[i,i+k-1]}=(A^{[i]},\linebreak[1]\ldots,\linebreak[1]A^{[i+k-1]})$ with $i\in[1,m]$. By assumption $A^{[i,i+k-1]}$ is a square non-singular matrix as it has non-zero determianant. For the $j$-th vector in the column block $A^{[i+k]}$, $A^{[i+k],j}$, with $j\in[1,t]$, we can find rational coefficients $b_{(i-1)t+w,(i+k-1)t+j}$, $w\in[1,kt]$, with
	\begin{displaymath}
		A^{[i+k],j}=\sum_{w\in[1,kt]} b_{(i-1)t+w,(i+k-1)t+j}\; A^{[i,i+k-1],w}
	\end{displaymath}
	where $A^{[i,i+k-1],w}$ stands for the $w$-th column in $A^{[i,i+k-1]}$ and corresponds to the $((i-1)t+w)$-th column in $A$. Moreover, since the determinant is coprime with $n$, there exists an integer $c_{(i+k-1)t+j,(i+k-1)t+j}$, coprime with $n$, such that
	\begin{equation}\label{e.coeff_C}
		-c_{(i+k-1)t+j,(i+k-1)t+j}\; A^{[i+k],j}=\sum_{w\in[1,kt]} c_{(i-1)t+w,(i+k-1)t+j}\; A^{[i,i+k-1],w}
	\end{equation}
	where, for $w\in[1,kt]$, $c_{(i-1)t+w,(i+k-1)t+j}=-c_{(i+k-1)t+j,(i+k-1)t+j} \; b_{(i-1)t+w,(i+k-1)t+j}$ are integers.

The coefficients of the matrix $C$ are 
\begin{enumerate}[(a)]
	\item $c_{w_1,w_2}$ whenever the subscripts $(w_1,w_2)$ coincide with one the $c$'s found in the relations given by (\ref{e.coeff_C}) for the $mk$ column vectors of $A$.
	\item \label{c.matrix_c.2} $0$ otherwise.
\end{enumerate} 
Consider the matrix $C$ as divided into $t\times t$ blocks $\mathcal{C}_{\cdot,\cdot}\;$. $C$ satisfy property \ref{p.p1}. Indeed, given a column of $C$ indexed by $j=j_1t+j_2$, with $j_1\in[0,m-1]$ and $j_2\in[1,t]$, 
the indices $i$ of the rows involved in the relations given by (\ref{e.coeff_C}) satisfy $i\in[(j_1-k)t,(j_1+1)t]$.

The relation (\ref{e.coeff_C}) can be rearranged as 
\begin{displaymath}
	0=c_{(i+k-1)t+j,(i+k-1)t+j} A^{[i+k],j}+\sum_{w\in[1,kt]} c_{(i-1)t+w,(i+k-1)t+j} A^{[i,i+k-1],w}
\end{displaymath}
and can be extended to $\sum_{w\in[1,tm]} c_{w,(i+k-1)t+j} A^l$ considering that all the other $c$'s that appear in the sum are zero by (\ref{c.matrix_c.2}).
Thus \ref{p.p0} is satisfied.

Observe that $\mathcal{C}_{i,i}$ is a diagonal matrix where all the elements in the diagonal are coprime with $n$. Hence the first part of property \ref{p.p2} is satisfied. To show the second part observe that, for each $i\in[1,m]$, indices modulo $m$,
\begin{displaymath}
\begin{pmatrix}
	A^{[i-k]} & \cdots & A^{[i-1]}
\end{pmatrix} 
	\begin{pmatrix}
		0 & 0 & \cdots & 0 & \mathcal{C}_{i-k,i} \\
	 &  & I_{t(k-1)} &  & \vdots \\
		 &  &  &  & \mathcal{C}_{i-1,i} \\
\end{pmatrix}
= 
\begin{pmatrix}
	A^{[i-k+1]} & \cdots & A^{[i-1]} & \overline{A}^{[i]} 
\end{pmatrix}
\end{displaymath}
where the columns of $\overline{A}^{[i]}$ are multiples of the columns of $A^{[i]}$ by (\ref{e.coeff_C}). Indeed, $\overline{A}^{[i],j}= -c_{(i-1)t+j,(i-1)t+j} A^{[i],j}$. Therefore
\begin{displaymath}
\det\begin{pmatrix}
		A^{[i-k+1]} & \cdots & A^{[i-1]} & \overline{A}^{[i]} 
	\end{pmatrix}
	=
	\det\begin{pmatrix}A^{[i-k+1]} & \cdots & A^{[i-1]} & A^{[i]}\end{pmatrix} \prod_{j\in[1,t]}c_{(i-1)t+j,(i-1)t+j}
\end{displaymath}
which is a product of integers coprime with $n$. 
Since
\begin{displaymath}
		\det 	\begin{pmatrix}
				0 & 0 & \cdots & 0 & \mathcal{C}_{i-k,i} \\
			 &  & I_{t(k-1)} &  & \vdots \\
				 &  &  &  & \mathcal{C}_{i-1,i} \\
		\end{pmatrix}= \pm\det(\mathcal{C}_{i-k,i})
\end{displaymath}
and $(\pm\det(\mathcal{C}_{i-k,i}))\cdot \det\begin{pmatrix}
	A^{[i-k]} & \cdots & A^{[i-1]}
\end{pmatrix} =\det\begin{pmatrix}
		A^{[i-k+1]} & \cdots & A^{[i-1]} & \overline{A}^{[i]} 
	\end{pmatrix}$, then $\det(\mathcal{C}_{i-k,i})$
is an integer coprime with $n$. This proves the second part of \ref{p.p2} and finalizes the proof of the proposition.
\end{proof}


\subsection{Construction of the $n$-circular matrix} \label{s.construction_circular_matrix}

Let $n$ be a positive integer and let $G$ be an abelian group of order $n$. For our purposes, we can assume $G=\Z_n$. Let $A$ be a $kt\times mt$ matrix $A=\begin{pmatrix}I_{tk}& B \end{pmatrix}$ though of as built with $km$ blocks of size $t\times t$. Moreover, we shall assume that 
$\gcd\left(D_t(\mathcal{B}_i),n\right)=1$, where $\mathcal{B}_i$ is the $i$-th 
block of $t$ rows of the submatrix $B$, $i\in[1,k]$. In this section we build a 
$tk^{\text{\tiny{(9)}}}\times tm^{\text{\tiny{(9)}}}$ integer matrix 
$A^{\text{\tiny{(9)}}}=\begin{pmatrix}I_{tk^{\text{\tiny{(9)}}}}& B \end{pmatrix}$ such that:
\begin{itemize}
	\item $(A^{\text{\tiny{(9)}}},G^t)$ is $1$-auto-equivalent to $(A,G^t)$.
   \item $A^{\text{\tiny{(9)}}}$ is $n$-circular with blocks of length $1$, hence $n$-circular with blocks of size $t$.
\end{itemize}

We enlarge the $t\times (m-k)t$ matrix $\mathcal{B}_i$ using Lemma~\ref{lem:ext-mat} to the $(m-k)t\times (m-k)t$ matrix
\begin{displaymath}
\mathcal{B}_i^{\text{\tiny{(4)}}}=\begin{pmatrix}
	\mathcal{B}_i\\
	M_i\\
\end{pmatrix}
\text{ with } 
\det\begin{pmatrix}
	\mathcal{B}_i\\
	M_i\\
\end{pmatrix}=1.
\end{displaymath}
By adding some new variables taking values in $G$, 
$\mathcal{A}_i=\begin{pmatrix}I_t & \mathcal{B}_i\end{pmatrix}$
turns into the matrix denoted by $\mathcal{A}^{\text{\tiny{(8)}}}_i$ with
\begin{displaymath}
	\mathcal{A}^{\text{\tiny{(8)}}}_i=
	\begin{pmatrix}
I_t & 0 & \mathcal{B}_i \\
0 & I_{(m-k-1)t} & M_i \\
\end{pmatrix}
=\begin{pmatrix}I_{(m-k)t} & \mathcal{B}_i^{\text{\tiny{(4)}}}\end{pmatrix}
\end{displaymath}

Let us denote by $B^{\text{\tiny{(4)}}}$ the matrix formed by attaching together all the rows in $\{\mathcal{B}_i^{\text{\tiny{(4)}}}\}_{i\in[1,k]}$
\begin{displaymath}
B^{\text{\tiny{(4)}}}=
\begin{pmatrix}
	\mathcal{B}_1^{\text{\tiny{(4)}}}\\
	\vdots \\
	\mathcal{B}_{k}^{\text{\tiny{(4)}}}
\end{pmatrix}.
\end{displaymath}
Denote by $	A^{\text{\tiny{(8)}}}$ the matrix
	$A^{\text{\tiny{(8)}}}=\begin{pmatrix}I_{k(m-k)t} & B^{\text{\tiny{(4)}}}\end{pmatrix}$.
The variables added with respect to $A$ take values over the whole $G$.
	The system $(A^{\text{\tiny{(8)}}},G)$ and $(A^{\text{\tiny{(8)}}},G^t)$ are $1$-auto-equivalent to $(A,G)$ and $(A,G^t)$ respectively.

\paragraph{A Lemma for the building blocks.} \label{s.lemma_building_blocks}

Lemma~\ref{lem:ext3} improves \cite[Lemma~11]{ksv13} so that each block can be constructed by adding a linear number of rows with respect to the original number of columns.

\begin{lemma}\label{lem:ext3} Let $n$ and $r$ be positive integers and let $M$ be an $r\times r$ integer matrix with determinant coprime with $n$. There are $r\times r$ integer matrices $S$ and $T$ such that 
	\begin{displaymath}
		M'=
		\begin{pmatrix}
			I_r  \\
			S \\
			M \\
			T\\
			I_r\\
			\end{pmatrix}
				\end{displaymath}
is a $5r\times r$ integer matrix with the property that each $r\times r$ submatrix of $M'$, consisting of $r$ consecutive rows, has a determinant coprime with $n$.
\end{lemma}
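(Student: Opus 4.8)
The plan is to build the matrix $M'$ by stacking five $r\times r$ blocks: $I_r$, then a block $S$, then $M$, then a block $T$, then $I_r$ again, and to choose $S$ and $T$ so that the six windows of $r$ consecutive rows each have determinant coprime with $n$. The six windows are: (1) $I_r$; (2) the bottom $I_r$; (3) $M$ itself; (4) the window straddling $I_r$ and $S$; (5) the window straddling $S$ and $M$; (6) the window straddling $M$ and $T$; and (7) the window straddling $T$ and the bottom $I_r$. Windows (1), (2), (3) are automatically fine (determinant $1$, $1$, coprime to $n$ by hypothesis), so the real task is the four straddling windows, which depend on $S$ (for (4),(5)) and on $T$ (for (6),(7)). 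These split into two symmetric subproblems, one for the pair $(I_r, S, M)$ and one for the pair $(M, T, I_r)$, so it suffices to solve one of them.

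First I would fix attention on choosing $S$. Write $S$ with rows $s_1,\dots,s_r$ and $M$ with rows $\mu_1,\dots,\mu_r$. A window of $r$ consecutive rows taken from the stack $\begin{pmatrix}I_r\\ S\\ M\end{pmatrix}$ and overlapping $S$ looks, after permuting rows, like a matrix whose rows are some standard basis vectors $e_{j}$ (from the $I_r$ part), then some rows $s_a,\dots,s_b$ of $S$, then some initial rows of $M$. Using the $e_j$'s to do Laplace expansion (equivalently, deleting the columns indexed by those $e_j$ and the corresponding rows), the determinant of such a window reduces, up to sign, to the determinant of a smaller matrix built from a contiguous block of rows of $S$ together with a contiguous block of the first rows of $M$, with certain columns deleted. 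The goal is therefore to pick the entries of $S$ so that every such minor is a unit modulo $n$. A clean way to guarantee this is to work modulo $n$ and choose $S$ so that, modulo $n$, the stacked matrix $\begin{pmatrix}I_r\\S\\M\end{pmatrix}$ is "generic" in the sense that \emph{every} window of $r$ consecutive rows is invertible; one concrete device is to let $S$ be (a suitable integer lift of) a matrix making the rows of $\begin{pmatrix}I_r\\S\end{pmatrix}$ and $\begin{pmatrix}S\\M\end{pmatrix}$ into the first rows of circulant-type or Vandermonde-type configurations over $\mathbb{Z}/n\mathbb{Z}$, so that consecutive-row minors become products of differences of chosen units. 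Since $M$ has determinant coprime to $n$, its rows are a basis of $(\mathbb{Z}/n\mathbb{Z})^r$ after reduction, and one has enough freedom: I would choose $S$ row-by-row, at each step invoking that over $\mathbb{Z}/p\mathbb{Z}$ for each prime $p\mid n$ there are at most finitely many "bad" hyperplanes to avoid, and there are more than enough residues (using $n\ge 2$, and if necessary the Chinese Remainder Theorem to handle all $p\mid n$ simultaneously, padding with multiples of $n$ in the integer lift). Symmetrically, choose $T$ for the block $(M, T, I_r)$.

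The key step — and the main obstacle — is making the "window straddling $S$ and $M$" argument precise: one must check that for \emph{every} choice of how many rows come from $I_r$, from $S$, and from $M$, the resulting reduced minor can be made a unit, and that these constraints are simultaneously satisfiable by a single $S$. The cleanest route is probably not to argue minor-by-minor but to observe that it suffices to arrange, modulo each prime power dividing $n$, that the $3r\times r$ matrix $\begin{pmatrix}I_r\\S\\M\end{pmatrix}$ has the property that any $r$ consecutive rows are linearly independent; this is a ``matrix with consecutive-rows full rank'' condition of the same flavour as the $n$-circular condition elsewhere in the paper, and can be met greedily since at each of the $r$ stages one is choosing a row of $S$ avoiding a bounded union of proper subspaces over each residue field, which is possible whenever the field has more than that many elements — and when some prime $p\mid n$ is too small, one instead picks a row from a genuinely larger extension's worth of freedom by exploiting that we only need linear independence of $r$-subsets, not of all, or one replaces the naive greedy choice by an explicit algebraic family (powers of a fixed invertible companion-type matrix) for which all consecutive minors are automatically coprime to $n$, exactly as $n$-circular matrices are constructed. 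Once $S$ and $T$ are in hand, reading off $M'$ and checking each of the seven windows is routine, and the lemma follows. I expect the write-up to mirror, and mildly improve, the construction in \cite[Lemma~11]{ksv13}, the improvement being that only $O(r)$ extra rows (here $4r$) are used rather than a number growing faster with $r$.
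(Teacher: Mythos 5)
Your reduction of the problem to the four straddling windows, and the observation that these split into two symmetric subproblems (choosing $S$ against $(I_r,M)$ and $T$ against $(M,I_r)$), is correct and matches the structure of the paper's argument. The gap is in the step that actually produces $S$ (equivalently $T$). Your greedy/generic argument must, at some step, avoid a union of \emph{several} proper subspaces of $\mathbb{F}_p^r$ simultaneously, for every prime $p$ dividing $n$: in whatever order you choose the rows of $S$, the windows straddling $I_r$ and $S$ together with those straddling $S$ and $M$ are completed in batches (in fact, at the final step about $r$ of them become determined at once, whichever ordering you use, and keeping the necessary partial independences alive forces several avoidance conditions at intermediate steps as well). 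When $p\le r$ a union of that many hyperplanes can cover all of $\mathbb{F}_p^r$ (already over $\mathbb{F}_2$ the hyperplanes $x_1=0$, $x_2=0$, $x_1+x_2=0$ cover the space), so ``finitely many bad hyperplanes'' gives no choice, and this is exactly the relevant case since $n$ may have small prime factors. Your two proposed remedies do not close this: a ``larger extension's worth of freedom'' is not available because the entries of $S$ must be integers, so modulo $p$ you only ever work over $\mathbb{F}_p$; and an explicit family such as powers of a companion matrix would control windows internal to that family, but the critical windows mix rows of $S$ with rows of the \emph{arbitrary given} matrix $M$, so no fixed family is automatically transverse to them.

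The paper's proof avoids genericity entirely and is constructive: $T$ is built row by row as explicit integer combinations of progressively reduced rows of $M$, taking Bézout coefficients realizing the gcd $d_i$ of the relevant column and using Dirichlet's theorem on primes in arithmetic progressions to force the leading coefficient to be a prime $p_i>n$ (hence a unit modulo $n$); each new straddling window's determinant then equals a determinant already known to be coprime with $n$ times $\pm p_i$, and the resulting $T$ is triangular with diagonal entries $d_i$ coprime with $n$, which disposes of the windows straddling $T$ and the bottom identity; $S$ is built symmetrically. To salvage your route you would need either such an explicit construction or a genuinely new argument showing that the specific subspaces arising from your partial choices and from $M$ can never cover $\mathbb{F}_p^r$; as written, that is the missing idea.
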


\begin{proof}[Proof of Lemma~\ref{lem:ext3}]
	We detail the construction of $T$.
Let us define the matrices $r-i\times r$ matrices 
\begin{displaymath}
	M^i=\begin{pmatrix} M^i_{i+1} \\ \vdots \\ M^i_r \\
	\end{pmatrix},\; i\in [0,r-1],
	\end{displaymath}  together with the rows $T_{i+1}$ inductively.  Let $M^0=M$. Let $d_i=\gcd(M^{i-1}_{i,i},\ldots,M^{i-1}_{r,i})$, $i\in[1,r]$, be the greatest common divisor of the column $M_{\cdot,i}^{i-1}$. Let
\begin{displaymath}
	T_{i}=\lambda_{i}^{i} M^{i-1}_{i}+ \cdots + \lambda_r^{i} M^{i-1}_{r}
\end{displaymath}
where 
$\lambda_{i}^{i}, \ldots,\lambda_r^{i}$ are such that
\begin{equation} \label{eq.6}
	\lambda_{i}^{i} M^{i-1}_{i,i}+ \cdots + \lambda_r^{i} M^{i-1}_{r,i}=d_{i}
\end{equation}
and where $\lambda_{i}^{i}$ is some prime, $p_{i}$, larger than $n$.
 This $p_{i}$ exists, subjected to the constrain (\ref{eq.6}), by the Dirichlet theorem regarding the containment of infinitely many primes in the arithmetic progressions $a+b\Z$ with $\gcd(a,b)=1$. Observe that
\begin{displaymath}
		\det
		\begin{pmatrix}
			M^{i-1}_{i} \\
			\vdots \\
			M^{i-1}_r\\
		T_1\\
		\vdots\\
		T_{i-1}
		\end{pmatrix}
=p_{i}
				\det
				\begin{pmatrix}
					M^{i-1}_{i+1} \\
					\vdots \\
					M^{i-1}_r\\
				T_1\\
				\vdots\\
				T_{i}
			\end{pmatrix} 
\end{displaymath}

The rows of the matrix $M^i$, denoted by $M^i_j$, are
$M^i_{j}=M^{i-1}_j - (M^{i-1}_{j,i}/d_i) T_i$, for $j\in [i+1,r]$ and with $T_0=\mathbf{0}$.
The first $i$ columns of $M^i$ are the zero columns. 

Observe that $\gcd(d_i,n)=1$ as
\begin{displaymath}
	\det\begin{pmatrix}
		M^{i-1}_{i+1}\\
		\vdots\\
		M^{i-1}_{r}\\
	T_1\\
	\vdots \\
	T_i\\
		\end{pmatrix}=\det\begin{pmatrix}
			M^i_{i+1} \\
			\vdots \\
			M^i_r\\
		T_1\\
		\vdots\\
		T_i\\
			\end{pmatrix}
\end{displaymath}
is coprime with $n$ and the original matrix $M$ has determinant coprime with $n$. Therefore the equivalent matrix
\begin{displaymath}
 \begin{pmatrix}
		M^i_{i+1} \\
		\vdots \\
		M^i_r\\
	T_1\\
	\vdots\\
	T_i\\
		\end{pmatrix}
		\sim
		\begin{pmatrix}
			M_{i+1} \\
			\vdots \\
			M_r\\
		T_1\\
		\vdots\\
		T_i\\
		\end{pmatrix}
\end{displaymath}
also has a determinant coprime with $n$. This shows the property regarding the coprimality of the determinant of consecutive rows for the first $r$ rows constructed in this way, $T_1,\ldots,T_r$. Observe that
\begin{displaymath}
T_i=(\overbrace{0 \; \cdots \; 0}^{i-1}\; d_i \; \ast \; \cdots \; \ast).
\end{displaymath}
Since each $d_i$ is coprime with $n$, we can add the identity matrix after the matrix $T$ and the claimed properties are satisfied.
The matrix $S$
is built similarly but we start from the last column and we construct a lower diagonal matrix $S$.
\end{proof}

\paragraph{Attaching building blocks.}\label{page.attaching_building}

We use Lemma~\ref{lem:ext3} on each matrix $\mathcal{B}_i^{\text{\tiny{(4)}}}$ to obtain matrices
\begin{displaymath}
\begin{pmatrix}
		I_{t(m-k)}\\
		S_i\\
		\mathcal{B}_i^{\text{\tiny{(4)}}}\\
		T_i\\
		I_{t(m-k)}\\
\end{pmatrix}=
\begin{pmatrix}
				I_{t(m-k)}\\
				\mathcal{B}_i^\text{\tiny{(5)}}\\
				I_{t(m-k)}\\
		\end{pmatrix}
\end{displaymath}
that are put together into a large matrix
\begin{displaymath}
	A^{\text{\tiny{(9)}}}=
	\begin{pmatrix}
	I_{(4k+1)t(m-k)}
\begin{array}{c}
		I_{t(m-k)}\\
		\mathcal{B}_1^\text{\tiny{(5)}}\\
		I_{t(m-k)}\\
		 \vdots\\
		I_{t(m-k)}\\
		\mathcal{B}_k^\text{\tiny{(5)}}\\
		I_{t(m-k)}\\
\end{array}
\end{pmatrix}=\begin{pmatrix}I_{(4k+1)t(m-k)} & B^{\text{\tiny{(9)}}}\end{pmatrix}
\end{displaymath}
that is $n$-circular. This is, any $r=(4k+1)t(m-k)$ consecutive columns form a matrix with determinant coprime with $n$. Indeed, the matrix formed by the first $r$ columns is the identity matrix. On the other cases, some columns of the left most identity matrix $I_{r}$ are selected, along with some other columns from the $B^{\text{\tiny{(9)}}}$ part. Therefore the determinant is, up to a sign, the determinant of the submatrix formed by the columns selected in $B^{\text{\tiny{(9)}}}$ and the rows corresponding to the indices of the columns not picked from $I_{r}$. If the set of columns selected are consecutive and contains all the columns of $B^{\text{\tiny{(9)}}}$, the determinant is coprime with $n$ as so is the determinant formed with  $t(m-k)$ consecutive rows from $B^{\text{\tiny{(9)}}}$.
Since the first and the last square blocks of $B^{\text{\tiny{(9)}}}$ are identity matrices, the remaining cases are shown.

For $G^t=\Z_n^t$, the equations induced by the new rows in $B^{\text{\tiny{(9)}}}$ with respect to $B^{\text{\tiny{(4)}}}$ are 
\begin{displaymath}
	x_i+\sum_{j=1}^{t(m-k)} B^{\text{\tiny{(9)}}}_{i,j}\; x_{r+j}=0, \; x_i\in \Z_n.
	\end{displaymath} Therefore, $(A^{\text{\tiny{(9)}}},G^t)$ is a $k^{\text{\tiny{(9)}}}\times m^{\text{\tiny{(9)}}}$ homomorphism system $1$-auto-equivalent to $(A^{\text{\tiny{(8)}}},G^t)$.

\begin{remark}\label{r.ext_matrix_auto}
The system $(A^{\text{\tiny{(9)}}},\Z_n^t)$ is $1$-auto-equivalent to $(A,\Z_n^t)$ by projecting onto the original coordinates using maps $\phi_i$ equal to the identity map. Indeed, any solution to $(A,\Z_n^t)$ can be extended uniquely to a solution in $(A^{\text{\tiny{(9)}}},\Z_n^t)$ as
the last $m-k$ variables in both systems parameterize the solutions in both cases.
\end{remark}

\subsection{Final composition for $\gamma\neq 1$ and representation for  $(A^{\text{(7)}},G)$} \label{s.final_composition}

\paragraph{Joining the matrices and groups.}
Let $\left\{\overline{J}_{\kappa}\right\}_{\kappa\in \Upsilon}$, be the $n$--circular integer matrices obtained from $\left\{ J_{\kappa}=\begin{pmatrix}I_{t(m+1)} & B_{\kappa}^{\text{\tiny{(3)}}}\end{pmatrix} \right\} _{\kappa\in \Upsilon}$ using the procedure in Section~\ref{s.construction_circular_matrix}. This applies by (\ref{prop.G2}) in Section~\ref{s.gamma-effective} regarding $\gcd(D_{t}(\mathcal{B}_{\kappa,i}^\text{\tiny{(3)}}),n)=1$.
All the matrices $\overline{J}_{\kappa}$ have the same dimensions $tk^{\text{\tiny{(J')}}}=(4k^{\text{\tiny{(J)}}}+1)t(m^{\text{\tiny{(J)}}}-k^{\text{\tiny{(J)}}})$, $tm^{\text{\tiny{(J')}}}=(4k^{\text{\tiny{(J)}}}+2)t(m^{\text{\tiny{(J)}}}-k^{\text{\tiny{(J)}}})$ over $G_{\kappa}$.

Consider $\Upsilon$ to be ordered lexicographically; given $(\kappa_1,\kappa_2),(\kappa_3,\kappa_4)\in \Upsilon$, $(\kappa_1,\kappa_2)<(\kappa_3,\kappa_4)$ if and only if $\kappa_1 m+\kappa_2<\kappa_3 m + \kappa_4$. The columns of the matrix $A^{\text{\tiny{(7)}}}$ correspond to the columns $\left(\overline{J}_{\kappa}\right)^v$ using the lexicographical order for the ordered set $[1,t m^{\text{\tiny{(J')}}}] \times \Upsilon \owns (v,\kappa)$. The rows $A^{\text{\tiny{(7)}}}$ correspond to the rows $\left(\overline{J}_{\kappa}\right)_w$ using the lexicographically ordered set $[1,t k^{\text{\tiny{(J')}}}]\times \Upsilon\owns (w,\kappa)$. The coefficients of $A^{\text{\tiny{(7)}}}$ are zero wherever the intersection of a column and a row does not appear in any of the matrices $\overline{J}_{\kappa}$. This is, the $(i,j)$ element of $A^{\text{\tiny{(7)}}}$ is given by
\begin{displaymath}
	(A^{\text{\tiny{(7)}}})_{i,j}=
	\left\{\begin{array}{cl}
	\left(\overline{J}_{\kappa}\right)_{\lambda_1,\mu_1} &\begin{array}{ll}
	 \text{if} &\left\{
	\begin{array}{c}
		 i=(\lambda_1-1)(1+mt)+1+(\kappa_1-1)t+\kappa_2\\
		j=(\mu_1-1)(1+mt)+1+(\kappa_1-1)t+\kappa_2\\
	\end{array}\right.\\
	\phantom{.}&\text{for some } 
	\left\{\begin{array}{c}
		\lambda_1\in[1,tk^{\text{\tiny{(J')}}}],\mu_1\in[1,tm^\text{\tiny{(J')}}]\\
	\kappa=(\kappa_1,\kappa_2)\in \Upsilon\\
		\end{array}\right.\\
\end{array}\\
	0 & \text{otherwise.}
\end{array}\right. 
\end{displaymath}

Consider $\overline{A^{\text{\tiny{(7)}}}}$ to be the block-diagonal matrix containing the matrices $\{\overline{J}_{\kappa}\}_{\kappa\in\Upsilon}$ as the blocks in the diagonal.
The matrix $A^{\text{\tiny{(7)}}}$ can be seen as an appropriate permutation of  rows and columns of the block matrix $\overline{A^{\text{\tiny{(7)}}}}$. Let $P_1$ and $P_2$ denote, respectively, the row and column permutations so that $A^{\text{\tiny{(7)}}}=P_1 \overline{A^{\text{\tiny{(7)}}}} P_2$.

$A^{\text{\tiny{(7)}}}$ can be considered as formed by $t^2 k^{\text{\tiny{(J')}}} m^{\text{\tiny{(J')}}}$ blocks of size $(1+tm)\times (1+tm)$ over the groups $G=\prod_{\kappa \in \Upsilon} G_{\kappa}$. Furthermore, $t^2$ of the $(1+tm)\times (1+tm)$ blocks can be grouped in a single block of size $t(1+tm)\times t(1+tm)$. This allows us to interpret $A^{\text{\tiny{(7)}}}$ as formed by $k^{\text{\tiny{(9)}}} m^{\text{\tiny{(9)}}}$ blocks of size $t(1+tm)\times t(1+tm)$ over the groups $G^t=\left(\prod_{\kappa \in \Upsilon} G_{\kappa}\right)^t$. Therefore, if we denote 
$k^{\text{\tiny{(7)}}}=k^{\text{\tiny{(J')}}}$ and $m^{\text{\tiny{(7)}}}=m^{\text{\tiny{(J')}}}$, $A^{\text{\tiny{(7)}}}$ can be considered as a $k^{\text{\tiny{(7)}}}\times m^{\text{\tiny{(7)}}}$ homomorphism system over $G^t$ denoted by  $(A^{\text{\tiny{(7)}}},G^t)$. $(A^{\text{\tiny{(7)}}},G^t)$ has the particularity that the solution set of the system $A^{\text{\tiny{(7)}}}$, $S(A^{\text{\tiny{(7)}}},G^t)$, is the cartesian product of the solution sets $\left\{S\left(\overline{J}_{\kappa},G_{\kappa}^t\right)\right\}_{\kappa\in \Upsilon}$.

\paragraph{Matrix $C$ for $A^{\text{\tiny{(7)}}}$.}

Since each of the matrices $\{\overline{J}_{\kappa}\}_{\kappa\in \Upsilon}$ is $n$--circular with block size $1$, we use Proposition~\ref{p.constr_c} to find band-shape matrices  $C_{\kappa}$ related to $\overline{J}_{\kappa}$ for $\kappa\in\Upsilon$.

All the  $\{C_{\kappa}\}_{\kappa\in \Upsilon}$ are joined into a single $C$ fulfilling the properties stated in Proposition~\ref{p.constr_c} for $A=A^{\text{\tiny{(7)}}}$.  Indeed, let $\overline{C}$ be the block matrix with the matrices  $\{C_{\kappa}\}_{\kappa\in \Upsilon}$ in the diagonal and zeros everywhere else. Observe that  $\overline{A^{\text{\tiny{(7)}}}} \; \overline{C}=\mathbf{0}$. Let $C=P_2^{-1}\overline{C}P_2$, where $P_2$ is the column permutation from  $\overline{A^{\text{\tiny{(7)}}}}$ into  $A^{\text{\tiny{(7)}}}$. Then the equality $A^{\text{\tiny{(7)}}}C=\mathbf{0}$ follows.

If we group the consecutive rows and columns of $C$ by blocks of size $t(1+mt)\times t(1+mt)$, then $C$ can be considered as a $(t(1+mt))^2$-sized-block matrix $C=\left(\mathcal{C}_{i,j}\right)$ with $(i,j)\in [1,m^{\text{\tiny{(7)}}}]\times [1,m^{\text{\tiny{(7)}}}]$. Moreover, $C$ has the band-shape inherited from $\{C_{\kappa}\}_{\kappa\in \Upsilon}$. In particular. 
\begin{itemize}
	\item $\mathcal{C}_{i,i}$, $i\in[1,m^{\text{\tiny{(7)}}}]$, is a $t(1+mt)\times t(1+mt)$ upper triangular matrix where each coefficient in the diagonal is coprime with $n$.
	\item $\mathcal{C}_{i,i+k^{\text{\tiny{(7)}}}}$, $i\in[1,m^{\text{\tiny{(7)}}}]$, is a $t(1+mt)\times t(1+mt)$ lower triangular matrix where each coefficient in the diagonal is coprime with the order of the group on which it is acting.\footnote{The matrix is lower triangular and not only block lower triangular (with the blocks in the diagonal having determinant coprime with $n$.) Indeed, the matrices $\overline{J}_{\kappa}$ built using Lemma~\ref{lem:ext3} are $n$-circular for blocks of size $1$.}
	\item $\mathcal{C}_{i,j}=0$ for $i\in[1,m^{\text{\tiny{(7)}}}]$ and $j\in [i+k^{\text{\tiny{(7)}}}+1,\ldots,i-1]$, indices modulo $m^{\text{\tiny{(7)}}}$.
\end{itemize}

\paragraph{Strongly $1$-representation for $(A^{\text{\tiny{(7)}}},G^t)$.} \label{s.strongly_1_rep}

We use a similar machinery as in \cite[Lemma~4]{ksv13} to construct, assuming $m^{\text{\tiny{(7)}}}\geq k^{\text{\tiny{(7)}}}+2$, a
strongly $1$-representation for $(A^{\text{\tiny{(7)}}},G)$ denoted by $(K,H)$.  

Let the hypergraph $K$ have the vertex set $V(K)=\left(\prod_{\kappa\in \Upsilon} G_{\kappa}\right)^t\times[1,m^{\text{\tiny{(7)}}}]$. $H$ has $[1,m^\text{\tiny{(7)}}]$ as its vertex set and $m^\text{\tiny{(7)}}$  edges $e_i=\{i,\ldots,i+k^\text{\tiny{(7)}}\} \mod m^{\text{\tiny{(7)}}}$ for $i\in[1,m^\text{\tiny{(7)}}]$ with $e_i$ coloured $i$. Since $m^{\text{\tiny{(7)}}}\geq k^{\text{\tiny{(7)}}}+2$, $H$ has $m^{\text{\tiny{(7)}}}$ pair-wise distinct edges.

The edges in $K^{\text{\tiny{(7)}}}$ form a $m^{\text{\tiny{(7)}}}$-partite $(k^{\text{\tiny{(7)}}}+1)$-uniform hypergraph. The edge $e_i=\{g_i,\ldots,g_{i+k^{\text{\tiny{(7)}}}}\}$, with $g_j\in G^t\times \{j\}\subset V(K)$, is coloured $i$ and is labelled $g$ if and only 
\begin{displaymath}
	\sum_{j=i}^{i+k^{\text{\tiny{(7)}}} } \mathcal{C}_{i,j}(g_j)=g.
\end{displaymath}
These labels on the edges define a labelling function $l:E(K)\to G^t$. Furthermore, the uniformity of the edges is $k^{\text{\tiny{(7)}}}+1$ which is bounded by $m^{\text{\tiny{(7)}}}=|V(H)|$. This shows RP\ref{prop_rep1} from Definition~\ref{d.rep_sys} with $\chi_1=m^{\text{\tiny{(7)}}}$.

For RP\ref{prop_rep2}, observe that all the copies of $H$ in $K$ should contain one, and exactly one, vertex in each of the vertex clusters $G^t\times \{j\}$ in $V(K)$.  Given $H_0\in C(H,K)$ with $V(H_0)=\{g_1,\ldots,g_{m^{\text{\tiny{(7)}}}}\}$, the labels on the edges $e_1,\ldots,e_{m^{\text{\tiny{(7)}}}}$ of $H_0$ are given by $(l(e_1),\ldots,l(e_{m^{\text{\tiny{(7)}}}}))^{\top}=C(g_1,\ldots,g_{m^{\text{\tiny{(7)}}}})^{\top}$. Since $A^{\text{\tiny{(7)}}}C=0$, then $(l(e_1),\ldots,l(e_{m^{\text{\tiny{(7)}}}}))\in S(A^{\text{\tiny{(7)}}},G^t)$.  Let $Q=\{1\}$, $r_q(H_0)=1$  for each $H_0\in C(H,K)$ and $p=1$. Then we define
	\begin{align}
		r:C(H,K) &\longrightarrow  S(A^{\text{\tiny{(7)}}},G^t)\times Q\nonumber \\
		H_0=\{e_1,\ldots,e_{m^{\text{\tiny{(7)}}}}\}&\longmapsto (r_0(H_0),r_q(H_0))=\left((l(e_1),\ldots,l(e_{m^{\text{\tiny{(7)}}}})),1\right). \nonumber 	\end{align}

We claim that $r$ induces a strong $1$-representation for some $c$ bounded by a function of $m^{\text{\tiny{(7)}}}$. Let $(\mathbf{x}_1,\ldots,\mathbf{x}_{m^{\text{\tiny{(7)}}}})=\mathbf{x}\in S(A^{\text{\tiny{(7)}}},G^t)$ be a solution. For each $i\in[1,m^{\text{\tiny{(7)}}}]$, there are $|G^t|^{k^{\text{\tiny{(7)}}}}$ edges labelled $\mathbf{x}_i$. Indeed, as $\mathcal{C}_{i,i}$ is a block with determinant coprime with the order of the group $G^t$, for any choice of the vertices $\{g_{i+1},\ldots,g_{i+k}\}$ there is a unique vertex $g_{i}\in G\times \{i\}$ such that 
		$\sum_{j=i}^{i+k^{\text{\tiny{(7)}}} } \mathcal{C}_{i,j}(g_j)=\mathbf{x}_i$, namely $g_i=\mathcal{C}^{-1}_{i,i} (\mathbf{x}_i-\sum_{j=i+1}^{i+k^{\text{\tiny{(7)}}} } \mathcal{C}_{i,j}(g_j))$.\footnote{All the rational numbers appearing in $\mathcal{C}^{-1}_{i,i}$ have denominators co-prime with $|G^t|$, hence inducing automorphisms in $G$.}

Given the solution $\mathbf{x}$ and an edge $e_i$ coloured $i$ and labelled $\mathbf{x}_i$ in $K$, there is a unique $H_0$, copy of $H$ in $K$, with $r_0(H_0)=\mathbf{x}$ and $e_i\in H_0$. Indeed, if $e_i$ has the vertices $\{g_i,\ldots,g_{i+k^{\text{\tiny{(7)}}} }\}$ as its support, then there exists a unique $g_{i-1}\in G^t\times \{i-1\}\subset V(K)$ such that $l(\{g_{i-1},\ldots,g_{i+k^{\text{\tiny{(7)}}} -1}\})=\mathbf{x}_{i-1}$. This process can be repeated a total of
 $m^{\text{\tiny{(7)}}}-k^{\text{\tiny{(7)}}}-1$ times. Indeed, the vertices $g_{i-1},g_{i-2},\ldots,g_{i+k^{\text{\tiny{(7)}}}+1 }$ that complete, together with $g_i,\ldots,g_{i+k^{\text{\tiny{(7)}}}}$, a copy of $H$ in $K$ can be uniquely determined using the coordinate values $\mathbf{x}_{i-1},\ldots,\mathbf{x}_{i+k^{\text{\tiny{(7)}}}+1}$ of the solution $\mathbf{x}$ and a subset of the previously determined vertices. 

Let $H_0$ denoted this copy of $H$ in $K$. By the existence of $r$, $r_0(H_0)\in S(A^{\text{\tiny{(7)}}},G^t)$. Even more, we claim that $r_0(H_0)=\mathbf{x}$. Indeed, by the $n$--circularity of $A^{\text{\tiny{(7)}}}$, any $m^{\text{\tiny{(7)}}}-k^{\text{\tiny{(7)}}}$ consecutive values of $\{\mathbf{x}_j\}$ determines the solution. The copy $H_0$ has been constructed so that it contains edges labelled with the $m^{\text{\tiny{(7)}}}-k^{\text{\tiny{(7)}}}$ consecutive values $\mathbf{x}_i,\mathbf{x}_{i-1},\ldots,\mathbf{x}_{i+k^{\text{\tiny{(7)}}}+1}$ and coloured appropriately with $\{i,i-1,\ldots,i+k^{\text{\tiny{(7)}}}+1\}$.
Since the only solution $\mathbf{y}\in S(A^{\text{\tiny{(7)}}},G^t)$ that satisfies $(\mathbf{y})_j=\mathbf{x}_j$ for $j\in\{i,i-1,\ldots, i+k^{\text{\tiny{(7)}}}+1\}$ is $\mathbf{y}=\mathbf{x}$ and $r_0(H_0)\in S(A^{\text{\tiny{(7)}}},G^t)$, then $r_0(H_0)=\mathbf{x}$.

Hence, given $\mathbf{x}\in S(A^{\text{\tiny{(7)}}},G^t)$ and $i\in[1,m^{\text{\tiny{(7)}}}]$, each copy of $e_i$ coloured $i$ and labelled $\mathbf{x}_i$ can be extended to a unique copy of $H$ in $K$ related to $\mathbf{x}$. This shows RP\ref{prop_rep3} and RP\ref{prop_rep4}. Moreover, the number of copies of $H$ related to $\mathbf{x}$ in $K$ is
\begin{displaymath}
|r^{-1}(\mathbf{x},1)|=|G^t|^{k^{\text{\tiny{(7)}}}}=\frac{|G^t|^{k^{\text{\tiny{(7)}}}+1}}{|G^t|}=\left(\frac{1}{m^{\text{\tiny{(7)}}}}\right)^{k^{\text{\tiny{(7)}}}+1} \frac{|K|^{k^{\text{\tiny{(7)}}}+1}}{|G^t|}=c \frac{|K|^{k^{\text{\tiny{(7)}}}+1}}{|G^t|}
\end{displaymath}
as there are $|G^t|^{k^{\text{\tiny{(7)}}}}$ edges labelled $(\mathbf{x})_i$ for any $i\in[1,m^{\text{\tiny{(7)}}}]$. This shows RP\ref{prop_rep2} and finishes the strong $1$-representation for $(A^{\text{\tiny{(7)}}},G^t)$ by $(K,H)$.

\paragraph{Relation with previous systems.}

Observe that each of the systems $\{(J_\kappa,G_\kappa^t)\}_{\kappa\in \Upsilon}$ have one more equation and two more variables than $(A^{\text{\tiny{(6)}}},\Z_n^t)$ and that $(A^{\text{\tiny{(7)}}},G^t)$ have the same number of equations and variable as any $(\overline{J}_\kappa,G_{\kappa}^t)$. Thus, $A^{\text{\tiny{(7)}}}$ has dimensions
$k^{\text{\tiny{(7)}}}=(4k^{\text{\tiny{(6)}}}+5)(m^{\text{\tiny{(6)}}}-k^{\text{\tiny{(6)}}}+1)$ and $m^{\text{\tiny{(7)}}}=(4k^{\text{\tiny{(6)}}}+6)(m^{\text{\tiny{(6)}}}-k^{\text{\tiny{(6)}}}+1)$ over $G^t$.

If $x_i$ is a variable in $(A^{\text{\tiny{(7)}}},G^t)$, then it can be decomposed into $t$ variables
$x_{(i,j)}\in G$ with $j\in [1,t]$. Furthermore, each $x_{(i,j)}$ can be understood as formed by $mt+1$ variables
$x_{(i,j),\kappa}\in G_{\kappa}$, where $\kappa\in\Upsilon$.

\begin{remark}\label{r.last}
The system $(A^{\text{\tiny{(7)}}},G^t)$ is $\mu$-equivalent to $(A^{\text{\tiny{(6)}}},\Z_n^t)$ with the injection
\begin{align}
	\sigma: [1,m^{\text{\tiny{(6)}}}] &\longrightarrow [1,m^{\text{\tiny{(7)}}}] \nonumber \\
	i & \longmapsto 
	\left\{
	\begin{array}{ll}
	4(i-1) (m^{\text{\tiny{(6)}}}-k^{\text{\tiny{(6)}}}+1)+ 3(m^{\text{\tiny{(6)}}}-k^{\text{\tiny{(6)}}}+1)+1 & \text{ if }\; i\in[1,k^{\text{\tiny{(6)}}}] \\
	(4 k^{\text{\tiny{(6)}}}+5)(m^{\text{\tiny{(6)}}}-k^{\text{\tiny{(6)}}}+1)+i-k^{\text{\tiny{(6)}}} & \text{ if }\;  i\in[k^{\text{\tiny{(6)}}}+1,m^{\text{\tiny{(6)}}}]\\
\end{array} \right. \nonumber
\end{align}
and
\begin{align}
	\phi:S(A^{\text{\tiny{(7)}}},G^t)
 &\longrightarrow S(A^{\text{\tiny{(6)}}},\Z_n^t) \nonumber \\
(x_1,\ldots,x_{m^{\text{\tiny{(6)}}}}) &\longmapsto (\phi_1(x_{\sigma(1)}),\ldots,\phi_{m^{\text{\tiny{(6)}}}}(x_{\sigma(m^{\text{\tiny{(6)}}})})) \nonumber
\end{align}
with
\begin{displaymath}
		\phi_i(x_{(\sigma(i),j)})= \left\{
		\begin{array}{cl}
		d_{i,j} \sum_{\kappa\in \Upsilon} x_{(\sigma(i),j),\kappa} & \text{ for }\; i\in[1,k^{\text{\tiny{(6)}}}]\\
\sum_{\kappa\in \Upsilon} x_{(\sigma(i),j),\kappa} & \text{ for }\; i\in[k^{\text{\tiny{(6)}}}+1,m^{\text{\tiny{(6)}}}] \\
	\end{array}\right. ,
\end{displaymath}
where $d_{i,j}$ is the greatest common divisor of the $j$-th row of the block $\mathcal{B}_i^{\text{\tiny{(1)}}}$ from $A^{\text{\tiny{(6)}}}=\begin{pmatrix} I_{tm} & B^{\text{\tiny{(1)}}} \end{pmatrix}$.\footnote{See Section~\ref{s.group_on_B}.}

Additionally, for any $\mathbf{x}\in S(A^{\text{\tiny{(6)}}},\Z_n^t)$ and $i\in[1,m^{\text{\tiny{(6)}}}]$, the number of $\mathbf{y}\in S(A^{\text{\tiny{(7)}}},G^t)$ with $\phi(\mathbf{y})=\mathbf{x}$ and fixed $(\mathbf{y})_{\sigma(i)}$ is independent of the $(\mathbf{y})_{\sigma(i)}\in \phi_i^{-1}((\mathbf{x})_i)$.
\end{remark}

\begin{proof}[Proof of Remark~\ref{r.last}]
	The system $(A^{\text{\tiny{(7)}}},G^t)$ is formed by joining the systems $\{(\overline{J}_\kappa,G_{\kappa}^t)\}_{\kappa\in \Upsilon}$ together. The $k^{\text{\tiny{(J')}}}\times m^{\text{\tiny{(J')}}}$ system $(\overline{J}_\kappa,G_{\kappa}^t)$ is $1$-auto-equivalent to $(J_\kappa,G_{\kappa}^t)$ by Remark~\ref{r.ext_matrix_auto} for any  $\kappa\in \Upsilon$.\footnote{
	Indeed, $(\overline{J}_{\kappa},G_{\kappa}^t)$ is built from $(J_{\kappa},G_{\kappa}^t)$ by adding some variables and the same number of equations in a way that the new equations are: new variable equal linear equation involving old variables. Therefore, a projection onto the right variables using the identity as maps $\phi_i$ configure the application $\phi$ of the $1$-auto-equivalence.}
	Let $\phi'_{\kappa}$ be the map defining the $1$-auto-equivalence from $S(\overline{J}_{\kappa},G_{\kappa}^t)$ to $S(J_{\kappa},G_{\kappa}^t)$

Any solution $\mathbf{x}\in S(A^{\text{\tiny{(7)}}},G^t)$ induces $(mt+1)$ solutions $\overline{\mathbf{x}}_\kappa\in S(\overline{J}_{\kappa},G_{\kappa}^t)$ and vice-versa. By the $1$-auto-equivalence, these solutions can be seen in $(J_{\kappa},G_{\kappa}^t)$ considering $\mathbf{x}_\kappa=\phi_{\kappa}'(\overline{\mathbf{x}}_\kappa)\in S(J_{\kappa},G_{\kappa}^t)$.
We use the maps $f_{\kappa}$ from Remark~\ref{r.prop.G3} to conclude that
\begin{displaymath}
	\phi(\mathbf{x})=\sum_{\kappa\in \Upsilon} f_{\kappa}(\mathbf{x}_\kappa)\in S(A^{\text{\tiny{(6)}}},\Z_n^t)
\end{displaymath}
as $\phi$ is the sum over $\Upsilon$ of the compositions of the homomorphisms $f_{\kappa}$ with the $\phi_{\kappa}'$.

Since the homomorphism $f_{(1,0)}$ associated to
$(J_{(1,0)},\Z_n^t)=(J_{(1,0)},G_{(1,0)}^t)$ is surjective, so is $\phi$.\footnote{Observe that $S(J_{\kappa},G_{\kappa}^t)$ contains the trivial solution $\mathbf{0}$ and $\{f_{\kappa}\}_{\kappa\in\Upsilon}$ are homomorphisms.} As $\phi$ is a homomorphism, $\phi$ is $\mu$-to-$1$ for $\mu=|S(A^{\text{\tiny{(7)}}},G^t)|/|S(A^{\text{\tiny{(6)}}},\Z_n^t)|$.

Let us show the second part of the result for $\mathbf{x}\in S(A^{\text{\tiny{(6)}}},\Z_n^t)$. 
Given $\mathbf{y}_{\kappa}\in S(J_{\kappa},G_{\kappa}^t)$, 
 $(\mathbf{y}_{\kappa})_{i}\in G_{\kappa}^t$, $i\in[1,m^{\text{\tiny{(J)}}}]$, denotes the $i$-th coordinate of $\mathbf{y}_{\kappa}$ and $(\mathbf{y}_{\kappa})_{i,j}\in G_{\kappa}$ denotes the $(i,j)$-th coordinate of the solution with $(i,j)\in[1,m^{\text{\tiny{(J)}}}]\times[1,t]$.

Any collection of solutions $\mathbf{y}_{\kappa}\in S(J_{\kappa},G_{\kappa}^t)$ induce a unique $\mathbf{y}\in(A^{\text{\tiny{(7)}}},G^t)$. The variables indexed in $[k^{\text{\tiny{(6)}}}+1,m^{\text{\tiny{(6)}}}]$ parameterize the solutions in $S(A^{\text{\tiny{(6)}}},\Z_n^t)$. Therefore, if we have
\begin{equation}\label{eq.x_as_sum_y}
	(\mathbf{x})_i=\sum_{\kappa\in \Upsilon} (\mathbf{y}_{\kappa})_{i+1}, \text{ for all } i\in[k^{\text{\tiny{(6)}}}+1,m^{\text{\tiny{(6)}}}]
\end{equation} 
for our selected $\mathbf{x}$, then $\phi(\mathbf{y})=\mathbf{x}$. The condition (\ref{eq.x_as_sum_y}) is also necessary; if the collection of solutions $\{\mathbf{y}_{\kappa}\}_{\kappa\in \Upsilon}$ inducing $\mathbf{y}$ does not satisfy (\ref{eq.x_as_sum_y}) for some index $i\in [k^{\text{\tiny{(6)}}}+1,m^{\text{\tiny{(6)}}}]$, then $\phi(\mathbf{y})\neq \mathbf{x}$. 
The variables that parameterize the solutions for any system $(J_{\kappa},G_{\kappa}^t)$ are those indexed in $[k^{\text{\tiny{(J)}}}+1,m^{\text{\tiny{(J)}}}]$; once the value of $(\mathbf{y}_{\kappa})_i$ is selected for $i\in[k^{\text{\tiny{(J)}}}+1,m^{\text{\tiny{(J)}}}]$, the solution $\mathbf{y}_\kappa \in S(J_{\kappa},G_{\kappa}^t)$ exists and is unique.
As the proof of Remark~\ref{r.prop.G3} highlights, the variable $m^{\text{\tiny{(J)}}}$ does not appear in (\ref{eq.x_as_sum_y}); the value of $\phi(\mathbf{y})$ is independent of the values $(\mathbf{y}_{\kappa})_{m^{\text{\tiny{(J)}}}}$ for $\kappa\in \Upsilon$.

Pick an $i\in[k^{\text{\tiny{(6)}}}+1,m^{\text{\tiny{(6)}}}]$ and a value for $(\mathbf{y})_{i+1}\in G^t$ such that $\phi_i((\mathbf{y})_{i+1})=(\mathbf{x})_i$. All the solutions $\mathbf{y}\in S(A^{\text{\tiny{(7)}}},G^t)$ with $\phi(\mathbf{y})=\mathbf{x}$ can be found by selecting a value for the remaining parameterizing variables $(\mathbf{y})_{i_1}$ with $i_1\in [k^{\text{\tiny{(J)}}}+1,m^{\text{\tiny{(J)}}}]\setminus\{i+1\}$ appropriately to configure a solution in $S(A^{\text{\tiny{(7)}}},G^t)$ with  $\phi(\mathbf{y})=\mathbf{x}$.
For $i_1\in [k^{\text{\tiny{(J)}}}+1,m^{\text{\tiny{(J)}}}-1]\setminus\{i+1\}$, we can select any $(\mathbf{y})_{i_1}\in G^t$ as long as
\begin{displaymath}
(\mathbf{x})_{i_1-1}=\sum_{\kappa \in\Upsilon} (\mathbf{y}_{\kappa})_{i_1}=\phi_{i_1-1}((\mathbf{y})_{i_1}).
\end{displaymath}
Additionally, we can select any value for $(\mathbf{y})_{m^{\text{\tiny{(J)}}}}$. Observe that the number of choices is independent on the particular value $(\mathbf{y})_{l+1}$ with $\phi_l((\mathbf{y})_{l+1})=(\mathbf{x})_l$ we have picked.

Given $\mathbf{x}\in S(A^{\text{\tiny{(6)}}},\Z_n^t)$, $i\in[1,k^{\text{\tiny{(6)}}}]$ and any $y^{(i)}\in G^t$ with $\phi_i(y^{(i)})=(\mathbf{x})_i$ we shall find all the solutions $\mathbf{y}\in S(A^{\text{\tiny{(7)}}},G^t)$ with $\phi(\mathbf{y})=\mathbf{x}$ and $(\mathbf{y})_i=y^{(i)}$. For $\kappa\in[1,m]\times[1,t]$, select any solution $\mathbf{y}_{\kappa}'''\in S(J_{\kappa},G_{\kappa}^t)$ such that
\begin{equation}\label{eq.23}
	(\mathbf{y}_{\kappa}''')_{i}=y^{(i)}_{\kappa}.
	\end{equation} Pick an auxiliary solution $\mathbf{y}_{(1,0)}'\in S(J_{(1,0)},G_{(1,0)}^t)$, defined by the last $m^{\text{\tiny{(J)}}}-k^{\text{\tiny{(J)}}}$ variables,
\begin{itemize}
	\item  choosing a value for $(\mathbf{y}_{(1,0)}')_{m^{\text{\tiny{(J)}}}}$  in $\Z_n^t$ (any value.)
	\item $(\mathbf{y}_{(1,0)}')_{j+1}=(\mathbf{x})_{j}-\sum_{\kappa\in\Upsilon\setminus \{(1,0)\}}(\mathbf{y}_{\kappa}''')_{j+1}$ for $j\in [k^{\text{\tiny{(6)}}}+1,m^{\text{\tiny{(6)}}}]$.
\end{itemize}
Let $\mathbf{y}'$ be the solution in $S(A^{\text{\tiny{(7)}}},G^t)$ defined by the $mt+1$ solutions $\{\mathbf{y}_{(1,0)}',\{\mathbf{y}_{\kappa}'''\}_{\kappa\in \Upsilon \setminus \{(1,0)\}}\}$. Observe that $\phi(\mathbf{y}')=\mathbf{x}$. Indeed, for $j\in [k^{\text{\tiny{(6)}}}+1,m^{\text{\tiny{(6)}}}]$,
\begin{displaymath}
	\phi_{j}(\mathbf{y}')=(\mathbf{y}_{(1,0)}')_{j+1}+\sum_{\kappa\in \Upsilon\setminus \{(1,0)\} } (\mathbf{y}_{\kappa}''')_{j+1}=(\mathbf{x})_{j} -\sum_{\kappa\in \Upsilon\setminus \{(1,0)\} } (\mathbf{y}_{\kappa}''')_{j+1}+\sum_{\kappa\in \Upsilon\setminus \{(1,0)\} } (\mathbf{y}_{\kappa}''')_{j+1}= (\mathbf{x})_{j}.
\end{displaymath}
Since $\phi(\mathbf{y}')\in S(A^{\text{\tiny{(6)}}},\Z_n^t)$ and $\mathbf{x}$ is unique given the value of its last $m^{\text{\tiny{(6)}}}-k^{\text{\tiny{(6)}}}$ variables, the claim follows and we have a solution $\mathbf{y}'\in S(A^{\text{\tiny{(7)}}},G^t)$ such that $\phi(\mathbf{y}')=\mathbf{x}$.

However, it is not yet clear that $(\mathbf{y})_i=y^{(i)}$; by (\ref{eq.23}), we know this equality holds for all coordinates in $\Upsilon\setminus (1,0)$.
If $\mathbf{y}_{(1,0)}'$ would be such that $(\mathbf{y}_{(1,0)}')_j=y^{(i)}_{(1,0)}$, then the solution $\mathbf{y}'\in S(A^{\text{\tiny{(7)}}},G^t)$ defined before satisfies the claims $\phi(\mathbf{y}')=\mathbf{x}$ and $(\mathbf{y}')_i=y^{(i)}$.
Let us define $\epsilon_{i,j}=\epsilon_{i,j}(\mathbf{x},\{\mathbf{y}_{\kappa}'''\}_{\kappa\in \Upsilon \setminus\{(1,0)\}},y^{(i)})$, for $j\in[1,t]$, as the difference between the $j$-th components of the $i$-th coordinate of $\mathbf{y}_{(1,0)}'$ and its aimed value $(y^{(i)}_{(1,0)})_j$:
\begin{displaymath}
	(\mathbf{y}_{(1,0)}')_{i,j}-(y^{(i)}_{(1,0)})_j=\epsilon_{i,j}(\mathbf{x},\{\mathbf{y}_{\kappa}'''\}_{\kappa\in \Upsilon \setminus\{(1,0)\}},y^{(i)}).
\end{displaymath}
Observe that $\epsilon_{i,j}\in G_{i,j}$. Indeed,
\begin{displaymath}
	(\phi_i(y^{(i)}))_j=d_{i,j} \left(\sum_{\kappa\in \Upsilon} (y^{(i)}_{\kappa})_{j}\right)=(\mathbf{x})_{i,j}\stackrel{\phi(\mathbf{y}')=\mathbf{x}}{=}(\phi_i(\mathbf{y}'))_{j}=d_{i,j}\left((\mathbf{y}'_{(1,0)})_{i,j}+\sum_{\kappa\in \Upsilon \setminus \{(1,0)\}} (y_{\kappa}^{(i)})_j\right).
\end{displaymath}
Therefore $d_{i,j} \epsilon_{i,j}=d_{i,j} \left((\mathbf{y}_{(1,0)}')_{i,j}-(y^{(i)}_{(1,0)})_j\right)=0$ as claimed.

Using $\epsilon_{i,j}$ we pick, one for each $j\in[1,t]$, a total of $t$ auxiliary solutions $\mathbf{y}^{(j)}_{(1,0)}\in S(J_{(1,0)},G_{(i,j)}^t)\subset S(J_{(1,0)},G_{(1,0)}^t)$, such that
\begin{itemize}
	\item  $(\mathbf{y}_{(1,0)}^{(j)})_{m^{\text{\tiny{(J)}}}}=0$.
	\item $(\mathbf{y}_{(1,0)}^{(j)})_{i,j}=\epsilon_{i,j}$.
	\item $ (\mathbf{y}_{(1,0)}^{(j)})_{i,r}=0$ for $r\in[1,t]\neq j$.\footnote{There are  $|G_{(i,j)}^t|^{m^{\text{\tiny{(J)}}}-k^{\text{\tiny{(J)}}}-2}$ such solutions for each $j\in[1,t]$ by Observation~\ref{o.number_of_solutions}. We only select one per each $j$.}
\end{itemize}
These $\{\mathbf{y}^{(j)}_{(1,0)}\}_{j\in[1,t]}$ exist because the greatest common divisor of the coefficients of the $((i-1)t+j)$-th row of $B^{\text{\tiny{(3)}}}_{(1,0)}$ from $J_{(1,0)}$, that define the $(i,j)$-th variable, is $1$.

Consider the collection of $t$ solutions $\{\mathbf{y}_{(i,j)}''\}_{j\in[1,t]}$, $\mathbf{y}_{(i,j)}''\in S(J_{(i,j)},G_{(i,j)}^t)$, that are determined by sharing the last $m^{\text{\tiny{(J)}}}-k^{\text{\tiny{(J)}}}$ variables with $\mathbf{y}_{(1,0)}^{(j)}$: $(\mathbf{y}_{(i,j)}'')_r=(\mathbf{y}_{(1,0)}^{(j)})_{r}$ for $r\in[k^{\text{\tiny{(J)}}}+1,m^{\text{\tiny{(J)}}}]$. Observe that 
$(\mathbf{y}_{(i,j)}'')_i=0$ since
\begin{itemize}
	\item $(\mathbf{y}_{(i,j)}'')_{i,j}=0$ as $(\mathbf{y}_{(i,j)}'')_{i,j}=(\mathbf{y}_{(i,j)}'')_{m^{\text{\tiny{(J)}}},1}=(\mathbf{y}_{(1,0)}^{j})_{m^{\text{\tiny{(J)}}},1}=0$.
	\item For $r\neq j$, the equations defining  $(\mathbf{y}_{(i,j)}'')_{i,r}$ and $(\mathbf{y}_{(1,0)}^{(j)})_{i,r}$ from the last $m^{\text{\tiny{(J)}}}-k^{\text{\tiny{(J)}}}$ variables are the same. Since $(\mathbf{y}_{(1,0)}^{(j)})_{i,r}=0$, then so is $(\mathbf{y}_{(i,j)}'')_{i,r}$. 
\end{itemize}

Let $\mathbf{y}$ be the solution in $S(A^{\text{\tiny{(7)}}},G^t)$ formed by
\begin{itemize}
	\item $\mathbf{y}_{\kappa}=\mathbf{y}_{\kappa}'''$ for $\kappa\in([1,m]\setminus\{i\})\times [1,t]$.
	\item $\mathbf{y}_{\kappa}=\mathbf{y}_{\kappa}''+\mathbf{y}_{\kappa}'''$ for $\kappa\in\{i\}\times [1,t]$.
	\item $\mathbf{y}_{(1,0)}=\mathbf{y}'_{(1,0)}-\sum_{j=1}^t \mathbf{y}_{(1,0)}^{(j)}$.
\end{itemize}
Observe that
\begin{itemize}
	\item for $\kappa\in \Upsilon$, $\mathbf{y}_{\kappa}\in S(J_{\kappa},G_{\kappa}^t)$ as $\mathbf{y}_{(1,0)}^{(j)}\in S(J_{(1,0)},G_{(i,j)}^t)\subset S(J_{(1,0)},G_{(1,0)}^t)$ and $\mathbf{y}_{(i,j)}''\in S(J_{(i,j)},G_{(i,j)}^t)$ for all $j\in[1,t]$.
	\item $\phi(\mathbf{y})=\mathbf{x}$ as $\phi(\mathbf{y})\in S(A^{\text{\tiny{(6)}}},\Z_n^t)$ and, for $r\in [k^{\text{\tiny{(6)}}}+1,m^{\text{\tiny{(6)}}}]$,
	\begin{align}
		\phi_r(\mathbf{y})&=(\mathbf{y}_{(1,0)})_{r+1}+\sum_{\kappa\in([1,m]\setminus\{i\})\times [1,t]} (\mathbf{y}_{\kappa})_{r+1}+\sum_{j\in[1,t]}(\mathbf{y}_{(i,j)})_{r+1} \nonumber \\
		&=
		(\mathbf{y}_{(1,0)}')_{r+1}-\sum_{j=1}^t (\mathbf{y}_{(1,0)}^{(j)})_{r+1}
		+\nonumber \\
				&\qquad
				+\sum_{\kappa\in([1,m]\setminus\{i\})\times [1,t]} (\mathbf{y}_{\kappa}''')_{r+1}+\sum_{j\in[1,t]}\left((\mathbf{y}_{(i,j)}'')_{r+1} + (\mathbf{y}_{(i,j)}''')_{r+1}\right) \nonumber \\
		&=(\mathbf{y}_{(1,0)}')_{r+1}+\sum_{\kappa\in\Upsilon \setminus\{(1,0)\}}(\mathbf{y}_{\kappa}''')_{r+1}=(\mathbf{x})_{r} \nonumber
\end{align}
\item $(\mathbf{y})_i=y^{(i)}$. Indeed, for $\kappa\in ([1,m]\setminus\{i\})\times [1,t]$, $(\mathbf{y}_{\kappa})_i=(\mathbf{y}_{\kappa}''')_i=y^{(i)}_{\kappa}$ by hypothesis. Since $(\mathbf{y}_{(i,j)}'')_i=0$, $(\mathbf{y}_{(i,j)})_i=(\mathbf{y}_{(i,j)}'')_i+(\mathbf{y}_{(i,j)}''')_i=y^{(i)}_{(i,j)}$ for $j\in[1,t]$. Additionally, for each $j\in[1,t]$,
\begin{displaymath} (\mathbf{y}_{(1,0)})_{i,j}=(\mathbf{y}'_{(1,0)})_{i,j}-\sum_{r=1}^t (\mathbf{y}_{(1,0)}^{(r)})_{i,j}=(\mathbf{y}'_{(1,0)})_{i,j}-(\mathbf{y}_{(1,0)}^{(j)})_{i,j}=(\mathbf{y}'_{(1,0)})_{i,j}-\epsilon_{i,j}=
	(y^{(i)}_{(1,0)})_j.
\end{displaymath}
\end{itemize}

Therefore, the set of solutions $\{\mathbf{y}_{\kappa}\}_{\kappa\in\Upsilon}$ provides a unique solution $\mathbf{y}$ with $\phi(\mathbf{y})=\mathbf{x}$ and $(\mathbf{y})_i=y^{(i)}$ as desired. Using Observation~\ref{o.number_of_solutions} on the matrices $J_{\kappa}$, the number of choices made to find $\mathbf{y}$ is independent of the particular $y^{(i)}$. Even more, the excesses $\epsilon_{i,j}$ define a quotient structure among the possible choices of $\{\mathbf{y}_\kappa'''\}_{\kappa\in\Upsilon\setminus \{(1,0)\}}$ and value for $(\mathbf{y}_{(1,0)}')_{m^{\text{\tiny{(J)}}}}$ (the other choices for the solutions $\mathbf{y}^{(j)}_{(1,0)}$ can be thought to be fixed depending on $\epsilon_{i,j}$.) This finishes the proof of the remark.
\end{proof}

\begin{remark}
Since the system is $n$--circular and $|G|$ is a divisor of $n^\alpha$, for some positive integer $\alpha$, $S_i(A^{\text{\tiny{(7)}}},G^t)=G^t$ for all $i\in[1,m^{\text{\tiny{(7)}}}]$.
\end{remark}

\subsection{Observation on adding variables} \label{s.adding_variables}

The number of variables in $A^{\text{\tiny{(7)}}}$, as well as its relative order, is the same as for matrix $\overline{J}_{\kappa}$. This is, the $i$-th variable in $\overline{J}_{\kappa}$, seen as a system $(\overline{J}_{\kappa},G_{\kappa}^t)$, is one of the coordinates that configure the $i$-th variable in the system $(A^{\text{\tiny{(7)}}},G^t)$.

The hypergraph $H$ used in the representation of $(A^{\text{\tiny{(7)}}},G^t)$ can be obtained in the following way. Consider the original $k\times m$ system $(A,G_0)$. Let $H_0$ be a $(k+1)$-uniform hypergraph over the vertex set $V=\{v_1,\ldots,v_{m}\}$ with edges $e_i=\{v_{i},\ldots,v_{i+k} \}$, indices modulo $m$. 
Let us pair the edge $e_i$ with the $i$-th variable of $A$, $x_i$. Let $e_i=\{v_i,\ldots,v_{i+k} \}$ and $e_j=\{v_j,\ldots,v_{j+k}\}$ be the edges associated with  $x_i$ and $x_j$ respectively. The variable $x_i$ is said to be \emph{before} $x_j$ if 
$v_j\in e_i$. If $x_i$ is before $x_j$ then the variables $x_{i+1},\ldots,x_{j-1}$ are said to be \emph{in-between} $x_i$ and $x_j$. 

Assume that $(A',G')$ is built from $(A,G_0)$ by adding a new variable and a new equation. Let $H_1$ denote the hypergraph associated with $(A',G')$ as described in the procedure above. Alternatively, $H_1$ can be constructed from $H_0$ as follows.
When a new variable $x_l$ is added to the system $(A,G)$, a new vertex $v_{m+1}$ is added to $V(H)$. If a new equation is added, the uniformity of the edges increases by $1$. 
The edges $e_1,\ldots,e_{l-1}$ in $H_1$ start at the same point as the 
corresponding edges in $H_0$ and have one more vertex in them. The edges 
$e_{l+1},\ldots,e_{m+1}$ in $H_1$ start one vertex later and have one additional 
vertex than the corresponding ones in $H_0$ (they finish two vertices later than 
$e_{l},\ldots,e_{m}$ in $H_0$.) In particular, if two edges $e_i$ and $e_j$, 
with associated variables $x_i$ and $x_j$, share a vertex in $H_0$ and $x_i$ is 
before $x_j$, then the corresponding edges in $H_1$ share the same number of 
vertices if the new variable $x_l$ is between $x_i$ and $x_j$.\footnote{Here 
$x_l$ is assumed to be between the variables corresponding to $x_i$ and $x_j$ in 
$H_1$.} If the added variable $x_l$ is not between them, then the corresponding 
edges to $e_i$ and $e_j$ in $H_1$ share an additional vertex. Moreover, some 
edges that did not share any vertex in $H_0$, may share a vertex in $H_1$. 

If $(A',G')$ is built from $(A,G_0)$ by adding only a new variable, then $H_1$ is built from $H_0$ by adding a new vertex $v_{m+1}$ but not increasing the uniformity of the edges. If $x_i$ is before $x_j$ in $H_0$ and the new variable $x_l$ is between $x_i$ and $x_j$, then $e_i$ and $e_{j+1}$, the edges corresponding to $x_i$ and $x_j$ in $(A',G')$ intersect in one vertex less. Otherwise, the intersection between the new edges does not change with respect to the intersection of the corresponding edges in $H_0$.

Observe that the hypergraph $H$  provided by the procedures from Section~\ref{s.final_composition} to represent  $(A^{\text{\tiny{(7)}}},G^t)$ can be thought of as coming from an embryonal cycle $H_0$ as described above. 
Let $H_2$ be the hypergraph obtained from $H$ by removing the edges related to the added variables from  $(A,G_0)$ to $(J_{\kappa},G_{\kappa}^t)$.
Observe that
most of the added variables involve the addition of equations. In these cases, removing the edges from $H$ represents no problems in terms of connectedness of $H_2$. However, some variables were added without the addition of any equation. Those free variables correspond to the parts of the construction dealing with $\gamma\neq 1$, Section~\ref{s.gamma-effective}, and to simulate the independent vectors in Section~\ref{s.union_of_systems}. Since the number of variables used in the simulation of the independent vectors is less than $k$, the $k+1$-uniformity of the hypergraph embryo $H_0$ allows $H_2$ to be connected.

The remaining cases involve the additional variable added in Section~\ref{s.gamma-effective} to each of the systems $(J_{\kappa},G_{\kappa}^t)$ with respect to $(A^{\text{\tiny{(6)}}},\Z_n^t)$. Since $m^{\text{\tiny{(6)}}}-k^{\text{\tiny{(6)}}}\geq 2$, the procedure from Section~\ref{s.construction_circular_matrix} ads at least four variables between any pair of the first $k^{\text{\tiny{(J)}}}$ variables from $(J_{\kappa},G_\kappa^t)$. Notice that the first $k^{\text{\tiny{(6)}}}$ variables of $J_{\kappa}$ contains the original set of $m$ variables of $(A,G_0)$.  Hence, the set of added variables is non-empty and well distributed throughout the original set of variables, with several variables placed in-between original ones and others before, increasing the connectedness. This justifies that $H_2$ is connected and supported over the same set of vertices as $H$.

\subsection{From the representation of $(A^{\text{(7)}},G^t)$ to $(A,\Z_n^t)$} \label{s.unwrap_const}

Let us summarize the steps followed to construct $(A^{\text{\tiny{(7)}}},G^t)$ and its relation with the previous systems. Recall that the original homomorphism system $(A',\prod_{i=1}^t\Z_{n_i})=(A_0,G_0)$ has dimensions $k\times m$. Moreover, we assume that $m\geq k+2$. 

\begin{center}\label{figure.summary}
{
\footnotesize
\begin{tabular}[c]{|p{1.67cm}|p{1.67cm}|l|p{2.65cm}|p{4.53cm}|l|}
	\hline
	\textbf{From} & \textbf{To} & \textbf{Relation} & \textbf{Dimensions} & \textbf{Description} & \textbf{In} \\ \hline
	$(A_0,G_0)$ & $(A,\Z_n^t)$ & $\mu$-equivalent 1 & $k$ eq., $m$ var. & From $G_0=\prod_{i=1}^t \Z_{n_i}$ to $\Z_{n_1}^t$ & \ref{s.repr_for_Zt_implies_G} \\ \hline
	
$(A,\Z_n^t)$ & $(A^{\text{\tiny{(1)}}},\Z_n^t)$ & equivalent & $k$ eq., $m$ var. & row reduction & \ref{s.union_of_systems} \\ \hline

$(A^{\text{\tiny{(1)}}},\Z_n^t)$ & $(A^{\text{\tiny{(3)}}},\Z_n^t)$ & $\mu$-auto-equiv. & $k$ eq., \newline $m+k=m^{\text{\tiny{(3)}}}$ var. 
& from determinantal $n$ to $1$, independent vector simulation & \ref{s.union_of_systems} \\ \hline

$(A^{\text{\tiny{(3)}}},\Z_n^t)$ & $(A^{\text{\tiny{(4)}}},\Z_n^t)$ 
& $1$-auto-equiv. & $m^{\text{\tiny{(3)}}}$ eq., \newline $2m^{\text{\tiny{(3)}}}- k^{\text{\tiny{(3)}}}$ var. & determinantal 1 \newline to determinant 1& \ref{s.determinantal_to_determinant}\\ \hline

$(A^{\text{\tiny{(4)}}},\Z_n^t)$ & $(A^{\text{\tiny{(5)}}},\Z_n^t)$ 
& equivalent & $m^{\text{\tiny{(3)}}}\times 2m^{\text{\tiny{(3)}}}- k^{\text{\tiny{(3)}}}$& row reduction to $\left( I_{m^{\text{\tiny{(3)}}}}  \;B \right)$ & \ref{s.determinantal_to_determinant}\\ \hline

$(A^{\text{\tiny{(5)}}},\Z_n^t)$ & $(A^{\text{\tiny{(6)}}},\Z_n^t)$ 
& $1$-auto-equiv. & $k^{\text{\tiny{(5)}}}$ eq., $m^{\text{\tiny{(5)}}}$ var.& row-reduce $t$-row blocks in $B$; \newline product of $\gcd$ of the rows is \newline the determinantal of the block & \ref{s.group_on_B}\\ \hline

$(A^{\text{\tiny{(6)}}},\Z_n^t)$ & $(J_{\kappa},G_{\kappa}^t)$, \newline  with $\kappa \in\Upsilon$
& splitting  & $k^{\text{\tiny{(6)}}}+1$ eq., \newline $m^{\text{\tiny{(6)}}}+2$ var. each & find systems $J_{\kappa}=\left( I_{k^{\text{\tiny{(6)}}}} \; B \right)$ \newline with $D_{t}(B_{\text{\tiny{[ti+1,ti+t]}}})=1$ 
& \ref{s.gamma-effective}\\ \hline

$(J_{\kappa},G_{\kappa}^t)$, \newline with $\kappa\in\Upsilon$ &
$\left(\overline{J}_{\kappa},G_{\kappa}^t\right)$, \newline with $\kappa\in \Upsilon$
& $1$-auto-equiv. & $k^{\text{\tiny{(J')}}}$ eq.,\newline $m^{\text{\tiny{(J')}}}$ var. each & find $n$-circular systems \newline for $(J_{\kappa},G_{\kappa}^t)$ & \ref{s.construction_circular_matrix}\\ \hline

 $\left(\overline{J}_{\kappa},G_{\kappa}^t\right)$ \newline with $\kappa\in \Upsilon$ & $(A^{\text{\tiny{(7)}}},G^t)$
& joining & $k^{\text{\tiny{(J')}}}=k^{\text{\tiny{(7)}}}$ eq.,\newline $m^{\text{\tiny{(J')}}}=k^{\text{\tiny{(7)}}}$ var. & group the systems $(\overline{J}_{\kappa},G_{\kappa}^t)$ \newline  in a single one & \ref{s.final_composition}\\ \hline

$(A^{\text{\tiny{(6)}}},\Z_n^t)$ & $(A^{\text{\tiny{(7)}}},G^t)$ &
$\mu$-equivalent 2 & $k^{\text{\tiny{(7)}}}$ eq.,\newline $m^{\text{\tiny{(7)}}}$ var. & conclusion from joining \newline the systems & \ref{s.final_composition}\\ \hline

\end{tabular}}
\end{center}

Where $k^{\text{\tiny{(J')}}}=(4 k^{\text{\tiny{(J)}}}+1)(m^{\text{\tiny{(J)}}}-k^{\text{\tiny{(J)}}})$, $m^{\text{\tiny{(J')}}}=(4 k^{\text{\tiny{(J)}}}+2)(m^{\text{\tiny{(J)}}}-k^{\text{\tiny{(J)}}})$, and $G=\prod_{\kappa\in \Upsilon} G_{\kappa}$.

We prove the first part of Theorem~\ref{t.rem_lem_ab_gr} under the conditions $m\geq k+2$ by concatenating Proposition~\ref{p.mu-equivalent_2}, \ref{p.mu-equivalent_1}, \ref{p.mu-auto-equivalent} and \ref{p.1-auto-equiv-rep} between the different pairs of $\mu$-equivalent systems appropriately. For instance we shall use
\begin{itemize}
	\item Proposition~\ref{p.mu-equivalent_2} from $(A^{\text{\tiny{(7)}}},G^t)$ to $(A^{\text{\tiny{(6)}}},\Z_n^t)$ by Remark~\ref{r.last}.
	\item Proposition~\ref{p.1-auto-equiv-rep} from $(A^{\text{\tiny{(6)}}},\Z_n^t)$ to $(A^{\text{\tiny{(5)}}},\Z_n^t)$ by Remark~\ref{r.4}.
	\item No proposition is needed from $(A^{\text{\tiny{(5)}}},\Z_n^t)$ to $(A^{\text{\tiny{(4)}}},\Z_n^t)$ as they are equivalent.
	\item Proposition~\ref{p.1-auto-equiv-rep} from $(A^{\text{\tiny{(4)}}},\Z_n^t)$ to $(A^{\text{\tiny{(3)}}},\Z_n^t)$ by Remark~\ref{r.3}.
	\item Proposition~\ref{p.mu-auto-equivalent} from $(A^{\text{\tiny{(3)}}},\Z_n^t)$ to $(A^{\text{\tiny{(1)}}},\Z_n^t)$ by Remark~\ref{r.2}.
	\item Proposition~\ref{p.mu-equivalent_1} from $(A^{\text{\tiny{(1)}}},\Z_n^t)$ to $(A_0,G_0)$ by Remark~\ref{r.1} (with $A^{\text{\tiny{(1)}}}=A'$ and $(A_0,G_0)=(A,G)$.)
\end{itemize}
These propositions can be concatenated by Section~\ref{s.adding_variables}. Indeed, the edges related to the remaining variables, after the composition of the maps defining the $\mu$-equivalences, still cover all the vertices of the initial hypergraph $H$. Thus this shows the following proposition.

\begin{proposition}[Representation for homomorphisms] \label{p.repr_hom}
	Let $G$ be a finite abelian group and let $m,k$ be two positive integers with $m\geq k+2$. Let $A$ be a homomorphism $A:G^m\to G^k$ and let $\mathbf{b}\in G^k$ be given. Then the system of linear configurations $((A,\mathbf{b}),G)$ is $\gamma$-strongly-representable with $\gamma_i=|G|/|S_i(A,G)|$ and where $\chi_1,\chi_2$ depend only on $m$.
\end{proposition}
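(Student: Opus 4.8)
The plan is to assemble the representation by chaining the equivalence-to-representability transfer results of Section~\ref{s.oper_between_representable} along the sequence of $\mu$-equivalent systems constructed throughout Sections~\ref{s.proof_rl-lsg-1}--\ref{s.final_composition}. The terminal system in the chain, $(A^{\text{\tiny{(7)}}},G^t)$, has already been shown in Section~\ref{s.strongly_1_rep} to be strongly $1$-representable by an explicit hypergraph pair $(K,H)$, with key parameters $\chi_1=m^{\text{\tiny{(7)}}}$ and $\chi_2=(1/m^{\text{\tiny{(7)}}})^{k^{\text{\tiny{(7)}}}+1}$; both of these depend only on $m^{\text{\tiny{(7)}}}$, $k^{\text{\tiny{(7)}}}$, which in turn depend only on $m,k$ (via the formulas $k^{\text{\tiny{(7)}}}=(4k^{\text{\tiny{(6)}}}+5)(m^{\text{\tiny{(6)}}}-k^{\text{\tiny{(6)}}}+1)$ and $m^{\text{\tiny{(7)}}}=(4k^{\text{\tiny{(6)}}}+6)(m^{\text{\tiny{(6)}}}-k^{\text{\tiny{(6)}}}+1)$ and the earlier dimension bookkeeping). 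Since $m\geq k+2$ one checks $m^{\text{\tiny{(7)}}}\geq k^{\text{\tiny{(7)}}}+2$, which is exactly what the strong $1$-representation of $(A^{\text{\tiny{(7)}}},G^t)$ needs. This is the base of the induction.

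Next I would run the chain of transfer steps backward, applying the appropriate proposition at each link, exactly as enumerated in Section~\ref{s.unwrap_const}: Proposition~\ref{p.mu-equivalent_2} transfers strong $\gamma$-representability from $(A^{\text{\tiny{(7)}}},G^t)$ to $(A^{\text{\tiny{(6)}}},\Z_n^t)$, using Remark~\ref{r.last} to verify its hypotheses (i) and (ii); here $\gamma$ ceases to be trivial, becoming $\gamma_i=\gamma_i'\frac{|S_i((A^{\text{\tiny{(7)}}}),G^t)|}{|S_i((A^{\text{\tiny{(6)}}}),\Z_n^t)|}\frac{|\Z_n^t|}{|G^t|}$, and one tracks that the constants $\chi_1,\chi_2$ are preserved. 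Then Proposition~\ref{p.1-auto-equiv-rep} carries this through $(A^{\text{\tiny{(6)}}})\to(A^{\text{\tiny{(5)}}})$ (Remark~\ref{r.4}) and $(A^{\text{\tiny{(4)}}})\to(A^{\text{\tiny{(3)}}})$ (Remark~\ref{r.3}), with the equivalent pair $(A^{\text{\tiny{(5)}}})\equiv(A^{\text{\tiny{(4)}}})$ requiring no proposition; Proposition~\ref{p.mu-auto-equivalent} handles $(A^{\text{\tiny{(3)}}})\to(A^{\text{\tiny{(1)}}})$ (Remark~\ref{r.2}); and Proposition~\ref{p.mu-equivalent_1} handles $(A^{\text{\tiny{(1)}}})\to(A_0,G_0)$ (Remark~\ref{r.1}, using Observation~\ref{o.same_gamma}). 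A separate application of Proposition~\ref{p.hom_to_all} reduces the inhomogeneous case $\mathbf{b}\neq 0$ to $\mathbf{b}=0$ via a $1$-auto-equivalence before all of this. At each link I keep track of how the $\gamma$-vector changes; multiplying through, the cumulative factors telescope so that the final $\gamma_i$ for $(A_0,G_0)$ is $|G|/|S_i(A,G)|$, which is the claimed value — this telescoping is the computation hidden behind the table in Section~\ref{s.unwrap_const}.

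The one genuine subtlety, and the main thing to verify carefully, is that the hypotheses of the transfer propositions remain satisfied after composition. Each of Propositions~\ref{p.1-auto-equiv-rep}, \ref{p.mu-auto-equivalent}, \ref{p.mu-equivalent_1}, \ref{p.mu-equivalent_2} requires that \emph{the edges coloured by the surviving variables cover all vertices of the hypergraph}. Because the chain involves many insertions of new variables (some accompanied by new equations, some — the free variables from Section~\ref{s.gamma-effective} and the independent-vector simulation in Section~\ref{s.union_of_systems} — not), after composing the projection maps one must argue that the edges indexed by the original $m$ variables still cover $V(H)$. This is precisely the content of Section~\ref{s.adding_variables}: since $m^{\text{\tiny{(6)}}}-k^{\text{\tiny{(6)}}}\geq 2$, the circular-matrix construction inserts at least four auxiliary variables between consecutive original variables, and the number of simulating variables is at most $k$, which is absorbed by the $(k+1)$-uniformity of the embryonic cycle; hence the sub-hypergraph $H_2$ spanned by the original variables' edges remains connected and spanning. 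I would invoke Section~\ref{s.adding_variables} for this and conclude that the composed maps verify the covering hypothesis at every step, so the chain of propositions applies and yields a strong $\gamma$-representation of $((A,\mathbf{b}),G)$ with $\gamma_i=|G|/|S_i(A,G)|$ and with $\chi_1,\chi_2$ functions of $m$ alone. $\square$
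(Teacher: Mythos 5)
Your proposal is correct and follows essentially the same route as the paper: it takes the strong $1$-representation of $(A^{\text{\tiny{(7)}}},G^t)$ from Section~\ref{s.final_composition} as the base, reduces $\mathbf{b}\neq 0$ via Proposition~\ref{p.hom_to_all}, and concatenates Propositions~\ref{p.mu-equivalent_2}, \ref{p.1-auto-equiv-rep}, \ref{p.mu-auto-equivalent} and \ref{p.mu-equivalent_1} along the chain of Remarks~\ref{r.last}, \ref{r.4}, \ref{r.3}, \ref{r.2}, \ref{r.1}, with the vertex-covering hypothesis justified by Section~\ref{s.adding_variables}, exactly as in Section~\ref{s.unwrap_const}.
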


By means of Theorem~\ref{t.rep_sys_rem_lem}, Proposition~\ref{p.repr_hom} proves the first part of Theorem~\ref{t.rem_lem_ab_gr} when $m\geq k+2$.  The second part of Theorem~\ref{t.rem_lem_ab_gr} and the treatment of the cases when $m<k+2$ are proved in Section~\ref{s.finish_rem_lem_dkA1}.

\section{Proof of Theorem~\ref{t.rem_lem_ab_gr}: second part and the cases $m<k+2$}
\label{s.finish_rem_lem_dkA1}

 The cases where $k>m$ can be reduced to $m=k$ by eliminating the redundant equations (for instance, thinking of them as equations in $\Z_n$ using Section~\ref{s.repr_for_Zt_implies_G} and Section~\ref{s.hom_mat_to_integer_mat} and performing Gaussian elimination on the matrices only allowing integer operations.)

Let $G_0$ be a finite abelian group. If the system $A\mathbf{x}=0$, $\mathbf{x}\in \prod_{i=1}^m X_i$, with $\left|S\left(A,G_0,\prod_{i=1}^m X_i\right)\right| < \delta |S(A,G_0)|$ has $k$ equations and $m$ variables, then
$A'\mathbf{x}=0$, $\mathbf{x}\in \prod_{i=1}^m X_i \times G_0^2$, with
\begin{displaymath}
	A'=\left(A
	\left|\begin{array}{cc}
		|G_0|  &|G_0| \\
		0  & 0 \\
		\vdots & \vdots \\
		0  & 0 \\
	\end{array}\right.\right)
\end{displaymath}
has $k$ equations in $m+2$ variables and
$\left|S\left(A',G_0,\prod_{i=1}^m X_i\times G_0^2\right)\right| < \delta |S(A',G_0)|$. Therefore, the cases $m<k+2$ can be proved using the second part of Theorem~\ref{t.rem_lem_ab_gr} restricted to the case $m\geq k+2$. This is encapsulated in Observation~\ref{lem:red2_ext_3}.

\begin{observation}[Do not remove from full sets] \label{lem:red2_ext_3}
Assume that the $\gamma$-representation of the $k\times m$ homomorphism system $(A,G_0)$, with $m\geq k+2$, is constructed using a system with an $n$--circular matrix $(A^{\text{\tiny{(7)}}},G)$ by the methods exposed in sections \ref{s.rep_indep_vector} through \ref{s.unwrap_const}. Let $I\subset[1,m]$ be a set of indices such that $X_i=G_0$, $i\in I$. Then Theorem~\ref{t.rem_lem_ab_gr} holds with $X_i'=\emptyset$.
\end{observation}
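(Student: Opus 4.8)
The statement I want is Observation~\ref{lem:red2_ext_3}: once the $\gamma$-representation of $(A,G_0)$ (with $m\ge k+2$) has been produced via the chain of $\mu$-equivalences ending at the $n$-circular system $(A^{\text{\tiny{(7)}}},G)$, I can refrain from deleting any element of a variable $X_i$ that is already full, i.e. $X_i=G_0$ (equivalently $X_i\supset S_i(A,G_0)$ since $S_i(A,G_0)\subset G_0$). The plan is to trace through the proof of the removal lemma for representable systems, Theorem~\ref{t.rep_sys_rem_lem}, and observe that the only place an element of $X_i$ gets placed into the removed set $X_i'$ is when the hypergraph removal lemma returns a set $E'$ of edges containing more than $\lambda\gamma_i/m$ edges labelled $x$ and coloured $i$. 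So the key point is: \emph{for a full index $i$, no edge coloured $i$ needs to be removed at all}, because the obstruction that forces such deletions never arises when $X_i$ is full.

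Concretely, first I would recall that in the strong representation of $(A^{\text{\tiny{(7)}}},G^t)$ built in Section~\ref{s.strongly_1_rep}, every vertex cluster $G^t\times\{j\}$ is present and $S_i(A^{\text{\tiny{(7)}}},G^t)=G^t$ for all $i$; moreover, for each solution $\mathbf x$ and each colour $i$, every edge coloured $i$ labelled $(\mathbf x)_i$ lies in exactly one copy of $H$ related to $\mathbf x$, so the copies of $H$ related to $\mathbf x$ are in bijection with the $|G^t|^{k^{\text{\tiny{(7)}}}}$ choices of the "free" vertices. Because the representation is \emph{strong} (property RP\ref{prop_rep4} propagates along the chain via Propositions~\ref{p.1-auto-equiv-rep}, \ref{p.mu-auto-equivalent}, \ref{p.mu-equivalent_1}, \ref{p.mu-equivalent_2}, all of which preserve strong representability), the set $E_i(\mathbf x,q)$ is exactly the set of edges labelled $(\mathbf x)_i$ coloured $i$. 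Now re-run the proof of Theorem~\ref{t.rep_sys_rem_lem}: we apply the coloured hypergraph removal lemma to $K_X$ and obtain $E'$. The trick is that for the full index $i$ we may assume $E'$ contains \emph{no} edge coloured $i$: indeed, in the colored hypergraph removal lemma one is free to choose which edge-colour classes to delete from, and deleting an edge coloured $i$ is never forced — if it were, then since $X_i=G_0$ we could instead not delete anything in colour $i$ and still destroy all copies of $H$ in $K_X$ using only the other colours, because $\mathbf x\notin S(A,G_0,X\setminus X')$ is witnessed as long as \emph{some} $x_j$ ($j\ne i$) is removed. More precisely, I would argue at the level of the representable system: restrict the edge set of $K_X$ by declaring the colour-$i$ edges "undeletable", observe that the number of copies of $H$ in $K_X$ is still at most $\delta|K|^h$, and apply a version of Theorem~\ref{t.rem_lem_edge_color_hyper} where deletions are confined to the colours in $[1,m]\setminus I$ — this variant follows from the same regularity/counting argument, or can be obtained from the unrestricted version by a standard argument (delete nothing in colour $i$, and note every copy of $H$ still has a non-$i$-coloured edge since $h>s\ge 2$ forces $|E(H)|=m\ge 2$, so colours other than $i$ suffice to cover all copies).

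The final step is the pigeonhole argument as in the proof of Theorem~\ref{t.rep_sys_rem_lem}: for $\mathbf x\in S(A,G_0,X)$ and $q\in Q$, the inequality
\[
\sum_{j=1}^m |E'_{j,x_j}|\, p\prod_{l\in[1,m]\setminus\{j\}}\gamma_l \;\ge\; p\lambda\prod_{l=1}^m\gamma_l
\]
now has $E'_{i,x_i}=\emptyset$ for $i\in I$, so one of the \emph{remaining} terms with index $j\notin I$ must be large, forcing $x_j\in X_j'$ with $j\notin I$; hence $\mathbf x\notin S(A,G_0,X\setminus X')$ even though $X_i'=\emptyset$ for $i\in I$. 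Combined with the already-established bound $|X_j'|<\varepsilon|G_0|/\gamma_j=\varepsilon|S_j(A,G_0)|$ (from $\gamma_j=|G_0|/|S_j(A,G_0)|$, Proposition~\ref{p.repr_hom}), this gives exactly the conclusion of Observation~\ref{lem:red2_ext_3}, and, threading it back through the reduction of Section~\ref{s.finish_rem_lem_dkA1} that adds two dummy full variables in $G_0^2$, also covers the cases $m<k+2$ with the same extra property. The main obstacle I anticipate is making rigorous the "one may restrict deletions to a prescribed subset of colours" step in the colored hypergraph removal lemma: I expect it is cleanest to prove it directly, by running the standard hypergraph-regularity removal argument but only removing regular pairs/blocks of the allowed colours, using that each copy of $H$ carries at least one edge of an allowed colour (true because $|I|\le m$ and $m\ge 2$, and more carefully because for each copy of $H$ the colour-$i$ edge is unique while the copy contains $m\ge 2$ edges), so that destroying the non-$I$-coloured regular structure already kills every copy.
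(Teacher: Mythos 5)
There is a genuine gap, and it sits exactly where you anticipated trouble: the ``removal lemma with deletions confined to a prescribed subset of colours'' that your argument hinges on is false in general, and your justification for it does not use the hypothesis $X_i=G_0$ in any essential way. The fact that every copy of $H$ contains at least one edge of an allowed colour does not imply that few allowed-colour deletions suffice: take $H$ to be one red and one blue edge, and $K$ to contain a single blue edge together with $\Theta(|K|^2)$ red edges each of which forms a copy of $H$ with that blue edge; the number of copies is $o(|K|^{|V(H)|})$, every copy contains a red (allowed) edge, yet destroying all copies using only red deletions requires removing $\Theta(|K|^2)$ edges, while one blue deletion suffices. So ``one is free to choose which edge-colour classes to delete from'' is not a property of Theorem~\ref{t.rem_lem_edge_color_hyper}, and the sentence in which you argue that a forced colour-$i$ deletion could be avoided ``because $\mathbf{x}\notin S(A,G_0,X\setminus X')$ is witnessed as long as some $x_j$, $j\neq i$, is removed'' is circular: it presupposes that the other colours already suffice to destroy all copies, which is the statement to be proved. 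Any correct proof must exploit that for $i\in I$ the colour-$i$ edge classes in $K_X$ are \emph{complete} (all labels present), not merely nonempty in each copy.

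This is precisely what the paper's proof does, by a projection rather than a restricted-deletion argument: it deletes the colour-$I$ edges from both $H$ and $K$, discards the vertex clusters that become isolated, and shows that the reduced pair $(K'',H'')$ is again a representation --- now of the projected system in the variables outside $I$ --- with the exact counting relation $|C(H,K)|=c\,|K|^d\,|C(H'',K'')|$. The fullness $X_i=G_0$ enters here: every copy of $H''$ extends freely to copies of $H$ because all colour-$i$ edges exist, and conversely each copy of $H$ restricts to a copy of $H''$; the structural facts needed (connectivity of $H''$ and the location of the isolated clusters) come from Section~\ref{s.adding_variables} and the $n$-circular construction of Section~\ref{s.final_composition}. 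Running the proof of Theorem~\ref{t.rep_sys_rem_lem} on $(K'',H'')$ then only ever produces removal sets for the represented (non-full) variables, so $X_i'=\emptyset$ for $i\in I$ comes for free. If you want to salvage your route, you would have to prove your restricted-colour removal lemma \emph{under the additional hypothesis} that the forbidden colour classes are complete and evenly distributed in the sense of the strong representation, and the natural way to do that is exactly the paper's reduction to $(K'',H'')$; your treatment of the $m<k+2$ case via the two dummy full variables is fine once this is repaired.
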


\begin{proof}[Proof of Observation~\ref{lem:red2_ext_3}] Suppose $m\geq k+2$.
	By reordering the variables, assume that $I=[j,m]$ for some $j$. Represent the system $(A,G_0)$ by the $s$-uniform hypergraph pair $(K_0,H)$. By the construction in Section~\ref{s.final_composition} and Section~\ref{s.unwrap_const} and the comments in Section~\ref{s.adding_variables}, $H$ is connected and $K_0$ is $|H|$-partite. Let $V_1,\ldots,V_{|H|}$ denote the stable sets in $K_0$ and recall that $|V_i|=|V_j|$ for $i,j\in[1,|H|]$. Let $K=K_0\left(\prod_{i=1}^m X_i\right)$ represent the hypergraph where only the edges labelled $x_i\in X_i$ and colored $i$ appear. Let $K'$ and $H'$ be the hypergraphs obtained  by removing the edges colored $i\in[j,m]$ from $K$ and $H$ respectively. Assume $e_1=\{1,\ldots,s\}\subset H$ is the edge related to $x_1$.

Let $d$ be the number of vertices in $H'$ with no edges. By the construction of the representation from the $n$--circular matrix provided in the third part of Section~\ref{s.final_composition} and knowing that the deleted edges have the largest indices, the stable sets $V_i$ (clusters of vertices) of $K'$ with no edge incident with them are the consecutive ones $V_{|H|-d+1},\ldots,V_{|H|}$.
Since every subset of $s$ clusters from $K'$ span, at most, one color class of edges, the connected component of $H'$ has one vertex in each of the first $|H|-d$ clusters. The isolated vertices of $H'$ can be placed in any cluster
$V_i, i\in[1,|H|]$.
Thus, each copy of $H$ in $K\left(\prod_{i=1}^m X_i\right)$ generates ${|K|-(|H|-d) \choose d}\approx c'|K|^d$ copies of $H'$ in $K'$, for some constant $c'$.

Let $H''$ be the hypergraph built from $H'$ by removing its isolated vertices. Let $K''$ be built from $K'$ by removing the isolated stables $V_{|H|-d+1},\ldots,V_{|H|}$. Then $(K'',H'')$ is a representation of the system $(A,G_0)$ where $(x_1,\ldots,x_{j-1})$ is a solution if and only if there is some $(x_j,\ldots,x_m)\in G_0^{m-j+1}$ for which $(x_1,\ldots,x_{j-1},x_j,\ldots,x_m)$ is a solution. Indeed, we can extend any copy $H''$ in $K''$ to $\left|\prod_{i=|H|-d+1}^{|H|} V_i\right|=|V_i|^d=c|K|^d$ copies of $H$ in $K$ by selecting one additional vertex in each of the last $d$ clusters.
Since $X_i=G_0$, $K$ has all the possible edges coloured $i\in I$ in the vertices $\{(\cdot,j)\}_{j\in \{i_1,\ldots,i_s\}}$, where $\{i_1,\ldots,i_s\}$ is the support of the edge coloured $i$ in $H$. Therefore, any choice in $\prod_{i=|H|-d+1}^{|H|} V_i$ completes a copy of $H''$ in $K''$ to a copy of $H$ in $K$.
 On the other hand, any copy of $H$ in $K$ generates one copy of $H''$ in $K''$. Thus,
\begin{displaymath}
	|C(H, K)|=|C(H'', K'')| \;|V_i|^d=|C(H'', K'')|\; c|K|^d
\end{displaymath}
and the proportions $|K|^{|H|}/|C(H, K)|$ and $|K''|^{|H''|}/|C(H'', K'')|$ are such that
\begin{displaymath}
	\frac{|K|^{|H|}}{|C(H, K)|}=\frac{|K|^{|H|}}{c|K|^d |C(H'', K'')|}=c''\frac{|K''|^{|H''|}}{|C(H'', K'')|}
\end{displaymath}
where $c''$ only depends on $m$. 

Therefore, if $|S(A,G_0,\prod_{i=1}^{j-1} X_i\times G_0^{m-j+1})|<\delta |S(A,G_0)|$, then, by the representability for $(A,G_0)$ by $(K,H)$, $|C(H,K)|<\delta' |K|^{|H|}$. Hence $|C(H'',K'')|<\delta'' |K''|^{|H''|}$.
By applying the same procedure of Theorem~\ref{t.rep_sys_rem_lem} to $(K'',H'')$, we show a removal lemma by obtaining sets $X_i'\subset X_i$, $i\in[1,j-1]$, as those are the represented variables, such that $\prod_{i=1}^{j-1} X_i\setminus X_i'$ has no solution (for any value of the last variables.) Therefore, we obtain the additional property that $X_i'=\emptyset$ for  $i\in [j,m]$.
\end{proof}

Let us observe that Observation~\ref{lem:red2_ext_3} can be used to obtain a similar additional conclusion for \cite[Theorem~1]{ksv13}.

\section{Conclusions and final comments}

In this paper we have presented Definition~\ref{d.rep_sys}, a notion of representation of a system of configurations using a pair of hypergraphs that generalizes previous definitions. Additionally, the notion is strong enough to translate the combinatorial removal lemma, Theorem~\ref{t.rem_lem_edge_color_hyper}, to the context of system of configurations, Theorem~\ref{t.rep_sys_rem_lem}. We observe that the systems of configurations induced by ``copies of a hypergraph H in K'' is representable. Additionally, we present a representation for the systems induced by ``patterns of the permutation $\tau$ in the permutation $\sigma$''.

The extra flexibility given by Definition~\ref{d.rep_sys} with respect to previous notions allows us to show that the configuration systems defined by homomorphisms between finite abelian groups are representable (see Proposition~\ref{p.repr_hom}). The combination of Proposition~\ref{p.repr_hom} and Theorem~\ref{t.rep_sys_rem_lem} is reflected in the removal lemma for homomorphism systems of finite abelian groups, Theorem~\ref{t.rem_lem_ab_gr}, which is the main result of the paper. Several applications of Theorem~\ref{t.rem_lem_ab_gr} are given in the introduction.

In \cite{canszeven14+}, a more algebraic definition of representation tied with the infinite aspect of the compact abelian groups is presented as \cite[Definition~3.7]{canszeven14+}, which follows the lines of \cite{sze10}. Indeed, the strong version of Definition~\ref{d.rep_sys} can be seen as the discrete and combinatorial analogue of \cite[Definition~3.7]{canszeven14+}. Let us mention that
the construction of the representation for the homomorphisms of finite abelian groups presented in sections~\ref{s.proof_rl-lsg-1}-\ref{s.finish_rem_lem_dkA1} can be adapted to fit \cite[Definition~3.7]{canszeven14+}. It is natural to ask if a removal lemma result holds for homomorphisms in compact abelian groups. However, such result presents technical difficulties involving some aspects of the construction presented here that arise when generic compact abelian groups are considered.\footnote{These difficulties are related with the extension of matrices performed in Section~\ref{s.construction_circular_matrix}.
}

\section*{Acknowledgements}

The author wants to thank Oriol Serra, Bal\'azs Szegedy and Pablo Candela for fruitful discussions, improvement suggestions as well as carefully reading the manuscript.


\begin{thebibliography}{99}
	
\bibitem{alosha04}
	N. Alon, A. Shapira. Testing subgraphs in directed Graphs. Journal
	of Computer and System Sciences \textbf{69} (3) (2004), 354--382.
	
	\bibitem{austao10}
	T.~Austin, T~Tao. Testability and repair of hereditary hypergraph properties. Random Structures and Algorithms \textbf{36} (4) (2010), 373--463.
	
	\bibitem{can_tesis_09} P.~Candela. \emph{Developments at the interface between combinatorics and Fourier analysis}. (2009), Ph.~D. Thesis, University of Cambridge, United Kingdom.
	
	\bibitem{cansis13} P.~Candela, O.~Sisask.
	A removal lemma for linear configurations in subsets of the circle. Proc. Edinb. Math. Soc. (2) \textbf{56} (2013), no. 3, 657--666.
	
	\bibitem{canszeven14+}
	P.~Candela, B.~Szegedy, L.~Vena. On linear configurations in subsets of compact abelian groups, and invariant measurable hypergraphs. arXiv:math/1408.6753 [math.CO], (2014).
	
	\bibitem{cooper06}
	J.~Cooper, A Permutation Regularity Lemma, The Electronic Journal of Combinatorics \textbf{13} (1), (2006), Research Paper 22, 20 pp.
	
	\bibitem{elesze12} G.~Elek, B.~Szegedy.
A measure-theoretic approach to the theory of dense hypergraphs. Adv. Math., \textbf{231} (2012), 3-4, 1731--1772.

\bibitem{erdfranrod86}
P.~Erd\H{o}s, P.~Frankl, V. R\"odl.
The asymptotic number of graphs not containing a fixed subgraph and a problem for hypergraphs having no exponent. 
Graphs Combin. \textbf{2} (2) (1986), 113--121.
	
	\bibitem{erd_ea73}
	P.~Erd\H{o}s, R. L.~Graham, P.~Montgomery, B. L. Rothschild, J.~Spencer.  Euclidean Ramsey theorems. I. Journal of Combinatorial Theory. Series A (1973) (14), 341-363.
	
	\bibitem{frarod02}
	P.~Frankl, V.~R\"odl. Extremal problems on set systems. Random Structures and Algorithms (2002) \textbf{20} (2), 131--164.
	
	\bibitem{fur95}
	Z.~F\"uredi. 
	Extremal hypergraphs and combinatorial geometry. Proceedings of the International Congress of Mathematicians, Vol. 1, 2 (Zürich, 1994), 1343--1352, Birkhäuser, Basel, 1995.
	
	\bibitem{furskatz78} H.~Furstenberg, Y.~Katznelson. An ergodic Szmere\'edi theorem for commuting transformations. Journal d'Analyse Mathématique \textbf{34} (1978), 275--291 (1979).
	
	
	\bibitem{grahrotspen90} R. L. Graham, B. L. Rothschild, J. Spencer. \emph{Ramsey Theory}. Second edition,  Wiley-Interscience Series in Discrete Mathematics and Optimization. A Wiley-Interscience Publication. John Wiley \& Sons, Inc., New York, 1990.
	\bibitem{gow07} T.~Gowers. Hypergraph regularity and the multidimensional Szemer\'edi theorem. Annals of Mathematics, Second Series (2007) vol. 166 (3) pp. 897--946.
	
	\bibitem{gre05} B.~Green. A Szemer\'edi-type regularity lemma in abelian groups, with applications. Geometric and Functional Analysis \textbf{15} (2) (2005), 340--376.
	
	\bibitem{ishi09}
	Y.~Ishigami. A Simple Regularization of Hypergraphs. arXiv:math/0612838 [math.CO], (2006).
	
	\bibitem{komshosimsze02}
J.~Koml{\'o}s, A.~Shokoufandeh, M.~Simonovits, E.~Szemer{\'e}di, The regularity lemma and its applications in graph theory, Theoretical aspects of computer science ({T}ehran, 2000), Lecture Notes in Comput. Sci., 2292, 84--112.
	

\bibitem{komsim96}
	L.~Koml\'os, M.~Simonovits.
	Szemer\'edi's regularity lemma and its applications in graph theory. Combinatorics, Paul Erd\H{o}s is eighty, Vol. 2 (Keszthely, 1993), 295--352, 
	Bolyai Soc. Math. Stud., 2, J\'anos Bolyai Math. Soc., Budapest, 1996.
	
	
\bibitem{kraserven08}
D.~Kr\'al', O.~Serra, L.~Vena.
A removal lemma for linear systems over finite fields. Sixth Conference on Discrete Mathematics and Computer Science (VI JMDA), 417--423, Univ. Lleida, Lleida, 2008.

\bibitem{kraserven09} D.~Kr\'al', O.~Serra, L.~Vena. A combinatorial proof of the removal lemma for groups. Journal of Combinatorial Theory, Series A \textbf{116} (4) (2009), 971--978.

\bibitem{ksv13} 
D.~Kr\'al', O.~Serra, L.~Vena. On the removal lemma for linear systems abelian groups. European Journal of Combinatorics \textbf{34} (2013), 248--259.

\bibitem{kraserven12} D.~Kr\'al', O. Serra, L. Vena. A removal lemma for systems of linear equations over finite fields. Israel Journal of Mathematics \textbf{187} (2012), 193--207.

\bibitem{nagrodsch06}
B.~Nagle, V.~R{{\"o}}dl, M.~Schacht, The counting lemma for regular {$k$}-uniform hypergraphs, Random Structures and Algorithms (2006) \textbf{28} (2), 113--179.


\bibitem{new72} 
M.~Newman. \emph{Integral matrices}. Academic Press, New York, 1972.

\bibitem{rodsko04}
V.~R{{\"o}}dl, J.~Skokan. Regularity lemma for {$k$}-uniform hypergraphs. Random Structures Algorithms.  \textbf{25} (2004)(1), 1--42.

\bibitem{rodletal} B.~Nagle, V.~R\"odl, M. Schacht. The counting lemma for regular $k$-uniform hypergraphs. Random Structures and Algorithms \textbf{28} (2006) (2), 113--179.

\bibitem{rot53}
Roth, K. F., On certain sets of integers,
J. London Math. Soc.,
\textbf{28} (1953), 104--109.

\bibitem{ruzsze78} I.Z.~Ruzsa, E.~Szemer\'edi. Triple systems with no six points carrying three triangles. Combinatorics (Proc. Fifth Hungarian Colloq., Keszthely, 1976) (1978) vol. \textbf{18}, 939--945.

\bibitem{serven14} O.~Serra, L.~Vena. On the number of monochromatic solutions of integer linear systems on  Abelian groups. European J. Combin. \textbf{35} (2014), 459--473.

\bibitem{saxtho13+} D.~Saxton, A.~Thomason. Hypergraph containers.  arXiv:1204.6595v2 [math.CO].
\bibitem{saxtho13+2} D.~Saxton, A.~Thomason. Hypergraph containers.  arXiv:1204.6595v3 [math.CO].

\bibitem{sha10} A.~Shapira. A proof of {G}reen's conjecture regarding the removal properties of sets of linear equations. J. Lond. Math. Soc. (2010)  \textbf{81} (2), 355--373.

\bibitem{sol04} J.~Solymosi. A note on a question of Erd\H{o}s and Graham. Combinatorics, Probability and Computing (2004) \textbf{13} (2), 263--267.

\bibitem{sze10}
B.~Szegedy. The symmetry preserving removal lemma. Proceedings of the American Mathematical Society (2010) \textbf{138} (2), 405--408.

\bibitem{sze78}
E.~Szemer\'edi. Regular partitions of graphs. Probl\`emes combinatoires et théorie des graphes (Colloq. Internat. CNRS, Univ. Orsay, Orsay, 1976), pp. 399--401, Colloq. Internat. CNRS, 260, CNRS, Paris, 1978.

\bibitem{tao06} T. Tao. A variant of the hypergraph removal lemma. Journal of Combinatorial Theory, Series A (2006) \textbf{113} (7), 1257--1280.


\bibitem{tao12} T.~Tao.
Mixing for progressions in nonabelian groups.
Forum Math. Sigma \textbf{1}, (2013), e2, 40.

\bibitem{tur41}
P.~Tur\'an.
Eine Extremalaufgabe aus der Graphentheorie. (Hungarian. German summary) 
Mat. Fiz. Lapok \textbf{48}, (1941), 436--452.


\bibitem{vandW91-2}
B.L.~van der Waerden. \emph{Algebra. {V}ol. {II}}, 1991, Springer-Verlag, New York.

\bibitem{vKAM}
L. Vena. The removal lemma for products of systems. Midsummer Combinatorial Workshop 2012, KAM-DIMATIA Series. Available as \url{http://kam.mff.cuni.cz/workshops/work18/mcw2012booklet.pdf}.

\bibitem{vena_master}
L.~Vena. \emph{The regularity Lemma in additive combinatorics}. Master's Thesis supervised by O.~Serra.
Available as \url{http://hdl.handle.net/2099.1/4726}.

\end{thebibliography}
\end{document}